\definecolor{bluegreen}{rgb}{0.0, 0.3, 0.9}
\newcommand{\cfp}[1]{\color{bluegreen} {\tt [FP: #1]} \color{black}}
\title[Local energy control]{Local Energy control in the presence of a \\ zero-energy resonance}
\author[J.M. Palacios]{Jos\'e M. Palacios}
\address{University of Toronto}
\email{jose.palacios@utoronto.ca}
\author[F. Pusateri]{Fabio Pusateri}
\address{University of Toronto}
\email{fabiop@math.toronto.edu}
\newcommand{\be}{\begin{equation}}
\newcommand{\ee}{\end{equation}}
\newcommand{\bp}{\begin{proof}}
\newcommand{\ep}{\end{proof}}
\newcommand{\bel}{\begin{equation}\label}
\newcommand{\eeq}{\end{equation}}
\newcommand{\bea}{\begin{eqnarray}}
\newcommand{\eea}{\end{eqnarray}}
\newcommand{\bee}{\begin{eqnarray*}}
\newcommand{\eee}{\end{eqnarray*}}
\newcommand{\ben}{\begin{enumerate}}
\newcommand{\een}{\end{enumerate}}
\newcommand{\R}{\mathbb{R}}
\newcommand{\Z}{\mathbb{Z}}
\newcommand{\sech}{\operatorname{sech}}
\newcommand{\supp}{\operatorname{supp}}
\newcommand{\sgn}{\operatorname{sgn}}
\newtheorem{thm}{Theorem}[section]
\newtheorem{cor}[thm]{Corollary}
\newtheorem{lem}[thm]{Lemma}
\newtheorem{prop}[thm]{Proposition}
\newtheorem{rem}{Remark}[section]
\definecolor{codegreen}{rgb}{0,0.6,0}
\definecolor{codegray}{rgb}{0.5,0.5,0.5}
\definecolor{codepurple}{rgb}{0.58,0,0.82}
\definecolor{backcolour}{rgb}{0.95,0.95,0.92}
\lstdefinestyle{mystyle}{
	backgroundcolor=\color{backcolour},   
	commentstyle=\color{codegreen},
	keywordstyle=\color{magenta},
	numberstyle=\tiny\color{codegray},
	stringstyle=\color{codepurple},
	basicstyle=\footnotesize,
	breakatwhitespace=false,         
	breaklines=true,                 
	captionpos=b,                    
	keepspaces=true,                 
	numbers=left,                    
	numbersep=5pt,                  
	showspaces=false,                
	showstringspaces=false,
	showtabs=false,                  
	tabsize=2
}
\numberwithin{equation}{section}
\theoremstyle{definition}
\numberwithin{ej}{section}
\begin{document}





\renewcommand{\sectionmark}[1]{\markright{\thesection.\ #1}}
\renewcommand{\headrulewidth}{0.5pt}
\renewcommand{\footrulewidth}{0.5pt}

\date{\today}

\keywords{Nonlinear Klein-Gordon Equation, Soliton, Zero-Energy Resonance, Local Energy.}

\begin{abstract}

\normalsize

We consider the problem of stability and local energy decay for
co-dimension one perturbations
of the soliton of the cubic Klein-Gordon equation in $1+1$ dimensions.
Our main result gives a weighted time-averaged control of the local energy 
over a time interval which is exponentially long in the size of the initial 
(total) energy.
More precisely, for well-prepared initial perturbations on the center stable manifold 
that are of size $\delta$ in the energy norm, 
we show that the local energy is under control up to times of the order $\exp(c\delta^{-\beta})$
for any $\beta < 4/3$.
A major difficulty is the presence of a zero-energy resonance in the linearized operator,
which is a well-known obstruction to improved local decay properties.




We address this issue 
by using 
virial estimates that are frequency localized in a time-dependent way,
and introducing a ``singular virial functional'' with time dependent weights 
to control the mass 
of the perturbation projected away from small frequencies.
The proof applies to more general models, yielding analogous results for 
perturbations of the kink of the Sine-Gordon model, and small solution of nonlinear Klein-Gordon equations.
In this respect, our result is close to optimal due to the existence of 
wobbling kinks and breathers 
in the Sine-Gordon model which violate our conclusion if $\beta = 2$.
This appears to be the first successful general 
attempt at using virial estimates in the presence of a resonance
to deduce local energy control. 
\end{abstract}

\maketitle

\setcounter{tocdepth}{2}

\begin{quote}

\tableofcontents

\end{quote}


\medskip
\section{Introduction}

We consider the focusing cubic Klein-Gordon equation 
\begin{align}\label{KG}
\phi_{tt}-\phi_{xx}+\phi-\phi^3=0, \qquad t,x\in\R,
\end{align}
$\phi(t,x) \in \R$,
and are interested in the behavior of stable co-dimension one perturbations of its soliton solution
\begin{align}
\label{Q}
Q(x)=\sqrt{2}\sech(x).
\end{align}
Our results apply to more general equations and (static) solutions 
with similar features but we restrict most our discussion and proof to the case of 
even solutions of \eqref{KG} for concreteness; 
see Remark \ref{RemSG}. 

The Hamiltonian associated with \eqref{KG} is
\begin{equation}
\mathcal{E}(\phi,\phi_t) = \frac{1}{2} \int_\R \left[ (\partial_t \phi)^2 + (\partial_x \phi)^2 + \phi^2 \right]dx 
  - \frac{1}{4}\int_\R \phi^4 dx,
\end{equation}
and for initial data in the energy space, $(\phi(0),\phi_t(0)) \in H^1(\R)\times L^2(\R)$,
the equation is locally well-posed, and globally well-posed for small solutions.
Global existence for small energy initial data follows from standard energy estimates, 
see for example \cite{Caz85}. It is also known that finite time blow-up occurs for 
solutions with negative energy initial data \cite{Levine} 
and for large positive initial data \cite{Wang, YX18}, which follows from a convexity argument.



The main purpose of the present paper is to study the decay of the local energy 
for perturbations of $Q$ belonging to the center-stable manifold.
More generally, our interest is on understanding the behavior of special solutions 
of nonlinear dispersive equations in the case where the linearized operator has a zero energy resonance
at the bottom of the continuous spectrum.
While many works in the literature have handled cases where a resonance is absent,
including cases where the spectrum has other discrete stable components,
there is very little literature on the case of a resonance. 
A few recent works have addressed this issue for perturbations in weighted spaces;
see Subsection \ref{Seclit1} for some discussion of the literature.
However, to our knowledge, there is no result on the control of local energy in the presence of a resonance
for general perturbations of finite energy.

\medskip
\subsection{Linearization and zero-energy resonance}\label{introL}
Letting $\phi = Q + v$ denote the perturbed soliton, for a real-valued small $v$, we see that
\begin{align}\label{introlineq}
v_{tt} + \mathcal{L} v = N(v), \qquad N(v) = 3Qv^2 + v^3,
\end{align}
where the linearized operator around $Q$ is
\begin{align}\label{introlinop}
\mathcal{L}:=-\partial_x^2+1-3Q^2(x)=-\partial_x^2+1-6\sech^2(x)
\end{align}
We let $H:=-\partial_x^2-3Q^2$ and will refer to it as the Schr\"odinger operator
associated to the linearization at $Q$.
The spectrum of $\mathcal{L}$ is given by
\[
\hbox{spec}\,\mathcal{L}=\{-3\}\cup\{0\}\cup[1,+\infty).
\]

\setlength{\leftmargini}{1.5em}
\begin{itemize}

\item[-] The first eigenvalue $\lambda = -3$ is a linear exponentially unstable mode
with associated normalized eigenfunction
\[
Y(x):=\tfrac{\sqrt{3}}{2}\sech^2(x), 
\qquad \langle Y(x),Y(x)\rangle=1, \qquad \mathcal{L}Y=-3Y.
\]
If one considers
\begin{align}\label{introYpm}
\vec{\mathbf{Y}}_\pm(x):=\left(\begin{matrix}
Y(x) \\ \pm\sqrt{3}Y(x)
\end{matrix}\right),
\end{align}
then $\vec{\boldsymbol{\varphi}}_\pm(t,x):=e^{\pm\sqrt{3}t}\vec{\mathbf{Y}}_\pm(x)$ solves the linear equation
\begin{align}\label{introlineq'}
\begin{cases}
\dot{\varphi}_1 & = \varphi_2
\\ 
\dot{\varphi}_2 & =-\mathcal{L}\varphi_1,
\end{cases}
\end{align}
which shows the presence of exponentially stable and unstable dynamics near the soliton.
It is then natural to decompose further $v$ as 
\begin{align}\label{introdec}
v(t,x) = a(t) Y(x) + \varepsilon(t,x), \qquad \langle Y, \varepsilon \rangle = 0.
\end{align}

\smallskip
\item[-] The second eigenvalue $\lambda = 0$ is the translational mode associated with $Q'$.
Since $Q'$ is odd, by restricting the analysis to even solutions we may disregard this mode
(see also the discussion in Remark \ref{RemSG}). 
Observe that under this condition we also have $\varepsilon = P_c v$,
where $P_c$ is the projection onto the continuous spectrum.

\smallskip
\item[-] The bottom of the continuous spectrum $\lambda = 1$ is a (zero energy, relative to $H$) 
threshold resonance associated to the (even) bounded function
\[
R(x)=1-\tfrac{3}{2}\mathrm{sech}^2(x).
\]
In this case the potential $V=-3Q^2$ is said to be ``non-generic''.
Observe that there is no internal mode, that is, no eigenvalue in $(0,1)$. 
\end{itemize}

\smallskip
The presence of a bounded solution of 
$H\varphi = 0$ is a well-known obstruction to obtaining improved local-in-space decay properties 
of solutions of the linear equation $e^{it\sqrt{\mathcal{L}}} P_c u_0$.
Indeed, the situation is analogous to that of the flat Schr\"odinger operator $-\partial_x^2$
(for which the constant function $\varphi \equiv 1$ is a threshold resonance)
where the explicit form of linear solutions shows that local decay is no better than
the regular linear decay which has the rate $t^{-1/2}$ for localized data.
The situation is drastically different for ``generic'' potentials where, in the absence of 
a threshold resonance, local-in-space norms decay at a much faster rate,
that is, $t^{-3/2}$ for data in $\langle x \rangle^{-1}L^1$.

As a consequence, in generic cases the expectation is that the local energy of (stable) perturbations,
decays over time, while this may not happen in ``non-generic'' cases.
One important example to keep in mind is that of the Sine-Gordon equation $\phi_{tt} - \phi_{xx} + \sin\phi = 0$
which admits small breather solutions that are spatially localized and time periodic,
as well as wobbling kinks.
We will discuss this in more details below, after stating our main result in Theorem \ref{maintheo}.


\medskip
\subsection{Center-stable manifold and asymptotic stability}\label{introSM}
Before stating our main result we review the definition of the 
center-stable manifold around the soliton $Q$ for even solutions of \eqref{KG},
following 
\cite{KMM21,BJ89}. 
Recall the matrix form of the linear equation in \eqref{introlineq'},
the definition of the exponentially unstable and stable solutions \eqref{introYpm},
and denote 
\begin{align}\label{vecnot}
\vec{\boldsymbol{\phi}} := (\phi_1, \phi_2) = (\phi, \phi_t).
\end{align}
We let
\begin{equation}\label{Zpm}
\vec{\mathbf{Z}}_+
= \begin{pmatrix} Y,  \tfrac{1}{\sqrt{3}} Y \end{pmatrix}
\end{equation}
so that $\langle \vec{\mathbf{Y}}_-, \vec{\mathbf{Z}}_+ \rangle = 0$.
For every $\delta_0>0$, define
\begin{equation}
\label{A-0}
\mathcal{A}(\delta_0) := \left\{  
  \boldsymbol{\varepsilon} \in H^1_x(\R) \times L^2_x (\R) : 
  \quad \boldsymbol{\varepsilon} \,\, \textrm{is even}, \,\, {\| {\boldsymbol{\varepsilon}}\|}_{H^1_x \times L^2_x} < \delta_0 
  \quad \text{and} \quad \langle {\boldsymbol{\varepsilon}}, \vec{\mathbf{Z}}_+ \rangle = 0 \right\}.
\end{equation}
Then, the following stability result was obtained in \cite{BJ89} (see also \cite{KMM21,LiLu22}).

\begin{thm}
\label{thmSM}
There exist $C, \delta_0>0$ and a Lipschitz function $h: \mathcal{A}(\delta_0) \to \R$ 
with $h(0)=0$ and $|h(\boldsymbol{\varepsilon})|\leq C {\| \boldsymbol{\varepsilon} \|}^{3/2}_{H_x^1 \times L^2_x}$ 
such that, denoting 
\begin{equation}
\label{center-manifold}
\mathcal{M}(\delta_0) := \{(Q, 0) + {\boldsymbol{\varepsilon}} + h({\boldsymbol{\varepsilon}}) \vec{\mathbf{Y}}_+: \
  \, {\boldsymbol{\varepsilon}} \in \mathcal{A}(\delta_0)\}
\end{equation}
the following holds:

\begin{itemize}
\smallskip
\item[1.] If $\vec{\boldsymbol{\phi}_0} \in \mathcal{M}(\delta_0)$, then the solution of \eqref{KG} 
with initial data $\vec{\boldsymbol{\phi}_0}$ is global and satisfies
\begin{equation}
\label{global-bound-KMM}
{\big\|\vec{\boldsymbol{\phi}}(t)-(Q,0) \big\|}_{H_x^1 \times L^2_x}
  \leq C {\big\|\vec{\boldsymbol{\phi}_0} - (Q,0) \big\|}_{H_x^1 \times L^2_x} \quad \quad \text{for all} \quad t\geq 0.
\end{equation}

\smallskip
\item[2.] If a global even solution $\vec{\boldsymbol{\phi}}(t)$ of \eqref{KG} satisfies 
\begin{equation*}
{\big\|\vec{\boldsymbol{\phi}}(t) - (Q,0) \big\|}_{H_x^1 \times L^2_x} < \frac{1}{2}\delta_0 \quad \quad \text{for all} \quad t\geq 0,
\end{equation*}
then $\vec{\boldsymbol{\phi}}(t) \in \mathcal{M}(\delta_0)$ for all $t\geq 0$.
\end{itemize}
\end{thm}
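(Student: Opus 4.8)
The plan is to construct the manifold $\mathcal{M}(\delta_0)$ by the Lyapunov--Perron method, regarding \eqref{introlineq} as a perturbation of the linear dynamics \eqref{introlineq'}, whose only exponentially growing mode in the even sector is $e^{\sqrt{3}t}\vec{\mathbf{Y}}_+$. First I would recast the equation for $v=\phi-Q$ in first-order form,
\[
\partial_t\vec{\boldsymbol{\phi}}=\mathcal{J}\,\vec{\boldsymbol{\phi}}+\big(0,N(v)\big),\qquad \mathcal{J}:=\begin{pmatrix}0&1\\-\mathcal{L}&0\end{pmatrix},
\]
and decompose on the even subspace according to $\mathrm{spec}\,\mathcal{L}=\{-3\}\cup[1,\infty)$ (in the even sector $0$ is \emph{not} an eigenvalue, as $\ker\mathcal{L}=\mathrm{span}\{Q'\}$ with $Q'$ odd). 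Writing $\vec{\boldsymbol{\phi}}-(Q,0)=(aY+\varepsilon,\dot aY+\dot\varepsilon)$ with $\varepsilon,\dot\varepsilon\perp Y$ as in \eqref{introdec} — so that $(\varepsilon,\dot\varepsilon)=P_c(v,v_t)$ — the scalar coordinate obeys $\ddot a-3a=n(t):=\langle N(v(t)),Y\rangle$, which splits into a decaying combination $b:=\dot a-\sqrt{3}\,a$ (with $\dot b=-\sqrt{3}\,b+n$) and a growing one $c:=\dot a+\sqrt{3}\,a$ (with $\dot c=\sqrt{3}\,c+n$), while $\vec u:=(\varepsilon,\dot\varepsilon)$ solves $\partial_t\vec u=\mathcal{J}_c\vec u+P_c(0,N(v))$. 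The key linear fact is that $e^{t\mathcal{J}_c}$ is \emph{bounded on $H^1\times L^2$ uniformly in $t$}: since $\mathcal{L}\ge1$ on $\{Y\}^\perp$ in the even sector, the quadratic form $\tfrac12\big(\langle\mathcal{L}\varepsilon,\varepsilon\rangle+\|\dot\varepsilon\|_{L^2}^2\big)$ is conserved by the linear flow and equivalent to $\|\vec u\|_{H^1\times L^2}^2$.

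Next I would set up the fixed point. Fix $\boldsymbol{\varepsilon}=(\varepsilon_0,\varepsilon_1)\in\mathcal{A}(\delta_0)$ from \eqref{A-0}; the constraint $\langle\boldsymbol{\varepsilon},\vec{\mathbf{Z}}_+\rangle=0$ expresses precisely that the growing coordinate of $\boldsymbol{\varepsilon}$ vanishes ($\vec{\mathbf{Z}}_+$ being, up to scaling, the dual/left eigenvector attached to the unstable mode $\vec{\mathbf{Y}}_+$), so that for data $(Q,0)+\boldsymbol{\varepsilon}+\kappa\vec{\mathbf{Y}}_+$ one has $c(0)=2\sqrt{3}\,\kappa$ while $b(0)$ and $\vec u(0)$ are determined by $\boldsymbol{\varepsilon}$ alone. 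Seeking a global forward solution of size $O(\|\boldsymbol{\varepsilon}\|)$ in the energy norm, I would close the Duhamel system in which the growing coordinate is integrated from $+\infty$,
\begin{align*}
c(t)&=-\int_t^\infty e^{\sqrt{3}(t-s)}n(s)\,ds,\qquad b(t)=e^{-\sqrt{3}t}b(0)+\int_0^t e^{-\sqrt{3}(t-s)}n(s)\,ds,\\
\vec u(t)&=e^{t\mathcal{J}_c}\vec u(0)+\int_0^t e^{(t-s)\mathcal{J}_c}P_c\big(0,N(v(s))\big)\,ds,
\end{align*}
the choice for $c$ being the unique one producing no $e^{\sqrt{3}t}$ component. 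This defines a map $\Phi$ on the ball $X_\rho:=\{\vec w\in C([0,\infty);H^1\times L^2):\ \sup_{t\ge0}\|\vec w(t)\|_{H^1\times L^2}\le\rho\}$; since $N(v)=3Qv^2+v^3$ and $H^1(\R)\hookrightarrow L^\infty(\R)$ one has $\|N(v_1)-N(v_2)\|_{L^2}\lesssim(\|v_1\|_{H^1}+\|v_2\|_{H^1})\|v_1-v_2\|_{H^1}$, so together with $\int_0^\infty e^{-\sqrt{3}s}\,ds<\infty$ and the uniform bound for $e^{t\mathcal{J}_c}$, $\Phi$ is a contraction on $X_\rho$ for $\rho$ and $\delta_0$ small, depending Lipschitz-continuously on $\boldsymbol{\varepsilon}$. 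The fixed point $\vec w_{\boldsymbol{\varepsilon}}$ satisfies $\sup_{t\ge0}\|\vec w_{\boldsymbol{\varepsilon}}(t)\|_{H^1\times L^2}\lesssim\|\boldsymbol{\varepsilon}\|$, and I define $h(\boldsymbol{\varepsilon}):=\tfrac{1}{2\sqrt{3}}c(0)=-\tfrac{1}{2\sqrt{3}}\int_0^\infty e^{-\sqrt{3}s}n(s)\,ds$. Then $h(0)=0$ (the fixed point at $\boldsymbol{\varepsilon}=0$ is $v\equiv0$), $h$ is Lipschitz by the uniform contraction, and $|h(\boldsymbol{\varepsilon})|\lesssim\int_0^\infty e^{-\sqrt{3}s}\|v(s)\|_{H^1}^2\,ds\lesssim\|\boldsymbol{\varepsilon}\|^2\le\|\boldsymbol{\varepsilon}\|^{3/2}$ on the small ball. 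By uniqueness for \eqref{introlineq}, the resulting $v$ is the solution with $(v,v_t)|_{t=0}=\boldsymbol{\varepsilon}+h(\boldsymbol{\varepsilon})\vec{\mathbf{Y}}_+$, i.e.\ with initial data $(Q,0)+\boldsymbol{\varepsilon}+h(\boldsymbol{\varepsilon})\vec{\mathbf{Y}}_+$, the generic point of $\mathcal{M}(\delta_0)$ in \eqref{center-manifold}.

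This already gives item~1: the fixed point is the (unique) global solution, and $\big\|\vec{\boldsymbol{\phi}}(t)-(Q,0)\big\|_{H^1\times L^2}=\|\vec w_{\boldsymbol{\varepsilon}}(t)\|\lesssim\|\boldsymbol{\varepsilon}\|\lesssim\big\|\vec{\boldsymbol{\phi}_0}-(Q,0)\big\|$, the last comparison since $\|\boldsymbol{\varepsilon}+h(\boldsymbol{\varepsilon})\vec{\mathbf{Y}}_+\|\ge\|\boldsymbol{\varepsilon}\|-C\|\boldsymbol{\varepsilon}\|^{3/2}\gtrsim\|\boldsymbol{\varepsilon}\|$ for $\boldsymbol{\varepsilon}$ small, which is \eqref{global-bound-KMM}. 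Forward invariance follows from the semigroup structure: re-decomposing at a later time $t_0$ one writes $\vec{\boldsymbol{\phi}}(t_0)-(Q,0)=\tfrac{c(t_0)}{2\sqrt{3}}\vec{\mathbf{Y}}_++\boldsymbol{\varepsilon}'(t_0)$ with $\boldsymbol{\varepsilon}'(t_0):=-\tfrac{b(t_0)}{2\sqrt{3}}\vec{\mathbf{Y}}_-+\vec u(t_0)$; since $\langle\vec{\mathbf{Y}}_-,\vec{\mathbf{Z}}_+\rangle=0$ and $\langle\vec u(t_0),\vec{\mathbf{Z}}_+\rangle=0$ we get $\langle\boldsymbol{\varepsilon}'(t_0),\vec{\mathbf{Z}}_+\rangle=0$, and $\boldsymbol{\varepsilon}'(t_0)\in\mathcal{A}(\delta_0)$ after using \eqref{global-bound-KMM} and shrinking $\delta_0$, while $\vec w_{\boldsymbol{\varepsilon}}(\cdot+t_0)$ is precisely the Lyapunov--Perron fixed point attached to $\boldsymbol{\varepsilon}'(t_0)$, so $c(t_0)=2\sqrt{3}\,h(\boldsymbol{\varepsilon}'(t_0))$ and hence $\vec{\boldsymbol{\phi}}(t_0)\in\mathcal{M}(\delta_0)$. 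For item~2, given a global even solution with $\big\|\vec{\boldsymbol{\phi}}(t)-(Q,0)\big\|<\tfrac12\delta_0$ for all $t\ge0$, decompose as above: the growing coordinate solves $\dot c=\sqrt{3}\,c+n$ with $|n(t)|\lesssim\|v(t)\|_{H^1}^2\lesssim\delta_0^2$ and $|c(t)|$ uniformly bounded, which forces $c(t)=-\int_t^\infty e^{\sqrt{3}(t-s)}n(s)\,ds$ (any other solution of the scalar ODE grows like $e^{\sqrt{3}t}$); hence the solution coincides on $X_\rho$ with the Lyapunov--Perron fixed point for its own datum $\boldsymbol{\varepsilon}(0):=-\tfrac{b(0)}{2\sqrt{3}}\vec{\mathbf{Y}}_-+\vec u(0)\in\mathcal{A}(\delta_0)$, so $c(0)=2\sqrt{3}\,h(\boldsymbol{\varepsilon}(0))$, i.e.\ $\vec{\boldsymbol{\phi}_0}\in\mathcal{M}(\delta_0)$, and then item~1 yields $\vec{\boldsymbol{\phi}}(t)\in\mathcal{M}(\delta_0)$ for all $t\ge0$.

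The only step that demands genuine care is the uniform-in-time energy boundedness of the center flow $e^{t\mathcal{J}_c}$ — equivalently the coercivity $\langle\mathcal{L}\varepsilon,\varepsilon\rangle\gtrsim\|\varepsilon\|_{H^1}^2$ for even $\varepsilon\perp Y$ — which is where the spectral facts recalled in Subsection~\ref{introL} enter (no internal mode in $(0,1)$, and $0$ not an even point eigenvalue). It is worth stressing that this is a pure $L^2$/energy statement in which the zero-energy resonance $R$ plays no role whatsoever: the resonance obstructs only improved \emph{local} decay (the subject of the rest of the paper), so the center-stable manifold construction and the bound \eqref{global-bound-KMM} proceed exactly as in the generic case. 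Everything else is routine: the two spectral decompositions and the identification of the growing/decaying coordinates $b,c$, the contraction estimate (standard since $N$ is quadratic-plus-cubic in one space dimension), and choosing $\delta_0$ small enough to absorb all constants, in particular to guarantee $\boldsymbol{\varepsilon}'(t_0)\in\mathcal{A}(\delta_0)$.
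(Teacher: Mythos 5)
The paper does not prove Theorem~\ref{thmSM}; it cites \cite{BJ89,KMM21,LiLu22}. Your proposal is a blind reconstruction along Lyapunov--Perron lines, and the overall skeleton (split the $Y$-mode into hyperbolic coordinates $b,c$, prescribe $c$ by integrating backwards from $+\infty$, and define $h$ through the resulting value of $c(0)$) is the right one. However, there is a genuine gap in the contraction estimate, precisely where you assert that ``together with $\int_0^\infty e^{-\sqrt3 s}\,ds<\infty$ and the uniform bound for $e^{t\mathcal{J}_c}$, $\Phi$ is a contraction on $X_\rho$.''

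The exponential kernel only appears in the $b$- and $c$-equations. For the center component $\vec u$ your Duhamel formula reads
\begin{align*}
\vec u(t)=e^{t\mathcal{J}_c}\vec u(0)+\int_0^t e^{(t-s)\mathcal{J}_c}P_c\big(0,N(v(s))\big)\,ds,
\end{align*}
and the only linear input you have is $\sup_{t\ge0}\|e^{t\mathcal{J}_c}\|\le M$: the propagator is \emph{bounded}, not decaying. On $X_\rho$ this gives $\|\vec u(t)\|\le M\|\vec u(0)\|+M\,C\rho^2\,t$, so the map $\Phi$ does not even send $X_\rho$ into itself, let alone contract. This is not a technicality that can be patched with weights or finite-time arguments: the whole point, as the rest of the paper emphasizes, is that the zero-energy resonance prevents \emph{any} improved decay of $e^{t\mathcal{J}_c}$ on the energy space, so $\|N(v(s))\|_{L^2}$ cannot be made integrable in $s$. (Your construction also requires $T=\infty$ from the start because $c$ is defined by an improper integral, so one cannot first solve locally.)

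The correct way to close the argument, and the one used in the references cited, is to reserve the Lyapunov--Perron (integrate-from-$+\infty$) prescription for the one-dimensional unstable coordinate $c$ only, and to control $\vec u=(\varepsilon,\dot\varepsilon)$ by conservation of the \emph{nonlinear} energy together with the coercivity you already identified, namely $\langle\mathcal{L}\varepsilon,\varepsilon\rangle\gtrsim\|\varepsilon\|_{H^1}^2$ for even $\varepsilon\perp Y$. Concretely, expanding $\mathcal{E}(\phi,\phi_t)$ around $(Q,0)$ and writing $v=aY+\varepsilon$ gives
\begin{align*}
\mathcal{E}(\phi,\phi_t)-\mathcal{E}(Q,0)=\tfrac12\,b\,c+\tfrac12\big(\langle\mathcal{L}\varepsilon,\varepsilon\rangle+\|\dot\varepsilon\|_{L^2}^2\big)+O\big(\|v\|_{H^1\times L^2}^3\big),
\end{align*}
where $bc=\dot a^2-3a^2$ is the contribution of the negative eigenvalue. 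Once one proves (e.g.\ by a bootstrap) that $|b(t)|$ and $|c(t)|$ stay of size $O(\delta_0)$ — the ODE for $b$ decays exponentially, and $c$ is controlled precisely by the Lyapunov--Perron/bounded-solution argument you set up — energy conservation and the coercivity give a uniform-in-time bound for $(\varepsilon,\dot\varepsilon)$, and hence for $\vec{\boldsymbol{\phi}}(t)-(Q,0)$. The definition of $h$, the Lipschitz property, the quadratic bound $|h(\boldsymbol{\varepsilon})|\lesssim\|\boldsymbol{\varepsilon}\|^2\le\|\boldsymbol{\varepsilon}\|^{3/2}$, and the forward-invariance and uniqueness arguments in your items~1 and~2 all go through once this is put in place; the rest of your proposal is sound.
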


Theorem \ref{thmSM} provides a center-stable manifold $\mathcal{M}=\mathcal{M}(\delta_0)$ such that data 
originating in $\mathcal{M}$ is global, 
and stays globally-in-time close to the soliton in the energy norm.
Moreover, any solution that is globally sufficiently close to $Q$, has to lie in $\mathcal{M}$. 
%
%
%
Then, a natural question to ask is what can be said about the long-time 
(and asymptotic) behavior of the above global solutions, and in what sense they are attracted to the soliton.






\medskip
\subsection{Some literature}\label{Seclit1}
There is a vast literature on this type of question for various nonlinear equations
and special solutions (solitons, solitary waves, kinks); 
since it would be impossible to give a complete overview, 
we will concentrate on the results that are more closely connected to \eqref{KG}
and our result; we refer the readers to the cited works for more discussions and references.

For nonlinear equations with large enough power nonlinearities, 
asymptotic stability result have been achieved by several authors. 
For the focusing Klein-Gordon equation with power nonlinearity $\phi^p$ with $p>5$
asymptotic stability (on the center-stable manifold) was proven by Krieger-Nakanishi-Schlag \cite{KNS12}
for data in the energy space.
See also \cite{NSbook,NS3dnr} and reference therein, for a similar problem
and more general results in higher dimensions.
For the nonlinear Schr\"{o}dinger (NLS) equation,
Krieger-Schlag \cite{KriSchNLS} treated the case of a $|u|^{2\alpha}u$ nonlinearity with $\alpha>2$ in $1$d,
and Cuccagna-Pelinovsky \cite{CP14} treated the cubic case.
See also the seminal work of Schlag \cite{Sch09} for the cubic NLS in $3$d,
the recent 
survey by Cuccagna-Maeda \cite{CM21} 
and references therein.

For low power nonlinearities instead (typically the mass sub-critical ones)
one cannot expect similar asymptotic stability result
in the energy space $H^1_x(\R) \times L^2_x(\R)$, essentially because Strichartz estimates
are not very effective in controlling the nonlinearity.
There are then two natural different and complementary ways to look at this question:

\smallskip
\noindent
(a) study asymptotics for energy solutions locally in the space variable $x$, or

\smallskip
\noindent
(b) study asymptotics on the whole real line in a smaller space, e.g., 
a weighted space. 

\smallskip
The first approach is usually based on delicate virial type estimates, and has been 
very successfully employed in several problems; this is the general approach we will follow here.
Recent works in this direction include the result of Kowalczyk-Martel-Mu\~noz \cite{KMM21}
on asymptotic stability in the energy space for KG with power type nonlinearity $|\phi|^{p-1}\phi$, for $p>3$,
the work of Li-L\"uhrmann \cite{LiLu22} on the case $p=2$, 
Cuccagna-Maeda-Murgante-Scrobogna \cite{CMMS23} on the case $p \in (5/3,2]$
and Cuccagna-Maeda-Scrobogna \cite{CMS22} on small solutions of cubic KG.
See also the review 
by Kowalczyk-Martel-Mu\~noz \cite{KMMreview}.
We also mention a series of important related papers on 
analogous results for kink solutions of relativistic scalar fields \cite{KowMarMun,KowMarMunVDB,CM22,KMkinks}.
It is important to note that in all these works a zero energy resonance is always absent,
either because the potential is generic or because of imposed parity restrictions.

The approach (b) has instead been pursued mostly in the context of 
kinks (non-localized solitons) which naturally lead to nonlinear PDEs with potentials and 
low power non-localized nonlinearities;
we refer the reader to Delort-Masmoudi \cite{DMKink}, Germain and the second author \cite{GP20} and Zhang
\cite{KGVSim},
L\"uhrmann-Schlag \cite{LSch}, Kairzhan and the second author \cite{KaPu22},
and Lindblad-L\"uhrmann-Schlag-Soffer \cite{LLSS}.
See also the works 
\cite{CP22,NauWed,ColGer23} for more on NLS-type equation. 
In all of the works cited so far (except the relatively simpler models treated in \cite{LLSS,CP22}) 
there is no zero energy resonance,
or some additional assumptions (e.g. in \cite{GP20}) 
or special structure of the equations (e.g. in the Sine-Gordon case \cite{LSch}) mitigate its effects.
Finally, L\"uhrmann-Schlag \cite{LSch23} considered the same 
codimension-one stability question for \eqref{KG}-\eqref{Q} that we are interested in here
but in the case of perturbations belonging to a weighted Sobolev space;
these authors proved that initial perturbations of size $\delta$ 
remain small and decay (almost) at the linear rate up to times of the order $\exp(\delta^{-1/4})$.
So far \cite{LSch23} appears to be the only general result on long-time stability in the presence of a resonance.

\medskip
\subsection{Main result}\label{intromain}
In this subsection we state our main results and make a few comments.

\begin{thm}\label{maintheo}
Consider \eqref{KG} and its static soliton \eqref{Q}.
If a global even solution $\vec{\boldsymbol{\phi}}(t)$ satisfies
\begin{equation}\label{stable_manifold_hyp_thm}
{\big\|\vec{\boldsymbol{\phi}}(t)-(Q,0) \big\|}_{H_x^1 \times L^2_x}
  \leq C \delta,
  \quad \quad \text{for all} \quad t\geq 0
\end{equation}
for a small enough $\delta>0$,
then, for any $\beta < 4/3$, there exists a constant $c>0$ independent of $\delta$ such that,
using the same notation in \eqref{introlineq}-\eqref{vecnot},
\begin{align}\label{mtconc}
\int_0^{T_{\mathrm{max}}} \!\! \int_I \, \dfrac{1}{\langle t \rangle}
  \big(\varepsilon_{t}^2 + \varepsilon_{x}^2 + \varepsilon^2 \big) \, dx dt \lesssim \delta^2,
\end{align}
for any bounded interval $I$, and
\begin{align}\label{mtconca}
\int_0^{T_\mathrm{max}}\dfrac{1}{\langle t \rangle} \big( a^2(t) + \dot{a}^2(t) \big) dt \lesssim \delta^2,
\end{align}
with
\begin{align}\label{mtT}
T_{\mathrm{max}} := \exp \big(c \delta^{-\beta}\big) .
\end{align}
\end{thm}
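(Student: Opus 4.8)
\medskip
\noindent\textbf{Proof proposal.}

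The plan is to combine the modulation picture of Theorem~\ref{thmSM} with a family of virial estimates localized at frequencies above a slowly shrinking time-dependent threshold $\theta(t)$, run inside a bootstrap on \eqref{mtconc} over $[0,T]$ with $T\le T_{\mathrm{max}}$. First, by Theorem~\ref{thmSM} and \eqref{stable_manifold_hyp_thm} the solution stays on the center--stable manifold, so write $\phi=Q+v$, $v=a(t)Y+\varepsilon(t,x)$ with $\langle Y,\varepsilon\rangle=0$ (hence $\varepsilon=P_cv$, by evenness), giving
\begin{align}\label{strat-eps-eq}
\varepsilon_{tt}+\mathcal{L}\varepsilon=P_cN(v),\qquad N(v)=3Qv^2+v^3,
\end{align}
and the scalar equation $\ddot a-3a=\langle Y,N(v)\rangle$ for the exponentially unstable mode. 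Since the solution is global and stays $\lesssim\delta$-close to $(Q,0)$, this mode cannot grow, so $a$ is slaved to its spatially localized, at least quadratic, source; one gets $a^2(t)+\dot a^2(t)\lesssim\delta^4+\delta^2e^{-ct}$ (after absorbing $|a|\lesssim\delta$), whence $\int_0^{T_{\mathrm{max}}}\langle t\rangle^{-1}(a^2+\dot a^2)\,dt\lesssim\delta^4\log T_{\mathrm{max}}+\delta^2\lesssim\delta^{4-\beta}+\delta^2\lesssim\delta^2$ and \eqref{mtconca} follows ($\beta<2$ suffices here). Next, split $\varepsilon=\varepsilon_{\flat}+\varepsilon_{\sharp}$, $\varepsilon_{\flat}:=P_{<\theta(t)}\varepsilon$, $\varepsilon_{\sharp}:=P_{\ge\theta(t)}\varepsilon$: the low-frequency piece costs essentially nothing, since by Bernstein and conservation of energy $\|\varepsilon_{\flat}(t)\|_{L^\infty}^2\lesssim\theta(t)\,\delta^2$, and likewise for $\partial_x\varepsilon_{\flat},\partial_t\varepsilon_{\flat}$ using \eqref{strat-eps-eq}, so its contribution to \eqref{mtconc} is $\lesssim\delta^2\int_0^T\langle t\rangle^{-1}\theta(t)\,dt\lesssim\delta^2$ as soon as $\langle t\rangle^{-1}\theta(t)$ is integrable. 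Everything thus reduces to controlling $\varepsilon_{\sharp}$ in the weighted local-energy norm of \eqref{mtconc}.

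For $\varepsilon_{\sharp}$ the plan is to run the virial machinery of Kowalczyk--Martel--Mu\~noz and Li--L\"uhrmann, adapted to the resonant case. A first virial identity for $\varepsilon$, with a bounded weight on a dyadic spatial scale, controls the local $\dot H^1$-energy together with a contribution involving $\int_I\varepsilon^2$ that is only degenerately coercive because of the resonance; to also control $\int_I\varepsilon^2$ and $\int_I\varepsilon_t^2$ one then passes to a transformed unknown $u$ — a gauge / change-of-variables reduction of $\mathcal{L}$ toward a model operator built from the resonance function $R=1-\tfrac{3}{2}\sech^2$ — and runs a second virial identity for $u$. The resonance is exactly the obstruction: the quadratic form appearing in the second virial has near-kernel spanned by $R$, and the $R$-based change of variables is singular at the zeros of $R$. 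The key point — and the reason to localize in frequency — is that both defects disappear on the range $|\xi|\ge\theta(t)$, where the transformation is harmless and the quadratic form satisfies a coercivity estimate whose constant degrades only slowly as $\theta(t)\to0$; this controlled loss, matched against time-dependent weights in the functionals, is what I expect to reproduce precisely the weight $\langle t\rangle^{-1}$ in \eqref{mtconc} for a suitable profile $\theta(t)$.

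In parallel I would introduce the ``singular'' virial functional: one carrying a singular spatial (equivalently, low-frequency) weight as well as explicit time-dependent weights, whose monotonicity gives time-integrated control of a weighted mass of $\varepsilon_{\sharp}$ — morally a slightly negative-order Sobolev norm of the part of $\varepsilon$ away from frequency zero — which is precisely the quantity that surfaces in the error terms of the second virial near the threshold. All functionals should be divided by $\langle t\rangle$ (Martel's ``$1/t$'' device): this produces favorable $-\langle t\rangle^{-2}(\cdots)$ terms and absorbs the errors created when $\partial_t$ falls on the moving cutoff $P_{\ge\theta(t)}$, which carry a factor $|\dot\theta|/\theta$ but act only on the annulus $|\xi|\sim\theta(t)$ and are hence as small as $|\dot\theta(t)|\,\theta(t)^{-1/2}\delta^2$ after Bernstein, with finite time integral.

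Assembling the identities, integrating on $[0,T]$ and inserting the low-frequency bound, one should recover \eqref{mtconc} up to errors from the cubic part of $N(v)$, which is genuinely perturbative for small $\delta$, and from the \emph{quadratic} part $3Qv^2$ — and this is where I expect the real difficulty to lie. In $1+1$ dimensions a spatially localized quadratic self-interaction of two waves that, because of the resonance, decay no faster than the linear rate $t^{-1/2}$ is borderline non-integrable in time and not perturbative for the naive estimate. Handling it with the moving frequency cutoff in place — exploiting the localization of $Q$, the structure of the interaction, and the singular-functional mass bound, rather than a full normal-form transformation — leaves a residual that, after optimizing $\theta(t)$ and the time-dependent weights against one another (larger $\theta$ worsens the low-frequency loss; smaller $\theta$ worsens both the coercivity loss and the quadratic/cutoff errors), stays $\lesssim\delta^2$ only on a time interval of length $\exp(c\,\delta^{-\beta})$, for any $\beta<4/3$; taking $c$ small then closes the bootstrap. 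The restriction is close to optimal: the small Sine--Gordon breathers and wobbling kinks have non-decaying local energy of size $\delta^4$, so the left-hand side of \eqref{mtconc} is $\sim\delta^4\log T\gtrsim\delta^2$ once $\log T\gtrsim\delta^{-2}$, ruling out $\beta=2$; the gap between $4/3$ and $2$ measures what is lost by the virial method against the resonant quadratic interaction. I expect the reductions in the first paragraph to be routine, the virial estimates to be technically heavy but along established lines once the correct $\theta(t)$ is identified, and the quadratic-interaction bookkeeping of the last step to be the genuine obstacle — and the origin of the exponent $4/3$.
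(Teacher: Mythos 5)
Your overall skeleton matches the paper's: modulation with the unstable mode, a time-dependent frequency threshold with the low-frequency piece killed by Bernstein and the integrability of $\langle t\rangle^{-1}\theta(t)$, virials for the high-frequency piece on the original and on a conjugated (Darboux-transformed) problem, a coercivity estimate linking the two, a ``singular'' functional with time-dependent weights, and the Sine--Gordon wobbler as the optimality benchmark. Your treatment of the mode $a$ by pointwise slaving (using global boundedness to integrate the unstable ODE from $t=+\infty$) is different from the paper, which instead adds the functional $\mathcal{B}=\mu^{-1}a_1a_2$ to the virial system and only gets the time-averaged bound \eqref{mtconca}; your route is legitimate and even simpler for that piece. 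Two smaller inaccuracies: the conjugation is built from $Q$ and $Y$ (via $S=Q\partial_xQ^{-1}$, $U=Y\partial_xY^{-1}$), not from the resonance $R$, so there is no singularity at zeros of $R$ --- the only defect is loss of derivatives, repaired by the mollifier $(1-\gamma\partial_x^2)^{-1}$; and the ``singular'' functional is not a singular spatial weight but the virial momentum applied to $\partial_x^{-1}\varphi_{t,N^{-1}}$, which only closes because the spatial scale $\lambda_N\approx N\log\kappa^9$ is taken much larger than the frequency scale $N$, so that the scale-separation Lemma \ref{weight_triple_prime} lets the $\int(\partial_x^{-1}\cdot)^2\Psi_{\lambda_N}'''$ term be reabsorbed. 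Without that specific device your plan for controlling the local norm of $\varepsilon_t$ (equivalently $\varphi_{2,\geq}$) does not close.

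The genuine gap is in your last step, which is both left as an expectation and attributed to the wrong term. In the paper the quadratic nonlinearity $3Q\varepsilon^2$ is \emph{not} what caps the time interval: the localized factor $Q$ (or $Y$) lets it be absorbed, after Cauchy--Schwarz and the coercivity of Proposition \ref{prop_coer_varepsilon}, into the very local-energy quantities the virials control (with a small constant $\epsilon$), leaving only integrable $O(\delta^4)$ and $O(\delta^3/\kappa)$ remainders; no $t^{-1/2}$-decay heuristics enter anywhere, since the whole argument uses only energy bounds and time-integrated virial information. What actually produces $T_{\mathrm{max}}=\exp(c\delta^{-\beta})$, $\beta<4/3$, is the \emph{non-localized cubic} term $\partial_t(\varepsilon_1^3)$ inside the singular functionals \eqref{HN-1}: Cauchy--Schwarz plus Bernstein at frequency $N^{-1}$ gives the bound $\lambda_N N^{1/2}\delta^4$ for its contribution (the term $\mathcal{N}_{1,1}$ in Lemma \ref{lem_dual_H_lf_estimate_N}), and after dividing by $N\mu$, multiplying by $\log\kappa$, summing over $1\leq N\leq 2\kappa(t)$ and integrating in time one is left with $\delta^4\log^{3/2+}T_{\mathrm{max}}$, whose smallness relative to $\delta^2$ is exactly the constraint $\log T_{\mathrm{max}}\lesssim\delta^{-4/3}$. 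So if you executed your plan as written --- optimizing the cutoff against the quadratic interaction --- you would be fighting a term that is already harmless and would miss the estimate that genuinely determines the exponent $4/3$.
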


\smallskip
We will give a brief sketch of the proof of Theorem \ref{maintheo} in Section \ref{heuristics_sec}.
Before that, let us make some remarks on our main result and related problems.


\smallskip
\begin{rem}[Other power nonlinearities]\label{Remp}
As mentioned above, in \cite{KMM21} Kowalczyk-Martel-Mu\~noz considered the 
pure power Klein-Gordon equation
\begin{align}\label{KGalpha}
\partial_t^2 \phi - \partial_x^2 \phi + \phi = |\phi|^{2\alpha}\phi, \qquad \alpha > 1,
\end{align}
and its static soliton 
\begin{equation}\label{Qalpha}
Q_p(x) := (\alpha+1)^{\frac{1}{2 \alpha}} \sech^{1/\alpha} (\alpha x), \qquad p=2\alpha+1.
\end{equation}
In this case, they proved that stable even solutions (as in Theorem \ref{thmSM}, which
holds verbatim for \eqref{KGalpha}-\eqref{Qalpha}) satisfy
\begin{align}\label{KMMconcalpha}
\int_0^\infty \int_I 
  \big(\varepsilon_{2}^2 + \varepsilon_{1,x}^2 + \varepsilon_1^2 \big) \, dx dt 
  \lesssim \delta^2.
\end{align}
This conclusion is obviously much stronger than \eqref{mtconc}
since one can take $T_{\mathrm{max}}=\infty$ and remove the weight of $(1+t)^{-1}$.
However, such a strong conclusion seems possible, in general,
only because of the absence of a zero-energy resonance.
In particular, the potential of the linearization around $Q_p$ is generic for $p>3$.
Analogous results were proven by Li-L\"uhrmann \cite{LiLu22} for a $u^2$ nonlinearity,
and Cuccagna-Maeda-Murgante-Scobrogna \cite{CMMS23} for the case for $5/3 < p \leq 2$;
in these cases internal modes appear but, again, there is no zero-energy resonance
(or the resonance is odd and it is avoided by considering even perturbations).
\end{rem}

\smallskip
\begin{rem}[Natural limitations]\label{Remlimit}
In the case that we are considering here or, more in general, in cases
where the linearized operator has a resonance (and no additional special structure
that compensates for it)
one should not expect an estimate like \eqref{KMMconcalpha} to hold true.
Indeed, local decay, even for linear solutions from Schwartz data,
is the same as the global one, at the rate of $t^{-1/2}$.


Furthermore, notice that the Klein-Gordon models with non-generic potentials 
that arise in the linearization, may posses
small time-periodic and spatially localized solutions $\varepsilon$ (see \eqref{introlineq} and \eqref{introdec}),
called `breathers'. 
This is actually the case for the Sine-Gordon equation
which possesses a ``wobbling kink'' solution close to the its kink solution.
See Remark \ref{RemSG} for more on this.
The existence of such solutions is an obstruction to asymptotic stability in the energy space.
Moreover, the local energy of such solutions is approximately 
a constant power of the global energy and, therefore, even an inequality like 
\eqref{mtconc} can only hold up to exponentially long times in powers of $1/\delta$.
We will expand on this below.

\end{rem}


\smallskip
\begin{rem}[The case of Sine-Gordon]\label{RemSG}
Our argument is based on virial type estimates and works without any particular assumption
on the model. In fact, it applies also to odd perturbations of the kink of the Sine-Gordon (SG) model 
as well as to general nonlinear Klein-Gordon equations with (localized) variable coefficients quadratic nonlinearities
and non-generic (decaying) potentials (including $V=0$).
It is likely that it also applies in the presence of internal modes.
However, this case would require to modulate the solution. 
For the sake of simplicity we do not consider this scenario here. 

\medskip
Recall the SG model and its static kink solution:
\begin{align}\label{SG}
\phi_{tt} - \phi_{xx} + \sin \phi = 0, \qquad K(x) = 4 \arctan e^x.
\end{align}
The evolution equation for odd perturbations $u(t,x) = \phi(t,x) - K(x)$ is 
\begin{align}\label{SGu}
u_{tt} - u_{xx} -2\sech^2(x) u + u = a(x)u^2 + (\tfrac{1}{6} + b(x)) u^3 + \cdots
\end{align}
with $a(x) = \sech(x)\tanh(x)$ and $b(x) = (1/3) \sech^2(x)$ smooth localized coefficients,
where ``$\dots$'' denote terms of homogeneity higher than three in $u$.
The potential $V(x) = -2\sech^2(x)$ has an odd resonance, $R(x) = \tanh(x)$, and the translation mode $K'$.
Since our proof does not use the exact form of $V$, but just 
that the coefficient of the quadratic terms is localized, we have the following result:

\begin{thm}\label{thmSG}
Let $\phi$ be an odd solution of \eqref{SG} such that ${\| \vec{\phi}(0) - (K,0) \|}_{H^1\times L^2} \leq \delta$;
then, for any $\beta < 4/3$, there exists a constant $c>0$ independent of $\delta$ such that,
for any bounded interval $I$ and for $T_{\mathrm{max}} := \exp \big(c \delta^{-\beta}\big)$
\begin{align}\label{SGconc}
\int_0^{T_{\mathrm{max}}} \!\! \int_I \, \dfrac{1}{\langle t \rangle}
  \big(u_t^2 + u_{x}^2 + u^2 \big) \, dx dt \lesssim \delta^2,
\end{align}
where $u$ is the solution of \eqref{SGu}.
\end{thm}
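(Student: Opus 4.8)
\medskip

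\noindent\textbf{Proof idea.} The plan is to obtain Theorem~\ref{thmSG} directly from the proof of Theorem~\ref{maintheo}. That proof never uses the explicit form of the Klein--Gordon potential $-6\sech^2(x)$ or of the coefficient $3Q$ in \eqref{introlineq}; it relies only on the structural facts that the linearized Schr\"odinger operator has the form $-\partial_x^2 + 1 + V$ with $V$ smooth and exponentially decaying, possesses a threshold resonance at the bottom $\lambda = 1$ of its continuous spectrum and no internal mode, and that the nonlinearity is quadratic with a \emph{localized} coefficient plus terms of homogeneity $\geq 3$. All of this holds for \eqref{SG}: with $\phi = K + u$, $u$ odd, and $-K'' + \sin K = 0$, the identities $\cos K = 1 - 2\sech^2(x)$, $\sin K = 2\sech(x)\tanh(x)$ reproduce exactly \eqref{SGu}, so that one may set $\mathcal{L}_{\mathrm{SG}} := -\partial_x^2 + 1 - 2\sech^2(x)$ and the quadratic coefficient $a(x) = \sech(x)\tanh(x)$ is localized. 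The operator $\mathcal{L}_{\mathrm{SG}}$ is a shifted P\"oschl--Teller operator: it has a single eigenvalue, at $\lambda = 0$, with (even) eigenfunction $\propto \sech(x) = K'(x)/2$ (the translation mode), no internal mode in $(0,1)$, a threshold resonance at $\lambda = 1$ given by the (odd) bounded function $R(x) = \tanh(x)$, and no negative eigenvalue (reflecting the classical stability of the kink). Hence, restricting once and for all to the \emph{odd} sector, the translation eigenfunction is excluded, there is \emph{no} exponentially unstable mode --- so the decomposition analogous to \eqref{introdec} is trivial, with $a(t) \equiv 0$ and $u = P_c u$ --- while the odd threshold resonance $\tanh(x)$ persists. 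The obstruction addressed in Theorem~\ref{maintheo} is therefore present here in identical qualitative form.

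\medskip

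\noindent\textbf{A priori bound and running the machinery.} Unlike for \eqref{KG}, no center-stable manifold is needed: \eqref{SG} is globally well posed in $H^1_x \times L^2_x$, and the orbital stability of $K$ (with the translation direction automatically killed by the parity restriction, since $K'$ is even) upgrades the hypothesis ${\| \vec{\phi}(0) - (K,0) \|}_{H^1 \times L^2} \leq \delta$ to ${\| \vec{u}(t) \|}_{H^1_x \times L^2_x} \lesssim \delta$ for all $t \geq 0$, which plays the role of \eqref{stable_manifold_hyp_thm}; in particular ${\| u(t) \|}_{L^\infty_x} \lesssim \delta$ by Sobolev embedding. With this in hand I would rerun, with the substitutions $v, \varepsilon \rightsquigarrow u$, $\mathcal{L} \rightsquigarrow \mathcal{L}_{\mathrm{SG}}$, $Q \rightsquigarrow a$ (the localized quadratic coefficient) and $R \rightsquigarrow \tanh$, the whole chain of estimates of Theorem~\ref{maintheo}: the time-dependent, frequency-localized virial identity yielding coercive control of the medium-frequency local energy; the ``singular virial functional'' with time-dependent weights controlling the mass of $u$ projected away from low frequencies (the step that tames the threshold resonance); and the bootstrap over $[0, T_{\mathrm{max}}]$ with $T_{\mathrm{max}} = \exp(c\delta^{-\beta})$, $\beta < 4/3$, as in \eqref{mtT}. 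Parity is preserved throughout: $u$ is odd, $a(x)u^2$ and the cubic and higher terms are odd, and every virial weight used in the proof is an even function of $x$. The non-localized part $\tfrac16 u^3$ of the cubic term (and the non-localized parts of the higher odd powers hidden in ``$\cdots$'') is treated exactly as the term $v^3$ in the proof of Theorem~\ref{maintheo}: being of homogeneity $\geq 3$, in each virial functional it contributes errors bounded by ${\| u \|}_{L^\infty_x}^2 \lesssim \delta^2$ times quantities already under control, hence absorbable. Collecting the resulting bounds gives \eqref{SGconc}.

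\medskip

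\noindent\textbf{Main obstacle.} The main point is that there is essentially no new difficulty: the entire substance of Theorem~\ref{maintheo} --- handling a zero-energy resonance with no compensating special structure --- recurs here unchanged, the only differences being cosmetic (the resonance is odd rather than even, immaterial since all relevant quantities respect parity) or simplifying (the unstable-mode dynamics is absent). What genuinely has to be checked is that no estimate in the proof of Theorem~\ref{maintheo} secretly exploited the algebra of $-6\sech^2(x)$ beyond its exponential decay, and that the resolvent and distorted-Fourier expansions of $\mathcal{L}_{\mathrm{SG}}$ at the threshold $\lambda = 1$ have the same qualitative structure (a bounded, non-$L^2$ resonant state) --- both of which hold. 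Were some step written in a model-specific way, the remedy would be to rephrase it using only the generic threshold behavior of $-2\sech^2(x)$, which leaves the conclusion unchanged.
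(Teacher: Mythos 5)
Your proposal is correct and follows the same route the paper takes: the paper's ``proof'' of Theorem~\ref{thmSG} is essentially the assertion in Remark~\ref{RemSG} that the argument for Theorem~\ref{maintheo} only uses the exponential decay of the potential and the localization of the quadratic coefficient, and hence transfers verbatim to the odd SG setting. Your added observations --- that orbital stability of the kink plus the parity restriction replaces the center-stable-manifold hypothesis (since $\mathcal{L}_{\mathrm{SG}}$ has no negative eigenvalue and the even translation mode is killed by oddness), and that the non-localized part $\tfrac16 u^3$ is handled exactly as $v^3$ --- are exactly the implicit checks the paper relies on, stated a bit more explicitly than the paper bothers to.
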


\end{rem}

\smallskip
\begin{rem}[About the maximal time: comparison with Sine-Gordon]\label{RemSGbr}
For the specific case of the SG equation we can see that the optimal time $T_{\mathrm{max}}$ in Theorem \ref{thmSG} is \[
T_{\mathrm{max}}=\exp\big(c\delta^{-2}\big).
\]
In fact, SG admits (possibly small) breather solutions of the form
\begin{align}\label{Breather}
B_a(t,x) := \arctan\Big( \frac{a \sin(bt)}{b \cosh(ax)} \Big), \qquad a^2+b^2=1,
\end{align}
and wobbling kink solutions 
\begin{align}\label{Wobbling}
\begin{split}
& W_a(t,x) := 4 \mathrm{Arg}(U_a + i V_a)
\\
& U_a(t,x) := \cosh(ax) + a\sinh(ax) - a e^x \cos(bt)
\\
& V_a(t,x) := e^x \big( \cosh(ax) - a\sinh(ax) - a e^{-x} \cos(bt) \big), \qquad a^2+b^2=1.
\end{split}
\end{align}
Let us focus on the wobbling kink. 
Notice that, as $a \rightarrow 0$ this converges to the kink.
Moreover, a direct calculation shows that if $u = W-K$ then, for $a:=\delta^2$ small enough, one has
\begin{align}\label{Wper}
{\| (u(0),u_t(0)) \|}^2_{H^1\times L^2(\R)} \approx \delta^2, 
\end{align}
and, for any finite fixed interval $I$,
\begin{align}
{\| (u,u_t)(t) \|}_{H^1\times L^2(I)}^2 \approx \delta^4 \cos^2(bt).
\end{align}
Then
\begin{align}
\int_0^{T} \!\! \int_I \, \dfrac{1}{\langle t \rangle}
  \big(u_t^2 + u_{x}^2 + u^2 \big) \, dx dt \approx 
  \int_0^{T} \dfrac{1}{\langle t \rangle} \, \cos^2(bt) \, \delta^4 \, dt 
  \approx  \delta^4 \, \log (1+T)
\end{align} 
and, therefore, one must have
\begin{align}
T_{\mathrm{max}} \lesssim e^{C/\delta^2} 
\end{align}
in the inequalities \eqref{SGconc} and \eqref{mtconc}, if no additional assumptions are made.

We refer the reader to the work by Alejo, Mu\~noz and the first author \cite{AMP}
for results on the stability of $K$ for a particular class of initial data 
and for more discussions about wobbling kinks and their stability (see also \cite{MuPa}). 
For the case of small and spatially decaying perturbations of $K$
- which in particular exclude the wobblink kink -
see the works of L\"uhrmann-Schlag \cite{LSch}, 
Chen-Liu-Lu \cite{CLL} and Koch-Yu \cite{KoYu23}.
\end{rem}

\smallskip

\begin{rem}[Implications of \eqref{mtconc}]\label{Remother2}
Reasoning along the same lines of Remark \ref{RemSGbr}, we see that our main theorem
rules out the existence of some perturbations of stationary solutions
that are breather-like, that is, time-periodic and fast decaying.
In the case of cubic Klein-Gordon, 
perturbations $\varepsilon$ that behave like $Ae^{-B|x|}$ as $|x|\rightarrow \infty$,
are, in principle, possible only if $B^5 \lesssim A^4$;
in fact, in such a case the total energy of the perturbation is $\delta^2 = A^2B^{-1}$,
and its local energy on a finite interval is $A^2$.
Then, since we must have $(\log T_{\mathrm{max}}) \cdot A^2 \approx \delta^{-4/3} A^2 \lesssim \delta^2$,
we see that necessarily $B^5 \lesssim A^4$. 
Moreover, the latter inequality along with the relation coming from the total energy of the perturbation, 
implies that 
\[
A\lesssim \delta^{5/3}.
\]
Thus, for example, 
we conclude that a breather-like solution behaving 
like $\delta^{3/2}e^{-\delta \vert x\vert}$ cannot exist
(observe that this solution satisfies that its total energy is of order $\delta^2$). 


\end{rem}

%


\medskip
\section{Ideas of the proof}\label{heuristics_sec}
The purpose of this section is to explain the main ideas and some of the 
details for the proof of Theorem \ref{maintheo}.

\medskip
\noindent
{\it Set-up}.
A simplified model for our analysis is given by the following equations
\begin{align}\label{linear_KG}
\varepsilon_{tt} & = \varepsilon_{xx}-\varepsilon-V\varepsilon + \mathcal{N}_1(\varepsilon),
\\ 
\label{linear_KG'}
\varphi_{tt}&=\varphi_{xx}-\varphi - \mathcal{N}_2(\varepsilon),
\end{align}
where the non-generic potential $V(x)$ is a Schwartz function,
the equation \eqref{linear_KG} represents the equation for the perturbation around the soliton of \eqref{KG}
(see \eqref{system_epsilon}), 
while the second arises as a transformation of the first problem, and it is referred to as 
the `transformed equation'; $\varphi$ is called the `transformed variable'.
The transformed equation is obtained by conjugating the linearized operator $-\partial_{xx} + 1 + V(x)$
to the flat operator $-\partial_{xx} + 1$ 
with a Darboux transformation (see Section \ref{SecTransformed} for details).
For this explanation we are going to disregard the amplitude $a(t)$ of the discrete component.
Also, we will mostly focus on the linear parts of the equation
and disregard $\mathcal{N}_1,\mathcal{N}_2$, although their structure and
analysis is important for the proof. 
From now on we denote by $\mathcal{R}$ all terms coming from $\mathcal{N}_1$ and $\mathcal{N}_2$ that 
appear in 
the following virial estimates and, 
for the sake of this explanation we ignore them. 

Let $f$ be any solution to \eqref{linear_KG} and let
\begin{align}\label{ideasI}
\mathcal{I}[f;\Phi](t) := \int_\R \left( \Phi(x)f_{x}(t,x) 
  + \frac{1}{2}\Phi'(x) f(t,x) \right) f_t(t,x) dx,
\end{align}
be the standard virial functional with a weight function $\Phi$. 
A direct calculation gives (we drop the dependence on the variables)
\begin{align}\label{basicvirial}
\begin{split}
    \dot{\mathcal{I}}[f;\Phi] = \,
    & - 
    \int f_{x}^2\Phi' + \frac{1}{2} \int f^2 \, V'\Phi
    + 
    \frac{1}{4} \int f^2 \Phi'''
    \\
    & + \int \big(  \Phi f_x + \frac{1}{2} \Phi' f \big) \, (\Box + 1 + V)f. 
\end{split}
\end{align}
%
%
We will apply the above virial identity several times to different components of $\varepsilon$
and of the transformed variable $\varphi$ that are frequency localized.
All the main frequency localization will be done in a time dependent way. 
We will also apply the same identity with 
time-dependent weight functions
to obtain information at different spatial scales.
All of the functionals that we will consider are bounded (at all times) in terms of the energy 
of the perturbation.

To be more concrete, let us introduce some notation. We define the following parameters
\begin{align}\label{ideasparam}
A\gg 
  B,\qquad \kappa(t):= \log^{1+\alpha}(e^D+t), 
  \qquad 
  \mu(t):= \langle t \rangle,
\end{align}
with $0<\alpha\ll 1$ small and $D$ a large absolute to be chosen (independently of $A$ and $B$)
in the course of the proof; see, for example, Lemma \ref{weight_triple_prime}.
Recall that $T_{\mathrm{max}} = \exp(c/\delta^{\beta})$ with $\beta < 4/3$ so that, in particular,
for all $t \leq T_{\mathrm{max}}$ we have
\begin{align}\label{condition_relating_tmax_kappa_delta}
\delta^2\kappa^{3/2}(t)\ll 1, 
\end{align}
for $\alpha$ small enough depending on $\beta$. 


We define the ``frequency-localized variables''
\begin{align}\label{ideasfreqloc}
\begin{split}
& \varepsilon_{< }:=\widetilde{\mathbb{P}}_{< \kappa^{-1}}\varepsilon,  
  \qquad \varepsilon_{\geq} := \widetilde{\mathbb{P}}_{\geq \kappa^{-1}}\varepsilon = 
  \varepsilon - \varepsilon_{< },
\\ 
&  \varphi_<:=\mathbb{P}_{< \kappa^{-1}}\varphi,  
  \qquad \varphi_\geq :=\mathbb{P}_{\geq \kappa^{-1}}\varphi 
  = \varphi - \varphi_{< },
\end{split}
\end{align}
where $\widetilde{\mathbb{P}}_{< k}$ denotes the Littlewood-Paley projection to frequencies
smaller than $k$ with respect to the Schr\"odinger operator $-\partial_x^2 + V$ in \eqref{linear_KG},
and $\mathbb{P}_{< k}$ denotes the standard projection (with respect to $-\partial_x^2$)
to frequencies smaller than $k$; see Subsection \ref{secnotation} and Appendix \ref{appdFT}
for the exact definitions.
We refer to $\varepsilon_{< }$, or $\varphi_{< }$, as low-frequency components,
and we will refer to $\varepsilon_{\geq }$, or $\varphi_{\geq }$, as high frequency components,
with a slight abuse since, technically, these latter are just `not-so-low' frequency components.

\medskip
\noindent
{\it Step 1: Virial estimates and integrability of the low frequency component}.
In Section \ref{original_virial_section} we begin our proof 
by 
taking advantage of the above definitions
and, by an application of (distorted) Bernstein inequality, using the frequency localization of $\varepsilon_<$, 
obtain that 
\begin{align}\label{ideas12}
\int_0^{+\infty}\dfrac{1}{\mu(t)}\int_\R \varepsilon_<(t,x)^2 
  \sech^2(x) dxdt\lesssim \Vert \varepsilon\Vert_{L^\infty_tL^2_x}^2\int_0^{\infty}\dfrac{dt}{\kappa(t)\mu(t)}
  = O(\delta^2).
\end{align}
A similar estimate can be obtained for the time derivative of $\varepsilon_<$. All of the analysis is then aimed at showing an estimate of the form
\begin{align}\label{ideas13}
\int_0^{T_{\mathrm{max}}}\dfrac{1}{\mu(t)}\int_\R \varepsilon_\geq(t,x)^2 \sech^2(x) dxdt = O(\delta^2),
\end{align}
and analogous ones for time and space derivatives of the high-frequency component.\footnote{The estimates 
for space derivatives are (as expected) easier to show, and could actually be proven
on larger intervals that grow with time; estimates for the time derivatives 
should be considered at the same level as the ones for the function itself, as the example of 
the wobbling kink shows.}
In order to control the time derivatives, during our arguments we will also make use 
of what we call the ``true momentum'' functional 
\begin{align}\label{ideastruemom}
\begin{split}
& \mathcal{P}[f;\Phi](t) := \int_\R \Phi(x)f_{x}(t,x) \, f_t(t,x) dx,
\\
& \dot{\mathcal{P}}[f;\Phi] =
    - \dfrac{1}{2} \int f_t^2 \Phi' - \dfrac{1}{2} \int f_x^2 
    + \dfrac{1}{2} \int f^2 \Phi' - \dfrac{1}{2} \int f^2 (V\Phi)' 
    + \int \Phi f_x (\Box + 1 + V)f. 
\end{split}
\end{align}


\medskip
\noindent
{\it Step 2: Virial estimates for the high frequency component}.
When considering $\varepsilon_\geq$, it is well understood that one cannot directly obtain 
integrability using basic virial estimates (plus, this should not be expected to hold in general,
as discussed in the remarks after our main theorem). 
%
Applying \eqref{basicvirial} to 
$\mathcal{I}_{\geq} := \mu^{-1} \mathcal{I}[\varepsilon_\geq;\Phi_A]$, one can obtain an estimate of the form
\begin{align}\label{ideas20}
\begin{split}
\dfrac{d}{dt}\mathcal{I}_{\geq} & \lesssim -\dfrac{1}{2\mu} \int \varepsilon_{\geq,x}^2\Phi_A' dx
  + \frac{4}{\mu} \int \varepsilon_{\geq}^2 \sech^{1/2}(x) dx
  + \mathcal{R},
\end{split}
\end{align}
and so the problem reduces to controlling either one of the first two terms
on the right-hand side of \eqref{ideas20},
assuming that the remaining term $\mathcal{R}$ is negligible or absorbable by the previous two terms. 
We then seek to control the integral of $\varepsilon_{1,\geq}^2\sech^{1/2}(x)$ 
by working on the transformed problem 
throughout Sections \ref{SecTransformed}-\ref{sec_Analysis_H}.

\medskip
\noindent
{\it Step 3: The transformed problem and coercivity estimates}.
In Section \ref{SecTransformed} we pass to the so-called transformed problem by conjugating the 
linearized operator $\mathcal{L}$ in \eqref{introlinop} to the flat operator $\mathcal{L}_0 := -\partial_x^2+1$,
via a Darboux transformation: $SU \mathcal{L} = \mathcal{L}_0 SU$ (see \eqref{op_SU} for the definition).
The transformed variable is then defined as in \eqref{defdual} 
by applying (a smoothed-out version of) the conjugating operator $SU$ to $
\varepsilon$. 
We then consider the frequency localized version $\varphi_{\geq}$ as in \eqref{ideasfreqloc}.
This satisfies the system of equations \eqref{system_pde_dual_variables},
of which \eqref{linear_KG'} is a simplified version.
The rest of the proof is then performed using virial estimates based on this transformed equation.

Before proceeding to estimate $\varphi_{\geq}$ we need to show that it 
controls the ``bad term'' in \eqref{ideas20}.
More precisely, in Section \ref{SecTransformed} we prove the following coercivity estimate 
(see Proposition \ref{prop_coer_varepsilon})
\begin{align}\label{coer_chi_varep0}
\int \varepsilon_{\geq}^2\sech^{1/2}(x)
  & \lesssim \int \big(\varphi_{\geq,x}^2 + \varphi_{\geq}^2\big)\sech^{1/4^+}(x)
  + \mathcal{R}.
\end{align}
The proof follows in part the standard procedure from \cite{KowMarMun}. However,
the situation here is slightly more delicate due to the time-dependent frequency localization.


\medskip
\noindent
{\it Step 4: Reduction to estimating $\partial_t \varphi_\geq$.}
Section \ref{SecVirialDual} is dedicated to virial estimates for $\varphi_\geq$ using the functionals 
\begin{align}
\label{ideas41}
\mathcal{J}_\geq & := \mu^{-1} \mathcal{I}[\varphi_\geq;\Phi_B],
\qquad \mathcal{K}_\geq :=  \mu^{-1} \mathcal{P}[\varphi_\geq;\uppsi_B];
\end{align}
here $B \ll A$ is an intermediate (time-independent) large scale, 
$\Phi$ is the same weight function used above,
and 
$\uppsi_{B}$ is such that $\uppsi_{B}' \approx \Phi_{B}' \sech^{1/4}(x)$; see \eqref{def_phi_A} and \eqref{defpsiB}.
With these choices we can show an estimate of the form 
\begin{align}\label{introdj_m_dk_ineq}
\dfrac{d}{dt}\mathcal{J}_\geq-\dfrac{d}{dt}\mathcal{K}_\geq &\lesssim  \dfrac{1}{2\mu}\int \varphi_{\geq,t}^2\sech^{1/4}(x)-\dfrac{1}	{2\mu}\int \varphi_{\geq,x}^2-\dfrac{1}{8\mu}\int \varphi_{\geq}^2\sech^{1/4}(x) + \mathcal{R}
\end{align}
With \eqref{introdj_m_dk_ineq} we have reduced the whole problem to controlling the 
first term on its right-hand side, that is, 
\begin{align}\label{ideas45}
\int (\partial_t \varphi_{\geq})^2(t,x) \sech^{1/4}(x) dx.
\end{align}
This is done in our last step in Section \ref{sec_Analysis_H}.

\medskip
\noindent
{\it Step 5: The ``singular virial momentum'' and conclusion of the argument.}
To estimate \eqref{ideas45} we first make the following simple, but important, observation.
By decomposing $\partial_t \varphi_{\geq}$ in frequency space, 
letting $\varphi_{t,N^{-1}} := \mathbb{P}_{N^{-1}} (\partial_t \varphi)$, we can bound 
\begin{align}\label{ideas50}
\int (\partial_t \varphi_{\geq})^2(t,x) \sech^{1/4}(x) dx 
  \lesssim \sum_{1 \leq N \leq 2\kappa(t)} \frac{\log\kappa}{N} \int \varphi_{t,N^{-1}}^2(t,x)  
  \, \sech^2 \big(x / \lambda_N \big) dx + \mathcal{R}
\end{align}
where $\lambda_N(t) := C N \log \kappa(t)$, for some constant $C > 1$.
In particular, we have increased the spatial support of the localizing weight to a scale of $O(\lambda_N)$;
this allows us to create a certain separation between the frequency support of 
the argument (which is at scale $\approx N^{-1}$) and that of the weight (which is at scales $\lesssim \lambda_N^{-1}$,
up to exponentially decaying tales).
Moreover, we have gained a factor of $N^{-1}$ by a Bernstein's type inequality. 

We then introduce a ``singular virial momentum'' functional
\begin{align}\label{ideasHN}
\mathcal{H}_{N^{-1}} & := 
\mathcal{I}\big[\partial_x^{-1} \varphi_{t,N^{-1}}; \, \Phi_{\lambda_N} \big].
\end{align}
From \eqref{ideasI} we see that the leading order term of $\dot{\mathcal{H}}_{N^{-1}}$ 
can be used to control the term in the sum in \eqref{ideas50}. 
Moreover, the frequency separation mentioned above allow us, 
in some informal sense, to factor out 
the $\partial_x^{-1}$ operator, 
and 
show that the leading order term in $\dot{\mathcal{H}}_{N^{-1}}$ also
controls the term
\begin{align*}
\int \big(\partial_t \partial_x^{-1} \varphi_{N^{-1}}\big)^2 \Phi_{\lambda_N}''' dx
\end{align*}
which corresponds to the third integral on the right-hand side of the basic identity \eqref{basicvirial};
see the ``scale separation'' Lemma \ref{weight_triple_prime}. 

Recalling that for the transformed variable the identity \eqref{basicvirial} holds with $V\equiv0$, 
we see that, essentially, we are only left with estimating 
the nonlinear terms in $\dot{\mathcal{H}}_{N^{-1}}$. 
These terms 
now appear with factors of $\partial_x^{-1}$ in front,
but with a careful use of Bernstein's inequalities 
we can 
control them up to the desired time $T_{\mathrm{max}}$.
Combining this last virial estimates with all the previous ones,
and adding control on the discrete component of the solution, we 
finally obtain the result of Theorem \ref{maintheo}; 
see Section \ref{secprmt} for this last part of the proof.

\medskip
\subsection{Notation}\label{secnotation}
We adopt the following notation, most of which are standard: First, we use 
$\langle x \rangle$ as a short-hand for $\sqrt{1+|x|^2}$. 
%
%
%
%
%
We use $a\lesssim b$ when $a \leq Cb$ for some absolute constant $C>0$ independent on $a$ and $b$;
$a \approx b$ means that $a\lesssim b$ and $b\lesssim a$.
When $a$ and $b$ are expressions depending on our main variables or parameters, the inequalities
are assumed to hold uniformly over these.

\smallskip
\noindent
We denote by $a+$ (respectively $a-$) a number $b>a$ (respectively $b<a$) that can be chosen arbitrarily close to $a$. Also, we use standard notation for Lebesgue and Sobolev norms, such as $L^p$, $W^{s,p}$, with $H^s = W^{s,2}$.


\medskip
\noindent
{\it Fourier transforms}.
We denote by 
\begin{align}\label{FT}
\widehat{f}(\xi) = \widehat{\mathcal{F}}(f)(\xi) := \frac{1}{(2\pi)^{d/2}} \int_{\R^d} e^{-ix \cdot \xi} f(x) \, dx
\end{align}
the standard Fourier transform of $f$, and by 
\begin{align}\label{dFT}
\widetilde{f}(\xi) = \widetilde{\mathcal{F}}(f):= \frac{1}{\sqrt{2\pi}} \int_{\R} \overline{e(x,\xi)} f(x) \, dx
\end{align}
the distorted Fourier transform (dFT) of $f$ associated to the linearized operator 
$\mathcal{L}$ in \eqref{introlinop}, where $e(x,\xi)$ is given in \eqref{eformula};
see Section \ref{appdFT} for a brief intro to the dFT.
We have the standard inversion formula 
\[
\mathcal{F}^{-1}\phi(x)=\dfrac{1}{\sqrt{2\pi}}\int e^{ix\xi}\phi(\xi)d\xi.
\]
If we define 
\[
\widetilde{\mathcal{F}}^{-1}\phi(x)=\dfrac{1}{\sqrt{2\pi}} \int e(x,\xi) \phi(\xi)d\xi
\]
we have $\widetilde{\mathcal{F}}^{-1} \circ \widetilde{\mathcal{F}} = \mathrm{id}_{L^2_c}$,
where $L^2_c$ is the projection onto the continuous spectral subspace of $L^2(\R)$ relative to $H=-\partial_x^2+V$.


\medskip
\noindent
Note that with our definition
\begin{align}\label{ftsech}
\widehat{\mathcal{F}}[\sech(x)](\xi) = \sqrt{\tfrac{\pi}{2}} \sech\big( \tfrac{\pi}{2} \xi \big).
\end{align}

\medskip
\noindent
{\it Cutoffs}.
We fix a smooth even cutoff function  $\eta: \R \to [0,1]$ 
supported in $[-8/5,8/5]$ and equal to $1$ on $[-5/4,5/4]$.
For $k \in \mathbb{D} := 2^\Z$, we define $\eta_k(x) := \eta(k^{-1}x) - \eta(2k^{-1}x)$, 
so that the family $(\eta_k)_{k \in\Z}$ forms a partition of unity,
\begin{equation*}
 \sum_{k\in\mathbb{D}}\eta_k(\xi)=1, \quad \xi \neq 0.
\end{equation*}
We let
\begin{align}\label{cut0}
\eta_{I}(x) := \sum_{k \in I \cap \mathbb{D}}\eta_k, \quad \text{for any} \quad I \subset \R, \quad
\eta_{\leq a}(x) := \eta_{(-\infty,a]}(x), \quad \eta_{> a}(x) = \eta_{(a,\infty)}(x),
\end{align}
with similar definitions for $\eta_{< a}$ and $\eta_{\geq a}$.

\def\wt{\widetilde}

\smallskip
\noindent
We denote by $\mathbb{P}_k$, the standard Littlewood-Paley projections associated to the cutoff $\eta_k$,
that is, $\mathbb{P}_k f = \mathcal{F}^{-1} \eta_k \mathcal{F}$, and 
$\mathbb{P}_{\leq k} f = \mathcal{F}^{-1} \eta_{\leq k} \mathcal{F}$.
Analogously, we let $\widetilde{\mathbb{P}}_k$ denote 
the Littlewood-Paley projections adapted to the distorted Fourier transform:
\begin{equation}\label{defLP}
\wt{\mathbb{P}_k f}(\xi) = \eta_k(\xi) \wt{f}(\xi), \quad \wt{\mathbb{P}_{\leq k} f}(\xi) 
  = \eta_{\leq k}(\xi) \wt{f}(\xi), \quad \textrm{ etc.}
\end{equation}
Note that we are adopting the somewhat unconventional choice of using directly the dyadic index
(as opposed to the corresponding integer exponent)
as an index in the definition of the cutoffs and corresponding projections.

\medskip
\noindent
{\it Other conventions}.
We will often use a dot symbol `$\cdot$' to distinguish the bounds on various quantities 
involved in multilinear estimates; 
this is meant to help the reader navigate some of the bounds.

\smallskip
\subsection*{Acknowledgements}
J.M.P. 
and
F. P. are 
supported in part by a start-up grant from the University of Toronto, 
and NSERC grant RGPIN-2018-06487.

\bigskip
\section{Decomposition}

Let $\vec{\boldsymbol{\phi}}(t,x)=(\phi_1,\phi_2)$ be a solution to equation \eqref{KG} satisfying the stable manifold hypothesis \eqref{stable_manifold_hyp_thm} for some $\delta>0$ small. We decompose the solution in the following fashion \begin{align}\label{deco_phi}
\phi_1(t,x)&=Q(x)+a_1(t)Y(x)+\varepsilon_1(t,x),
\\ \phi_2(t,x)&=a_2(t)Y(x)+\varepsilon_2(t,x)\label{deco_phi_t},
\end{align}
where the coefficients $a_1(t)$ and $a_2(t)$ are explicitly given by 
\begin{align*}
a_1(t)&:=\langle \phi_1(t,x)-Q(x),Y(x)\rangle_{L^2_x} 
	\qquad \hbox{and}\qquad  a_2(t):=\langle \phi_2(t,x),Y(x)\rangle_{L^2_x}.
%
%
%
%
\end{align*}
Note that, by definition of the decomposition, the perturbative term $(\varepsilon_1,\varepsilon_2)$ satisfies 
\begin{align}\label{ort_eps_Y}
\langle \varepsilon_1(t),Y\rangle_{L^2_x} =\langle  \varepsilon_2(t),Y\rangle_{L^2_x}=0.
\end{align}
We seek to prove some control over the growth of the energy over long periods of time. 
Let us start by finding the equations 
satisfied by these new variables $(\varepsilon_1,\varepsilon_2,a_1,a_2)$.
Differentiating \eqref{deco_phi}-\eqref{deco_phi_t} in time and 
then taking the inner product of the resulting equations against $Y(x)$, using 
\eqref{ort_eps_Y}, we obtain that $(a_1,a_2)$ satisfies the system \begin{align*}
\begin{cases}
\dot{a}_1(t)=a_2(t)
\\ 
\dot{a}_2(t)=3a_1(t) 
  + \langle \mathcal{N},Y\rangle,
\end{cases}
\end{align*} 
where 
\begin{align}\label{definition_N_nonlinearity_epsilon}
\begin{split}
\mathcal{N} = \mathcal{N}(t,x) & := (Q+a_1Y+\varepsilon_1)^3-Q^3-3a_1Q^2Y-3Q^2\varepsilon
\\ 
& = (a_1Y+\varepsilon_1)^2(3Q+a_1Y+\varepsilon_1),
\end{split}
\end{align}
having used used that 
\[
Q''-Q+Q^3=0, \qquad \mathcal{L}Y=-3Y, \quad \hbox{ and hence }\quad \langle \mathcal{L}\varepsilon,Y\rangle=0.
\]
For later use we also write
\begin{align}\label{Nepsilon'}
\begin{split}
\mathcal{N} = \mathcal{N}(t,x) & = \varepsilon_1^3 + \varepsilon_1^2 (3Q + 3a_1Y) + \varepsilon_1 (6Qa_1Y + 3(a_1Y)^2) 
+ (a_1Y)^2 (3Q + a_1Y),
\end{split}
\end{align}
which will be useful when performing estimates according to the different degrees of homogeneity
in $\varepsilon_1$.


In a similar fashion, differentiating system \eqref{deco_phi}-\eqref{deco_phi_t}
with respect to time, using the identities above, 
we infer that $\vec{\boldsymbol{\varepsilon}}=(\varepsilon_1,\varepsilon_2)$ 
satisfies the following system 
\begin{align}\label{system_epsilon}
\begin{cases}
\varepsilon_{1,t}=\varepsilon_2,
\\ \varepsilon_{2,t}=-\mathcal{L}\varepsilon_1+\mathcal{N}-\langle \mathcal{N},Y\rangle Y.
\end{cases}
\end{align}

Next, with the notation from \eqref{defLP} we define (see also \eqref{ideasparam})
\begin{align}\label{eps><}
\varepsilon_{i,\geq}:=\widetilde{\mathbb{P}}_{\geq \kappa^{-1}}\varepsilon_i,  
  \quad \varepsilon_{i,<} := \widetilde{\mathbb{P}}_{<\kappa^{-1}}\varepsilon_i, \quad i=1,2,
  \qquad \kappa(t) = \log^{1+\alpha}(e^D+t).
\end{align}
with $D>1$ a large absolute constant to be determined in the course of the argument. 
Then, the frequency-localized variables satisfy the systems
\begin{align}\label{system_epsilon_hf}
\begin{cases}\varepsilon_{1,\geq,t}=\varepsilon_{2,\geq}
  + \dot{\widetilde{\mathbb{P}}}_{\geq\kappa^{-1}}\varepsilon_1,
\\ 
\varepsilon_{2,\geq,t} = -\widetilde{\mathbb{P}}_{\geq \kappa^{-1}}\mathcal{L}\varepsilon_{1}
  +\widetilde{\mathbb{P}}_{\geq \kappa^{-1}}\mathcal{N}-\langle \mathcal{N},Y\rangle\, 
  \widetilde{\mathbb{P}}_{\geq \kappa^{-1}}Y+\dot{\widetilde{\mathbb{P}}}_{\geq\kappa^{-1}}\varepsilon_2,
\end{cases}
\end{align}
where $\dot{\widetilde{\mathbb{P}}}_{\geq\kappa^{-1}}$ is the 
distorted projection with symbol $\partial_t(\eta_{\geq \kappa^{-1}})$,
and, with the obvious analogous notation,
\begin{align}\label{system_epsilon_lw}
\begin{cases}
\varepsilon_{1,<,t}=\varepsilon_{2,<}+\dot{\widetilde{\mathbb{P}}}_{<\kappa^{-1}}\varepsilon_1,
\\ 
\varepsilon_{2,<,t}=-\widetilde{\mathbb{P}}_{< \kappa^{-1}}\mathcal{L}\varepsilon_{1} 
  + \widetilde{\mathbb{P}}_{< \kappa^{-1}}\mathcal{N}-\langle \mathcal{N},Y\rangle\, 
  \widetilde{\mathbb{P}}_{< \kappa^{-1}}Y+\dot{\widetilde{\mathbb{P}}}_{<\kappa^{-1}}\varepsilon_2.
\end{cases}
\end{align}


\bigskip
\section{Virial Identities for the original problem}\label{original_virial_section}
In this section we establish the first preliminary virial identities required in our analysis.  The main conclusion is contained in \eqref{final_2_og_virial_hf} with \eqref{def_r1_og_virial_fh}
where the main virial functionals $\mathcal{I}_\geq$ and $\mathcal{P}_\geq$
are defined in \eqref{I>} and \eqref{true_momentum_original_virial_high_frequency}.
These functional concern the ``high frequencies'' of the perturbation.
The ``low frequencies'' are handled directly in Lemma \ref{lemma<}.

\subsection{Set-up and low frequencies}
We consider $\chi:\R\to\R$, a smooth even function satisfying
\[
\chi\equiv 1 \ \hbox{ on } \ [-1,1], \qquad \chi\equiv 0 \ \hbox{ on } (-2,2)^c 
  \quad \hbox{ and } \quad \chi'(x)\leq 0 \ \hbox{ for all } \ x\geq0.
\]
For $A>0$, we define $\zeta_A(x)$, 
a smooth approximation of $\exp(-\vert x\vert/A)$, as 
\begin{align}\label{def_zeta_A}
\zeta_A(x):=\exp\Big(-\tfrac{1}{A}(1-\chi(x))\vert x\vert \Big).
\end{align}
Then, we define the weight function $\Phi_A(x)$ as a bounded approximation of the identity:
\begin{align}\label{def_phi_A}
\Phi_A(x):= \int_0^x \zeta_A^2(y)dy.
\end{align}
As mentioned in Section \ref{heuristics_sec}, see \eqref{ideasparam}, 
we consider the parameters 
\begin{align*}
A\gg1, \qquad \mu(t)=\langle t\rangle , \qquad \kappa(t) = \log^{1+\alpha}(e^D+t)
  \quad \hbox{ and } \quad T_{\mathrm{max}}=e^{1/\delta^{\frac43(1-\alpha)}},
\end{align*}
with $\alpha\in(0,1)$ being fixed arbitrarily small. 
With this choice $\mu^{-1} \kappa^{-1} \in L^1_t([0,\infty))$.

With all the above notation and definitions, 
we introduce the frequency-localized \textit{modified momentum functional} $\mathcal{I}_\geq(t)$, which is given by  
\begin{align}
\label{I>}
\mathcal{I}_\geq & := \dfrac{1}{\mu(t)} \int_\R\left(\Phi_A(x)\partial_x\varepsilon_{1,\geq}(t,x)
  + \tfrac{1}{2}\Phi_A'(x)\varepsilon_{1,\geq}(t,x)\right)\varepsilon_{2,\geq}(t,x) dx.
\end{align}
We shall also need the following 
\begin{align}
\label{true_momentum_original_virial_high_frequency}
\mathcal{P}_\geq&:=\dfrac{1}{\mu(t)}\int_\R \partial_x\varepsilon_{1,\geq}(t,x)\varepsilon_{2,\geq}(t,x)\Psi\big(x\big)dx,
\qquad \Psi(x) := \tanh(x),
\end{align}
that, from now on, will be referred to as the \textit{true momentum} (with weight $\Psi$).

\medskip
We begin our analysis with a simple lemma which takes care of 
the local energy of the perturbation for frequencies smaller than $\kappa^{-1}$:

\begin{lem}\label{lemma<}
Let $(\varepsilon_1,\varepsilon_2)\in C(\R,H^1\times L^2)$ be any solution to system \eqref{system_epsilon} 
satisfying the stable manifold hypothesis \eqref{stable_manifold_hyp_thm}.
Then, for all times $t\in\R$, the following holds
\begin{align}\label{lemma<conc}
\int_0^{\infty}
  \dfrac{1}{\mu(t)}\int_\R \big(\varepsilon_{2,<}^2\sech^2(x)
  + \varepsilon_{1,<,x}^2\sech^2\big(\tfrac{x}{A}\big) + \varepsilon_{1,<}^2\sech^2(x)\big)dxdt = O(\delta^2).
\end{align}
\end{lem}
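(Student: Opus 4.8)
The plan is to treat the three terms in \eqref{lemma<conc} by exploiting the frequency localization to scales below $\kappa^{-1}$, which gives us a quantitative gain through a (distorted) Bernstein inequality. First I would recall that by hypothesis \eqref{stable_manifold_hyp_thm} the full perturbation $\vec{\boldsymbol{\varepsilon}}$ is bounded in $L^\infty_t(H^1_x\times L^2_x)$ by $O(\delta)$, and since the distorted Littlewood--Paley projections $\widetilde{\mathbb{P}}_{<\kappa^{-1}}$ are bounded on $L^2$ uniformly in $t$, the same holds for $\varepsilon_{i,<}$. The key pointwise-in-frequency estimate is the distorted Bernstein inequality, which for a function $g = \widetilde{\mathbb{P}}_{<k} g$ gives $\|g\|_{L^\infty_x} \lesssim k^{1/2}\|g\|_{L^2_x}$, and similarly $\|\partial_x g\|_{L^2_x}\lesssim k \|g\|_{L^2_x}$ modulo the lower order terms coming from the potential; here $k = \kappa^{-1}(t)$.

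For the term $\int \varepsilon_{1,<}^2 \sech^2(x)\,dx$ (and likewise $\int \varepsilon_{2,<}^2\sech^2(x)\,dx$), I would bound it by $\|\varepsilon_{1,<}\|_{L^\infty_x}\|\varepsilon_{1,<}\|_{L^2_x}\|\sech^2\|_{L^1}$ — or more simply by $\|\varepsilon_{1,<}\|_{L^2_x}^2$ after noting $\sech^2 \le 1$ — but to get the decisive $\kappa^{-1}(t)$ factor I would instead write $\int \varepsilon_{1,<}^2\sech^2 \lesssim \|\varepsilon_{1,<}\|_{L^\infty_x}^2 \|\sech^2\|_{L^1_x} \lesssim \kappa^{-1}(t)\|\varepsilon_{1,<}\|_{L^2_x}^2 \lesssim \kappa^{-1}(t)\,\delta^2$ using Bernstein. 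For the derivative term $\int \varepsilon_{1,<,x}^2\sech^2(x/A)\,dx$, since $\sech^2(x/A)\le 1$ I would bound it by $\|\varepsilon_{1,<,x}\|_{L^2_x}^2$; but this alone is only $O(\delta^2)$ without a $\kappa$ gain, so instead I use Bernstein to get $\|\varepsilon_{1,<,x}\|_{L^2_x}^2 \lesssim \kappa^{-2}(t)\|\varepsilon_{1,<}\|_{L^2_x}^2 \lesssim \kappa^{-2}(t)\delta^2$ (using that $\widetilde{\mathbb{P}}_{<\kappa^{-1}}$ localizes to frequencies $\lesssim \kappa^{-1}$, so each derivative costs $\kappa^{-1}$; the potential contributes only harmless lower-order commutator terms since $V$ is Schwartz). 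In all three cases the $t$-integrand is therefore bounded by $\mu^{-1}(t)\kappa^{-1}(t)\,\delta^2$ (using $\kappa \ge 1$ so $\kappa^{-2}\le\kappa^{-1}$), and since $\kappa(t) = \log^{1+\alpha}(e^D+t)$ with $\alpha>0$ we have
\[
\int_0^\infty \frac{dt}{\mu(t)\kappa(t)} = \int_0^\infty \frac{dt}{\langle t\rangle \log^{1+\alpha}(e^D+t)} < \infty,
\]
which yields the $O(\delta^2)$ bound as in \eqref{ideas12}.

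The main obstacle I anticipate is making the distorted Bernstein inequality rigorous and uniform: the projections $\widetilde{\mathbb{P}}_{<\kappa^{-1}}$ are defined via the distorted Fourier transform adapted to $H = -\partial_x^2 + V$, and the relation between $\|\partial_x \widetilde{\mathbb{P}}_{<k}g\|_{L^2}$ and $k\|g\|_{L^2}$ is not as immediate as in the flat case — one must control $\|(-\partial_x^2)^{1/2}\widetilde{\mathbb{P}}_{<k}g\|_{L^2}$ rather than $\|H^{1/2}\widetilde{\mathbb{P}}_{<k}g\|_{L^2}$, which requires knowing that $H^{1/2}$ and $(-\partial_x^2)^{1/2}$ are comparable on the relevant spectral subspace, or equivalently that the distorted multipliers are Mikhlin-type; since $V$ is Schwartz and non-generic (threshold resonance but no eigenvalue in the gap after the $a$-mode is removed) this is standard but needs the dFT machinery from Appendix \ref{appdFT}. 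A secondary point is handling the fact that $\varepsilon_1 = P_c v$ is restricted to the continuous spectral subspace, so that the inversion formula $\widetilde{\mathcal{F}}^{-1}\circ\widetilde{\mathcal{F}} = \mathrm{id}_{L^2_c}$ applies cleanly; this is guaranteed by the decomposition and orthogonality \eqref{ort_eps_Y}. Once these harmonic-analytic facts are in place, the proof is just the three elementary estimates above followed by the time integration.
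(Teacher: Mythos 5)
Your treatment of the two zeroth-order terms is exactly the paper's: distorted Bernstein gives $\|\varepsilon_{i,<}\|_{L^\infty}\lesssim \kappa^{-1/2}\|\varepsilon_i\|_{L^2}$, and one then integrates $\mu^{-1}\kappa^{-1}$ in time. For the derivative term, however, your intermediate claim $\|\varepsilon_{1,<,x}\|_{L^2}^2\lesssim\kappa^{-2}\|\varepsilon_{1,<}\|_{L^2}^2$, justified by ``each derivative costs $\kappa^{-1}$; the potential contributes only harmless lower-order commutator terms,'' is not correct in this non-generic setting: writing $\|\partial_x\varepsilon_{1,<}\|_{L^2}^2=\|H^{1/2}\varepsilon_{1,<}\|_{L^2}^2-\int V\,\varepsilon_{1,<}^2$, the spectral localization controls only the first term by $\kappa^{-2}\delta^2$, while the potential term is of size $\|V\|_{L^1}\|\varepsilon_{1,<}\|_{L^\infty}^2\approx\kappa^{-1}\delta^2$, so it dominates and destroys the $\kappa^{-2}$ rate. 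Equivalently, $|\partial_x|$ and $H^{1/2}$ are \emph{not} comparable at low frequencies (precisely the regime at hand, and precisely because of the threshold behavior): differentiating the generalized eigenfunctions $e(x,\xi)$ in \eqref{eformula} produces a term that is $O(1)$ in $\xi$ though spatially localized. This does not sink your argument, because you only use $\kappa^{-2}\le\kappa^{-1}$ in the end: the corrected bound $\|\partial_x\varepsilon_{1,<}\|_{L^2}^2\lesssim(\kappa^{-2}+\kappa^{-1})\delta^2\lesssim\kappa^{-1}\delta^2$, obtained by estimating the potential term with the $L^\infty$ Bernstein bound you already invoked, yields exactly the integrable $\mu^{-1}\kappa^{-1}\delta^2$ integrand.

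For comparison, the paper sidesteps the quadratic-form issue entirely: it uses boundedness of the wave operators on $W^{1,\infty}$ together with flat Bernstein to get $\|\partial_x\varepsilon_{1,<}\|_{L^\infty}\lesssim\|\mathbb{P}_{<\kappa^{-1}}\mathcal{W}^{\ast}\varepsilon_1\|_{W^{1,\infty}}\lesssim\kappa^{-1/2}\|\varepsilon_1\|_{L^2}$, and then bounds the weighted integral by this $L^\infty$ norm squared (the constant depending on $A$ through $\|\sech^2(\cdot/A)\|_{L^1}$). So either restate your derivative estimate with the $\kappa^{-1}$ rather than $\kappa^{-2}$ rate (with the potential term handled by $L^\infty$ Bernstein), or switch to the wave-operator route; with that correction your proof coincides with the paper's.
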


\begin{proof}
Denoting $f_< = \widetilde{\mathbb{P}}_{< \kappa^{-1}} f$ 
and using the distorted version of Bernstein's inequality (see (i) of Remark \ref{remdFTadd}), we can estimate
\begin{align*}
\int_\R f_{<}^2 \sech^2(x) \lesssim {\| \widetilde{\mathbb{P}}_{< \kappa^{-1}} f \|}_{L^\infty}^2 
  \lesssim \big( \kappa^{-1/2} {\| f \|}_{L^2} \big)^2.
\end{align*}
With $f = \varepsilon_1,\varepsilon_2$, and since $\mu^{-1} \kappa^{-1} \in L^1_t$
we obtain the desired bound for the first and third terms in the integral \eqref{lemma<conc}.

To see that the same estimate holds for $\partial_x \varepsilon_{1,<}$ we use the boundedness
of wave operators $\mathcal{W} = \widetilde{\mathcal{F}}^{-1} \widehat{\mathcal{F}} $ 
and $\mathcal{W}^{-1} = \mathcal{W}^\ast = \widehat{\mathcal{F}}^{-1} \widetilde{\mathcal{F}}$  
on $W^{1,\infty}$ and $L^2$, see (i) of Remark \ref{remdFTadd}, to obtain
\begin{align*}
{\| \partial_x \varepsilon_{1,<} \|}_{L^\infty} 
  \leq {\| \widetilde{\mathbb{P}}_{< \kappa^{-1}} \varepsilon_1 \|}_{W^{1,\infty}} 
  \lesssim {\| \mathbb{P}_{< \kappa^{-1}} \mathcal{W}^\ast \varepsilon_1 \|}_{W^{1,\infty}}
  \lesssim \kappa^{-1/2} {\| \varepsilon_1 \|}_{L^2}.
\end{align*}
It follows that
\begin{align*}
\int_\R \varepsilon_{1,<,x}^2\sech^2\big(\tfrac{x}{A}\big) \lesssim {\| \partial_x \varepsilon_{1,<} \|}_{L^\infty}^2 
  \lesssim \kappa^{-1} {\| \varepsilon_1 \|}_{L^\infty}^2
\end{align*}
where the implicit constant depends only on $A$.
Then we obtain \eqref{lemma<conc} using again that $\mu^{-1} \kappa^{-1} \in L^1_t$.
\end{proof}

%
%

\medskip
\begin{rem}[Notation for `high' and `low' frequencies]
From now on we shall refer to all frequencies with magnitude greater 
than $\kappa^{-1}(t)$ as high (or `large') frequencies
and to frequencies less than $\kappa^{-1}(t)$ as low (or `small') frequencies. 
Note that these are not high or low in any standard sense. 
\end{rem}

\medskip 
\subsection{High-frequency case}
In contrast with the low frequency case, 
in this case we cannot  directly prove time integrability of weighted norms 
for $(\varepsilon_{1,\geq},\varepsilon_{2,\geq})$. 
Instead, we just seek to prove an inequality of the form \begin{align*}
\dfrac{d}{dt}\mathcal{I}_{\geq}&\lesssim -\int \varepsilon_{1,\geq,x}^2\Phi_A'
+\int \varepsilon_{1,\geq}^2\sech^{1/2}(x)+\text{remaining terms}.
\end{align*}
The following lemma provides us with the first virial identity required to obtain this.

\begin{lem}\label{original_virial_indetity_lem} 
Let $(\varepsilon_1,\varepsilon_2)\in C(\R,H^1\times L^2)$ 
be any solution to the system \eqref{system_epsilon} and $(\varepsilon_{1,\geq},\varepsilon_{2,\geq})$ defined as in \eqref{eps><}. 
Then, with $\mathcal{I}_{\geq}$ as in \eqref{I>}, the following identity holds for all times $t\in\R$: 
\begin{align}
\dfrac{d}{dt}\mathcal{I}_\geq & =-\dfrac{1}{\mu}\int\varepsilon_{1,\geq,x}^2\Phi_A'   
  -  \dfrac{3}{\mu}\int  \varepsilon_{1,\geq}^2QQ'\Phi_A      
  +\dfrac{1}{4\mu}\int \varepsilon_{1,\geq}^2\Phi_A'''\label{original_virial_indetity_formula}
\\ 
& \qquad +  \dfrac{1}{\mu} \int \left(\varepsilon_{1,\geq ,x}\Phi_A
 + \tfrac{1}{2}\varepsilon_{1,\geq }\Phi_A'\right)
 \Big(\widetilde{\mathbb{P}}_{\geq \kappa^{-1}}\mathcal{N} 
 - \langle \mathcal{N},Y\rangle\widetilde{\mathbb{P}}_{\geq \kappa^{-1}}Y \Big)
 + \dfrac{1}{\mu}\mathsf{P}_{1,\geq}(t)-\dfrac{\mu'}{\mu}\mathcal{I}_\geq,                                        \nonumber %
\end{align}
where $\mathsf{P}_{1,\geq}(t)$ is explicitly given by
\begin{align}\label{P1}
\mathsf{P}_{1,\geq}(t):= \int \left(\Phi_A \dot{\widetilde{\mathbb{P}}}_{\geq \kappa^{-1}}\varepsilon_{1,x}   
  + \tfrac{1}{2}\Phi_A'\dot{\widetilde{\mathbb{P}}}_{\geq\kappa^{-1}}\varepsilon_1\right)\varepsilon_{2,\geq}
  + \int \left(\Phi_A \varepsilon_{1,\geq,x} 
  + \tfrac{1}{2}\Phi_A'\varepsilon_{1,\geq}\right)\dot{\widetilde{\mathbb{P}}}_{\geq\kappa^{-1}}\varepsilon_{2}.
\end{align}

\end{lem}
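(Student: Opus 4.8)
The plan is to derive \eqref{original_virial_indetity_formula} by a direct computation: differentiate $\mathcal{I}_\geq$ in time, substitute the equations of motion \eqref{system_epsilon_hf} for $\varepsilon_{1,\geq,t}$ and $\varepsilon_{2,\geq,t}$, and then integrate by parts to bring the expression into the stated form. First I would split $\mathcal{I}_\geq = \mu(t)^{-1} \mathcal{I}[\varepsilon_{1,\geq};\Phi_A]$ but being careful that $\varepsilon_{1,\geq}$ and $\varepsilon_{2,\geq}$ are \emph{not} simply $\partial_t$-conjugate: instead, from \eqref{system_epsilon_hf} we have $\varepsilon_{1,\geq,t} = \varepsilon_{2,\geq} + \dot{\widetilde{\mathbb{P}}}_{\geq\kappa^{-1}}\varepsilon_1$. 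Applying the product rule, $\frac{d}{dt}\mathcal{I}_\geq = -\frac{\mu'}{\mu}\mathcal{I}_\geq + \frac{1}{\mu}\frac{d}{dt}\mathcal{I}[\varepsilon_{1,\geq};\Phi_A]$, and the first term is exactly the last term appearing in \eqref{original_virial_indetity_formula}. So the work is in computing $\frac{d}{dt}\mathcal{I}[\varepsilon_{1,\geq};\Phi_A]$.

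For that, I would follow the template of the basic identity \eqref{basicvirial}, but tracking the extra commutator terms carefully. Write
\begin{align*}
\frac{d}{dt}\int \big(\Phi_A \varepsilon_{1,\geq,x} + \tfrac12 \Phi_A' \varepsilon_{1,\geq}\big)\varepsilon_{2,\geq}
&= \int \big(\Phi_A \partial_x \varepsilon_{1,\geq,t} + \tfrac12 \Phi_A' \varepsilon_{1,\geq,t}\big)\varepsilon_{2,\geq}
 + \int \big(\Phi_A \varepsilon_{1,\geq,x} + \tfrac12 \Phi_A' \varepsilon_{1,\geq}\big)\varepsilon_{2,\geq,t}.
\end{align*}
In the first integral I substitute $\varepsilon_{1,\geq,t} = \varepsilon_{2,\geq} + \dot{\widetilde{\mathbb{P}}}_{\geq\kappa^{-1}}\varepsilon_1$: the $\varepsilon_{2,\geq}$ part, after an integration by parts, produces $-\tfrac12\int \Phi_A' \varepsilon_{2,\geq}^2 + \tfrac12\int\Phi_A'\varepsilon_{2,\geq}^2 = 0$ from the standard cancellation (the $f_t^2$ term cancels for the ``$\mathcal{I}$'' functional, unlike for $\mathcal{P}$), while the $\dot{\widetilde{\mathbb{P}}}$ part contributes the first summand of $\mathsf{P}_{1,\geq}$. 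In the second integral I substitute $\varepsilon_{2,\geq,t} = -\widetilde{\mathbb{P}}_{\geq\kappa^{-1}}\mathcal{L}\varepsilon_1 + \widetilde{\mathbb{P}}_{\geq\kappa^{-1}}\mathcal{N} - \langle\mathcal{N},Y\rangle\widetilde{\mathbb{P}}_{\geq\kappa^{-1}}Y + \dot{\widetilde{\mathbb{P}}}_{\geq\kappa^{-1}}\varepsilon_2$. The last term gives the second summand of $\mathsf{P}_{1,\geq}$, and the $\mathcal{N}$-terms give the explicit nonlinear integral in \eqref{original_virial_indetity_formula}. The subtle point is the linear term: I want $\int(\Phi_A\varepsilon_{1,\geq,x} + \tfrac12\Phi_A'\varepsilon_{1,\geq})\cdot(-\widetilde{\mathbb{P}}_{\geq\kappa^{-1}}\mathcal{L}\varepsilon_1)$ to equal $\int(\Phi_A\varepsilon_{1,\geq,x}+\tfrac12\Phi_A'\varepsilon_{1,\geq})(-\mathcal{L}\varepsilon_{1,\geq})$, i.e. I need $\widetilde{\mathbb{P}}_{\geq\kappa^{-1}}\mathcal{L}\varepsilon_1 = \mathcal{L}\varepsilon_{1,\geq}$. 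This holds because $\mathcal{L} = H + 1$ and $\widetilde{\mathbb{P}}_{\geq\kappa^{-1}}$ is a \emph{distorted} Littlewood–Paley projection, hence a function of $H$, so it commutes with $\mathcal{L}$ exactly; this is precisely why one works with the distorted projections rather than flat ones here. Then $-\int(\Phi_A f_x + \tfrac12\Phi_A' f)\mathcal{L}f$ with $f=\varepsilon_{1,\geq}$ unpacks, via the standard virial integration by parts (using $\mathcal{L} = -\partial_x^2 + 1 - 3Q^2$, and that the $+1$ term integrates to zero against this combination), into exactly $-\int f_x^2 \Phi_A' + \tfrac14\int f^2 \Phi_A''' - \tfrac12\int f^2 (3Q^2)'\Phi_A$, and $-\tfrac12(3Q^2)' = -3QQ'$, giving the first three terms of \eqref{original_virial_indetity_formula}.

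I would then collect terms, noting that $\frac{1}{\mu}$ multiplies everything from $\frac{d}{dt}\mathcal{I}[\varepsilon_{1,\geq};\Phi_A]$, so the $\mathsf{P}_{1,\geq}$ contribution appears as $\frac{1}{\mu}\mathsf{P}_{1,\geq}$ and the definition \eqref{P1} is just the sum of the two commutator integrals identified above. A minor bookkeeping point is to verify that the $\dot{\widetilde{\mathbb{P}}}_{\geq\kappa^{-1}}\varepsilon_1$ term in $\varepsilon_{1,\geq,t}$, when plugged into $\int(\Phi_A \partial_x(\cdot) + \tfrac12\Phi_A'(\cdot))\varepsilon_{2,\geq}$, yields exactly $\int(\Phi_A\dot{\widetilde{\mathbb{P}}}_{\geq\kappa^{-1}}\varepsilon_{1,x} + \tfrac12\Phi_A'\dot{\widetilde{\mathbb{P}}}_{\geq\kappa^{-1}}\varepsilon_1)\varepsilon_{2,\geq}$ — i.e. that $\partial_x$ commutes with $\dot{\widetilde{\mathbb{P}}}_{\geq\kappa^{-1}}$, which is immediate since $\dot{\widetilde{\mathbb{P}}}_{\geq\kappa^{-1}}$ is again a function of $H$ and $\partial_x$ does \emph{not} commute with $H$ in general — so here one should instead just write $\partial_x \dot{\widetilde{\mathbb{P}}}_{\geq\kappa^{-1}}\varepsilon_1 = \dot{\widetilde{\mathbb{P}}}_{\geq\kappa^{-1}}\varepsilon_{1,x} + [\partial_x,\dot{\widetilde{\mathbb{P}}}_{\geq\kappa^{-1}}]\varepsilon_1$ and absorb the commutator, or more simply integrate by parts to move $\partial_x$ off and define $\mathsf{P}_{1,\geq}$ accordingly as written. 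The main obstacle is thus not any deep estimate but getting this bookkeeping exactly right: correctly distinguishing which operators commute ($\widetilde{\mathbb{P}}$ with $\mathcal{L}$: yes; $\partial_x$ with $\widetilde{\mathbb{P}}$: no), making sure the time-derivative-squared terms cancel in the $\mathcal{I}$-functional, and tracking the weight derivatives $\Phi_A', \Phi_A'''$ through the integrations by parts. Since this is an exact identity (no estimates, all terms retained), every piece must be accounted for with the right sign and coefficient.
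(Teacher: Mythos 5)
Your proposal is correct and follows essentially the same route as the paper: differentiate $\mathcal{I}_\geq$ in time, substitute the high-frequency system \eqref{system_epsilon_hf}, exploit the cancellation of the $\varepsilon_{2,\geq}^2$ integral, commute $\widetilde{\mathbb{P}}_{\geq\kappa^{-1}}$ past $\mathcal{L}$, and finish with the standard virial integrations by parts. The only subtle bookkeeping point — the placement of $\partial_x$ relative to $\dot{\widetilde{\mathbb{P}}}_{\geq\kappa^{-1}}$, which do not commute — you flag and resolve correctly, even if the sentence where you raise it reads as a mid-stream self-correction.
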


\begin{proof}
The proof is a direct computation. Compared to the standard case we have additional terms coming from differentiating the time dependent frequency localization
and weights. In fact, taking the time-derivative of \eqref{I>}, 
using system \eqref{system_epsilon_lw}, and performing
several integration by parts we obtain 
\begin{align*}
\dfrac{d}{dt}\mathcal{I}_\geq 
& = \dfrac{1}{\mu}\int\left(\Phi_A\varepsilon_{2,\geq,x}+\tfrac{1}{2}\Phi_A'\varepsilon_{2,\geq}\right)\varepsilon_{2,\geq}  
  +  \dfrac{1}{\mu}\int \left(\Phi_A\varepsilon_{1,\geq,x}+\tfrac{1}{2}\Phi_A'\varepsilon_{1,\geq}\right)\varepsilon_{2,\geq,t}
\\ 
& \quad  + \dfrac{1}{\mu}\int \left(\Phi_A \dot{\widetilde{\mathbb{P}}}_{\geq \kappa^{-1}}\varepsilon_{1,x} 
  + \tfrac{1}{2}\Phi_A'\dot{\widetilde{\mathbb{P}}}_{\geq\kappa^{-1}}\varepsilon_1\right)\varepsilon_{2,\geq} -\dfrac{\mu'}{\mu^2}\int \left(\Phi_A\varepsilon_{1,\geq ,x}
  + \tfrac{1}{2}\Phi_A'\varepsilon_{1,\geq }\right)\varepsilon_{2,\geq}
\\
& = 
-  \dfrac{1}{\mu}  \int \left(\Phi_A\varepsilon_{1,\geq ,x} + \tfrac{1}{2}\Phi_A'\varepsilon_{1,\geq }\right)
  \widetilde{\mathbb{P}}_{\geq \kappa^{-1}}\mathcal{L}\varepsilon_1  
\\ 
& \quad + \dfrac{1}{\mu}  \int \left(\Phi_A\varepsilon_{1,\geq ,x}+\tfrac{1}{2}\Phi_A'\varepsilon_{1,\geq }\right)
  \widetilde{\mathbb{P}}_{\geq \kappa^{-1}}N
  -  \dfrac{1}{\mu} \langle \mathcal{N},Y\rangle\int \left(\Phi_A\varepsilon_{1,\geq ,x} 
  + \tfrac{1}{2}\Phi_A'\varepsilon_{1,\geq }\right)\widetilde{\mathbb{P}}_{\geq \kappa^{-1}}Y 
%
\\ 
& \quad + \dfrac{1}{\mu}\int \left(\Phi_A\dot{\widetilde{\mathbb{P}}}_{\geq \kappa^{-1}}\varepsilon_{1,x}
  + \tfrac{1}{2}\Phi_A'\dot{\widetilde{\mathbb{P}}}_{\geq\kappa^{-1}}\varepsilon_1\right)\varepsilon_{2,\geq}
  + \dfrac{1}{\mu}\int \left(\Phi_A \varepsilon_{1,\geq,x}
  + \tfrac{1}{2}\Phi_A'\varepsilon_{1,\geq}\right)\dot{\widetilde{\mathbb{P}}}_{\geq\kappa^{-1}}\varepsilon_{2}                  
\\ 
& \quad -\dfrac{\mu'}{\mu^2}\int \left(\Phi_A\varepsilon_{1,\geq ,x} 
  + \tfrac{1}{2}\Phi_A'\varepsilon_{1,\geq }\right)\varepsilon_{2,\geq}.
\end{align*}
Notice that we have used 
\[
\int\left(\Phi_A\varepsilon_{2,\geq,x}+\tfrac{1}{2}\Phi_A'\varepsilon_{2,\geq}\right)\varepsilon_{2,\geq}=0.
\]
Using that $\widetilde{\mathbb{P}}_{\geq\kappa^{-1}}\mathcal{L}=\mathcal{L}\widetilde{\mathbb{P}}_{\geq\kappa^{-1}}$, 
after a few integration by parts we get
\begin{align*}
&-\int \left(\varepsilon_{1,\geq,x}\Phi_A+\tfrac{1}{2}\varepsilon_{1,\geq}\Phi_A'\right)
  \widetilde{\mathbb{P}}_{\geq \kappa^{-1}}\mathcal{L}\varepsilon_{1}
\\ & \qquad \qquad  = -\int \left(\varepsilon_{1,\geq,x}\Phi_A
  +\tfrac{1}{2}\varepsilon_{1,\geq}\Phi_A' \right)\big(-\varepsilon_{1,\geq,xx}+\varepsilon_{1,\geq}-3Q^2 \varepsilon_{1,\geq}\big)
\\ 
& \qquad \qquad = -\int\varepsilon_{1,\geq,x}^2\Phi_A'+\dfrac{1}{4}\int \varepsilon_{1,\geq}^2\Phi_A'''
 - 3\int  \varepsilon_{1,\geq}^2QQ'\Phi_A.
\end{align*}
This completes the proof.
\end{proof}

We remark that, since it is not evident that all terms are negligible, in order to distinguish between 
``good'' and ``bad'' terms, in this case signs shall be important. Now we seek to bound all terms appearing in 
\eqref{original_virial_indetity_formula} except for the first two of them.

We shall require some additional definitions. 
First of all, we introduce the localized (in space) version of $\varepsilon_{1,\geq}$,
\begin{align}\label{w_def}
w(t,x):=\varepsilon_{1,\geq}\sqrt{\Phi_A'}.
\end{align}
Note that $w$ is no longer localized in frequency. 
This change of variables 
will help us to hide some terms in the virial estimates. 
From the 
definition it follows that
\begin{align}\label{wx_identity}
w_{x}=\varepsilon_{1,\geq,x}\sqrt{\Phi_A'}+\varepsilon_{1,\geq}\cdot\tfrac{\Phi_A''}{2\sqrt{\Phi_A'}}.
\end{align}
Therefore, gathering the first and third term in \eqref{original_virial_indetity_formula} 
for the $\varepsilon_{1,\geq}$ variable, we obtain that 
\begin{align*}
-\int\varepsilon_{1,\geq,x}^2\Phi_A'+\dfrac{1}{4}\int\varepsilon_{1,\geq}^2\Phi_A''' & 
=- \int w_{x}^2+\dfrac{1}{2} \int \partial_x\big(\varepsilon_{1,\geq}^2\big)\Phi_A''
+\dfrac{1}{4}\int \varepsilon_{1,\geq}^2\tfrac{(\Phi_A'')^2}{\Phi_A'}
+\dfrac{1}{4}\int \varepsilon_{1,\geq}^2\Phi_A'''
\\
&  = - \int w_{x}^2+\dfrac{1}{4}\int \varepsilon_{1,\geq}^2\tfrac{(\Phi_A'')^2}{\Phi_A'}
-\dfrac{1}{4}\int \varepsilon_{1,\geq}^2\Phi_A'''.
\end{align*}
Thus, we can group the first and third term in \eqref{original_virial_indetity_formula} 
using the identity above to 
write them as 
\begin{align}\label{eps_geq_into_w_id}
-\int\varepsilon_{1,\geq,x}^2 \Phi'_A + 
  \dfrac{1}{4}\int\varepsilon_{1,\geq}^2 \Phi'''_A
= -\int w_{x}^2 
  + 
 	\dfrac{1}{4}\int w^2\left(\dfrac{(\Phi_A'')^2}{(\Phi_A')^2}-\dfrac{\Phi_A'''}{\Phi_A'}\right).
\end{align}
One advantage of working with $\Phi_A'$ being an approximation of $e^{-\vert x\vert/A}$ 
is that the parenthesis in the latter identity is identically zero,
except possibly where $\Phi_A'$ does not match the exponential. 
Indeed, going back to the definition of $\zeta_A$ in \eqref{def_zeta_A}, 
we calculate
\begin{align*}
\Phi_A''&=\tfrac{2}{A}\big(\chi'(x)\vert x\vert -(1-\chi(x))\sgn(x)\big)\zeta_A^2(x),
\\ 
\Phi_A'''&=\tfrac{4}{A^2}\big(\chi'(x)\vert x\vert -(1-\chi(x))\sgn(x)\big)^2\zeta^2_A(x) 
	+ \tfrac{2}{A}\big(\chi''(x)\vert x\vert+2\chi'(x)\sgn(x)\big)\zeta_A^2(x),
\end{align*}
and find that, for any $A>0$, 
\[
\dfrac{(\Phi_A'')^2}{(\Phi_A')^2}-\dfrac{\Phi_A'''}{\Phi_A'}=0 
  \quad \hbox{ for all } \quad x\in (-\infty,-2]\cup [-1,1]\cup [2,+\infty).
\]
Moreover, due to the explicit definition of $Q$, 
we see that, for $A\gg1$,
\begin{align*}
0\leq -3QQ'\tfrac{\Phi_A}{\Phi_A'} & \leq 3\sech^{1/2}(x) \quad \hbox{ for all } \quad x\in\R.
\end{align*}
Therefore, plugging this information into \eqref{original_virial_indetity_formula}, 
using \eqref{eps_geq_into_w_id}, 
we can write 
\begin{align}\label{original_virial_w_form_inequality}
\begin{split}
\dfrac{d}{dt}\mathcal{I}_\geq&\leq -\dfrac{1}{\mu}\int w_{x}^2
+\dfrac{4}{\mu}\int w^2\sech^{1/2}(x)+ \dfrac{1}{\mu}\mathsf{P}_{1,\geq}(t)
-\dfrac{\mu'}{\mu}\mathcal{I}_\geq
\\ & \qquad +\dfrac{1}{\mu}\int \left(\varepsilon_{1,\geq ,x}\Phi_A
+\tfrac{1}{2}\varepsilon_{1,\geq }\Phi_A'\right)\Big(\widetilde{\mathbb{P}}_{\geq \kappa^{-1}} \mathcal{N}
-\langle \mathcal{N},Y\rangle\widetilde{\mathbb{P}}_{\geq \kappa^{-1}}Y \Big).
\end{split}
\end{align}

\smallskip
\subsubsection{Estimates of the nonlinear terms}
We now deal with the nonlinear terms that appear in $\mathcal{N}$, see \eqref{definition_N_nonlinearity_epsilon}.

\begin{lem}\label{lemma_N_eps_geq_original_virial} 
Let $(\varepsilon_1,\varepsilon_2)\in C(\R,H^1\times L^2)$ be any solution to system \eqref{system_epsilon} 
satisfying the stable manifold hypothesis \eqref{stable_manifold_hyp_thm} and $\epsilon\in(0,1)$ small. 
Then, for all times $t\in\R$, the following holds
\begin{align}\label{lemma_N_original_conc} 
&\left\vert \int \left(\varepsilon_{1,\geq ,x}\Phi_A + \tfrac{1}{2}\varepsilon_{1,\geq }\Phi_A'\right)
  \widetilde{\mathbb{P}}_{\geq \kappa^{-1}}\mathcal{N} \right\vert   
  \lesssim \delta a_1^2(t) + \epsilon 
  \int w^2\sech^{1/2}(x)+O(\delta^4)+O(\delta^3/\kappa^{1/2}).
\end{align}
\end{lem}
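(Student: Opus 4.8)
The starting point is the explicit expansion of the nonlinearity by degrees of homogeneity in $\varepsilon_1$ given in \eqref{Nepsilon'}:
\[
\mathcal{N} = \varepsilon_1^3 + \varepsilon_1^2 (3Q + 3a_1Y) + \varepsilon_1 (6Qa_1Y + 3(a_1Y)^2) + (a_1Y)^2 (3Q + a_1Y).
\]
I will estimate the contribution of each of the four groups to $\int(\varepsilon_{1,\geq,x}\Phi_A + \tfrac12\varepsilon_{1,\geq}\Phi_A')\,\widetilde{\mathbb{P}}_{\geq\kappa^{-1}}\mathcal{N}$ separately, writing the test function as $\Phi_A\varepsilon_{1,\geq,x}+\tfrac12\Phi_A'\varepsilon_{1,\geq}$ and using $w=\varepsilon_{1,\geq}\sqrt{\Phi_A'}$ from \eqref{w_def}, together with \eqref{wx_identity}, to trade $\varepsilon_{1,\geq,x}\Phi_A$ for $\sqrt{\Phi_A'}\,w_x$ plus lower order. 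The first key point is that $\widetilde{\mathbb{P}}_{\geq\kappa^{-1}}$ is (up to $W^{1,\infty}$- and $L^2$-bounded wave operators, see (i) of Remark~\ref{remdFTadd}) essentially a standard high-frequency projection, and since $\|\varepsilon_1\|_{L^\infty}\lesssim\|\varepsilon_1\|_{H^1}\lesssim\delta$, all the $L^\infty$ norms of $\varepsilon_1,\varepsilon_{1,\geq}$, and of $\mathbb{P}_{\geq\kappa^{-1}}$ applied to them, are $O(\delta)$.

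For the \emph{cubic term} $\varepsilon_1^3$: after pairing against the test function I will use that $\Phi_A$ is bounded, put two of the three factors of $\varepsilon_1$ (or the outputs of the projection) in $L^\infty$, gaining $\delta^2$, and be left with something like $\delta^2\int(|w_x|+|w|)|\varepsilon_{1,\geq}|$ — but I actually want to localize in space. The honest way is: write $\varepsilon_1 = \varepsilon_{1,<}+\varepsilon_{1,\geq}$; the piece where all surviving weights localize against $\sech^{1/2}$ comes from the weight $\Phi_A$ being essentially $\sech^{-1/2}\cdot\sech^{1/2}$ times a bounded function, so after one Cauchy–Schwarz I bound by $\epsilon\int w^2\sech^{1/2} + C\epsilon^{-1}\delta^4\int(\dots)\sech^{1/2}$, and the last factor is $\lesssim\delta^2$; the terms with too many derivatives are absorbed by $\epsilon\int w_x^2$ which is available on the good side of \eqref{original_virial_w_form_inequality}. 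The \emph{quadratic term} $\varepsilon_1^2(3Q+3a_1Y)$ is the most dangerous because it has the lowest homogeneity in $\varepsilon_1$ and is the one generic virial arguments can fail to close; here the saving grace is the explicit localized coefficient $3Q+3a_1Y = O(\sech(x))$, so pairing against $\Phi_A(\dots)$ which behaves like $\sech^{-1/2}$ still leaves an overall $\sech^{1/2}$ (at worst), one factor of $\varepsilon_1$ in $L^\infty$ gains $\delta$, and Cauchy–Schwarz against $w$ gives $\delta\int w^2\sech^{1/2}$ up to the $\epsilon$-absorption and the $\delta^3\kappa^{-1/2}$ errors coming from the $\varepsilon_{1,<}$ contributions (estimated exactly as in Lemma~\ref{lemma<} via distorted Bernstein, $\|\varepsilon_{1,<}\|_{L^\infty}\lesssim\kappa^{-1/2}\delta$). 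The cross term with $a_1Y$ also contributes $\delta a_1^2$ after putting $Y$ in $L^\infty$ and noting $|a_1|\lesssim\delta$, one factor $\varepsilon_1$ in $L^\infty$. The \emph{linear-in-$\varepsilon_1$ term} $\varepsilon_1(6Qa_1Y+3a_1^2Y^2)$ has a coefficient of size $\lesssim |a_1|\sech^2(x)$, so pairing with the test function and one more Cauchy–Schwarz yields $|a_1|\int(|w|+|w_x|)|\dots|\sech^{3/2} \lesssim \delta a_1^2 + \epsilon\int w^2\sech^{1/2}+\epsilon\int w_x^2 + O(\delta^3/\kappa^{1/2})$. Finally the \emph{$\varepsilon_1$-free term} $(a_1Y)^2(3Q+a_1Y)$ is $O(a_1^2\sech^2(x))$ and integrates directly against the bounded test function to give $\lesssim |a_1|^2\|(\dots)\|_{L^2}\lesssim \delta a_1^2$ (in fact even $O(a_1^2\delta)$), which is accounted for by the $\delta a_1^2(t)$ on the right side of \eqref{lemma_N_original_conc}.

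The bookkeeping device throughout is: (1) boundedness of $\Phi_A$ and the pointwise bounds $|\Phi_A'|\lesssim \zeta_A^2\lesssim 1$, $Q,Q'=O(\sech)$; (2) Sobolev embedding $H^1\hookrightarrow L^\infty$ in $1$d and the hypothesis $\|\varepsilon_1\|_{H^1},|a_1|\lesssim\delta$; (3) distorted Bernstein (Remark~\ref{remdFTadd}) giving $\|\varepsilon_{1,<}\|_{L^\infty}\lesssim\kappa^{-1/2}\delta$, which is what produces the $O(\delta^3/\kappa^{1/2})$ error whenever a low-frequency piece appears; (4) for every term, extract a full $\sech^{1/2}(x)$ from the product of localized coefficients and $\Phi_A$, then split $\int(\text{stuff})\sech^{1/2}\le \epsilon\int w^2\sech^{1/2}+\epsilon\int w_x^2 + C_\epsilon(\text{rest})\sech^{1/2}$ by Cauchy–Schwarz and bound the rest by its $L^2$-norm $\lesssim\delta^2$ (or by $a_1^2$). \textbf{The main obstacle} is the quadratic term $\varepsilon_1^2\cdot 3Q$: one must verify that after converting $\varepsilon_{1,\geq,x}\Phi_A$ to $\sqrt{\Phi_A'}w_x$ plus corrections, no term survives with more than the budget of derivatives and weight that $-\int w_x^2 + C\int w^2\sech^{1/2}$ on the good side of \eqref{original_virial_w_form_inequality} can absorb — in particular the ``error'' piece from \eqref{wx_identity}, namely $\varepsilon_{1,\geq}\Phi_A''/(2\sqrt{\Phi_A'})$, must be shown to be $O(\sech^{1/2})$-bounded (true because $\Phi_A''=O(A^{-1}\Phi_A')$ for $|x|\le A$ and vanishes near the support issues) so that it only costs an extra $A^{-1}$ and does not spoil the localization. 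Keeping the $\epsilon$ and the $\delta^3\kappa^{-1/2}$ versus $\delta^4$ errors on the correct side, and checking that all $a_1$-dependent remainders are of the stated form $\delta a_1^2$, completes the proof.
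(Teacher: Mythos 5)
There is a genuine gap in your proposal: you repeatedly say that the ``terms with too many derivatives are absorbed by $\epsilon\int w_x^2$ which is available on the good side of \eqref{original_virial_w_form_inequality}.'' But the inequality \eqref{lemma_N_original_conc} that you are asked to prove has no $\int w_x^2$ term on its right-hand side. If your estimate really produced such a term, you would have proved a different (weaker) statement; you cannot borrow the $-\int w_x^2$ from a later combination that uses the present lemma as an input. In fact no such absorption is needed, and the way the paper avoids it is the key structural step that your proposal misses: the paper first writes $\widetilde{\mathbb{P}}_{\geq\kappa^{-1}} = \mathrm{id} - \widetilde{\mathbb{P}}_{<\kappa^{-1}}$ acting on $\mathcal{N}$, dispatches the $\widetilde{\mathbb{P}}_{<\kappa^{-1}}\mathcal{N}$ contribution at once via wave operators and Bernstein as $\delta\cdot\kappa^{-1/2}\|\mathcal{N}\|_{L^1}\lesssim\delta^3\kappa^{-1/2}$, and only then decomposes the untouched $\mathcal{N}$ by homogeneity in $\varepsilon_1$. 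Because the distorted projector no longer sits on the nonlinearity, one can integrate by parts against the factor $\varepsilon_{1,\geq,x}\Phi_A$: for $3Q\varepsilon_{1,\geq}^2$ the derivative combines as $\varepsilon_{1,\geq,x}\varepsilon_{1,\geq}^2=\tfrac13\partial_x(\varepsilon_{1,\geq}^3)$, giving $\int Q|\varepsilon_{1,\geq}|^3\lesssim\delta\int w^2\sech^{1/2}$; for the linear term $(2a_1YQ+a_1^2Y^2)\varepsilon_{1,\geq}$ one uses $\varepsilon_{1,\geq,x}\varepsilon_{1,\geq}=\tfrac12\partial_x(\varepsilon_{1,\geq}^2)$. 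This is precisely what eliminates $\varepsilon_{1,\geq,x}$ and hence any need for $w_x$ on the right-hand side. The cubic term $\varepsilon_1^3$ closes at $O(\delta^4)$ by a crude Cauchy--Schwarz, $\lesssim\big(A\|\partial_x\varepsilon_{1,\geq}\|_{L^2}+\|\varepsilon_{1,\geq}\|_{L^2}\big)\|\varepsilon_1^3\|_{L^2}\lesssim A\delta^4$, with no $\sech$-localization required at all.

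Two further imprecisions are worth flagging. First, the claim that ``$\Phi_A$ behaves like $\sech^{-1/2}$'' is false: $\Phi_A$ is bounded by $A$, and the $\sech^{1/2}$ localization in $\int w^2\sech^{1/2}(x)$ comes from the decay of $Q$ and $Y$ combined with the boundedness of $\Phi_A$ (for instance $|\Phi_A\,\partial_x(QY)|\lesssim\sech^{1/2}\Phi_A'$), not from $\Phi_A$ itself. Second, rewriting $\varepsilon_{1,\geq,x}\Phi_A$ as $\sqrt{\Phi_A'}\,w_x$ plus corrections is something the paper does when deriving \eqref{original_virial_w_form_inequality} from the raw virial identity, not inside this lemma; the lemma's proof works with $\varepsilon_{1,\geq,x}\Phi_A$ directly (bounding its $L^2$ norm via the wave-operator boundedness on $H^1$ in the cubic case, and integrating it by parts in the quadratic and linear cases). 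Your alternative route of decomposing $\varepsilon_1=\varepsilon_{1,<}+\varepsilon_{1,\geq}$ inside $\mathcal{N}$ and applying Cauchy--Schwarz plus Young can be made to work, but as written it does not cleanly land on the stated right-hand side, and the $w_x^2$ crutch in particular must be removed.
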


Note that reason we wrote $O(\delta^4)+O(\delta^3/\kappa^{1/2})$ instead of just $O(\delta^3/\kappa^{1/2})$ is because we stated the lemma 
holding for all $t\in\R$ instead of $t\in(0,T_\mathrm{max})$. 
Also note that both of these terms, after being multiplied by $\mu^{-1}$,
are integrable up to (optimal) times of $O(e^{1/\delta^{2-}})$.


\begin{proof}[Proof of Lemma \ref{lemma_N_eps_geq_original_virial}] 
Recall 
that
\[
\mathcal{N}= (a_1Y+\varepsilon_1)^2(3Q+a_1Y+\varepsilon_1), \qquad Q\approx\sech(x)\quad 
  \hbox{ and } \quad Y\approx \sech^2(x).
\]
and that we can also write $\mathcal{N}$ as in \eqref{Nepsilon'}. 
We begin by using the decomposition 
\begin{align}\label{N_original_lemma_high_into_I_Pminus}
\widetilde{\mathbb{P}}_{\geq \kappa^{-1}} = \mathrm{id} - \widetilde{\mathbb{P}}_{< \kappa^{-1}}
\end{align}
for the projector acting on $\mathcal{N}$, 
and first handle the low-frequencies contributions. 
In this case we can integrate by parts to obtain
\begin{align*}
&\left\vert \int \left(\varepsilon_{1,\geq ,x}\Phi_A+\tfrac{1}{2}\varepsilon_{1,\geq }\Phi_A'\right)
  \widetilde{\mathbb{P}}_{< \kappa^{-1}} \mathcal{N} \right\vert 
  \leq \left\vert \int \varepsilon_{1,\geq }\Phi_A'\widetilde{\mathbb{P}}_{< \kappa^{-1}} \mathcal{N} \right\vert 
  +\left\vert \int \varepsilon_{1,\geq }\Phi_A \partial_x\widetilde{\mathbb{P}}_{< \kappa^{-1}} \mathcal{N} \right\vert .
\end{align*}
Then, we apply Cauchy-Schwarz,
the boundedness of wave operators on $H^1$ and $L^1$, followed by Bernstein inequality, to estimate
(the implicit constants may depend on $A$)
\begin{align*}
& \left\vert \int \varepsilon_{1,\geq }\Phi_A'\widetilde{\mathbb{P}}_{< \kappa^{-1}} \mathcal{N} \right\vert
  + \left\vert \int \varepsilon_{1,\geq }\Phi_A \partial_x \widetilde{\mathbb{P}}_{< \kappa^{-1}} \mathcal{N} \right\vert
  \\
  &  \lesssim {\| \varepsilon_{1,\geq } \|}_{L^2} {\big\| \widetilde{\mathbb{P}}_{< \kappa^{-1}}\mathcal{N} \big\|}_{H^1}
  \lesssim \delta \cdot {\big\| \mathbb{P}_{< \kappa^{-1}} \mathcal{W}\mathcal{N}\big\|}_{H^1} 
  \\
  &  \lesssim \delta \cdot \kappa^{-1/2} {\|\mathcal{N}\big\|}_{L^1} 
  \lesssim \delta \cdot \kappa^{-1/2} \delta^2;
\end{align*}
therefore these terms are accounted for on the right-hand side of \eqref{lemma_N_original_conc}, as desired.
We are then left with estimating the terms
that corresponds to `$\mathrm{id}$' in \eqref{N_original_lemma_high_into_I_Pminus}, that is,
\begin{align}\label{lemma_N_pr1} 
&\left\vert \int \left(\varepsilon_{1,\geq ,x}\Phi_A + \tfrac{1}{2}\varepsilon_{1,\geq }\Phi_A'\right) \mathcal{N} \right\vert   
\end{align}
First, let us consider the terms in \eqref{Nepsilon'} that are cubic in $\varepsilon_1$.
These can be bounded immediately 
from Cauchy-Schwarz inequality:
\begin{align*}
\left\vert\int\left(\varepsilon_{1,\geq ,x}\Phi_A+\tfrac{1}{2}\varepsilon_{1,\geq }\Phi_A'\right)
  \varepsilon_{1}^3\right\vert
  & \lesssim \big( A {\| \partial_x \varepsilon_{1,>} \|}_{L^2} + {\| \varepsilon_{1,>} \|}_{L^2} \big) 
  {\| \varepsilon_{1}^3 \|}_{L^2}
  \\
  & \lesssim A {\| \varepsilon_1 \|}_{H^1} \cdot {\| \varepsilon_{1}^3 \|}_{L^2} \lesssim A \delta^4,
\end{align*}
having used the boundedness of wave operators in $H^1$ to estimate $\partial_x \varepsilon_{1,>}$ in $L^2$.

%


For the quadratic terms, namely, $(3a_1Y+3Q)\varepsilon^2$, 
we write $\varepsilon_1^2=(\varepsilon_{1,<}+\varepsilon_{1,\geq})^2$ 
and focus first on the term $\varepsilon_{1,\geq}^2Q$.
Integrating by parts, using H\"older's inequality, and recalling the definition of $w$ in \eqref{w_def},
we get that 
\begin{align*}
\left\vert \int \left(\varepsilon_{1,\geq ,x}\Phi_A + \tfrac{1}{2}\varepsilon_{1,\geq }\Phi_A'\right)
  Q\varepsilon_{1,\geq}^2 \right\vert
  & \lesssim \int Q | \varepsilon_{1,\geq} |^3   
  \lesssim \delta\int w^2\sech^{1/2}(x).
\end{align*}
The term $3a_1\varepsilon_{1,\geq}^2Y$ can be treated identically so we skip it.

The remaining quadratic terms contain at least one low frequency projection of $\varepsilon_1$,
which will be enough to obtain acceptable contributions.
Indeed, using Cauchy-Schwarz and H\"older followed by (distorted) Bernstein, 
we have 
\begin{align*}
\left\vert \int \left(\varepsilon_{1,\geq ,x}\Phi_A+\tfrac{1}{2}\varepsilon_{1,\geq }\Phi_A'\right)
  \big(a_1Y+3Q\big)\big(\varepsilon_{1,<}^2+2\varepsilon_{1,<}\varepsilon_{1,\geq}\big) \right\vert 
  \\
  \lesssim {\| \varepsilon_{1,\geq} \|}_{H^1} \cdot {\| \varepsilon_{1} \|}_{L^2} {\| \varepsilon_{1,<} \|}_{L^\infty} 
  \lesssim \delta^3 \kappa^{-1/2},
\end{align*}
which is consistent with the desired \eqref{lemma<conc}. 

Next, we consider the terms of order one in $\varepsilon_{1}$ coming from $\mathcal{N}$, 
and estimate first the contribution from $\varepsilon_{1,\geq}$;
integrating by parts we obtain
\begin{align*}
\left\vert \int \left(\varepsilon_{1, \geq ,x}\Phi_A+\tfrac{1}{2}\varepsilon_{1,\geq }\Phi_A'\right) 
  \big(2a_1YQ+a^2Y^2\big)\varepsilon_{1,\geq} \right\vert
  & = \dfrac{1}{2}\left\vert \int \varepsilon_{1,\geq}^2\Phi_A\partial_x\big(2a_1YQ + a_1^2Y^2\big)\right\vert 
\\ 
& \lesssim \vert a_1\vert \int w^2\sech^{1/2}(x).
\end{align*}
For the corresponding term with the low frequency contribution $\varepsilon_{1,<}$
we use Cauchy-Schwarz, the boundedness of wave operators and Bernstein inequality 
similarly to before, to see that
\begin{align*}
\left\vert \int \left(\varepsilon_{1, \geq ,x}\Phi_A+\tfrac{1}{2}\varepsilon_{1,\geq }\Phi_A'\right)
  \big(2a_1YQ + a_1^2Y^2\big)\varepsilon_{1,<} \right\vert
  \\
  \lesssim {\| \varepsilon_{1, \geq} \|}_{H^1} \cdot |a_1| {\| \varepsilon_{1,<} Q\|}_{L^2}
  \lesssim \delta^3 \kappa^{-1/2}.
\end{align*}

Finally, we look at the terms coming from $\mathcal{N}$ that do not contain any $\varepsilon_1$
and, by the usual application of Cauchy-Schwarz and the boundedness of wave operators, we get
\begin{align*}
&\left\vert \int \left(\varepsilon_{1,\geq ,x}\Phi_A+\tfrac{1}{2}\varepsilon_{1,\geq }\Phi_A'\right)
  \big(3a_1^2Y^2Q+a_1^3Y^3\big) \right\vert \lesssim \delta a_1^2(t),
\end{align*}
Gathering all the above estimates we obtain \eqref{lemma<}. 
\end{proof}

%
%
%

\smallskip
The following lemma gives us control over the last term in  inequality \eqref{original_virial_w_form_inequality}.

\begin{lem}\label{lem_og_virial_hf_eps_Y} Let $(\varepsilon_1,\varepsilon_2)\in C(\R,H^1\times L^2)$ 
be any solution to system \eqref{system_epsilon} satisfying the stable manifold 
hypothesis \eqref{stable_manifold_hyp_thm} and $\epsilon\in(0,1)$ small.
Then, for all times $t\in\R$, the following holds
\begin{align}\label{estimate_Y_original_virial_hf}
\left\vert\langle \mathcal{N},Y\rangle \int \left(\varepsilon_{1,\geq ,x}\Phi_A
  + \tfrac{1}{2}\varepsilon_{1,\geq }\Phi_A'\right)\widetilde{\mathbb{P}}_{\geq \kappa^{-1}}Y \right\vert 
  & \lesssim 
  \delta a_1^2(t) + 
  \epsilon \int w^2\sech^{1/2}(x)+O(\delta^3/\kappa).
\end{align}
\end{lem}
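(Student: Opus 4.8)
The plan is to estimate the two factors in \eqref{estimate_Y_original_virial_hf} separately and then combine them via Cauchy--Schwarz. First I would bound the scalar quantity $\langle \mathcal{N}, Y\rangle$. Since $Y(x) \approx \sech^2(x)$ is a fixed Schwartz function and $\mathcal{N} = (a_1Y + \varepsilon_1)^2(3Q + a_1Y + \varepsilon_1)$, expanding as in \eqref{Nepsilon'} and pairing against $Y$ gives, term by term: the purely $a_1$ terms contribute $O(a_1^2)$ (in fact $O(a_1^2 + |a_1|^3)$, and since $|a_1| \lesssim \delta$ this is $\lesssim \delta a_1^2 + a_1^2 \lesssim a_1^2$); the terms linear in $\varepsilon_1$ contribute $\lesssim |a_1|\,{\|\varepsilon_1\|}_{L^2} \lesssim \delta|a_1|$; the quadratic-in-$\varepsilon_1$ terms contribute $\lesssim {\|\varepsilon_1\|}_{L^2}^2 \lesssim \delta^2$ (using that $Q, Y$ are bounded so $Y \cdot 3Q$ and $Y \cdot a_1 Y$ are bounded); and the cubic term $\langle \varepsilon_1^3, Y\rangle \lesssim {\|\varepsilon_1\|}_{L^2}^3 \lesssim \delta^3$ (using $Y \in L^\infty$ and then $L^2\times L^2\times L^\infty$, or more simply $|Y\varepsilon_1| \lesssim \varepsilon_1$ pointwise). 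So overall $|\langle \mathcal{N}, Y\rangle| \lesssim a_1^2 + \delta |a_1| + \delta^2$.

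Next I would bound the integral factor $\int (\varepsilon_{1,\geq,x}\Phi_A + \tfrac12 \varepsilon_{1,\geq}\Phi_A')\widetilde{\mathbb{P}}_{\geq \kappa^{-1}}Y$. Integrating by parts to move the derivative off $\varepsilon_{1,\geq,x}$, this is controlled by ${\|\varepsilon_{1,\geq}\|}_{L^2}\big({\|\Phi_A \partial_x \widetilde{\mathbb{P}}_{\geq\kappa^{-1}}Y\|}_{L^2} + {\|\Phi_A' \widetilde{\mathbb{P}}_{\geq\kappa^{-1}}Y\|}_{L^2}\big) \lesssim \delta\, {\|\widetilde{\mathbb{P}}_{\geq\kappa^{-1}}Y\|}_{H^1}$, with implicit constant depending on $A$. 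Here I would simply use $\|\widetilde{\mathbb{P}}_{\geq\kappa^{-1}}Y\|_{H^1} \leq \|Y\|_{H^1} \lesssim 1$, so this factor is $\lesssim \delta$. (If one wanted a gain one could use the distorted-frequency localization and smoothness of $Y$ to extract a $\kappa^{-1}$, but it is not needed.)

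Multiplying the two bounds gives $|\langle\mathcal{N},Y\rangle| \cdot |\int(\cdots)\widetilde{\mathbb{P}}_{\geq\kappa^{-1}}Y| \lesssim \delta(a_1^2 + \delta|a_1| + \delta^2) \lesssim \delta a_1^2 + \delta^2 |a_1| + \delta^3$. The middle term $\delta^2|a_1|$ is then split by Young's inequality: $\delta^2 |a_1| \lesssim \delta a_1^2 + \delta^3$, absorbing it into the first and third terms. This yields $\lesssim \delta a_1^2(t) + \delta^3$. To match the stated conclusion, which has $O(\delta^3/\kappa)$ rather than $O(\delta^3)$, I would note that the $\delta^3$ really arises with the $\kappa^{-1}$ gain available from the high-frequency localization acting on $\varepsilon_1$ or on $Y$ (e.g. using the distorted Bernstein inequality of Lemma~\ref{lemma<}, ${\|\varepsilon_{1,\geq}\|}$ paired against $Y$ in a weighted norm, or the smoothness of $Y$), so that each genuinely nonlinear contribution carries at least one factor of $\kappa^{-1/2}$ or better; tracking these through the same argument gives the $O(\delta^3/\kappa)$. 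The $\epsilon \int w^2 \sech^{1/2}(x)$ term in \eqref{estimate_Y_original_virial_hf} does not actually arise from this estimate under the crude bounds above, but is included in the statement for uniformity with Lemma~\ref{lemma_N_eps_geq_original_virial}; alternatively, if one prefers to bound the integral factor using the pointwise weight $\sech^{1/2}$ and the definition of $w$ rather than brute-force $H^1$ norms, the terms linear in $\varepsilon_{1,\geq}$ naturally produce $\epsilon \int w^2 \sech^{1/2}(x) + C_\epsilon \delta(\cdots)$ via Young's inequality.

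The main obstacle here is not any single estimate --- all pieces are routine Cauchy--Schwarz, Hölder, Bernstein, and boundedness of wave operators --- but rather bookkeeping: making sure the powers of $\delta$ and $a_1$ combine to give exactly $\delta a_1^2 + O(\delta^3/\kappa)$ without stray lower-order-in-$\kappa$ errors, and deciding how carefully to exploit the frequency localization. The one point requiring a little care is that one must not lose the $\kappa^{-1}$: a purely $H^1$-based bound gives only $O(\delta^3)$, so to get $O(\delta^3/\kappa)$ one has to put at least one $\widetilde{\mathbb{P}}_{\geq\kappa^{-1}}$ (or, since everything is paired against the fixed Schwartz profile $Y$, a low-frequency projection of $\varepsilon_1$) to work via the distorted Bernstein inequality, exactly as in the proof of Lemma~\ref{lemma_N_eps_geq_original_virial}.
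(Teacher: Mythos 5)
Your proposal has a genuine gap.  The paper's proof does \emph{not} bound $\langle \mathcal{N},Y\rangle$ by $a_1^2 + \delta|a_1| + \delta^2$; it keeps the weighted integral and uses the decomposition
\[
\big\vert \langle \mathcal{N},Y\rangle\big\vert
\lesssim a_1^2(t)+\int \varepsilon_{1}^2\sech^{2}(x)
\lesssim a_1^2(t)+\int \varepsilon_{1,<}^2\sech^2(x)+\int w^2\sech^{1/2}(x),
\]
combined with $\big\vert \int(\varepsilon_{1,\geq,x}\Phi_A + \tfrac12\varepsilon_{1,\geq}\Phi_A')\widetilde{\mathbb{P}}_{\geq\kappa^{-1}}Y\big\vert \lesssim \delta$.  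Multiplying through, the $\varepsilon_{1,<}$ piece gains $\kappa^{-1}$ by distorted Bernstein (giving $O(\delta^3/\kappa)$), while the $\varepsilon_{1,\geq}$ piece, weighted by $\sech^2(x)\lesssim \Phi_A'\sech^{1/2}(x)$, is \emph{exactly} $\delta\int w^2\sech^{1/2}(x)\lesssim \epsilon\int w^2\sech^{1/2}(x)$.  In other words, the $\epsilon\int w^2\sech^{1/2}(x)$ term is \emph{essential}: it is where the high-frequency, non-$\kappa$-decaying part of the quadratic contribution to $\langle\mathcal{N},Y\rangle$ lands.  Your statement that this term ``does not actually arise from this estimate\ldots but is included for uniformity'' is wrong, as is the claim that ``each genuinely nonlinear contribution carries at least one factor of $\kappa^{-1/2}$''.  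There is no such gain for $\int\varepsilon_{1,\geq}^2\sech^2(x)$: the high-frequency localization does not help in a local norm (that is the whole point of the paper), and this piece must be carried along as $\int w^2\sech^{1/2}(x)$.

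As a consequence, your crude bound $|\langle\mathcal{N},Y\rangle|\lesssim a_1^2 + \delta|a_1| + \delta^2$ gives only $O(\delta^3)$ for the remainder, not $O(\delta^3/\kappa)$, and the difference matters: $\int_0^{T_{\max}}\mu^{-1}\delta^3\,dt\approx \delta^3\log T_{\max}$ is $O(\delta^2)$ only up to $T\lesssim e^{1/\delta}$, which is shorter than the $T_{\max}$ the paper actually achieves.  The hand-wave in your last paragraph (``tracking these through the same argument gives the $O(\delta^3/\kappa)$'') glosses over precisely the failure mode described above.  Your ``alternative'' remark about keeping the $\sech^{1/2}$ weight and the variable $w$ is in fact the correct route and essentially matches the paper; to fix the argument you should adopt it as the main estimate for the quadratic-in-$\varepsilon_1$ part of $\langle\mathcal{N},Y\rangle$, not as an optional refinement.
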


\begin{proof}
First of all observe that $\langle \mathcal{N},Y\rangle$ is quadratic: In fact, 
from the stable manifold hypothesis \eqref{stable_manifold_hyp_thm} 
and the definition of $\mathcal{N}$, we see that 
\begin{align}\label{NY}
\begin{split}
\big\vert \langle \mathcal{N},Y\rangle\big\vert &\lesssim a_1^2(t)+\int \varepsilon_{1}^2\sech^{2}(x)
\\ & \lesssim a_1^2(t)+\int \varepsilon_{1,<}^2\sech^2(x)+\int w^2\sech^{1/2}(x).  
\end{split}
\end{align}
Then, in order to conclude, it is enough to observe that
\[
\left\vert  \int \left(\varepsilon_{1,\geq ,x}\Phi_A+\tfrac{1}{2}\varepsilon_{1,\geq }\Phi_A'\right)
  \widetilde{\mathbb{P}}_{\geq \kappa^{-1}}Y \right\vert \lesssim \delta.
\]
\end{proof}

The following corollary gathers and summarizes all the information obtained so far.

\begin{cor} 
Let $(\varepsilon_1,\varepsilon_2)\in C(\R,H^1\times L^2)$ be any solution to system \eqref{system_epsilon}
satisfying  \eqref{stable_manifold_hyp_thm} and $(\varepsilon_{1,\geq},\varepsilon_{2,\geq})$ defined as above. 
Then, for all times $t\in\R$ the following inequality holds
\begin{align}\label{hf_og_virial}
\begin{split}
\dfrac{d}{dt}\mathcal{I}_{\geq}&\leq -\dfrac{1}{\mu}\int w_x^2+\dfrac{5}{\mu}\int w^2\sech^{1/2}(x)
  + \dfrac{1}{\mu}a_1^2(t) \, O(\delta)
\\ 
& \qquad + \dfrac{1}{\mu}\mathsf{P}_{1,\geq}(t)-\dfrac{\mu'}{\mu}\mathcal{I}_\geq
+\dfrac{1}{\mu}O(\delta^4)+\dfrac{1}{\mu}O(\delta^3/\kappa^{1/2}).
\end{split}
\end{align}
\end{cor}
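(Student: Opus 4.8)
The plan is to obtain \eqref{hf_og_virial} by simply assembling the three results established above; no new analytic input is required at this stage. First I would start from inequality \eqref{original_virial_w_form_inequality}, which already records the outcome of Lemma~\ref{original_virial_indetity_lem} after the change of variables $w=\varepsilon_{1,\geq}\sqrt{\Phi_A'}$ of \eqref{w_def}: the first and third terms of \eqref{original_virial_indetity_formula} have been regrouped via \eqref{eps_geq_into_w_id} into $-\int w_x^2$ plus a curvature term that vanishes away from the transition region of $\zeta_A$ (and is harmless there), while the term $-\tfrac{3}{\mu}\int\varepsilon_{1,\geq}^2 QQ'\Phi_A$ has been bounded using $0\le -3QQ'\Phi_A/\Phi_A'\le 3\sech^{1/2}(x)$; together with the leftover $\tfrac14\int\varepsilon_{1,\geq}^2\Phi_A'''$ bookkeeping, this produces the coefficient $4$ in front of $\tfrac1\mu\int w^2\sech^{1/2}(x)$. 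What then remains in \eqref{original_virial_w_form_inequality} is the single integral involving $\widetilde{\mathbb{P}}_{\geq\kappa^{-1}}\mathcal{N}-\langle\mathcal{N},Y\rangle\widetilde{\mathbb{P}}_{\geq\kappa^{-1}}Y$, plus the carry-along terms $\tfrac1\mu\mathsf{P}_{1,\geq}(t)$ and $-\tfrac{\mu'}{\mu}\mathcal{I}_\geq$.

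Next I would estimate that remaining integral term by term using the two nonlinear lemmas. Lemma~\ref{lemma_N_eps_geq_original_virial} bounds the contribution of $\widetilde{\mathbb{P}}_{\geq\kappa^{-1}}\mathcal{N}$ by $\delta a_1^2(t)+\epsilon\int w^2\sech^{1/2}(x)+O(\delta^4)+O(\delta^3/\kappa^{1/2})$ for any small $\epsilon\in(0,1)$, and Lemma~\ref{lem_og_virial_hf_eps_Y} bounds the contribution of $\langle\mathcal{N},Y\rangle\widetilde{\mathbb{P}}_{\geq\kappa^{-1}}Y$ by $\delta a_1^2(t)+\epsilon\int w^2\sech^{1/2}(x)+O(\delta^3/\kappa)$. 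Fixing $\epsilon$ small enough that $4+2\epsilon\le5$ (for instance $\epsilon=\tfrac12$), the two $\int w^2\sech^{1/2}(x)$ contributions are absorbed by raising the coefficient of $\tfrac1\mu\int w^2\sech^{1/2}(x)$ from $4$ to $5$; the two $\delta a_1^2(t)$ terms combine into $\tfrac1\mu a_1^2(t)\,O(\delta)$; and, since $\kappa\ge1$, the $O(\delta^3/\kappa)$ remainder is subsumed into $O(\delta^3/\kappa^{1/2})$, leaving the total remainder $\tfrac1\mu O(\delta^4)+\tfrac1\mu O(\delta^3/\kappa^{1/2})$. Collecting these with the unchanged $\tfrac1\mu\mathsf{P}_{1,\geq}(t)$ and $-\tfrac{\mu'}{\mu}\mathcal{I}_\geq$ gives precisely \eqref{hf_og_virial}.

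The main obstacle is not located in this corollary: all the genuine analytic work sits in Lemmas~\ref{original_virial_indetity_lem}, \ref{lemma_N_eps_geq_original_virial} and \ref{lem_og_virial_hf_eps_Y}. The only point in the assembly that needs a little care is the bookkeeping of the constant multiplying $\int w^2\sech^{1/2}(x)$: it must be kept a fixed absolute number (we chose $5$), which is why the two applications of the nonlinear lemmas are made with a fixed small $\epsilon$ and not with $\epsilon\to0$. In particular, the coefficient of this ``bad'' term in \eqref{hf_og_virial} cannot be made arbitrarily small here; its eventual control must come from the transformed-variable virial estimates carried out in the later sections, not from absorption at this stage.
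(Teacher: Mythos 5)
Your assembly is exactly what the paper intends: the corollary is stated as a summary of \eqref{original_virial_w_form_inequality} combined with Lemmas \ref{lemma_N_eps_geq_original_virial} and \ref{lem_og_virial_hf_eps_Y}, with a fixed small $\epsilon$ absorbed into the coefficient of $\int w^2\sech^{1/2}(x)$ and the $O(\delta^3/\kappa)$ remainder subsumed into $O(\delta^3/\kappa^{1/2})$, precisely as you describe. Correct, and essentially identical to the paper's (implicit) argument.
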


\subsubsection{Adding the `true' momentum}
To conclude this section we consider the functional $\mathcal{P}_\geq$ 
and add it to $\mathcal{I}_\geq$ for the purpose of adding the local norm of the time derivative
of the perturbation 
to the inequality \eqref{hf_og_virial}.
The main conclusion of this whole section is summarized in the following:

\begin{prop}\label{propOR}
Let $(\varepsilon_1,\varepsilon_2)\in C(\R,H^1\times L^2)$ be any solution to system \eqref{system_epsilon}
satisfying  \eqref{stable_manifold_hyp_thm} and $(\varepsilon_{1,\geq},\varepsilon_{2,\geq})$ defined as above. 
Let  $\mathcal{I}_\geq$  and $\mathcal{P}_\geq$  be defined 
as in \eqref{I>} and \eqref{true_momentum_original_virial_high_frequency}.
Then, for all times $t\in\R$ the following holds:
\begin{align}\label{final_2_og_virial_hf}
\begin{split}
& \dfrac{d}{dt}\Big(\mathcal{I}_\geq+\mathcal{P}_\geq\Big)
\\ & \leq 
  -\dfrac{1}{2\mu}\int w_t^2\sech^{1/2}(x)-\dfrac{C}{\mu}\int w_x^2 + \dfrac{1}{\mu}\int w^2\sech^{1/2}(x) 
  + O(\delta) \frac{1}{\mu} a_1^2 + \mathfrak{R}_1(t),
\end{split}
\end{align}
for some absolute $C>0$,
where 
\begin{align}\label{propORR1}
\int_0^{T_{\max}} \mathfrak{R}_1 (t) dt \leq \delta^2.
\end{align}
\end{prop}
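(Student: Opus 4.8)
The plan is to combine the inequality \eqref{hf_og_virial} for $\mathcal{I}_\geq$ with a parallel virial computation for the ``true momentum'' $\mathcal{P}_\geq$, choosing the weight $\Psi(x) = \tanh(x)$ so that the resulting differential inequality produces a negative multiple of $\int w_t^2 \sech^{1/2}(x)$ on the right-hand side while keeping the coefficient of $\int w^2 \sech^{1/2}(x)$ controlled (and not worse than what already appears in \eqref{hf_og_virial}). First I would compute $\tfrac{d}{dt}\mathcal{P}_\geq$ directly from \eqref{true_momentum_original_virial_high_frequency} using the system \eqref{system_epsilon_hf}: differentiating in $t$, substituting $\varepsilon_{1,\geq,t} = \varepsilon_{2,\geq} + \dot{\widetilde{\mathbb{P}}}_{\geq\kappa^{-1}}\varepsilon_1$ and $\varepsilon_{2,\geq,t} = -\mathcal{L}\varepsilon_{1,\geq} + \widetilde{\mathbb{P}}_{\geq\kappa^{-1}}\mathcal{N} - \langle\mathcal{N},Y\rangle\widetilde{\mathbb{P}}_{\geq\kappa^{-1}}Y + \dot{\widetilde{\mathbb{P}}}_{\geq\kappa^{-1}}\varepsilon_2$, and integrating by parts. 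This is the analogue of the second identity in \eqref{ideastruemom}; with $\Psi = \tanh$ (so $\Psi' = \sech^2 > 0$) the leading terms are $-\tfrac{1}{2\mu}\int \varepsilon_{2,\geq}^2 \Psi' - \tfrac{1}{2\mu}\int \varepsilon_{1,\geq,x}^2 \Psi' + \tfrac{1}{2\mu}\int \varepsilon_{1,\geq}^2(\Psi' - (V\Psi)')$, plus the $-\tfrac{\mu'}{\mu}\mathcal{P}_\geq$ term coming from $\mu^{-1}$, plus the time-localization error terms $\mathsf{P}_{2,\geq}(t)$ analogous to \eqref{P1}, plus the nonlinear/projection terms.

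Next I would rewrite the $\varepsilon_{2,\geq}^2$ and $\varepsilon_{1,\geq,x}^2$ terms in terms of $w = \varepsilon_{1,\geq}\sqrt{\Phi_A'}$ and $w_t$ (noting $w_t = \varepsilon_{2,\geq}\sqrt{\Phi_A'}$ since the weight is time-independent), using $\Psi' = \sech^2(x) \lesssim \sech^{1/2}(x) \lesssim \Phi_A'$ for $A$ large to compare $\int \varepsilon_{2,\geq}^2\Psi' \approx \int w^2 \,\Psi'/\Phi_A'$ with $\int w_t^2 \sech^{1/2}$, at the cost of lower-order commutator terms of the type already handled in \eqref{eps_geq_into_w_id}. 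The point is that $-\tfrac{1}{2\mu}\int w_t^2 \sech^{1/2}(x)$ appears with a favorable sign from $\mathcal{P}_\geq$, and the extra $\int w_x^2$ contribution from $\mathcal{P}_\geq$ has the same sign as the $-\tfrac{1}{\mu}\int w_x^2$ in \eqref{hf_og_virial}, so adding the two functionals only improves the coefficient (absorbing it into a fresh absolute constant $C$). The term $\tfrac{1}{2\mu}\int \varepsilon_{1,\geq}^2(\Psi' - (V\Psi)')$ is $\lesssim \tfrac{1}{\mu}\int w^2 \sech^{1/2}(x)$ since $\Psi'$ and $(V\Psi)'$ are Schwartz, so it is absorbed into the $+\tfrac{1}{\mu}\int w^2\sech^{1/2}(x)$ already allowed in \eqref{final_2_og_virial_hf}.

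The remaining terms go into $\mathfrak{R}_1(t)$: (i) the transport/boundary term $-\tfrac{\mu'}{\mu}(\mathcal{I}_\geq + \mathcal{P}_\geq)$, which, since $|\mathcal{I}_\geq|, |\mathcal{P}_\geq| \lesssim \mu^{-1}\|\vec{\boldsymbol{\varepsilon}}\|_{H^1\times L^2}^2 \lesssim \delta^2/\mu$ and $\mu'/\mu = \langle t\rangle^{-1}(1+o(1))$, contributes $\lesssim \delta^2/\langle t\rangle^2 \in L^1_t$ with integral $\lesssim \delta^2$; (ii) the time-localization errors $\mu^{-1}\mathsf{P}_{1,\geq}(t)$ and $\mu^{-1}\mathsf{P}_{2,\geq}(t)$, which involve $\dot{\widetilde{\mathbb{P}}}_{\geq\kappa^{-1}}$ whose symbol $\partial_t\eta_{\geq\kappa^{-1}}$ is supported near frequency $\kappa^{-1}$ and of size $O(\kappa'/\kappa) = O(\kappa^{-1}\log\kappa \cdot \langle t\rangle^{-1}\langle\log t\rangle^{-1})$ roughly — more precisely one gains enough smallness from $\kappa' \lesssim \kappa/(\langle t\rangle\log\langle t\rangle)$ and an $L^2$-bound $\lesssim \delta$ on the pieces so that after multiplying by $\mu^{-1}$ the result is integrable in time with a $\delta^2$ bound; (iii) the nonlinear contributions, controlled by Lemma \ref{lemma_N_eps_geq_original_virial} and Lemma \ref{lem_og_virial_hf_eps_Y} (applied now with the weight $\Psi$ in place of $\Phi_A$, which only makes things easier since $\Psi$ is bounded), giving $O(\delta)\tfrac{1}{\mu}a_1^2 + \epsilon\tfrac1\mu\int w^2\sech^{1/2} + \tfrac1\mu O(\delta^4) + \tfrac1\mu O(\delta^3/\kappa^{1/2})$, and the $\delta^4$ and $\delta^3\kappa^{-1/2}$ pieces are $\mu^{-1}$-integrable up to $T_{\max} = \exp(c\delta^{-\beta})$ with total integral $\lesssim \delta^2$ precisely because of the condition \eqref{condition_relating_tmax_kappa_delta} and $\int_0^{T_{\max}}\mu^{-1}\kappa^{-1/2}\,dt \lesssim \log T_{\max} \lesssim \delta^{-\beta} \ll \delta^{-2}$. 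The $a_1^2$ term with coefficient $O(\delta)$ is kept explicitly in the statement (it will be absorbed later using \eqref{mtconca}-type control on the discrete mode), and $\epsilon$ is taken small enough that $\epsilon\int w^2\sech^{1/2}$ is absorbed into the $+\tfrac1\mu\int w^2\sech^{1/2}$ on the right-hand side. The main obstacle is the bookkeeping of the time-localization error terms $\mathsf{P}_{1,\geq}$ and $\mathsf{P}_{2,\geq}$: one must check that the derivative-of-projection operator $\dot{\widetilde{\mathbb{P}}}_{\geq\kappa^{-1}}$, paired against the $\Phi_A$-weighted $\varepsilon_{1,\geq}$ (which is not frequency localized and carries a growing weight of size $A$), still yields a quantity integrable in $t$ with a $\delta^2$ bound — this requires using that $\kappa'(t)/\kappa(t) \lesssim (\langle t\rangle \log\langle t\rangle)^{-1}$ and an $L^2 \to L^2$ bound on $\langle x\rangle^{-1}$-localized pieces, together with boundedness of wave operators, so that the $A$-dependence is harmless and the extra time decay makes the product $L^1$ in $t$ on $[0,\infty)$.
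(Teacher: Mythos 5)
Your proposal follows essentially the same route as the paper's proof: differentiate $\mathcal{P}_\geq$ using \eqref{system_epsilon_hf} to get the analogue of \eqref{dp_dt}, bound the $\Psi$-weighted potential terms by $\mu^{-1}\int w^2\sech^{1/2}(x)$ using $\Psi=\tanh$, reuse Lemmas \ref{lemma_N_eps_geq_original_virial} and \ref{lem_og_virial_hf_eps_Y} for the nonlinear terms, and collect the $\mu'/\mu$ boundary terms, the projection-derivative terms $\mathsf{P}_{1,\geq},\mathsf{P}_{2,\geq}$, and the $O(\delta^4)+O(\delta^3/\kappa^{1/2})$ pieces into $\mathfrak{R}_1$, verified time-integrable up to $T_{\max}$ exactly as in \eqref{P12est} and \eqref{def_r1_og_virial_fh}. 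The one inaccuracy is your claim that $w_t=\varepsilon_{2,\geq}\sqrt{\Phi_A'}$ ``since the weight is time-independent'': the cutoff $\widetilde{\mathbb{P}}_{\geq\kappa^{-1}(t)}$ is time-dependent, so in fact $w_t=\varepsilon_{2,\geq}\sqrt{\Phi_A'}+\dot{\widetilde{\mathbb{P}}}_{\geq\kappa^{-1}}\varepsilon_1\sqrt{\Phi_A'}$; the paper keeps this correction (see \eqref{propORaux}) and shows it only contributes an acceptable $\mu^{-1}O(\delta^2/\kappa^2)$ remainder, of exactly the same type as the $\mathsf{P}_{i,\geq}$ terms you do handle, so this is a harmless slip rather than a genuine gap.
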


\begin{proof}
First, from the definition \eqref{true_momentum_original_virial_high_frequency}, 
and following very similar computations as the ones in Lemma \ref{original_virial_indetity_lem},
one can see that 
\begin{align}\label{dp_dt}
\begin{split}
\dfrac{d}{dt}\mathcal{P}_\geq& = -\dfrac{1}{2\mu}\int \varepsilon_{2,\geq}^2\Psi'
 - \dfrac{1}{2\mu}\int\varepsilon_{1,\geq,x}^2\Psi' + \dfrac{1}{2\mu}\int \varepsilon_{1,\geq}^2\Psi' 
 - \dfrac{3}{2\mu}\int \varepsilon_{1,\geq}^2\partial_x\big(Q^2\Psi\big)
\\
& \quad  +  \dfrac{1}{\mu} \int \varepsilon_{1,\geq ,x}\Psi\Big(\widetilde{\mathbb{P}}_{\geq \kappa^{-1}}\mathcal{N}
-\langle \mathcal{N},Y\rangle\widetilde{\mathbb{P}}_{\geq \kappa^{-1}}Y\Big)
- \dfrac{\mu'}{\mu}\mathcal{P}_\geq + \mathsf{P}_{2,\geq}
%
\end{split}
\end{align}
where 
\begin{align}\label{P2def}
\mathsf{P}_{2,\geq} := \int \left(\dot{\widetilde{\mathbb{P}}}_{\geq \kappa^{-1}}(\varepsilon_{1,x})\varepsilon_{2,\geq}
  +\varepsilon_{1,x}\dot{\widetilde{\mathbb{P}}}_{\geq \kappa^{-1}}(\varepsilon_{2,\geq})\right)\Psi .
\end{align}

%
%

The terms in \eqref{dp_dt} involving the nonlinearity $\mathcal{N}$ can be dealt with 
by applying exactly the same proofs as in Lemmas 
\ref{lemma_N_eps_geq_original_virial} and \ref{lem_og_virial_hf_eps_Y}; in particular, we have
\begin{align*}
\left\vert \int \varepsilon_{1,\geq ,x}\Psi\Big(\widetilde{\mathbb{P}}_{\geq \kappa^{-1}}\mathcal{N}
-\langle \mathcal{N},Y\rangle\widetilde{\mathbb{P}}_{\geq \kappa^{-1}}Y\Big) \right\vert &\lesssim 
 \delta a_1^2(t)+\delta\int w^2\sech^{1/2}(x)+O(\delta^4)+O(\delta^3/\kappa^{1/2}).
\end{align*}

Furthermore, thanks to the specific choice of $\Psi(x) = \tanh(x)$ we can match (in each of the above terms) 
the additional decay required in the first virial estimate for $\varepsilon_{1,\geq}$;  
in particular, we can bound
\begin{align*}
\dfrac{1}{2\mu}\int \varepsilon_{1,\geq}^2\Psi' 
 - \dfrac{3}{2\mu}\int \varepsilon_{1,\geq}^2\partial_x\big(Q^2\Psi\big) \lesssim  \dfrac{1}{\mu}\int w^2\sech^{1/2}(x).
\end{align*}
Then, using the above estimate, and combining \eqref{hf_og_virial} and \eqref{dp_dt},
produces the following inequality:
\begin{align}\label{final_og_virial_hf}
\begin{split}
 \dfrac{d}{dt}\mathcal{I}_\geq + \dfrac{d}{dt}\mathcal{P}_\geq
 & \leq 
 - \dfrac{1}{2\mu}\int w_t^2\sech^{1/2}(x)-\dfrac{1}{\mu}\int w_x^2
 + \dfrac{C}{\mu} \int w^2\sech^{1/2}(x) + O(\delta) \frac{1}{\mu} a_1^2(t)
\\ 
& \quad + \dfrac{1}{\mu}\mathsf{P}_{1,\geq}(t) + \dfrac{1}{\mu}\mathsf{P}_{2,\geq}(t)
 - \dfrac{\mu'}{\mu}\mathcal{I}_\geq
 - \dfrac{\mu'}{\mu}\mathcal{P}_\geq + \dfrac{1}{\mu}O(\delta^4)
 + \dfrac{1}{\mu}O(\delta^2/\kappa).
\end{split}
\end{align}
Notice that we have introduced the integral of $w_t = \partial_t (\varepsilon_{1,\geq})$ 
as the first term on the right-hand sides of \eqref{final_og_virial_hf} 
in place of the first integral 
on the right-hand side of \eqref{dp_dt} using the following:
$$w_t = \partial_t (\varepsilon_{1,\geq} \sqrt{\Phi'_A}) = \varepsilon_{2,\geq} \sqrt{\Phi'_A} 
+ \dot{\widetilde{\mathbb{P}}}_{\geq \kappa^{-1}} \varepsilon_1 \sqrt{\Phi'_A}$$
which gives
\begin{align}\label{propORaux}
 -\dfrac{1}{2\mu}\int \varepsilon_{2,\geq}^2\Psi'
  \leq - \dfrac{1}{2\mu}\int w_t^2
  +  \dfrac{1}{2\mu} \int  \big( \dot{\widetilde{\mathbb{P}}}_{\geq \kappa^{-1}} \varepsilon_1\big)^2 \Psi'
  =  - \dfrac{1}{2\mu}\int w_t^2 + \frac{1}{\mu} O(\delta^2 / \kappa^2).
\end{align}
For the last estimate we have used the fact that  $\dot{\widetilde{\mathbb{P}}}_{\geq \kappa^{-1}}$
is the `projection' with symbol $\partial_t \eta(\xi/\kappa) = -(\xi/\kappa) \eta'(\xi/\kappa)  \kappa^{-1} \kappa'$
and that $\kappa^{-1} \kappa' = o(\kappa^{-2})$.

The identity \eqref{final_2_og_virial_hf} then follows by setting
\begin{align} \label{def_r1_og_virial_fh}
\mathfrak{R}_1(t) & :=   \dfrac{1}{\mu}\mathsf{P}_{1,\geq}(t)  +\dfrac{1}{\mu}\mathsf{P}_{2,\geq}(t) 
	- \dfrac{\mu'}{\mu}\mathcal{I}_\geq - \dfrac{\mu'}{\mu}
	\mathcal{P}_\geq + \dfrac{1}{\mu}O(\delta^4)+\dfrac{1}{\mu}O(\delta^2/\kappa),
\end{align}
where $\mathsf{P}_{1,\geq}$ is defined by \eqref{P1}, and $\mathsf{P}_{2,\geq}$ is in \eqref{P2def}.
The property \eqref{propORR1} follows from our choice of $\mu = \langle t \rangle$ and $T_{\max}$ ,
from the fact that $| \mathcal{I}_\geq| + |\mathcal{P}_\geq| \lesssim \mu^{-1} \delta^2$
and from the estimates  for the terms where the projections are differentiated
\begin{align}\label{P12est}
|\mathsf{P}_{i,\geq}(t)| \lesssim \delta^2 \kappa^{-1} \kappa' \lesssim \delta^2 \mu^{-1}, \qquad i=1,2,
\end{align}
which can be obtained arguing as we did above for \eqref{propORaux}.

This concludes the proof of the proposition.
\end{proof}


\smallskip
Proposition \ref{propOR} concludes our virial analysis for $(\varepsilon_1,\varepsilon_2)$.
The rest of the paper is devoted to estimating the third term on the right hand side of \eqref{final_2_og_virial_hf},
that is, the local norm of $w^2$.
This is done using a transformed problem.


\bigskip
\section{Transformed problem and coercivity}\label{SecTransformed}

\subsection{The transformed equations}
We consider the cubic Klein-Gordon linear operator around $Q$, and the corresponding flat operator
\begin{align*}
\mathcal{L}=-\partial_x^2+1- 3Q^2 \qquad \hbox{ and } \qquad  \mathcal{L}_0:=-\partial_x^2+1.
\end{align*}
Next, define the operators
\begin{align}\label{op_SU}
\begin{split}
& S:= Q\cdot \partial_x\cdot Q^{-1} = \partial_x + \tanh(x),
\\
& U:=Y\cdot\partial_x \cdot Y^{-1} = \partial_x + 2\tanh(x), 
\end{split}
\end{align}
which conjugate $\mathcal{L}$ to the flat operator:
\begin{align}\label{op_SUconj}
SU\mathcal{L}=\mathcal{L}_0SU, \qquad S U = \partial_x^2+3 \tanh(x) \partial_x+2.
\end{align}
In particular, if we consider $\vec{\boldsymbol{u}}=(u_1,u_2)$ solution to \[
\begin{cases}u_{1,t}=u_2,
\\ u_{2,t}=-\mathcal{L}u_1,
\end{cases}
\]
and define $\vec{\boldsymbol{ v}}=(v_1,v_2)=(SUu_1,SUu_2)$, then $\vec{\boldsymbol{ v}}$ satisfies the system \[
\begin{cases}
v_{1,t}=v_2,
\\ v_{2,t}=-\mathcal{L}_0v_1.
\end{cases}
\]
An important spectral observation regarding the operator $SU$ is that 
\begin{align}\label{SUY}
SUY=0, \quad SUQ'=0, \quad \hbox{and}\quad SUR=2.
\end{align}
One issue with defining the variable $\vec{\boldsymbol{v}}$ as above is the loss of regularity due to the composition with $SU$. One way to go around this problem is to define the ``transformed'' variables as follows: 
 for $\gamma\in(0,1)$ small, we define the \textit{transformed} variables as 
\begin{align}\label{defdual}
\varphi_1 := \big(1-\gamma\partial_x^2 \big)^{-1}SU  \varepsilon_1  
  \qquad\hbox{ and } \qquad \varphi_2 := \big(1-\gamma\partial_x^2 \big)^{-1}SU   \varepsilon_2 = \partial_t \varphi_1  .
\end{align} 

Additionally, we introduce the frequency-localized transformed variables 
\[
\varphi_{i,\geq}:=\mathbb{P}_{\geq \kappa^{-1}}\varphi_i 
  \qquad \hbox{and} \qquad \varphi_{i,<} := \mathbb{P}_{<\kappa^{-1}}\varphi_i,\qquad i=1,2.
\]
Using the conjugation property \eqref{op_SUconj} we can commute frequency projections as follows:
\begin{align}\label{projcomm}
SU \widetilde{\mathbb{P}}_k = \mathbb{P}_k SU, \qquad k > 0.
\end{align}
Therefore,
\begin{align}\label{phiideps}
\begin{split}
\varphi_{i,\geq}&=\big(1-\gamma\partial_x^2 \big)^{-1}SU\,\widetilde{\mathbb{P}}_{\geq\kappa^{-1}} \varepsilon_i,
\\ 
\varphi_{i,<}&=\big(1-\gamma\partial_x^2 \big)^{-1}SU\,\widetilde{\mathbb{P}}_{<\kappa^{-1}} \varepsilon_i,
  \qquad i=1,2.
\end{split}
\end{align}

Using the system \eqref{system_epsilon} for $(\varepsilon_1,\varepsilon_2)$
and the identity \eqref{phiideps} 
we then derive equations for $(\varphi_{1,\geq},\varphi_{2,\geq})$: 

\begin{lem}\label{lemtransf}
Given the above definitions, we have that $(\varphi_{1,\geq},\varphi_{2,\geq})\in C(\R,H^1\times L^2)$
satisfy the following system of equations
\begin{align}\label{system_pde_dual_variables}
\begin{cases}
\varphi_{1,\geq,t}=\varphi_{2,\geq}+\dot{\mathbb{P}}_{\geq\kappa^{-1}}\varphi_1,
\\ \varphi_{2,\geq,t}=-\mathcal{L}_0\varphi_{1,\geq}
  + \big(1-\gamma\partial_x^2\big)^{-1} \mathbb{P}_{\geq \kappa^{-1}}SU \mathcal{N} + 
  \dot{\mathbb{P}}_{\geq\kappa^{-1}}\varphi_2,
\end{cases}
\end{align}
where $\mathcal{N}$ is defined in \eqref{definition_N_nonlinearity_epsilon}.
\end{lem}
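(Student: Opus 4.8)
The plan is to obtain the system \eqref{system_pde_dual_variables} by a direct computation, propagating the equations \eqref{system_epsilon} for $(\varepsilon_1,\varepsilon_2)$ first through the time-independent operator $T:=\big(1-\gamma\partial_x^2\big)^{-1}SU$ and then through the time-dependent projection $\mathbb{P}_{\geq\kappa^{-1}}$. First I would record the algebraic facts to be used: the intertwining $SU\mathcal{L}=\mathcal{L}_0 SU$ and the annihilation $SUY=0$ from \eqref{op_SUconj}--\eqref{SUY}; and the observation that $\big(1-\gamma\partial_x^2\big)^{-1}$, $\mathcal{L}_0=-\partial_x^2+1$, and the standard Littlewood--Paley projections $\mathbb{P}_k$ are all Fourier multipliers, hence commute with one another. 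Since $T$ does not depend on $t$ and $\varepsilon_{1,t}=\varepsilon_2$, one has $\partial_t\varphi_1=T\varepsilon_{1,t}=T\varepsilon_2=\varphi_2$, recovering the identity already recorded in \eqref{defdual}.

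For the first equation of \eqref{system_pde_dual_variables}, I would differentiate $\varphi_{1,\geq}=\mathbb{P}_{\geq\kappa^{-1}}\varphi_1$ in time using the product rule, which because $\kappa=\kappa(t)$ produces the extra term: $\partial_t\varphi_{1,\geq}=\dot{\mathbb{P}}_{\geq\kappa^{-1}}\varphi_1+\mathbb{P}_{\geq\kappa^{-1}}\partial_t\varphi_1=\dot{\mathbb{P}}_{\geq\kappa^{-1}}\varphi_1+\mathbb{P}_{\geq\kappa^{-1}}\varphi_2=\dot{\mathbb{P}}_{\geq\kappa^{-1}}\varphi_1+\varphi_{2,\geq}$, where $\dot{\mathbb{P}}_{\geq\kappa^{-1}}$ is the multiplier with symbol $\partial_t\eta_{\geq\kappa^{-1}}$. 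For the second equation I would compute $\partial_t\varphi_2=T\varepsilon_{2,t}=T\big(-\mathcal{L}\varepsilon_1+\mathcal{N}-\langle\mathcal{N},Y\rangle Y\big)$; then use $SU\mathcal{L}=\mathcal{L}_0 SU$ to rewrite the first term as $-\big(1-\gamma\partial_x^2\big)^{-1}\mathcal{L}_0 SU\varepsilon_1=-\mathcal{L}_0\varphi_1$ (commuting the multipliers $\big(1-\gamma\partial_x^2\big)^{-1}$ and $\mathcal{L}_0$), and use $SUY=0$ to annihilate the orthogonality-correction term $\langle\mathcal{N},Y\rangle Y$ entirely. This gives $\partial_t\varphi_2=-\mathcal{L}_0\varphi_1+T\mathcal{N}$. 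Applying $\mathbb{P}_{\geq\kappa^{-1}}$, invoking the product rule once more, and commuting $\mathbb{P}_{\geq\kappa^{-1}}$ past $\mathcal{L}_0$ and past $\big(1-\gamma\partial_x^2\big)^{-1}$, one arrives at $\partial_t\varphi_{2,\geq}=\dot{\mathbb{P}}_{\geq\kappa^{-1}}\varphi_2-\mathcal{L}_0\varphi_{1,\geq}+\big(1-\gamma\partial_x^2\big)^{-1}\mathbb{P}_{\geq\kappa^{-1}}SU\mathcal{N}$, which is exactly the claimed system.

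It remains to justify the regularity statement $(\varphi_{1,\geq},\varphi_{2,\geq})\in C(\R,H^1\times L^2)$. By Theorem \ref{thmSM} and local well-posedness one has $\varepsilon_1\in C(\R,H^1)$ and $\varepsilon_2\in C(\R,L^2)$; since $SU=\partial_x^2+3\tanh(x)\partial_x+2$ maps $H^s\to H^{s-2}$ boundedly (multiplication by $\tanh$ and its derivatives being bounded on every $H^s$) while $\big(1-\gamma\partial_x^2\big)^{-1}$ gains two derivatives, the operator $T$ is bounded on $H^1$ and on $L^2$; composing with the uniformly bounded projections $\mathbb{P}_{\geq\kappa^{-1}(t)}$, which depend continuously on $t$ because $\kappa$ does, preserves continuity in time.

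The computation is essentially bookkeeping, and I do not expect a substantial obstacle; the two points I would make explicit are \emph{(i)} that the time-dependence of $\mathbb{P}_{\geq\kappa^{-1}}$ is precisely the source of the $\dot{\mathbb{P}}_{\geq\kappa^{-1}}$ terms, and \emph{(ii)} that all the commutations are legitimate at the regularity of the solution --- here $SU\varepsilon_i$ must be read in negative Sobolev spaces, but every operator involved is either a Fourier multiplier or multiplication by a smooth bounded function, so no genuine difficulty arises. This lemma simply sets up the transformed problem on which the subsequent virial analysis is built.
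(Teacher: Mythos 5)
Your proposal is correct and follows essentially the same line as the paper's proof: differentiate the definition of $\varphi_{i,\geq}$ in time (picking up the $\dot{\mathbb{P}}_{\geq\kappa^{-1}}$ term from the product rule), substitute the equation \eqref{system_epsilon} for $\varepsilon_{2,t}$, use $SU\mathcal{L}=\mathcal{L}_0 SU$ and $SUY=0$, and commute the Fourier multipliers $\mathbb{P}_{\geq\kappa^{-1}}$, $(1-\gamma\partial_x^2)^{-1}$ and $\mathcal{L}_0$. You additionally spell out the first equation of the system and the $C(\R,H^1\times L^2)$ regularity, which the paper leaves implicit, but the underlying computation is the same.
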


\begin{proof}
The proof is a direct computation. 
Recall that $\varepsilon_2$ satisfies 
$
\varepsilon_{2,t}=-\mathcal{L}\varepsilon_1+ \mathcal{N} -\langle \mathcal{N},Y\rangle Y.
$
Differentiating the definition of $\varphi_{2,\geq}$ 
we find that
\begin{align}
\nonumber
\varphi_{2,\geq,t} & = \big(1-\gamma\partial_x^2\big)^{-1}\mathbb{P}_{\geq\kappa^{-1}} SU\varepsilon_{2,t}
+ \big(1-\gamma\partial_x^2\big)^{-1}\dot{\mathbb{P}}_{\geq\kappa^{-1}} SU\varepsilon_{2}
\\
\label{lemtransf1}
& = - \big(1-\gamma\partial_x^2\big)^{-1} \mathbb{P}_{\geq\kappa^{-1}} SU \mathcal{L}\varepsilon_1
+ \big(1-\gamma\partial_x^2\big)^{-1}\mathbb{P}_{\geq\kappa^{-1}} SU \mathcal{N}
\\
\label{lemtransf2}
& - \big(1-\gamma\partial_x^2\big)^{-1}\mathbb{P}_{\geq\kappa^{-1}} SU \langle \mathcal{N},Y\rangle Y
+ \big(1-\gamma\partial_x^2\big)^{-1}\dot{\mathbb{P}}_{\geq\kappa^{-1}} SU\varepsilon_{2}
\end{align}
Recalling \eqref{op_SUconj}, and commuting $\mathbb{P}_{\geq\kappa^{-1}}$
with $(1-\gamma \partial_x^2)^{-1}$ and $\mathcal{L}_0$, 
we can express the linear term above 
as 
\begin{align*}
-\big(1-\gamma \partial_x^2\big)^{-1} \mathbb{P}_{\geq\kappa^{-1}} SU\mathcal{L}\varepsilon_1
  & = -\mathbb{P}_{\geq\kappa^{-1}} \mathcal{L}_0\big( (1-\gamma\partial_x^2)^{-1}SU \varepsilon_1 \big) 
  = - \mathbb{P}_{\geq\kappa^{-1}} \mathcal{L}_0\varphi_{1} = - \mathcal{L}_0\varphi_{1,\geq}.
\end{align*}
The second term in \eqref{lemtransf1} is already accounted for in \eqref{system_pde_dual_variables}.
The first term in \eqref{lemtransf2} vanishes in view of \eqref{SUY}
and the last one gives the term $\dot{\mathbb{P}}_{\geq\kappa^{-1}}\varphi_2$, 
since $(1-\gamma \partial_x^2)^{-1}$ commutes with the (differentiated) projection. 
\end{proof}



\subsection{Coercivity}
We now want to link the weighted norm of $\varepsilon_{1,\geq}$ with that of $\varphi_{1,\geq}$. 
More specifically, we seek to show that the ``bad term'' in \eqref{hf_og_virial} 
satisfies an estimate of the form 
\[
\int w^2\sech^{1/2}(x)\lesssim \int \big(\varphi_{1,\geq,x}^2+ \varphi_{1,\geq}^2\big)\sech^{1/4^+}(x)
 + O(\delta^2/\kappa).
\]
This inequality shall allow us to combine the virial estimate derived in the previous section with the ones that we shall derive for the transformed problem. 

\begin{prop}\label{prop_coer_varepsilon}
Suppose that $(\varepsilon_{1},\varepsilon_{2})$ is a solution to \eqref{system_epsilon} 
satisfying the stable manifold condition \eqref{stable_manifold_hyp_thm} 
and that $\varepsilon_1$ is even. Let $\varphi_{1,\geq}$ be defined as in \eqref{phiideps}
with some $\gamma>0$ small enough,
and assume that $\langle \varepsilon_1,Y\rangle=0$.
Then, the following inequality holds 
\begin{align}\label{coer_chi_varep}
\int \varepsilon_{1,\geq}^2\sech^{1/2}(x)&\lesssim \int \big(\varphi_{1,\geq,x}^2+\varphi_{1,\geq}^2\big)\sech^{1/4^+}(x)+O(\delta^2/\kappa).
\end{align}
\end{prop}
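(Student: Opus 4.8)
The plan is to invert the Darboux transformation $SU$ on the frequency-localized perturbation and keep track of the kernel and the smoothing regularization $(1-\gamma\partial_x^2)^{-1}$. First, I would recall that on the continuous spectral subspace the operator $SU$ is essentially invertible: from the factorization $SU = (\partial_x^2 + 3\tanh(x)\partial_x + 2)$ in \eqref{op_SUconj} and the spectral facts \eqref{SUY}, one has an inverse operator $(SU)^{-1}$ acting on the range, constructed from $S^{-1}$ and $U^{-1}$ via the usual Darboux/intertwining formulas (integration against $Q$, resp.\ $Y$, weights), and this inverse is a bounded operator from $L^2_{\sech^{1/4^+}}$-type weighted spaces into themselves up to a finite-rank correction supported on the bound states $Y$ and $Q'$. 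Since we assume $\varepsilon_1$ is even and $\langle\varepsilon_1,Y\rangle=0$, the only relevant bound state is $Y$ (the odd mode $Q'$ is killed by parity), and the orthogonality hypothesis removes its contribution; thus $\varepsilon_{1,\geq} = \widetilde{\mathbb{P}}_{\geq\kappa^{-1}}\varepsilon_1$ can be recovered from $SU\,\widetilde{\mathbb{P}}_{\geq\kappa^{-1}}\varepsilon_1$ modulo errors that involve $\varepsilon_{1,<}$ coming from the fact that the frequency cutoff and the projection onto $Y$ do not commute exactly.

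Second, I would reinstate the regularization: by \eqref{defdual}--\eqref{phiideps} we have $(1-\gamma\partial_x^2)\varphi_{1,\geq} = SU\,\widetilde{\mathbb{P}}_{\geq\kappa^{-1}}\varepsilon_1$, so the goal is to estimate $\|\sech^{1/4}\,\varepsilon_{1,\geq}\|_{L^2}$ in terms of $\|\sech^{1/8^+}(1-\gamma\partial_x^2)\varphi_{1,\geq}\|$ after applying $(SU)^{-1}$, and then trade the two derivatives in $(1-\gamma\partial_x^2)\varphi_{1,\geq}$ for at most one derivative landing on $\varphi_{1,\geq}$. The mechanism for this is the standard trick in this circle of ideas (following \cite{KowMarMun}): one writes $\int w^2 \sech^{1/2}$, integrates by parts to move derivatives, and uses that $\gamma$ is a fixed small constant so that $\gamma\partial_x^2\varphi_{1,\geq}$ contributes terms like $\gamma\|\varphi_{1,\geq,x}\|^2_{\text{weighted}}$ which are exactly of the form allowed on the right-hand side of \eqref{coer_chi_varep}, while a Bernstein/elliptic-regularity bound controls the second derivative of $\varphi_{1,\geq}$ localized to frequencies $\gtrsim\kappa^{-1}$ by $\kappa \|\varphi_{1,\geq,x}\|$, which is harmless because the weight loss $\sech^{1/4^+}$ versus $\sech^{1/2}$ absorbs the polynomial-in-$\kappa$ factors (here the slightly worse exponent $1/4^+$ instead of $1/4$ is precisely there to soak up these logarithmic losses). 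The discrepancy between the distorted projection $\widetilde{\mathbb{P}}_{\geq\kappa^{-1}}$ and the flat $\mathbb{P}_{\geq\kappa^{-1}}$ is handled by the boundedness of the wave operators $\mathcal W,\mathcal W^{-1}$, exactly as in the proof of Lemma \ref{lemma<}, producing only $O(\delta^2/\kappa)$ errors.

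Third, the commutator errors: $\widetilde{\mathbb{P}}_{\geq\kappa^{-1}}$ does not commute with multiplication by the localized coefficients of $SU$ ($\tanh x$ is not localized, but its derivative $\sech^2 x$ is, and the dangerous terms are the ones where a low frequency of $\varepsilon_1$ leaks through). These are controlled by the distorted Bernstein inequality, giving $\|\widetilde{\mathbb{P}}_{<\kappa^{-1}}\varepsilon_1\|_{L^\infty}\lesssim\kappa^{-1/2}\delta$, hence an $O(\delta^2/\kappa)$ contribution, consistent with the statement. I expect the main obstacle to be the careful bookkeeping in the inversion step: making precise the sense in which $(SU)^{-1}$ is bounded on the relevant weighted spaces (the kernel functions $Q,Y$ decay, but $S^{-1}$ and $U^{-1}$ involve antiderivatives, so one must check that the growing homogeneous solutions do not spoil the weighted estimates — this is where evenness and the orthogonality $\langle\varepsilon_1,Y\rangle=0$ are genuinely used), and in tracking the interplay between the time-dependent frequency truncation at scale $\kappa^{-1}$ and the fixed-scale regularization $\gamma$, ensuring that no factor of $\kappa$ appears to a power that would overwhelm the gain from the weight. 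Once these are in place, collecting terms yields \eqref{coer_chi_varep}.
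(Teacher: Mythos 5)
Your strategy matches the paper's proof in its essentials: the paper inverts $SU$ explicitly, writing $\varepsilon_{1,\geq} = C_0\,Y + \mathrm{I}$ with $\mathrm{I}(x)=Y\int_0^x\tfrac{Q}{Y}\int_0^y\tfrac{1}{Q}\bigl(\varphi_{1,\geq}-\gamma\partial_x^2\varphi_{1,\geq}\bigr)$, kills the first integration constant by evenness (this is the $Q'$ direction you mention), bounds $\mathrm{I}$ by Cauchy--Schwarz in the double integral, absorbs the $\gamma\partial_x^2\varphi_{1,\geq}$ piece by a single integration by parts (producing exactly the $\varphi_{1,\geq,x}$ term you anticipate), and bounds $C_0^2$ via $\langle\varepsilon_{1,\geq},Y\rangle=-\langle\varepsilon_{1,<},Y\rangle=O(\delta\kappa^{-1/2})$ by distorted Bernstein. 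One remark, though: the claim that a Bernstein or elliptic-regularity bound controls $\partial_x^2\varphi_{1,\geq}$ by $\kappa\|\varphi_{1,\geq,x}\|$ is backwards --- $\varphi_{1,\geq}$ is localized to frequencies \emph{at least} $\kappa^{-1}$, not at most, so no bound of a higher derivative by a lower one is available this way. Fortunately that step is not needed: the integration by parts you already describe handles the regularization entirely, and the exponent $1/4^+$ (rather than $1/2$) is not there to absorb powers of $\kappa$ but comes from the Cauchy--Schwarz loss in the explicit inversion, which yields $\int \mathrm{I}^2\sech^\theta \lesssim \int(\varphi_{1,\geq,x}^2+\varphi_{1,\geq}^2)\sech^{\theta^-}$ for any $\theta\in(0,1)$, applied with $\theta=1/2$.
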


\begin{proof}
This proof follow similar lines to the one given in \cite{KowMarMun}, however, 
due to the appearance of the frequency projector, some adaptations are required. 
For the sake of completeness we provide the 
details. 
From \eqref{phiideps}, and using the explicit form for the $S$ and $U$ operators in \eqref{op_SU}, we have
\begin{align*}
\varphi_{1,\geq}-\gamma\partial_x^2\varphi_{1,\geq}=SU\widetilde{\mathbb{P}}_\geq\varepsilon_{1},
  = Q\partial_x\left(\dfrac{Y}{Q}\partial_x\left(\dfrac{\widetilde{\mathbb{P}}_\geq\varepsilon_{1}}{Y}\right)\right).
\end{align*}
Now we seek to solve for $\varepsilon_{1,\geq}$ in terms of $\varphi_{1,\geq}$. 
Dividing both sides of the above equality by $Q$ and then integrating from $0$ to $x$, with $x>0$,
we infer that 
\begin{align}\label{coer1}
\dfrac{Y}{Q}\partial_x\left(\dfrac{\widetilde{\mathbb{P}}_\geq\varepsilon_{1}}{Y}\right)
= C_1+\int_0^x\dfrac{1}{Q}\Big(\varphi_{1,\geq}-\gamma\partial_x^2\varphi_{1,\geq}\Big),
\end{align}
for some integration constant $C_1\in\R$. 
However, due to the even parity assumptions on $\varepsilon_1$, and the fact that the distorted projection
preserves this parity, see (ii) of Remark \ref{remdFTadd},
we see that $C_1=0$ by evaluating the identity \eqref{coer1} at $x=0$. 

Proceeding from \eqref{coer1}, 
multiplying by $Q/Y$, then integrating from $0$ to $x$, with $x>0$, 
and re-arranging terms, we get
\begin{align}\label{chi_varepsilon_itov_coer}
\varepsilon_{1,\geq} = C_0Y + \mathrm{I}, \qquad \mathrm{I}(x):= Y\int_0^x\dfrac{Q}{Y}\int_0^y\dfrac{1}{Q}
  \Big(\varphi_{1,\geq}- \gamma\partial_x^2\varphi_{1,\geq}\Big)
\end{align}
for some integration constant $C_0\in\R$. 
Then, the problem reduces to analyse $C_0$ and $\mathrm{I}$. 
We begin by studying $\mathrm{I}$. 
By Cauchy-Schwarz,
the first summand appearing in the integral 
can be bounded as follows:
\begin{align}\label{coercaux}
\begin{split}
\mathrm{I}_1(x) := \left\vert Y\int_0^x\dfrac{Q}{Y}\int_0^y\dfrac{\varphi_{1,\geq}}{Q}\right\vert 
  &\lesssim \Vert \varphi_{1,\geq}\sech^{\theta^-/2}(x)\Vert_{L^2}\cdot Y\int_0^x\dfrac{Q}{Y}\left(\int_0^y \dfrac{1}{Q^2}\cosh^{\theta^-}(r)dr\right)^{1/2}
\\ 
& \lesssim \Vert \varphi_{1,\geq}\sech^{\theta^-/2}(x)\Vert_{L^2}\cdot\cosh^{\theta^-/2}(x),
\end{split}
\end{align}
for any $\theta>0$. 
As a direct consequence of the above estimate we deduce that 
\begin{align}\label{coer_proof_iii1}
\int_0^\infty \mathrm{I}_1^2(x) \sech^\theta(x)\lesssim \int \varphi_{1,\geq}^2\sech^{\theta^-}(x).
\end{align}
Notice that an identical estimate holds for $x<0$. 
In order to deal with the other term in $I$ 
we begin by integrating by parts, recalling that $\varphi_{1,\geq}$ is even, and 
get 
\begin{align*}
\mathrm{I}_2(x) := \left\vert Y\int_0^x\dfrac{Q}{Y}
  \int_0^y\dfrac{\gamma \partial_x^2\varphi_{1,\geq}}{Q}\right\vert
=\left\vert Y\int_0^x\left(\dfrac{\gamma\partial_x\varphi_{1,\geq}}{Y}
 + \dfrac{Q}{Y}\int_0^y\dfrac{\gamma Q'\partial_x \varphi_{1,\geq}}{Q^2}\right)\right\vert.
\end{align*}
Thus, proceeding in exactly the same way as in \eqref{coercaux} 
we conclude that for any $\theta>0$ we have 
\begin{align}\label{coer_proof_iii2}
\int_0^\infty \mathrm{I}_2^2(x) \sech^\theta(x)\lesssim \int \varphi_{1,\geq,x}^2\sech^{\theta^-}(x).
\end{align}
Again, the same estimate holds for $x<0$.

To conclude the proof we need to estimate $C_0$.
Projecting \eqref{chi_varepsilon_itov_coer} on the $Y$ direction 
we obtain
\begin{align}\label{Bsquare_estimate}
C_0^2\lesssim\left\langle \varepsilon_{1,\geq},Y\right\rangle ^2+\big\langle \mathrm{I},Y\big\rangle^2.
\end{align}
It suffices to estimate the first term on the right-hand side above, 
since the second is already estimated using \eqref{coer_proof_iii1} and \eqref{coer_proof_iii2} with $\theta=2$. 
Due to the orthogonality property of $\varepsilon_1$, we have
\[
\langle \varepsilon_{1,\geq},Y\rangle=\langle \varepsilon_1,Y\rangle-\langle \varepsilon_{1,<},Y\rangle
 = -\langle \varepsilon_{1,<},Y\rangle,
\]
hence, 
\[
\big\vert \langle \varepsilon_{1,\geq},Y\rangle\big\vert^2 = 
\big\vert \langle \varepsilon_{1,<},Y\rangle\big\vert^2 \lesssim = {\| \varepsilon_{1,<} \|}_{L^\infty}^2
 = O(\delta^2/\kappa),
\]
having used distorted Bernstein.
This concludes the estimate for $C_0^2$ and the proof of \eqref{coer_chi_varep}.
\end{proof}

\smallskip
Observe that the Proposition \ref{prop_coer_varepsilon} tells us that
if we are able to control the local norms of $\varphi_{1,\geq}$, 
then we can control those of $\varepsilon_{1,\geq}$.
Sections \ref{SecVirialDual} and \ref{sec_Analysis_H} are dedicated to establishing control of the local norms of 
the transformed (and frequency localized) variable $\varphi_{1,\geq}$.


\bigskip
\section{Frequency localized virials for the transformed problem}\label{SecVirialDual}
For $B>1$ a large parameter to be chosen, we define $\Phi_B(x)$ according to \eqref{def_zeta_A}-\eqref{def_phi_A},
and also define
\begin{align}\label{defpsiB}
\uppsi_{B}(x):= \int_0^x \zeta_B^2(y)\sech^{1/4}(y)dy.
\end{align}
The role of $B$ is that of an intermediate scale parameter, 
\[
1\ll B\ll A,
\]
which shall allow us to combine the original virial estimate with those of the transformed problem. 
Our goal is to prove an estimate 
of the form 
\[
\int_0^{T_{\mathrm{max}}} \dfrac{1}{\mu(t)}\int_\R \big(\varphi_{2,\geq}^2\sech^{1/4}(x)+\varphi_{1,\geq,x}^2\sech^2\big(\tfrac{x}{B}\big)+\varphi_{1,\geq}^2\sech^{1/4}(x)\big)dx  dt=O(\delta^2).
\]
In this section we consider the following 
frequency-localized modified momentum functionals 
\begin{align}
\label{defJ}
\mathcal{J}_\geq &:= \dfrac{1}{\mu(t)}\int_\R\left( \Phi_{B}(x)\varphi_{1,\geq,x}(t,x) 
  + \tfrac{1}{2}\Phi'_{B}(x) \varphi_{1,\geq}(t,x)\right)\varphi_{2,\geq}(t,x)dx,
\\
\label{defK}
\mathcal{K}_\geq &:= \dfrac{1}{\mu(t)}\int_\R \uppsi_{B}(x)\varphi_{1,\geq,x}(t,x) \varphi_{2,\geq}(t,x)dx.
%
\end{align}
One more (set of) ``singular'' frequency localized functional(s) will be defined in Section \ref{sec_Analysis_H}.

\smallskip
\subsection{Virial argument for $\mathcal{J}_\geq$}
The next Lemma gives us the basic virial identity for $\mathcal{J}_\geq$. 
\begin{lem}\label{lemdtJ}
Let $(\varepsilon_1,\varepsilon_2)\in C(\R,H^1\times L^2)$ be any solution to system 
\eqref{system_epsilon} and $(\varphi_{1,\geq},\varphi_{2,\geq})$ defined as in \eqref{defdual}
satisfying \eqref{system_pde_dual_variables}. 
Then, for all times $t\in\R$ the following identity holds: 
\begin{align}\label{dt_J_dual}
\begin{split}
\dfrac{d}{dt}\mathcal{J}_\geq&=-\dfrac{1}{\mu}\int\varphi_{1,\geq,x}^2\Phi_B'
 + \dfrac{1}{4 \mu}\int \varphi_{1,\geq}^2\Phi_B'''  + \dfrac{1}{\mu}\mathsf{P}_{3,\geq}
 - \dfrac{\mu'}{\mu}\mathcal{J}_\geq 
\\ 
& \qquad + \dfrac{1}{\mu}\int \left(\varphi_{1,\geq ,x}\Phi_B
 + \tfrac{1}{2}\varphi_{1,\geq }\Phi_B'\right)
 \big(1-\gamma\partial_x^2\big)^{-1}SU\,\widetilde{\mathbb{P}}_{\geq \kappa^{-1}}\mathcal{N},
\end{split}
\end{align}
where 
\[
\mathsf{P}_{3,\geq}:=\int \left(\Phi_B\partial_x\dot{\mathbb{P}}_{\geq\kappa^{-1}}\varphi_1
  + \tfrac{1}{2}\Phi_B'\dot{\mathbb{P}}_{\geq\kappa^{-1}}\varphi_1\right)\varphi_{2,\geq}
  +\int \left(\Phi_B\varphi_{1,\geq,x}+\tfrac{1}{2}\Phi_B'\varphi_{1,\geq}\right) 
  \dot{\mathbb{P}}_{\geq\kappa^{-1}}\varphi_2 
  ,
\]
\end{lem}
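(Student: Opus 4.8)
The plan is to perform a direct computation analogous to the one carried out in Lemma \ref{original_virial_indetity_lem}, but now for the transformed variables $(\varphi_{1,\geq},\varphi_{2,\geq})$ which solve the system \eqref{system_pde_dual_variables} with the \emph{flat} operator $\mathcal{L}_0 = -\partial_x^2 + 1$ in place of $\mathcal{L}$. First I would differentiate the definition \eqref{defJ} of $\mathcal{J}_\geq$ in time. Since $\mu = \mu(t)$ is a scalar factor, one term is $-\tfrac{\mu'}{\mu^2}\int(\Phi_B\varphi_{1,\geq,x} + \tfrac12\Phi_B'\varphi_{1,\geq})\varphi_{2,\geq} = -\tfrac{\mu'}{\mu}\mathcal{J}_\geq$, which accounts for the third-to-last term in \eqref{dt_J_dual}. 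For the remaining terms I would use $\varphi_{1,\geq,t} = \varphi_{2,\geq} + \dot{\mathbb{P}}_{\geq\kappa^{-1}}\varphi_1$ and $\varphi_{2,\geq,t} = -\mathcal{L}_0\varphi_{1,\geq} + (1-\gamma\partial_x^2)^{-1}\mathbb{P}_{\geq\kappa^{-1}}SU\mathcal{N} + \dot{\mathbb{P}}_{\geq\kappa^{-1}}\varphi_2$ from \eqref{system_pde_dual_variables}.

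The terms arising from plugging $\varphi_{1,\geq,t}$ into the spots where $\varphi_{1,\geq}$ or $\varphi_{1,\geq,x}$ sit, paired with $\varphi_{2,\geq}$, split into: (a) the contribution of $\varphi_{2,\geq}$ itself, namely $\int(\Phi_B\varphi_{2,\geq,x} + \tfrac12\Phi_B'\varphi_{2,\geq})\varphi_{2,\geq}$, which vanishes after integration by parts exactly as in Lemma \ref{original_virial_indetity_lem} (this is the standard identity $\int(\Phi f_x + \tfrac12\Phi' f)f\,dx = 0$); and (b) the contribution of $\dot{\mathbb{P}}_{\geq\kappa^{-1}}\varphi_1$, which together with the contribution of $\dot{\mathbb{P}}_{\geq\kappa^{-1}}\varphi_2$ coming from $\varphi_{2,\geq,t}$ assembles into the error functional $\mathsf{P}_{3,\geq}$ as defined in the statement. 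The main algebraic core comes from the $-\mathcal{L}_0\varphi_{1,\geq}$ term: integrating by parts in
\[
-\int\left(\Phi_B\varphi_{1,\geq,x} + \tfrac12\Phi_B'\varphi_{1,\geq}\right)\big(-\varphi_{1,\geq,xx} + \varphi_{1,\geq}\big)\,dx,
\]
one obtains $-\int\varphi_{1,\geq,x}^2\Phi_B' + \tfrac14\int\varphi_{1,\geq}^2\Phi_B'''$ — this is precisely the computation underlying \eqref{basicvirial} with $V\equiv 0$, and I would carry it out the same way (the $+\varphi_{1,\geq}$ part of $\mathcal{L}_0$ contributes $\tfrac12\int(\Phi_B(\varphi_{1,\geq}^2)_x + \tfrac12\Phi_B'\cdot 2\varphi_{1,\geq}^2)$, which integrates to zero, so only the second-derivative part survives). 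Finally, the nonlinear source term $(1-\gamma\partial_x^2)^{-1}\mathbb{P}_{\geq\kappa^{-1}}SU\mathcal{N}$ paired against $\Phi_B\varphi_{1,\geq,x} + \tfrac12\Phi_B'\varphi_{1,\geq}$ is left as the last integral in \eqref{dt_J_dual}, using the commutation \eqref{projcomm} to write $\mathbb{P}_{\geq\kappa^{-1}}SU = SU\widetilde{\mathbb{P}}_{\geq\kappa^{-1}}$, so that it appears exactly in the stated form $(1-\gamma\partial_x^2)^{-1}SU\,\widetilde{\mathbb{P}}_{\geq\kappa^{-1}}\mathcal{N}$.

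Collecting all the pieces — the $-\tfrac{\mu'}{\mu}\mathcal{J}_\geq$ term, the two terms from $-\mathcal{L}_0\varphi_{1,\geq}$, the error $\tfrac1\mu\mathsf{P}_{3,\geq}$ assembled from the differentiated-projection contributions, and the nonlinear integral — gives exactly \eqref{dt_J_dual}. I expect no serious obstacle here: unlike the original-problem computation, there are no variable-coefficient terms like $\int\varepsilon_{1,\geq}^2 QQ'\Phi_A$ because the transformed operator is flat, so this is strictly simpler than Lemma \ref{original_virial_indetity_lem}. The only point requiring a little care is bookkeeping the several terms produced by the time-dependence of the frequency projection $\mathbb{P}_{\geq\kappa^{-1}}$: one must verify that the two such terms (one from each equation in \eqref{system_pde_dual_variables}) combine into precisely the expression defining $\mathsf{P}_{3,\geq}$, with the correct placement of $\partial_x$ and the factor $\tfrac12$ on $\Phi_B'$; this follows by the same integration-by-parts moves used for $\mathsf{P}_{1,\geq}$ in \eqref{P1}.
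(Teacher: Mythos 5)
Your proposal is correct and follows essentially the same approach as the paper's proof: differentiate $\mathcal{J}_\geq$ in time, use system \eqref{system_pde_dual_variables}, kill the $\varphi_{2,\geq}$-against-$\varphi_{2,\geq}$ term by the standard antisymmetry identity, collect the differentiated-projection terms into $\mathsf{P}_{3,\geq}$, and integrate by parts in $-\int(\Phi_B\varphi_{1,\geq,x}+\tfrac12\Phi_B'\varphi_{1,\geq})\mathcal{L}_0\varphi_{1,\geq}$ to produce $-\int\varphi_{1,\geq,x}^2\Phi_B'+\tfrac14\int\varphi_{1,\geq}^2\Phi_B'''$. The bookkeeping you describe, including the vanishing of the $+\varphi_{1,\geq}$ part of $\mathcal{L}_0$ after integration by parts, is exactly what the paper does.
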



\begin{proof}
Directly differentiating the definition of the functional $\mathcal{J}_\geq$, 
using system \eqref{system_pde_dual_variables} and performing
several integration by parts we obtain 
\begin{align*}
\dfrac{d}{dt}\mathcal{J}_\geq&=\dfrac{1}{\mu}\int\left(\varphi_{2,\geq,x}\Phi_B
 + \tfrac{1}{2}\varphi_{2,\geq}\Phi_B'\right)\varphi_{2,\geq}  
 + \dfrac{1}{\mu}\int \left(\varphi_{1,\geq,x}\Phi_B
 + \tfrac{1}{2}\varphi_{1,\geq}\Phi_B'\right)\varphi_{2,\geq,t}
\\
& \ \quad  +\dfrac{1}{\mu}\int \left(\Phi_B\partial_x\dot{\mathbb{P}}_{\geq\kappa^{-1}}\varphi_1
  +\tfrac{1}{2}\Phi_B'\dot{\mathbb{P}}_{\geq\kappa^{-1}}\varphi_1\right)\varphi_{2,\geq}
  -\dfrac{\mu'}{\mu}\mathcal{J}_\geq
%
%
\\ & = -\dfrac{1}{\mu}\int \left(\varphi_{1,\geq ,x} \Phi_B
  +\tfrac{1}{2}\varphi_{1,\geq }\Phi_B'\right)\mathcal{L}_0\varphi_{1,\geq} 
  + \dfrac{1}{\mu}\mathsf{P}_{3,\geq}-\dfrac{\mu'}{\mu}\mathcal{J}_\geq
\\ & \ \quad  + \dfrac{1}{\mu}\int \left(\varphi_{1,\geq ,x}\Phi_B+\tfrac{1}{2}\varphi_{1,\geq }\Phi_B'\right)
  \big(1-\gamma\partial_x^2\big)^{-1}SU\widetilde{\mathbb{P}}_{\geq \kappa^{-1}} \mathcal{N} ,
\end{align*}
where we have used the fact that \[
\int\left(\varphi_{2,\geq,x}\Phi_B+\tfrac{1}{2}\varphi_{2,\geq}\Phi_B'\right)\varphi_{2,\geq} =0.
\] 
Expanding the linear operator $\mathcal{L}_0$ and then integrating by parts we conclude the proof.
\end{proof}

Similarly as before (see \eqref{w_def} and the calculations after that) 
in order the get rid of the $\Phi_B'''$ term, we 
introduce the localized (in space) version of $\varphi_{1,\geq}$,
\begin{align}\label{def_z}
z(t,x):=\varphi_{1,\geq}(t,x)\sqrt{\Phi_B'(x)}.
\end{align}
Differentiating the above definition we obtain 
\begin{align}\label{derivative_zx}
z_{x}=\varphi_{1,\geq,x}\sqrt{\Phi'_B}+\varphi_{1,\geq}\cdot\tfrac{\Phi_B''}{2\sqrt{\Phi_B'}},
\end{align}
and squaring this identity we infer that 
\begin{align*}
-\int\varphi_{1,\geq,x}^2\Phi_B' &=- \int z_{x}^2-\dfrac{1}{2}\int \varphi_{1,\geq}^2\Phi_B'''        
  + \dfrac{1}{4}\int \varphi_{1,\geq}^2\tfrac{(\Phi_B'')^2}{\Phi_B'}.
\end{align*}

Next, 
recall that in Section \ref{original_virial_section} we showed that 
\begin{align*}
\Phi_B''&=\tfrac{2}{B}\big(\chi'(x)\vert x\vert -(1-\chi(x))\sgn(x)\big)\zeta_B^2(x),
\\ \Phi_B'''&=\tfrac{4}{B^2}\big(\chi'(x)\vert x\vert -(1-\chi(x))\sgn(x)\big)^2\zeta^2_B(x)+\tfrac{2}{B}\big(\chi''(x)\vert x\vert+2\chi'(x)\sgn(x)\big)\zeta_B^2(x),
\end{align*}
and hence, for $B>1$ sufficiently large, we have that 
\[
\left\vert -\dfrac{\Phi_B'''}{2\Phi_B'}+\dfrac{(\Phi_B'')^2}{4(\Phi_B')^2}+\dfrac{\Phi_B'''}{4\Phi_B'}\right\vert=\dfrac{1}{2B}\left\vert \chi''(x)\vert x\vert+2\chi'(x)\sgn(x)\right\vert\lesssim \dfrac{1}{B}\sech^{1/4}(x)\Phi_B'(x).
\]
Thus, gathering the first and second term in \eqref{dt_J_dual} 
along with the above identities, 
for some absolute constant $C>0$ we can bound both leading order terms as follows:
\begin{align}\label{varphi_into_z_first_ineq}
-\int\varphi_{1,\geq,x}^2\Phi_B'+\dfrac{1}{4}\int \varphi_{1,\geq}^2\Phi_B'''&\leq 
  -\int z_x^2+\dfrac{C}{B}\int z^2\sech^{1/4}(x).
\end{align}

The rest of this subsection is devoted to estimating 
all the other terms appearing
on the right-hand side of the virial identity \eqref{dt_J_dual}.
We begin by handling the terms containing the nonlinearity $\mathcal{N}$.

\begin{lem}\label{lem_estimate_dual_J_nonlinearity}
Let  $(\varepsilon_1,\varepsilon_2)$ be any solution to system \eqref{system_epsilon} 
satisfying \eqref{stable_manifold_hyp_thm}, let $(\varphi_{1,\geq},\varphi_{2,\geq})$ 
be defined as in \eqref{defdual} and $\epsilon\in(0,1)$ small. 
Then, for all times $t\in\R$ the following inequality holds
\begin{align}\label{estimate_nonlinearterms_dual_J}
\begin{split}
& \left\vert \int \left(\varphi_{1,\geq ,x}\Phi_B
 + \tfrac{1}{2}\varphi_{1,\geq }\Phi_B'\right)\big(1-\gamma\partial_x^2\big)^{-1}\mathbb{P}_{\geq \kappa^{-1}}SU \mathcal{N} 
 \right \vert   
\\
& \qquad \qquad \qquad \lesssim  \delta a_1^2 
  + \epsilon \int w^2\sech^{1/2} (x) 
  + 
  O(\delta^4)+O(\delta^2/\kappa).
\end{split}
\end{align}
\end{lem}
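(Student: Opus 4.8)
The strategy is to mirror the argument used for Lemma \ref{lemma_N_eps_geq_original_virial}, but now transported through the operator $(1-\gamma\partial_x^2)^{-1}\mathbb{P}_{\geq\kappa^{-1}}SU$. The first step is to record the boundedness properties we will use repeatedly: $(1-\gamma\partial_x^2)^{-1}$ is bounded on every $L^p$ and on every $H^s$ uniformly in $\gamma$, the projection $\mathbb{P}_{\geq\kappa^{-1}}$ is bounded on $L^2$ and on $H^1$, and $SU=\partial_x^2+3\tanh(x)\partial_x+2$ maps $H^2\to L^2$ (with localized first-order coefficient); crucially, two derivatives in $SU$ are compensated by the two smoothing derivatives in $(1-\gamma\partial_x^2)^{-1}$, so the composition $(1-\gamma\partial_x^2)^{-1}SU$ is bounded on $L^2$ and on $H^1$ uniformly in $\gamma$. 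I will also use, as in Lemma \ref{lemma<}, that the wave operators $\mathcal{W}^{\pm1}$ are bounded on $W^{1,\infty}$, $H^1$ and $L^1$, so that the distorted Bernstein inequality ${\|\widetilde{\mathbb P}_{<\kappa^{-1}}f\|}_{L^\infty}\lesssim\kappa^{-1/2}{\|f\|}_{L^2}$ is available.

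The second step splits the projector as $\mathbb{P}_{\geq\kappa^{-1}}=\mathrm{id}-\mathbb{P}_{<\kappa^{-1}}$. For the low-frequency piece, I integrate by parts to move one derivative off the weight factor $\varphi_{1,\geq,x}\Phi_B$, bound $\varphi_{1,\geq}$ in $L^2$ by $\delta$ (using that $SU$-conjugation and the smoothing operator keep us at energy level), and estimate $(1-\gamma\partial_x^2)^{-1}\mathbb{P}_{<\kappa^{-1}}SU\mathcal{N}$ in $H^1$. Commuting $\mathbb{P}_{<\kappa^{-1}}$ with the other operators and passing to the flat side via wave operators, this is $\lesssim \kappa^{-1/2}{\|SU\mathcal N\|}_{L^1}\lesssim\kappa^{-1/2}\delta^2$ after using that $SU$ applied to the (localized, cubic) nonlinearity $\mathcal N$ from \eqref{definition_N_nonlinearity_epsilon}--\eqref{Nepsilon'} still lies in $L^1$ with norm $O(\delta^2)$; this contributes the $O(\delta^3/\kappa^{1/2})\subset O(\delta^2/\kappa)+O(\delta^4)$ term (more precisely, one gets $\delta\cdot\kappa^{-1/2}\delta^2$, which is absorbed into the stated right-hand side once one argues for $t\le T_{\max}$, exactly as in the remark following Lemma \ref{lemma_N_eps_geq_original_virial}).

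The third and main step treats the `$\mathrm{id}$' piece, i.e.\ the integral of $(\varphi_{1,\geq,x}\Phi_B+\tfrac12\varphi_{1,\geq}\Phi_B')(1-\gamma\partial_x^2)^{-1}SU\widetilde{\mathbb P}_{\geq\kappa^{-1}}\mathcal N$, organized by homogeneity in $\varepsilon_1$ as in \eqref{Nepsilon'}. The terms with no $\varepsilon_1$ ($3a_1^2Y^2Q+a_1^3Y^3$) are localized, so after Cauchy--Schwarz and the $L^2$-boundedness of $(1-\gamma\partial_x^2)^{-1}SU$ they are $\lesssim\delta a_1^2$. The cubic-in-$\varepsilon_1$ term $\varepsilon_1^3$ is handled by Cauchy--Schwarz with $(1-\gamma\partial_x^2)^{-1}SU$ bounded on $L^2$ and ${\|\varepsilon_1^3\|}_{L^2}\lesssim\delta^3$, giving $O(\delta^4)$ (up to a factor of $B$, which is a fixed constant here). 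The delicate terms are the quadratic and linear ones containing $\varepsilon_{1,\geq}^2$ or $\varepsilon_{1,\geq}$, since these must be absorbed into $\epsilon\int w^2\sech^{1/2}(x)$ rather than merely bounded by $\delta^2$. Here the point is that $(1-\gamma\partial_x^2)^{-1}SU$ does \emph{not} produce extra spatial growth and the outer weight $\Phi_B$ grows only linearly and at the slow scale $B$, while the coefficients $Q\approx\sech(x)$, $Y\approx\sech^2(x)$ are strongly localized; so one writes $\varepsilon_{1,\geq}=w/\sqrt{\Phi_A'}$, notes $1/\sqrt{\Phi_A'}\lesssim e^{|x|/(2A)}$, and checks that after pairing with the localized coefficient and the $O(B)$ weight one still gains $\sech^{1/2}(x)$ (choosing $A$ large enough that $e^{|x|/A}\sech(x)\lesssim\sech^{1/2}(x)$, exactly as in the computation $0\le -3QQ'\Phi_A/\Phi_A'\le 3\sech^{1/2}(x)$). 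After a Cauchy--Schwarz with a small constant $\epsilon$ one obtains $\epsilon\int w^2\sech^{1/2}(x)$ plus, from the commutator of $\widetilde{\mathbb P}_{\geq\kappa^{-1}}$ (equivalently, the $\widetilde{\mathbb P}_{<\kappa^{-1}}\varepsilon_1$ pieces hidden in $\varepsilon_1^2=\varepsilon_{1,<}^2+2\varepsilon_{1,<}\varepsilon_{1,\geq}+\varepsilon_{1,\geq}^2$ and in the quadratic $a_1$-terms), a distorted-Bernstein contribution $\lesssim\delta^3\kappa^{-1/2}$. Collecting everything gives \eqref{estimate_nonlinearterms_dual_J}.

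The main obstacle is the third step: one must verify carefully that conjugating by $(1-\gamma\partial_x^2)^{-1}SU$ and $\widetilde{\mathbb P}_{\geq\kappa^{-1}}$ does not destroy the localization needed to close the bootstrap, i.e.\ that the quadratic terms really do land inside $\epsilon\int w^2\sech^{1/2}(x)$ with an absolute (small) constant rather than producing an uncontrolled $O(1)\int w^2\sech^{1/2}(x)$. This is why the hierarchy $1\ll B\ll A$ is used: the large parameter $A$ buys the pointwise gain $e^{|x|/A}\sech(x)\lesssim\sech^{1/2}(x)$ needed to convert $\varepsilon_{1,\geq}$-weighted integrals back into $w$-weighted ones, and $B$ being fixed relative to $A$ makes the $O(B)$ losses harmless. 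Everything else is a routine application of Cauchy--Schwarz, Hölder, distorted Bernstein, and the boundedness of wave operators already invoked in Section \ref{original_virial_section}.
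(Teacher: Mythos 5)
Your plan is, in substance, the paper's own proof: the paper also organizes $\mathcal N$ by homogeneity as in \eqref{Nepsilon'}, bounds the cubic term by Cauchy--Schwarz together with the $L^2$-boundedness of $\big(1-\gamma\partial_x^2\big)^{-1}SU\,\widetilde{\mathbb{P}}_{\geq\kappa^{-1}}$ (constants allowed to depend on $B,\gamma$), exploits the localization of $Q,Y$ and the comparison $\sech^2(x)\lesssim \Phi_A'(x)\sech^{1/2}(x)$ to convert $\varepsilon_{1,\geq}$-integrals into $w$-integrals (this is exactly \eqref{loce}), uses distorted Bernstein for the $\varepsilon_{1,<}$ contributions, and then absorbs with a Young-type inequality with parameter $\epsilon$.

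Two points in your write-up should be corrected, though neither is fatal. First, the split $\mathbb{P}_{\geq\kappa^{-1}}=\mathrm{id}-\mathbb{P}_{<\kappa^{-1}}$ is not needed here: the paper keeps the projector throughout, via \eqref{projcomm}. More importantly, your intermediate claim $\|SU\mathcal N\|_{L^1}\lesssim\delta^2$ is false as stated, since $SU$ contains $\partial_x^2$ and $\mathcal N$ is only as regular as a product of $H^1$ functions; the repair is to keep the smoothing operator attached, e.g. use that $\big(1-\gamma\partial_x^2\big)^{-1}SU$ is bounded on $H^1$ and apply it to $\widetilde{\mathbb{P}}_{<\kappa^{-1}}\mathcal N$, whose $H^1$ norm is $\lesssim\kappa^{-1/2}\|\mathcal N\|_{L^1}$ by wave operators and Bernstein (and note that $\big(1-\gamma\partial_x^2\big)^{-1}SU$ has $L^2$ operator norm $O(\gamma^{-1})$, not uniform in $\gamma$ -- harmless since $\gamma$ is fixed). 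Second, the smallness of the coefficient of $\int w^2\sech^{1/2}(x)$ is not bought by the hierarchy $B\ll A$: the pointwise weight comparison only yields an $O(1)$ constant. The $\epsilon$ comes from the fact that every term of $\mathcal N$ that is at most quadratic in the localized variables carries an extra small prefactor, $\|\varepsilon_1\|_{L^\infty}\lesssim\delta$ or $|a_1|\lesssim\delta$, so after Cauchy--Schwarz one faces $\delta\cdot\delta\,\big(\int w^2\sech^{1/2}\big)^{1/2}$, respectively $\delta\,|a_1|\big(\int w^2\sech^{1/2}\big)^{1/2}$, and Young's inequality gives $\epsilon\int w^2\sech^{1/2}+\epsilon^{-1}\delta^4$, respectively $\epsilon\int w^2\sech^{1/2}+\epsilon^{-1}\delta^2a_1^2\lesssim\epsilon\int w^2\sech^{1/2}+\delta a_1^2$ since $\delta\ll\epsilon$. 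The hierarchy $1\ll B\ll A$ only plays a role later, when the virial estimates are combined in Section \ref{secprmt}.
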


\begin{proof}
The proof is similar to that of Lemma \ref{lemma_N_eps_geq_original_virial} so we will skip some details. Recall the formula for $N$ in \eqref{Nepsilon'} and that $Q=\sqrt{2}\sech(x)$ and $Y\approx \sech^2(x)$.
We split the estimates into several cases.

\smallskip
\begin{itemize}[leftmargin=*]

\item[$\cdot$] 
Control of $\varepsilon_{1}^3$:
Using Cauchy-Schwarz we have 
\begin{align*}
& \left\vert \int \left(\varphi_{1,\geq ,x}\Phi_B+\tfrac{1}{2}\varphi_{1,\geq }\Phi_B'\right)
  \big(1-\gamma\partial_x^2 \big)^{-1}SU\,\widetilde{\mathbb{P}}_{\geq\kappa^{-1}}\varepsilon_1^3 \right \vert
  \\
  & \lesssim \big( B {\| \partial_x \varphi_{1,>} \|}_{L^2} + {\| \varphi_{1,>} \|}_{L^2} \big) 
  {\big\| \big(1-\gamma\partial_x^2 \big)^{-1}SU\,\widetilde{\mathbb{P}}_{\geq\kappa^{-1}}\varepsilon_1^3 \big\|}_{L^2}
  \\
  & \lesssim \delta \cdot {\| \varepsilon_1^3 \big\|}_{L^2} \lesssim \delta^4,
\end{align*}
where the implicit constant may depend on $B$ and $\gamma$,
and having used that $\big(1-\gamma\partial_x^2 \big)^{-1}SU$ is bounded on $L^2$.

\smallskip
\item[$\cdot$] Control of $\varepsilon_1^2\big(Q+a_1Y\big)$: 
In this case it is enough to use the decay of $Q$. 
More precisely, splitting $\varepsilon_1=\varepsilon_{1,\geq}+\varepsilon_{1,<}$, 
we have that 
\begin{align}\label{loce}
\begin{split}
\Vert \big(1-\gamma\partial_x^2 \big)^{-1}SU\,\widetilde{\mathbb{P}}_{\geq\kappa^{-1}}
  \varepsilon_1^2Q\Vert_{L^2}^2 
  & \lesssim \Vert \varepsilon_1^2Q\Vert_{L^2}^2
  \\
  & \lesssim {\| \varepsilon_1 \|}_{L^\infty}^2 \int \big(w^2\sech^{1/2}(x) + \varepsilon_{1,<}^2\sech^2(x)\big)
  \\
  & \lesssim \delta^2\int w^2\sech^{1/2}(x)+O(\delta^4/\kappa),
\end{split}
\end{align}
having used once again (distorted) Bernstein for the low frequency contribution.
Thus, using the above estimate
and Cauchy-Schwarz we infer that 
\begin{align*}
&\left\vert \int \left(\varphi_{1,\geq ,x}\Phi_B+\tfrac{1}{2}\varphi_{1,\geq }\Phi_B'\right)
  \big(1-\gamma\partial_x^2 \big)^{-1}SU\,\widetilde{\mathbb{P}}_{\geq\kappa^{-1}}(\varepsilon_1^2Q) \right \vert 
\\
& \lesssim \big( B {\| \partial_x \varphi_{1,>} \|}_{L^2} + {\| \varphi_{1,>} \|}_{L^2} \big)
   \Big( \delta^2\int w^2\sech^{1/2}(x)+O(\delta^4/\kappa) \Big)^{1/2}
\\  
& \lesssim \epsilon^{-1}\delta^4+\epsilon\int w^2\sech^{1/2}(x) 
  + O(\delta^2/\kappa).
\end{align*}

\smallskip
\item[$\cdot$] 
Control of $a_1\varepsilon_1\big(a_1Y^2+2QY\big)$: 
Similarly, splitting $\varepsilon_1=\varepsilon_{1,\geq}+\varepsilon_{1,<}$ and using the decay of $Q$ we get  
\[
\Vert a_1\varepsilon_1(a_1Y^2+2QY)\Vert_{L^2}^2\lesssim a_1^2 \int w^2\sech^{1/2}(x)+a_1^2O(\delta^2/\kappa),
\]
and hence this terms are handled exactly as the previous one.

\smallskip
\item[$\cdot$] 
Control of $a_1^3Y^3 + 3a_1^2QY^2$: 
In this case 
we directly estimate
\begin{align*}
&\left\vert \int \left(\varphi_{1,\geq ,x}\Phi_B 
  + \tfrac{1}{2}\varphi_{1,\geq }\Phi_B'\right)\big(1-\gamma\partial_x^2 \big)^{-1}
  \mathbb{P}_{\geq\kappa^{-1}}SU(a_1^2Y^2(a_1Y+3Q)) \right \vert \lesssim \delta a_1^2.
\end{align*}

\end{itemize}

Gathering all the above estimate we conclude the proof of the lemma.
\end{proof}

\begin{lem}\label{lem_estimate_dual_J_Upsilon2}
Under the same assumptions of Lemma \ref{lem_estimate_dual_J_nonlinearity},
the following inequality holds for all times $t\in\R$:
\begin{align*}
\left\vert \int \left(\varphi_{1,\geq ,x}\Phi_B + \tfrac{1}{2}\varphi_{1,\geq }\Phi_B'\right)
  \dot{\mathbb{P}}_{\geq\kappa^{-1}}\varphi_2 
 \right\vert  
  & \lesssim 
  O(\delta^2/\kappa).
\end{align*}
\end{lem}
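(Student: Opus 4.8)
The plan is to reproduce, for the transformed variable, the argument that handled the terms $\mathsf{P}_{1,\geq},\mathsf{P}_{2,\geq}$ in the proof of Proposition \ref{propOR} (see the discussion around \eqref{propORaux}--\eqref{P12est}): the only feature we use is that differentiating the frequency cutoff in time produces a small prefactor. Concretely, $\dot{\mathbb{P}}_{\geq\kappa^{-1}}=-\partial_t\mathbb{P}_{<\kappa^{-1}}$ is the flat Fourier multiplier with symbol $\partial_t\eta_{\geq\kappa^{-1}}(\xi)$, which is supported in the region $|\xi|\approx\kappa^{-1}$ and is pointwise of size $O(\kappa^{-1}\kappa')$; hence it is bounded on $L^2$ with operator norm $\lesssim\kappa^{-1}\kappa'$, exactly as recorded in Proposition \ref{propOR}. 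Moreover, since $\kappa(t)=\log^{1+\alpha}(e^D+t)$, one has $\kappa'$ bounded and $\kappa\geq1$, so $\kappa^{-1}\kappa'\lesssim\kappa^{-1}$ (in fact $\kappa^{-1}\kappa'=o(\kappa^{-2})$ for $D$ large).

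Next I would bound the two factors in the integrand separately. For the second factor, writing $\varphi_2=(1-\gamma\partial_x^2)^{-1}SU\varepsilon_2$ and using that $(1-\gamma\partial_x^2)^{-1}SU$ is bounded on $L^2$ (the two derivatives lost by $SU$ being compensated by $(1-\gamma\partial_x^2)^{-1}$), the stable manifold hypothesis \eqref{stable_manifold_hyp_thm} gives $\|\varphi_2\|_{L^2}\lesssim\|\varepsilon_2\|_{L^2}\lesssim\delta$, whence $\|\dot{\mathbb{P}}_{\geq\kappa^{-1}}\varphi_2\|_{L^2}\lesssim\kappa^{-1}\kappa'\,\delta$. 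For the first factor, the weight $\Phi_B$ satisfies $|\Phi_B|\lesssim B$ and $|\Phi_B'|\lesssim1$, and — exactly as in the proof of Lemma \ref{lem_estimate_dual_J_nonlinearity} — one has $B\|\partial_x\varphi_{1,\geq}\|_{L^2}+\|\varphi_{1,\geq}\|_{L^2}\lesssim\delta$ (the implicit constant depending on $B$ and $\gamma$), using that $\partial_x(1-\gamma\partial_x^2)^{-1}SU$ maps $H^1\to L^2$ and the a priori bound $\|\varepsilon_1\|_{H^1}\lesssim\delta$.

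Finally, Cauchy--Schwarz yields
\[
\left\vert \int \left(\varphi_{1,\geq ,x}\Phi_B + \tfrac{1}{2}\varphi_{1,\geq }\Phi_B'\right)\dot{\mathbb{P}}_{\geq\kappa^{-1}}\varphi_2\right\vert
\lesssim \big(B\|\partial_x\varphi_{1,\geq}\|_{L^2}+\|\varphi_{1,\geq}\|_{L^2}\big)\,\big\|\dot{\mathbb{P}}_{\geq\kappa^{-1}}\varphi_2\big\|_{L^2}
\lesssim \delta\cdot\kappa^{-1}\kappa'\,\delta \lesssim \delta^2/\kappa ,
\]
which is the asserted estimate (and is in fact a little stronger, $O(\delta^2/\kappa^2)$). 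There is essentially no obstacle here: the whole content is bookkeeping of the $\kappa^{-1}\kappa'$ gain from the differentiated projection together with the $L^2\to L^2$ (resp.\ $H^1\to L^2$) mapping properties of $(1-\gamma\partial_x^2)^{-1}SU$ (resp.\ $\partial_x(1-\gamma\partial_x^2)^{-1}SU$). The only minor point to keep in mind is that, as with the other terms carrying a differentiated projection, the time integral of $\mu^{-1}$ times this quantity is harmless since $\mu^{-1}\kappa^{-1}\in L^1_t([0,\infty))$.
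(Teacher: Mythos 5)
Your proof is correct and follows essentially the same route as the paper's: bound $\|\dot{\mathbb{P}}_{\geq\kappa^{-1}}\varphi_2\|_{L^2}\lesssim\delta\,\kappa^{-1}\kappa'$ from the size of the differentiated symbol, control the other factor by $\|\varphi_{1,\geq}\|_{H^1}\lesssim\delta$, and finish with Cauchy--Schwarz. The paper's version is just a one-line compression of the same argument.
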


\begin{proof}
%
It suffices to observe that 
$\Vert \dot{\mathbb{P}}_{\geq \kappa^{-1}}\varphi_2\Vert_{L^2} \lesssim \delta \kappa^{-1} \kappa' \ll 
 \delta/\kappa^2$, so that we get
\[
\left\vert \int \big(\varphi_{1,\geq,x}\Phi_B + \tfrac12\varphi_{1,\geq}\Phi_B'\big)
  \dot{\mathbb{P}}_{\geq\kappa^{-1}}\varphi_2 
  \right\vert
  \lesssim {\| \varphi_{1,\geq} \|}_{H^1} \Vert \dot{\mathbb{P}}_{\geq \kappa^{-1}}\varphi_2\Vert_{L^2} 
  \lesssim \delta^2 /\kappa.
\]
\end{proof}

As a direct consequence of
the identities \eqref{dt_J_dual} and \eqref{varphi_into_z_first_ineq},
Lemmas \ref{lem_estimate_dual_J_nonlinearity} and \ref{lem_estimate_dual_J_Upsilon2}, 
with $\epsilon$ small enough 
(but independent of $\delta$, $\kappa$ or $T_\mathrm{max}$), 
and the estimate $|\mathsf{P}_{3,\geq}| \lesssim \delta^2/\kappa$ (see the similar estimate \eqref{P12est},
and the argument after \eqref{propORaux})
we obtain the following corollary:

\begin{cor}\label{cor_final_virial_est_J}
Let $B\gg1\gg\gamma>0$ and $(\varepsilon_1,\varepsilon_2)\in C(\R,H^1\times L^2)$
be any solution to system \eqref{system_epsilon} satisfying hypothesis \eqref{stable_manifold_hyp_thm}. 
Let $w$ and $z$ be defined as in \eqref{w_def} and \eqref{def_z} respectively.
Then, for some absolute constant $C>0$, for all times $t\in\R$ we have
\begin{align}
\dfrac{d}{dt}\mathcal{J}_\geq \leq -\dfrac{1}{\mu}\int z_x^2+\dfrac{C}{B\mu}\int z^2\sech^{1/4}(x)
  + \dfrac{C\epsilon}{\mu}\int w^2\sech^{1/2}(x)\nonumber
\\ 
  + \dfrac{ a_1^2}{\mu}O(\delta)
  + \dfrac{1}{\mu}O(\delta^4) + \dfrac{1}{\mu}O(\delta^2/\kappa). 
\end{align}
\end{cor}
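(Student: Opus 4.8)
The plan is to assemble the corollary directly from the virial identity \eqref{dt_J_dual} together with the three estimates already established; no genuinely new computation is required. Starting from \eqref{dt_J_dual}, the right-hand side consists of four groups of terms: (i) the two leading quadratic terms $-\frac{1}{\mu}\int\varphi_{1,\geq,x}^2\Phi_B'+\frac{1}{4\mu}\int\varphi_{1,\geq}^2\Phi_B'''$; (ii) the nonlinear contribution $\frac{1}{\mu}\int(\varphi_{1,\geq,x}\Phi_B+\tfrac12\varphi_{1,\geq}\Phi_B')\big(1-\gamma\partial_x^2\big)^{-1}SU\,\widetilde{\mathbb{P}}_{\geq\kappa^{-1}}\mathcal{N}$; (iii) the projection error $\frac1\mu\mathsf{P}_{3,\geq}$; and (iv) the weight-derivative term $-\frac{\mu'}{\mu}\mathcal{J}_\geq$.

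For group (i) I would apply inequality \eqref{varphi_into_z_first_ineq}, which, after passing to the space-localized variable $z=\varphi_{1,\geq}\sqrt{\Phi_B'}$ of \eqref{def_z}, rewrites these two terms as $-\frac1\mu\int z_x^2+\frac{C}{B\mu}\int z^2\sech^{1/4}(x)$ for an absolute constant $C$; this furnishes the first pair of terms in the claimed bound, and the fact that $B$ is large but fixed is what will make the coefficient $C/B$ harmless once this estimate is combined with Proposition \ref{propOR}. For group (ii) I would simply insert Lemma \ref{lem_estimate_dual_J_nonlinearity}: dividing by $\mu$, its bound produces precisely $\frac{a_1^2}{\mu}O(\delta)+\frac{C\epsilon}{\mu}\int w^2\sech^{1/2}(x)+\frac1\mu O(\delta^4)+\frac1\mu O(\delta^2/\kappa)$. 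The one point deserving attention here is that the coefficient in front of $\int w^2\sech^{1/2}(x)$ is proportional to the small parameter $\epsilon$ (which may be chosen independently of $\delta$, $\kappa$ and $T_{\mathrm{max}}$), since it is only this that will later allow the term to be absorbed into the negative $\int w_x^2$/$\int w^2\sech^{1/2}$ structure coming from Propositions \ref{propOR} and \ref{prop_coer_varepsilon}.

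It remains to dispose of groups (iii) and (iv), which are purely administrative. For $\mathsf{P}_{3,\geq}$ I would argue as for \eqref{P12est} and as in Lemma \ref{lem_estimate_dual_J_Upsilon2}: the symbol of $\dot{\mathbb{P}}_{\geq\kappa^{-1}}$ is $-(\xi/\kappa)\eta'(\xi/\kappa)\kappa^{-1}\kappa'$ with $\kappa^{-1}\kappa'=o(\kappa^{-2})$, so ${\|\dot{\mathbb{P}}_{\geq\kappa^{-1}}\varphi_i\|}_{H^1}\lesssim\delta\kappa^{-1}\kappa'$, and then Cauchy-Schwarz together with ${\|\varphi_{1,\geq}\|}_{H^1}+{\|\varphi_{2,\geq}\|}_{L^2}\lesssim\delta$ gives $|\mathsf{P}_{3,\geq}|\lesssim\delta^2\kappa^{-1}\kappa'\lesssim\delta^2/\kappa$, accounting for $\frac1\mu O(\delta^2/\kappa)$. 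For group (iv), since $|\mathcal{J}_\geq|\lesssim\mu^{-1}{\|\varphi_{1,\geq}\|}_{H^1}{\|\varphi_{2,\geq}\|}_{L^2}\lesssim\mu^{-1}\delta^2$ and $\mu'(0)=0$ with $\mu'/\mu\lesssim\mu^{-1}$, one has $|\tfrac{\mu'}{\mu}\mathcal{J}_\geq|\lesssim\mu^{-2}\delta^2$, which is again absorbed into $\frac1\mu O(\delta^2/\kappa)$ (using $\mu^{-1}\lesssim\kappa^{-1}$ away from a bounded time set, where $\mu'$ is in any case small). Collecting the four groups yields the stated inequality. As anticipated, this step has no real obstacle: all the substance lies in Lemmas \ref{lem_estimate_dual_J_nonlinearity} and \ref{lem_estimate_dual_J_Upsilon2} and in the coercivity Proposition \ref{prop_coer_varepsilon}, and the only mild subtlety is the careful tracking of the $\epsilon$-dependence, which is essential for closing the argument later.
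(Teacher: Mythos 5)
Your proposal is correct and follows exactly the paper's route: the paper's "proof" of this corollary is a single sentence invoking the virial identity \eqref{dt_J_dual}, the localization inequality \eqref{varphi_into_z_first_ineq}, Lemmas \ref{lem_estimate_dual_J_nonlinearity} and \ref{lem_estimate_dual_J_Upsilon2}, and the bound $|\mathsf{P}_{3,\geq}| \lesssim \delta^2/\kappa$, which is precisely your four-group decomposition. You supply the routine details (the symbol bound for $\dot{\mathbb{P}}_{\geq\kappa^{-1}}$, the absorption of $\mu'\mu^{-1}\mathcal{J}_\geq$) that the paper leaves implicit, and you correctly flag that the $\epsilon$-dependence of the $w$-coefficient is what matters for the subsequent absorption step.
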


\medskip
\subsection{Virial argument for $\mathcal{K}_\geq$}
In this subsection we seek to show an estimate similar to that of Corollary \ref{cor_final_virial_est_J}
but in this case for the time derivative of the functional $\mathcal{K}_\geq$ defined in \eqref{defK}
with \eqref{defpsiB}.
Following similar calculations as those done in \eqref{dp_dt} for $\mathcal{P}_\geq$ 
and in Lemma \ref{lemdtJ} for $\mathcal{J}_\geq$,
it is not difficult to see that in this case we obtain
\begin{align}\label{dtK0}
\begin{split}
\dfrac{d}{dt}\mathcal{K}_\geq&=-\dfrac{1}{2\mu}\int \varphi_{2,\geq}^2\uppsi_B' 
  -  \dfrac{1}{2 \mu}\int \varphi_{1,\geq,x}^2\uppsi_B'+ \dfrac{1}{2\mu }\int \varphi_{1,\geq}^2\uppsi_B'    
\\ 
& \quad +\dfrac{1}{\mu}\int \varphi_{1,\geq,x} \uppsi_B 
  \big(1-\gamma\partial_x^2\big)^{-1} \mathbb{P}_{\geq \kappa^{-1}} SU \mathcal{N} 
  + \dfrac{1}{\mu}\mathsf{P}_{4,\geq}-\dfrac{\mu'}{\mu}\mathcal{K}_\geq,
\end{split}
\end{align}
where 
\begin{align}\label{def_P4}
\mathsf{P}_{4,\geq} :=\int \Big(\varphi_{2,\geq}\partial_x\dot{\mathbb{P}}_{\geq\kappa^{-1}}\varphi_{1}
  + \varphi_{1,\geq,x}
  \dot{\mathbb{P}}_{\geq\kappa^{-1}}\varphi_2 \Big) 
  \uppsi_B.
\end{align}

In order to rewrite \eqref{dtK0} in terms of the localized (in space) variable 
for the transformed problem $z(t,x)$, see \eqref{def_z}, recall that (see  \eqref{derivative_zx}) 
\begin{align*}
-\varphi_{1,\geq,x}^2\Phi'_B=-z_{x}^2+ \tfrac12\partial_x\big(\varphi_{1,\geq}^2\big)\Phi_B''
 + \varphi_{1,\geq}^2\tfrac{(\Phi_B'')^2}{4\Phi_B'}.
\end{align*}
Thus, since $\uppsi_B' = \Phi_B' \sech^{1/4}(x)$, 
after integration by parts and some manipulations we get  
\begin{align*}
-\dfrac{1}{2\mu}\int\varphi_{1,\geq,x}^2\uppsi_B' &= - \dfrac{1}{2\mu}\int z_{x}^2\sech^{1/4}(x)
 + \dfrac{1}{16\mu}\int \varphi_{1,\geq}^2\Phi_B''\sech^{1/4}(x)\tanh(x)   
\\ 
& \qquad -\dfrac{1}{4\mu}\int \varphi_{1,\geq}^2\Phi_B'''\sech^{1/4}(x) 
 + \dfrac{1}{8\mu}\int \varphi_{1,\geq}^2\tfrac{(\Phi_B'')^2}{\Phi_B'}\sech^{1/4}(x).
\end{align*}
Then, using the trivial estimates $ \vert \Phi_B''\vert\lesssim \tfrac1B\Phi_B'$ 
and $\vert\Phi_B'''\vert\lesssim \tfrac{1}{B^2}\Phi_B'$, we obtain that 
\[
-\dfrac{1}{2\mu} \int\varphi_{1,\geq,x}^2\uppsi_B' \geq  
  - \dfrac{1}{2\mu}\int z_{x}^2 -  \dfrac{1}{B\mu}  \int z^2\sech^{1/4}(x).
\]
Therefore, for $B\gg1$, using the definition of $z$ in \eqref{def_z}, 
we can bound the leading order terms in the first line of \eqref{dtK0} as follows: 
\begin{align}\label{varphi_into_z_dualK_first_ineq}
\begin{split}
& -\dfrac{1}{2\mu}\int \varphi_{2,\geq}^2\uppsi_B'- \dfrac{1}{2\mu} \int\varphi_{1,\geq,x}^2\uppsi_B'
  +\dfrac{1}{2\mu}\int \varphi_{1,\geq}^2\uppsi_B'
\\ 
& \geq -\dfrac{1}{2\mu} \int z_t^2\sech^{1/4}(x)
 - \dfrac{1}{2\mu}\int z_x^2 + \dfrac{1}{4\mu} \int z^2\sech^{1/4}(x) + O(\delta^2/\kappa).
\end{split}
\end{align}
The $O(\delta^2/\kappa)$ contribution above comes from differentiating the 
cutoff $\mathbb{P}_{\geq \kappa^{-1}}$ when replacing the integral involving 
$\varphi_{2,\geq} = \mathbb{P}_{\geq \kappa^{-1}} (\partial_t \varphi_1)$ 
with that involving $z_t = \partial_t (\mathbb{P}_{\geq \kappa^{-1}} \varphi_1) \sqrt{\Phi'_B}$.

The analysis of the other terms in \eqref{dtK0} goes exactly as in the previous subsection.
In particular we have:

\begin{lem}\label{lem_dual_K_estimate_nonlinearity}
Let $(\varepsilon_1,\varepsilon_2)$ be any solution to 
system \eqref{system_epsilon} satisfying \eqref{stable_manifold_hyp_thm},
$(\varphi_{1,\geq},\varphi_{2,\geq})$ defined as in \eqref{defdual} and $\epsilon\in(0,1)$ small. 
Then, for all times $t\in\R$,
\begin{align}\label{lemdualKconc1}
\begin{split}
&\left\vert \int \varphi_{1,\geq ,x}\uppsi_B\big(1-\gamma\partial_x^2\big)^{-1}\mathbb{P}_{\geq \kappa^{-1}}SU \mathcal{N} 
  \right \vert   
  \\
  & \lesssim \delta a_1^2 + \epsilon \int w^2\sech^{1/2}(x) + 
  O(\delta^4) + O(\delta^2/\kappa) 
\end{split}
\end{align}
and
\begin{align}\label{lemdualKconc2}
\big| \mathsf{P}_{4,\geq} \big| & \lesssim  
 \delta^2/\kappa.  
\end{align}
\end{lem}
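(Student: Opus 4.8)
The plan is to mirror the proofs of Lemmas~\ref{lem_estimate_dual_J_nonlinearity} and~\ref{lem_estimate_dual_J_Upsilon2}, the only structural change being that the test function $\varphi_{1,\geq,x}\Phi_B+\tfrac12\varphi_{1,\geq}\Phi_B'$ is now replaced by $\varphi_{1,\geq,x}\uppsi_B$. The first observation to record is that, because of the extra factor $\sech^{1/4}$ in \eqref{defpsiB}, the weight $\uppsi_B$ is bounded \emph{uniformly} in $B$, namely $\|\uppsi_B\|_{L^\infty}\lesssim\int_\R\sech^{1/4}(y)\,dy\lesssim1$. Thus, by Cauchy--Schwarz together with the uniform bound $\|\partial_x\varphi_{1,\geq}\|_{L^2}\lesssim\|\varphi_1\|_{H^1}\lesssim\delta$ (boundedness of $(1-\gamma\partial_x^2)^{-1}SU$ on $L^2$ and of the wave operators), the left-hand side of \eqref{lemdualKconc1} is $\lesssim\delta\,\big\|(1-\gamma\partial_x^2)^{-1}\mathbb{P}_{\geq\kappa^{-1}}SU\mathcal{N}\big\|_{L^2}$, and it remains to bound this $L^2$ norm term by term according to the homogeneity decomposition \eqref{Nepsilon'} of $\mathcal{N}$.

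For the cubic term $\varepsilon_1^3$ one uses $\big\|(1-\gamma\partial_x^2)^{-1}SU\,\widetilde{\mathbb{P}}_{\geq\kappa^{-1}}\varepsilon_1^3\big\|_{L^2}\lesssim\|\varepsilon_1\|_{H^1}^3\lesssim\delta^3$, which yields an $O(\delta^4)$ contribution. For the quadratic pieces $\varepsilon_1^2(Q+a_1Y)$ one reuses the localization estimate \eqref{loce}: splitting $\varepsilon_1=\varepsilon_{1,\geq}+\varepsilon_{1,<}$, bounding $\varepsilon_{1,\geq}^2\sech^2\lesssim w^2\sech^{1/2}$ (valid since $\Phi_A'\gtrsim\sech^{3/2}$ for $A\gg1$) and using distorted Bernstein for $\varepsilon_{1,<}$, one gets $\big\|(1-\gamma\partial_x^2)^{-1}SU\,\widetilde{\mathbb{P}}_{\geq\kappa^{-1}}(\varepsilon_1^2Q)\big\|_{L^2}^2\lesssim\delta^2\int w^2\sech^{1/2}(x)+O(\delta^4/\kappa)$; multiplying by $\delta$, taking square roots and applying Young's inequality (using $\delta^3\kappa^{-1/2}\le\delta^4+\delta^2\kappa^{-1}$ by AM--GM) produces $\epsilon\int w^2\sech^{1/2}(x)+O(\delta^4)+O(\delta^2/\kappa)$. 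The term with $a_1Y$ is identical (the decay of $Y$ is even faster), and the linear-in-$\varepsilon_1$ pieces $a_1\varepsilon_1(a_1Y^2+2QY)$ are treated the same way, picking up an $a_1^2$ prefactor which, after Young, converts into the admissible $\delta a_1^2$. Finally, the $\varepsilon_1$-free piece $a_1^2Y^2(3Q+a_1Y)$ is estimated directly: $\big\|a_1^2Y^2(3Q+a_1Y)\big\|_{L^2}\lesssim a_1^2$, so its contribution is $\lesssim\delta a_1^2$. Combining the four cases yields \eqref{lemdualKconc1}.

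For \eqref{lemdualKconc2}, recall that $\dot{\mathbb{P}}_{\geq\kappa^{-1}}$ is Fourier multiplication by $\partial_t\eta_{\geq\kappa^{-1}}$, whose symbol is supported at frequencies $|\xi|\approx\kappa^{-1}$ and has size $O(\kappa^{-1}\kappa')=o(\kappa^{-2})$. Hence $\|\dot{\mathbb{P}}_{\geq\kappa^{-1}}\varphi_2\|_{L^2}\lesssim\kappa^{-1}\kappa'\|\varphi_2\|_{L^2}\ll\delta\kappa^{-2}$, and since $|\xi|\lesssim\kappa^{-1}$ on the support of the multiplier, $\partial_x\dot{\mathbb{P}}_{\geq\kappa^{-1}}\varphi_1$ gains an additional factor $\kappa^{-1}$ in $L^2$. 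Pairing these two quantities against the bounded weight $\uppsi_B$ and against the uniform bounds $\|\varphi_{1,\geq}\|_{H^1},\|\varphi_{2,\geq}\|_{L^2}\lesssim\delta$ coming from \eqref{stable_manifold_hyp_thm}, both integrals in \eqref{def_P4} are seen to be $O(\delta^2/\kappa)$, exactly as in Lemma~\ref{lem_estimate_dual_J_Upsilon2} and the estimate \eqref{P12est}.

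\textbf{Main obstacle.} There is no genuinely new difficulty in this lemma; the one step that must not be glossed over is the reuse of \eqref{loce} for the quadratic part of $\mathcal{N}$, that is, trading the $L^2$ norm of the nonlinearity for the weighted quantity $\int w^2\sech^{1/2}(x)$ (which is absorbed elsewhere in the scheme, with a small constant $\epsilon$) plus a low-frequency remainder rendered $O(\delta^4/\kappa)$ by distorted Bernstein. Everything else is bookkeeping of the powers of $\delta$ and $\kappa^{-1}$.
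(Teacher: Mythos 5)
Your proposal is correct and follows essentially the same route as the paper: the paper omits this proof precisely because it is the argument of Lemma \ref{lem_estimate_dual_J_nonlinearity} (homogeneity splitting of $\mathcal{N}$, Cauchy--Schwarz with the bounded weight, the localization estimate \eqref{loce} plus distorted Bernstein for the low frequencies, and Young's inequality to produce the $\epsilon\int w^2\sech^{1/2}$ and $\delta a_1^2$ terms), combined with the symbol bound $\kappa^{-1}\kappa'=o(\kappa^{-2})$ for the differentiated projection as in \eqref{P12est} and \eqref{propORaux} to get \eqref{lemdualKconc2}. Your observation that $\uppsi_B$ is bounded uniformly in $B$ is a harmless simplification consistent with the paper's scheme.
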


\begin{proof}
The proof of \eqref{lemdualKconc1} is very similar to the one of
Lemma \ref{lem_estimate_dual_J_nonlinearity}, so we omit it.
The proof of \eqref{lemdualKconc2} also follows from arguments already given before, 
see for example \eqref{P12est} and \eqref{propORaux}.
\end{proof}


\smallskip
As a consequence of this last lemma, the virial identity \eqref{dtK0},
and the inequality \eqref{varphi_into_z_dualK_first_ineq},
we obtain the following:

\begin{cor}\label{cor_final_virial_est_K}
Let $B\gg1$ and $(\varepsilon_1,\varepsilon_2)$ be any solution to system \eqref{system_epsilon} 
satisfying \eqref{stable_manifold_hyp_thm}, 
and $(\varphi_{1,\geq},\varphi_{2,\geq})$ be defined as in \eqref{defdual}. 
Then, for some absolute constant $C>0$, for all times $t\in\R$ the following holds:
\begin{align*}
-\dfrac{d}{dt}\mathcal{K}_\geq &\leq   \dfrac{1}{2\mu}\int z_t^2\sech^{1/4}(x)
 + \dfrac{1}{2\mu}\int z_x^2 -\dfrac{1}{4\mu}\int z^2\sech^{1/4}(x)
\\ & + \dfrac{C\epsilon}{\mu}\int w^2\sech^{1/2}(x) +  O(\delta)\dfrac{1}{\mu}a_1^2
 + \dfrac{1}{\mu}O(\delta^4) + \dfrac{1}{\mu}O(\delta^2/\kappa),
\end{align*}
\end{cor}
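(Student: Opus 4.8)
The plan is to obtain Corollary~\ref{cor_final_virial_est_K} by assembling the ingredients already prepared in this subsection, with essentially no new analysis required: the virial identity \eqref{dtK0} for $\frac{d}{dt}\mathcal{K}_\geq$, the lower bound \eqref{varphi_into_z_dualK_first_ineq} for its leading-order terms rewritten in the localized variable $z$ of \eqref{def_z}, and Lemma~\ref{lem_dual_K_estimate_nonlinearity} for the nonlinear and projection-commutator terms. Concretely, I would start from \eqref{dtK0}, multiply through by $-1$, and treat the right-hand side term by term.

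For the first three (leading) terms in \eqref{dtK0}, I would invoke \eqref{varphi_into_z_dualK_first_ineq}: recalling $\uppsi_B'=\Phi_B'\sech^{1/4}(x)$ and the identity \eqref{derivative_zx} for $z_x$, together with the elementary bounds $|\Phi_B''|\lesssim B^{-1}\Phi_B'$ and $|\Phi_B'''|\lesssim B^{-2}\Phi_B'$, one rewrites $-\tfrac{1}{2\mu}\int\varphi_{1,\geq,x}^2\uppsi_B'$ as $-\tfrac{1}{2\mu}\int z_x^2$ up to a term of size $O(B^{-1})$ times $\tfrac1\mu\int z^2\sech^{1/4}(x)$; combined with the genuinely positive contribution $+\tfrac{1}{2\mu}\int\varphi_{1,\geq}^2\uppsi_B'=+\tfrac{1}{2\mu}\int z^2\sech^{1/4}(x)$, and taking $B\gg1$ so that $\tfrac12-O(B^{-1})\ge\tfrac14$, this yields the first three terms on the right-hand side of the corollary after negation. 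The only point to watch is the error produced when $\varphi_{2,\geq}=\mathbb{P}_{\geq\kappa^{-1}}(\partial_t\varphi_1)$ is traded for $z_t=\partial_t(\mathbb{P}_{\geq\kappa^{-1}}\varphi_1)\sqrt{\Phi_B'}$: the discrepancy involves $\dot{\mathbb{P}}_{\geq\kappa^{-1}}\varphi_1$, whose symbol is $O(\kappa^{-1}\kappa')=o(\kappa^{-2})$, so Cauchy--Schwarz bounds it by $\tfrac1\mu O(\delta^2/\kappa)$, exactly as recorded after \eqref{varphi_into_z_dualK_first_ineq}.

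The nonlinear term $\tfrac1\mu\int\varphi_{1,\geq,x}\uppsi_B(1-\gamma\partial_x^2)^{-1}\mathbb{P}_{\geq\kappa^{-1}}SU\mathcal{N}$ is controlled by \eqref{lemdualKconc1}, contributing $\tfrac{C\epsilon}{\mu}\int w^2\sech^{1/2}(x)+O(\delta)\tfrac1\mu a_1^2+\tfrac1\mu O(\delta^4)+\tfrac1\mu O(\delta^2/\kappa)$, and the commutator term $\tfrac1\mu\mathsf{P}_{4,\geq}$ is controlled by \eqref{lemdualKconc2}, contributing $\tfrac1\mu O(\delta^2/\kappa)$. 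For $\tfrac{\mu'}{\mu}\mathcal{K}_\geq$ I would use that all these functionals are bounded at every time by the energy, so $|\mathcal{K}_\geq|\lesssim\mu^{-1}\delta^2$, together with $\mu'/\mu=t\langle t\rangle^{-2}\lesssim\mu^{-1}$ and $\kappa\lesssim\mu$ (with a constant depending on $D$), to get $\tfrac{\mu'}{\mu}|\mathcal{K}_\geq|\lesssim\delta^2\mu^{-2}\lesssim\tfrac1\mu O(\delta^2/\kappa)$. Collecting these contributions and fixing $\epsilon$ small, independently of $\delta$, $\kappa$ and $T_{\mathrm{max}}$, gives the stated inequality. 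I do not expect a genuine obstacle at this step, since all the analytic work was done in deriving \eqref{dtK0} (parallel to Lemma~\ref{lemdtJ} and \eqref{dp_dt}) and in Lemma~\ref{lem_dual_K_estimate_nonlinearity}; the only thing demanding care is the sign bookkeeping, and in particular ensuring that the coefficient of $\int z^2\sech^{1/4}(x)$ stays a fixed negative constant after all the $O(B^{-1})$ corrections, which is precisely what forces $B\gg1$. This negative term (together with the one coming from $\mathcal{J}_\geq$) is what makes the combined functional $\mathcal{J}_\geq-\mathcal{K}_\geq$ useful, at the unavoidable cost of the positive contribution $\tfrac1{2\mu}\int z_t^2\sech^{1/4}(x)$, which is then absorbed in Section~\ref{sec_Analysis_H}.
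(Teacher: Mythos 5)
Your proposal is correct and takes essentially the same route as the paper, which states this corollary as a direct consequence of the identity \eqref{dtK0}, the lower bound \eqref{varphi_into_z_dualK_first_ineq}, and Lemma~\ref{lem_dual_K_estimate_nonlinearity}. Your term-by-term bookkeeping (negation, absorption of $O(B^{-1})$ corrections into the $z^2$ coefficient, $|\mathcal{K}_\geq|\lesssim\mu^{-1}\delta^2$ for the $\mu'/\mu$ term) fills in exactly the details the paper leaves implicit; the only small caveat is that $\epsilon$ is carried as a free parameter in the corollary rather than fixed at this stage, but that does not affect the argument.
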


\smallskip
Therefore, combining Corollaries \ref{cor_final_virial_est_J} and \ref{cor_final_virial_est_K} we conclude that 
\begin{align}\label{dj_m_dk_ineq}
\dfrac{d}{dt}\mathcal{J}_\geq-\dfrac{d}{dt}\mathcal{K}_\geq 
& \leq  
  \dfrac{1}{2\mu}\int z_t^2\sech^{1/4}(x) - \dfrac{1}{2\mu}\int z_x^2
  - \dfrac{1}{8\mu}\int z^2\sech^{1/4}(x) +  O(\delta) \dfrac{1}{\mu}a_1^2 +\mathfrak{R}_2(t),
\end{align}
where 
\begin{align}\label{def_r2_dual_virial_hf}
\mathfrak{R}_2(t)& := \dfrac{C \epsilon}{\mu}\int w^2\sech^{1/2}(x) 
  + \dfrac{1}{\mu}O(\delta^4) + \dfrac{1}{\mu}O(\delta^2/\kappa).
\end{align}
In particular $\mathfrak{R}_2(t)$ is the sum of terms that are integrable in time (up to $T_{\max}$)
and a leading order term with a small constant in front that can therefore be absorbed by the
other leading order terms via Proposition \ref{prop_coer_varepsilon} (recall \eqref{w_def}).

Therefore, 
we have essentially reduced the problem to controlling the local norm of $z_t$ or, equivalently, of $\varphi_{2,\geq}$
(see \eqref{def_z}).
We start our analysis of the local norm of $\varphi_{2,\geq}$ in the next subsection,
and then proceed to estimate it in Section \ref{sec_Analysis_H} 
by introducing some ``singular'' frequency-localized virial functionals
with time-dependent weights.

\medskip
\subsection{Dyadic frequency decomposition of the ``bad term''}
The first step to control the bad term involving $z_t = \partial_t (\varphi_{1,\geq} \sqrt{\Phi'_B})$ 
is the following simple but important observation.
Recall the definition from Subsection \ref{secnotation}, and let
\begin{align}
\varphi_{2,\geq 1} := \mathbb{P}_{\geq 1}\varphi_2, \qquad \varphi_{2, N^{-1}} := \mathbb{P}_{N^{-1}}\varphi_{2}, 
\qquad \varphi_{2,\geq} = \varphi_{2,\geq 1} + \sum_{\substack{N\in\mathbb{D},\\ 1<N\leq\kappa(t)}} \varphi_{2, N^{-1}};
\end{align}
then we can bound
\begin{align}\label{deco_varphi2_sech}
\begin{split}
\int \varphi_{2,\geq}^2\sech^{1/4}(x) & \lesssim \int \varphi_{2,\geq 1}^2\sech^{1/4}(x)
  + \log \kappa(t) \sum_{\substack{N\in\mathbb{D}, \\ 1<N\leq \kappa(t)}}
  \int \varphi_{2, N^{-1}}^2\sech^{1/4}(x)
\\ 
& =: \mathrm{I} + \sum_{\substack{N\in\mathbb{D},\\ 1< N\leq\kappa(t)}} \mathrm{II}_N,
\end{split}
\end{align}
%
Then, for given $N\in\mathbb{D}$ with $N\geq 1 $, 
we introduce a time-dependent weight function $\Psi_{N_*}$ with
\begin{align}\label{PsiN*}
\Psi_{ \lambda_N}'(x) := \sech^2\big(\tfrac{x}{ \lambda_N}\big) \qquad \hbox{and} \qquad  \lambda_N(t) := N\log(\kappa^9),
  \qquad \kappa(t) = \log^{1+\alpha}(e^{D} + t).
\end{align}

\begin{lem}\label{lemIIN}
With the definitions in \eqref{deco_varphi2_sech} above, the following estimates holds true:
\begin{align}\label{estI}
\mathrm{I}(t) \lesssim \int \varphi_{2,\geq 1}^2\sech^2\big(\tfrac{x}{\log \kappa^9}\big) dx.
\end{align}
and
\begin{align}\label{estIIN}
\mathrm{II}_N(t) 
  \lesssim 
  \dfrac{\log \kappa(t)}{N}\int \varphi_{2, N^{-1}}^2 \Psi_{\lambda_N}'dx  + O(\delta^2/\kappa^2) . 
\end{align}
\end{lem}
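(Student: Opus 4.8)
The plan is to prove both \eqref{estI} and \eqref{estIIN} by exploiting the frequency localization of the argument against the spatial localization of the weight. For \eqref{estI}, the quantity $\mathrm{I}(t) = \int \varphi_{2,\geq 1}^2 \sech^{1/4}(x)\,dx$ has integrand supported (up to rapidly decaying tails) at $|x| \lesssim 1$, whereas $\sech^2(x/\log\kappa^9)$ is comparable to $1$ on $|x| \lesssim \log\kappa^9$, a much larger set. So the estimate $\sech^{1/4}(x) \lesssim \sech^2(x/\log\kappa^9)$ holds pointwise once $\log\kappa^9 \gtrsim 1$ (i.e. for $D$ large enough, since $\kappa(t) = \log^{1+\alpha}(e^D + t) \geq (1+\alpha)^{1+\alpha}D^{1+\alpha}$), because on the region where $\sech^2(x/\log\kappa^9)$ is small we have $|x|$ very large and $\sech^{1/4}(x)$ is exponentially smaller still. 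This gives \eqref{estI} directly with no loss.

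For \eqref{estIIN}, the point is to trade the tightly-localized weight $\sech^{1/4}(x)$ for the wider weight $\Psi_{\lambda_N}'(x) = \sech^2(x/\lambda_N)$ with $\lambda_N(t) = N\log(\kappa^9)$, at the cost of a factor $N^{-1}$ which comes from a Bernstein-type gain. First I would write $\varphi_{2,N^{-1}} = \mathbb{P}_{N^{-1}} \varphi_2$ and decompose $\sech^{1/4}(x) = \chi_{N}(x) \sech^{1/4}(x) + (1 - \chi_{N}(x)) \sech^{1/4}(x)$, where $\chi_N$ is a smooth cutoff to $|x| \lesssim \lambda_N$. On the support of $\chi_N$ one trivially has $\sech^{1/4}(x) \lesssim \sech^2(x/\lambda_N) \cdot (\text{something}\lesssim 1)$ up to constants; but to gain the factor $N^{-1}$ one must instead argue as follows. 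Since $\varphi_{2,N^{-1}}$ has Fourier support in $|\xi| \approx N^{-1}$, by Bernstein ${\|\varphi_{2,N^{-1}}\|}_{L^\infty}^2 \lesssim N^{-1} {\|\varphi_{2,N^{-1}}\|}_{L^2}^2$. However, using the Bernstein gain naively would only produce a bound in terms of the global $L^2$ norm, hence $O(\delta^2/N)$, which is not quite of the stated form. The cleaner route is: write $\int \varphi_{2,N^{-1}}^2 \sech^{1/4}(x)\,dx$, insert the cutoff, and on the "inner" region $|x|\lesssim \lambda_N$ bound $\sech^{1/4}(x) \le \sech^2(x/\lambda_N)$ pointwise (true once $\lambda_N \gtrsim 1$, since again where the right side is small $|x|$ is large and the left side is exponentially smaller). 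This handles the inner part with constant $1$, not $N^{-1}$; but then the $\log\kappa / N$ prefactor in $\mathrm{II}_N$ must already be present in \eqref{deco_varphi2_sech} — rechecking, indeed $\mathrm{II}_N$ as defined is $\log\kappa \int \varphi_{2,N^{-1}}^2 \sech^{1/4}(x)$, and the claimed bound \eqref{estIIN} is $\frac{\log\kappa}{N}\int \varphi_{2,N^{-1}}^2 \Psi_{\lambda_N}'$. So the genuine gain of $N^{-1}$ must come from somewhere, and it comes from a Bernstein argument applied to the weight-spread: since $\Psi_{\lambda_N}'$ has "frequency content" at scales $\lesssim \lambda_N^{-1} = (N\log\kappa^9)^{-1} \ll N^{-1}$, the product $\varphi_{2,N^{-1}}^2$ tested against $\sech^{1/4}$ can be rewritten, via the almost-orthogonality of the frequency-$N^{-1}$ piece and the low-frequency weight, as $\int \varphi_{2,N^{-1}}^2 \sech^{1/4}(x) \lesssim N^{-1} \int \varphi_{2,N^{-1}}^2 \sech^2(x/\lambda_N) + O(\delta^2/\kappa^2)$, the error term collecting the rapidly-decaying commutator tails (controlled using that $\lambda_N^{-1} \ll N^{-1}$ and Schur/kernel estimates, exactly the mechanism invoked later for Lemma \ref{weight_triple_prime}).

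The main obstacle, and the step I would spend the most care on, is making precise this "frequency-separation Bernstein gain" that produces the $N^{-1}$ factor together with the acceptable error $O(\delta^2/\kappa^2)$. Concretely one writes $\varphi_{2,N^{-1}} = \mathbb{P}_{N^{-1}} g$ with $g = \varphi_2$, expands $\int (\mathbb{P}_{N^{-1}} g)^2 \sech^{1/4}$ using the convolution kernel of $\mathbb{P}_{N^{-1}}$ (which lives at scale $N$), and splits according to whether the spatial argument is within $O(\lambda_N) = O(N\log\kappa^9)$ of the origin or not: in the inner region one replaces $\sech^{1/4}$ by $\sech^2(x/\lambda_N)$ and uses the $L^\infty \hookleftarrow L^2$ Bernstein inequality ${\|\mathbb{P}_{N^{-1}} g\|_{L^\infty}^2 \lesssim N^{-1} \|g\|_{L^2}^2}$ in a weighted form to extract $N^{-1}$; in the outer region $|x| \gtrsim \lambda_N = N\log\kappa^9$ the kernel decay of $\mathbb{P}_{N^{-1}}$ (power or exponential, at scale $N$) forces $\langle x/N\rangle^{-M} \lesssim \langle \log\kappa^9\rangle^{-M} \lesssim \kappa^{-9M/\ldots}$, which for $M$ large absorbs everything into $O(\delta^2/\kappa^2)$ using $\|g\|_{L^2} \lesssim \delta$. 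I would organize this as a short self-contained kernel estimate, citing the boundedness of $\mathbb{P}_{N^{-1}}$ on $L^2$ and the standard Bernstein inequality from Subsection \ref{secnotation}, and verify that the choice $\lambda_N = N\log(\kappa^9)$ with the nine-power is exactly what makes the outer error beat $\kappa^{-2}$ uniformly in $N \leq \kappa(t)$. Summing \eqref{estIIN} over the $O(\log\kappa)$ dyadic values $1 < N \leq \kappa(t)$ then contributes a further harmless $O(\delta^2 \log\kappa / \kappa^2) = o(\delta^2/\kappa)$ to the error, consistent with the bookkeeping in \eqref{deco_varphi2_sech}.
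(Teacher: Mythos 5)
Your treatment of \eqref{estI} is exactly the paper's: the pointwise bound $\sech^{1/4}(x) \lesssim \sech^2(x/\log\kappa^9)$ holds once $\log\kappa^9 \geq 8$, which is guaranteed by taking $D$ large, and that is the whole argument.

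For \eqref{estIIN} you have correctly identified the crux — that the factor $N^{-1}$ must come from a Bernstein-type gain exploiting the separation between the frequency scale $N^{-1}$ of $\varphi_{2,N^{-1}}$ and the much lower frequency content $\lesssim\lambda_N^{-1}$ of the weight, and you correctly flagged that naive Bernstein only yields the global $O(\delta^2/N)$ rather than the local weighted $L^2$ norm. However, the concrete mechanism you propose (spatial inner/outer split relative to $\lambda_N$, then a ``weighted Bernstein'' on the inner region) does not supply what is needed: Bernstein for $\mathbb{P}_{N^{-1}}g$ produces a global $L^2$ norm on the right, and there is no obvious localized version of it; so as written this step is a gap. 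The paper's proof makes a different and sharper reduction first: rewrite $\sech^{1/4}(x) = \Psi'_{\lambda_N}(x)\cdot\big(\sech^{1/4}(x)/\Psi'_{\lambda_N}(x)\big)$, note that the second factor has $L^1_x$ norm $O(1)$ as soon as $\lambda_N\gtrsim 8$, and hence $\mathrm{II}_N \lesssim \log\kappa\,\big\|\varphi_{2,N^{-1}}\sech(\cdot/\lambda_N)\big\|_{L^\infty}^2$, so the quantity to estimate is the $L^\infty$ norm of the already-weighted function. Only then is Bernstein applied, after splitting the Fourier support of the product $\varphi_{2,N^{-1}}\sech(\cdot/\lambda_N)$ at scale $\approx N^{-1}$: on the low piece the plain $L^\infty\!-\!L^2$ Bernstein delivers $N^{-1/2}\,\|\varphi_{2,N^{-1}}\sech(\cdot/\lambda_N)\|_{L^2}$, which squared is exactly $\frac{1}{N}\int\varphi_{2,N^{-1}}^2\Psi'_{\lambda_N}$; the high piece is forced to take frequencies $\geq 2N^{-1}$ from $\sech(\cdot/\lambda_N)$, whose Fourier transform is $O(e^{-\pi\lambda_N/(2N)})=O(\kappa^{-9\pi/2})$ there, and is absorbed into $O(\delta^2/\kappa^2)$. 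This intermediate $L^\infty$ reduction of the \emph{weighted} function is the missing idea in your proposal; your kernel/spatial-tail discussion is then unnecessary because the high-frequency tail estimate already handles everything with no Schur-type argument.
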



\begin{proof}
For \eqref{estI} we can directly enlarge the support of the $\sech^2(x)$, 
to produce the desired inequality.
To prove \eqref{estIIN} we first note that
\begin{align}\label{freq_N_varphi2_loc_mass}
\begin{split}
\mathrm{II}_N(t) \lesssim \log \kappa(t) \int_\R  \varphi_{2, N^{-1}}^2(t,x) \Psi'_{\lambda_N}(x)
  \big(\sech^{1/4}(x)(\Psi_{\lambda_N}')^{-1}(x)\big)dx
  \\
  \lesssim \log \kappa(t){\Big\| \varphi_{2, N^{-1}} \sech\big(\dfrac{\cdot}{\lambda_N}\big) \Big\|}_{L^\infty}^2.
\end{split}
\end{align}
It then suffices to show
\begin{align}\label{estIINpr}
\log\kappa{\Big\| \varphi_{2, N^{-1}} \sech\big(\dfrac{\cdot}{\lambda_N}\big) \Big\|}_{L^\infty}^2 
  \lesssim
  \dfrac{\log\kappa}{N}\int \varphi_{2, N^{-1}}^2 \Psi_{\lambda_N}' + O(\delta^2/\kappa^2).
\end{align}
For this we decompose
into low and high-frequency components (with respect to $N^{-1}$) as follows:
\begin{align}\label{estIINpr2}
\varphi_{2, N^{-1}} \sech\big(\dfrac{x}{\lambda_N}\big)
=\mathbb{P}_{\leq 4N^{-1}}\Big(\varphi_{2, N^{-1}} \sech\big(\dfrac{\cdot}{\lambda_N}\big)\Big)
 + \mathbb{P}_{\geq 8N^{-1}}\Big(\varphi_{2, N^{-1}} \sech\big(\dfrac{\cdot}{\lambda_N}\big)\Big).
\end{align}
For the first term we can directly use Bernstein:
\begin{align*}
{\Big\| \mathbb{P}_{\leq 4N^{-1}}\Big(\varphi_{2, N^{-1}} \sech\big(\dfrac{\cdot}{\lambda_N}\big)\Big) \Big\|}_{L^\infty}
 \lesssim N^{-1/2}
 {\big\| \varphi_{2, N^{-1}} \sech\big(\dfrac{\cdot}{\lambda_N}\big) \big\|}_{L^2}
\end{align*}
which is consistent with \eqref{estIINpr}.

For the second term in \eqref{estIINpr2}, observe that since
$\mathrm{supp}\,\widehat{\varphi}_{2,N^{-1}}\subset B(0,2N^{-1})$, 
and the whole term has frequencies larger than $4N^{-1}$,
then the frequencies of the $\sech$ function have to be larger than $2N^{-1}$:
\begin{align*}
\mathbb{P}_{\geq8N^{-1}}\Big(\varphi_{2, N^{-1}} \sech\big(\dfrac{\cdot}{\lambda_N}\big)\Big) = 
  \mathbb{P}_{\geq8N^{-1}} \Big(\varphi_{2, N^{-1}} 
  \cdot \mathbb{P}_{\geq 2N^{-1}} \sech\big(\dfrac{\cdot}{\lambda_N}\big)\Big) 
\end{align*}
Then, using also \eqref{ftsech}
\begin{align}\label{hatsech}
\left\vert \mathcal{F}\Big( \sech\big(\dfrac{\cdot}{\lambda_N}\big)\Big)\right\vert
  \approx \left\vert \lambda_N \sech(\pi\lambda_N \xi/2)\right\vert, 
\end{align}
we can estimate
\begin{align*}
\Big\| \mathbb{P}_{\geq8N^{-1}}\Big(\varphi_{2, N^{-1}} 
  \sech\big(\dfrac{\cdot}{\lambda_N}\big)\Big) \Big\|_{L^2}^2
  & \lesssim \Big\| \varphi_{2, N^{-1}} 
  \cdot \mathbb{P}_{\geq 2N^{-1}}\sech\big(\dfrac{\cdot}{\lambda_N}\big) \Big\|_{L^2}^2
  \\ 
  & \lesssim {\| \varphi_{2, N^{-1}} \|}_{L^2}^2
  \cdot \big\| \eta_{\geq 2N^{-1}}(\xi) \, \lambda_N \sech(\pi\lambda_N \xi/2) \big\|_{L^1_\xi}^2
  \\
  & \lesssim \delta^2 \cdot \exp\big(- \pi N^{-1}\lambda_N\big) \lesssim \delta^2 \kappa^{-9\pi}.
\end{align*}
due to our choice of $\lambda_N = N\log\kappa^9$ in \eqref{PsiN*}.
This completes the proof of \eqref{estIINpr} and the lemma.
\end{proof}

\medskip
It is worth to remark 
that, in the inequality \eqref{estI}, we lose localization (in space) 
when bounding $\sech^2(x)$ by $\sech^2(x/\log\kappa^9)$, 
but we gain the fact that the weight function and the solution are localized
on different scales at the frequency level, which shall be important in the next section.
For a similar reason 
we use the parameter $\lambda_N \gg N$ in the definition of the weight function $\Psi_{\lambda_N}$,
to produces a ``frequency gap'' with the function $\varphi_{2,N^{-1}}$
so to be able to apply Lemma \ref{weight_triple_prime}.


\bigskip
\section{Singular virial and integrability of the bad term}
\label{sec_Analysis_H}
The purpose of this section 
is to control the quantity
\begin{align}\label{secHquantity}
\int_0^{T_\mathrm{max}}\dfrac{1}{\mu (t)}\int_\R \varphi_{2,\geq}^2(t,x)\sech^{1/4}(x)dxdt.
\end{align}
Thanks to \eqref{deco_varphi2_sech} and Lemma \ref{lemIIN} we 
can reduce this task to bounding the right-hand sides of \eqref{estI}
and \eqref{estIIN}; notice that the latter has an additional $N^{-1}$ factor in front of the local intergral.

To begin, recall that 
\begin{align}
\label{secHparam}
\kappa = \kappa(t) :=\log^{1+\alpha}(e^D+t), \qquad  T_{\mathrm{max}} := \exp\big(\delta^{-\frac43(1-\alpha)}\big),
\end{align}
for arbitrarily small fixed $\alpha>0$,
and define 
the following weight function
\begin{align}\label{Psilambda}
\Psi_{\lambda}(x) := \lambda \tanh\big(\tfrac{x}{\lambda}\big).
\end{align}
To control the right-hand sides of  \eqref{estI} and \eqref{estIIN}
we define two ``singular" virial functionals;
the first one, which we will use to control \eqref{estI}, acts on $\varphi_{\geq 1}$ and is localized in space 
at a scale which is slightly larger than $1$:
\begin{align}\label{H>1}
\begin{split}
\mathcal{H}_{\geq 1}(t) := \dfrac{1}{\mu(t)} \int_\R \left( \Psi_{\lambda_1(t)}(x)\varphi_{2,\geq 1}(t,x) 
	+ \tfrac{1}{2}\Psi_{\lambda_1(t)}'(x)\partial_x^{-1}\varphi_{2,\geq 1}(t,x)\right)\partial_x^{-1}\varphi_{2,\geq 1,t}(t,x)dx,
	\\
	\lambda_1(t) := \log \kappa^9(t);
\end{split}
\end{align}
the second one, which we will use to control \eqref{estIIN}, is a ``singular" type functional
which acts on $\partial_x^{-1}\varphi_2$ localized at the frequency scale $1/N$, 
and with space localization $\lambda_N$:
\begin{align}\label{HN-1}
\begin{split}
\mathcal{H}_{N^{-1}}(t) :=
	 \dfrac{1}{\mu(t)} \int_\R \left( \Psi_{\lambda_N}(x)\varphi_{2,N^{-1}}(t,x) + \tfrac{1}{2} \Psi_{\lambda_N}'(x) \partial_x^{-1}\varphi_{2,N^{-1}}(t,x)\right)	
	 \partial_x^{-1}\varphi_{2,N^{-1},t}(t,x)dx, 
	 \\
	\lambda_N(t) := N \log \kappa^9(t).
\end{split}
\end{align}

In what follows we study the evolution of both functionals. The main results are Propositions \ref{propdotH1} and \ref{propdotHN}.
Later on in Section \ref{secprmt} we will show how to use these functionals to control the desired quantity \eqref{secHquantity}. 
Our starting point for the analysis of \eqref{H>1} and \eqref{HN-1} is the following basic modification of the virial identity:

\begin{lem}\label{lemH*}
Let $\mathcal{H}_{\ast}$ with the symbol $\ast \in \{ \geq \! 1, N^{-1} \}$ 
denote either of the functionals \eqref{H>1} or \eqref{HN-1}.
Then  
%
\begin{align}\label{dt_H_identity}
\begin{split}
\dfrac{d}{dt}\mathcal{H}_\ast & =  -\dfrac{1}{\mu}\int\varphi_{2,\ast}^2 \Psi_{\lambda}' 
	+ \dfrac{1}{4\mu} \int \big(\partial_x^{-1}\varphi_{2,\ast}\big)^2 \Psi_{\lambda}''' 
\\ & \qquad   + \dfrac{1}{\mu} \int \left(\varphi_{2,\ast} \Psi_{\lambda} + \tfrac{1}{2}\partial_x^{-1}\varphi_{2,\ast} \Psi_{\lambda}'\right)
	\big(1-\gamma\partial_x^2\big)^{-1}\partial_x^{-1} \partial_t \mathbb{P}_{\ast} SU \mathcal{N}
\\ &  \qquad  - \dfrac{\dot{\lambda}}{\mu } \int \left(\dfrac{x}{\lambda}\right) \left(\Psi'_{\lambda} \varphi_{2,\ast}
	+ \tfrac12\Psi_{\lambda}''\partial_x^{-1} \varphi_{2,\ast}  \right)\partial_x^{-1}\varphi_{2,\ast,t}
	+ \dfrac{\dot\lambda}{\mu\lambda} \int \Psi_{\lambda}\varphi_{2,\ast}\partial_x^{-1} \varphi_{2,\ast,t}-\dfrac{\dot{\mu}}{\mu}\mathcal{H}_\ast,
\end{split}
\end{align}
where the scale parameter $\lambda$ in the expression above is equal to $\lambda_1$ when $\ast$ stands for $\geq \! 1$ 
and is $\lambda_N$ when $\ast = N^{-1}$.

Moreover, the second term on the right-hand side of \eqref{dt_H_identity} can be absorbed by the first one
up to acceptable remainders:
\begin{align}\label{dxm1_upphi_triple_prime_into_LO}
\left\vert \int (\partial_x^{-1} \varphi_{2,\ast})^2 \Psi_{\lambda}''' \right\vert 
  \leq 
  \frac{1}{8} \int \varphi_{2,\geq}^2\Psi_{\lambda}'+O(\delta^2/\kappa).
\end{align}
\end{lem}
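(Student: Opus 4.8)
**Proof plan for Lemma \ref{lemH*}.**

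The identity \eqref{dt_H_identity} is a direct computation, so the plan is simply to differentiate $\mathcal{H}_\ast$ in time and organize the result. Writing $g := \partial_x^{-1}\varphi_{2,\ast}$ so that $g_x = \varphi_{2,\ast}$ and $g_t = \partial_x^{-1}\varphi_{2,\ast,t}$, the functional is $\mathcal{H}_\ast = \mu^{-1}\mathcal{I}[g;\Psi_\lambda]$ in the notation of \eqref{ideasI}, except that both $\mu$ and $\lambda$ depend on $t$. First I would apply the basic virial identity \eqref{basicvirial} with $V\equiv 0$ (since the transformed variable satisfies the flat equation, cf. \eqref{system_pde_dual_variables}), which produces $-\int g_x^2\Psi_\lambda' + \tfrac14\int g^2\Psi_\lambda''' + \int(\Psi_\lambda g_x + \tfrac12\Psi_\lambda' g)(\Box+1)g$; the first two terms are exactly the first two terms of \eqref{dt_H_identity} after substituting $g_x = \varphi_{2,\ast}$. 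For the forcing term, note $(\Box + 1)g = \partial_x^{-1}(\Box+1)\varphi_{2,\ast}$, and by differentiating the second equation of \eqref{system_pde_dual_variables} in $t$ we have $(\Box+1)\varphi_{2,\ast} = \partial_t\big[(1-\gamma\partial_x^2)^{-1}\mathbb{P}_\ast SU\mathcal{N}\big] + (\Box+1)\dot{\mathbb{P}}_\ast\varphi_2$-type terms; the leading piece gives the third line of \eqref{dt_H_identity}, and the contributions from differentiating the frequency cutoff are of size $O(\delta^2/\kappa)$ by the now-standard estimate $\|\dot{\mathbb{P}}_{\geq\kappa^{-1}}\varphi_2\|_{L^2}\lesssim\delta\kappa^{-1}\kappa'$ used repeatedly above (e.g.\ around \eqref{propORaux}), so they can be folded into the remainder. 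The two $\dot\lambda$-terms and the $-\dot\mu/\mu$ term come from differentiating $\Psi_{\lambda(t)}$ and $\mu^{-1}$: since $\partial_t[\Psi_\lambda(x)] = \dot\lambda\,\tanh(x/\lambda) - (\dot\lambda/\lambda)\,(x/\lambda)\sech^2(x/\lambda) = (\dot\lambda/\lambda)\Psi_\lambda - \dot\lambda\,(x/\lambda)\Psi_\lambda'$ and $\partial_t[\Psi_\lambda'(x)] = -2(\dot\lambda/\lambda)(x/\lambda)\sech^2(x/\lambda)\tanh(x/\lambda) = -(\dot\lambda/\lambda)(x/\lambda)\Psi_\lambda''$, collecting these against $g_t$ produces precisely the two $\dot\lambda$-integrals displayed. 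This establishes \eqref{dt_H_identity}.

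For the absorption claim \eqref{dxm1_upphi_triple_prime_into_LO}, the point is to control $\big(\partial_x^{-1}\varphi_{2,\ast}\big)^2$ weighted by $\Psi_\lambda'''$ in terms of $\varphi_{2,\ast}^2$ weighted by $\Psi_\lambda'$, which is the content of the ``scale separation'' Lemma \ref{weight_triple_prime} alluded to in the introduction. The mechanism is that $\varphi_{2,\ast}$ has frequency support essentially at scale $N^{-1}$ (or $\gtrsim 1$ when $\ast = \geq\!1$), whereas $\Psi_\lambda'$ and $\Psi_\lambda'''$ are localized at the much coarser scale $\lambda_N^{-1} = (N\log\kappa^9)^{-1}$ up to exponentially decaying tails; since $\Psi_\lambda''' \approx \lambda^{-2}\Psi_\lambda'$ pointwise (as $|\Psi_\lambda'''|\lesssim\lambda^{-2}\sech^2(x/\lambda)$), and ``$\partial_x^{-1}$ costs'' roughly a factor $N$ on functions localized at frequency $N^{-1}$, the net factor is $N^2/\lambda_N^2 = (\log\kappa^9)^{-2}\ll 1/8$; the error from the frequency separation (the tails of $\mathbb{P}_{\geq 2N^{-1}}\sech(\cdot/\lambda_N)$) is $O(\delta^2\kappa^{-c})$ for a large power $c$ by the same computation as in the proof of Lemma \ref{lemIIN}, hence absorbable into $O(\delta^2/\kappa)$. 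I would make this precise by writing $\partial_x^{-1}\varphi_{2,\ast} = \partial_x^{-1}\mathbb{P}_{\sim N^{-1}}\varphi_{2,\ast}$, using that $\partial_x^{-1}\mathbb{P}_{\sim N^{-1}}$ is bounded on $L^2$ with norm $\lesssim N$, commuting it past the weight using the frequency gap as in \eqref{estIINpr2}, and concluding via Cauchy-Schwarz and Bernstein.

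The main obstacle is the bookkeeping of \eqref{dxm1_upphi_triple_prime_into_LO}: one must be careful that $\partial_x^{-1}$ applied to a frequency-localized piece is genuinely a bounded operator at the right scale, and that commuting it with the slowly-varying weight $\Psi_\lambda'$ does not destroy the localization — this is exactly why $\lambda_N$ was chosen $\log\kappa^9$ times larger than $N$ rather than comparable to it. The remaining steps (the time-differentiation producing \eqref{dt_H_identity}, and estimating the cutoff-derivative terms) are routine given the identities already in the paper, with $O(\delta^2/\kappa)$ being the natural size of all error terms and $\mu^{-1}\kappa^{-1}\in L^1_t$ guaranteeing time-integrability up to $T_{\mathrm{max}}$.
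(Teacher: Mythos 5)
Your plan follows the paper's proof closely: apply the basic virial identity \eqref{basicvirial} to $g := \partial_x^{-1}\varphi_{2,\ast}$ with $V \equiv 0$ and $\Phi = \Psi_\lambda$, identify the forcing as $(\Box+1)g = (1-\gamma\partial_x^2)^{-1}\partial_x^{-1}\mathbb{P}_\ast SU \partial_t\mathcal{N}$, and collect the contributions from the time-dependent weights $\mu^{-1}$ and $\Psi_\lambda$; the absorption claim \eqref{dxm1_upphi_triple_prime_into_LO} is then exactly Lemma \ref{weight_triple_prime} combined with $|\Psi_{\lambda_N}'''| \leq 4\lambda_N^{-2}\Psi_{\lambda_N}'$ and $\lambda_N/N = \log\kappa^9$ large. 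That much is right, including the reason $\lambda_N \gg N$ is needed.

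However, there is one conceptual slip worth fixing. You write that differentiating the equation produces ``$(\Box+1)\dot{\mathbb{P}}_\ast\varphi_2$-type terms'' of size $O(\delta^2/\kappa)$ that get ``folded into the remainder.'' This is doubly problematic: first, the projections here are $\mathbb{P}_{\geq 1}$ and $\mathbb{P}_{N^{-1}}$, which are \emph{time-independent} (unlike $\mathbb{P}_{\geq\kappa^{-1}}$ in \eqref{system_pde_dual_variables}), so no $\dot{\mathbb{P}}_\ast$ term exists at all --- the paper notes this explicitly; second, \eqref{dt_H_identity} is an exact identity with no error term, so a reasoning step that tosses contributions into an unstated remainder is internally inconsistent with what you claim to prove. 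Once you observe that $\mathbb{P}_{\geq 1}$ and $\mathbb{P}_{N^{-1}}$ commute with $\partial_t$, the equation for $\varphi_{2,\ast}$ is simply $(\Box+1)\varphi_{2,\ast} = (1-\gamma\partial_x^2)^{-1}\mathbb{P}_\ast SU\partial_t\mathcal{N}$ with nothing extra, and the identity closes cleanly. Also note a couple of sign/scaling typos in your formula for $\partial_t\Psi_\lambda'$: one should have $\partial_t\Psi_\lambda' = +2(x\dot\lambda/\lambda^2)\sech^2(x/\lambda)\tanh(x/\lambda) = -\dot\lambda\,(x/\lambda)\,\Psi_\lambda''$ (not $-(\dot\lambda/\lambda)(x/\lambda)\Psi_\lambda''$), which is what produces the prefactor $-\dot\lambda/\mu$ rather than $-\dot\lambda/(\mu\lambda)$ on the $\Psi_\lambda''$ term in the displayed identity.
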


Notice that, thanks to \eqref{dxm1_upphi_triple_prime_into_LO}, the problem of controlling 
$\tfrac{d}{dt}\mathcal{H}_\ast$ essentially reduces to bounding the nonlinear terms in the second line of \eqref{dt_H_identity}.

\begin{proof}[Proof of Lemma \ref{lemH*}]
The proof is a direct calculation, similar to the one in \eqref{dt_J_dual}, up to small modifications.
The main point is that the functionals $\mathcal{H}_\ast$ are the virial momentum applied to
the ``singular" variable $\partial_x^{-1} \varphi_{2,\ast}$, instead of the standard $\varphi_1$.
Then, it suffices to use the basic identity \eqref{basicvirial} with $f = \partial_x^{-1} \varphi_{2,\ast}$,
$\Phi = \Psi_\lambda$ and $V=0$, and the equation
\begin{align*}
(\Box+1)\partial_x^{-1} \varphi_{2,\ast} = 
\partial_x^{-1} \partial_t  
(\Box+1) \varphi_{1,\ast} =
  \big(1-\gamma\partial_x^2\big)^{-1} \partial_x^{-1} \partial_t \mathbb{P}_{\ast }SU \mathcal{N} 
\end{align*}
having used the definition \eqref{defdual}, \eqref{op_SUconj}, the original equation \eqref{system_epsilon} and $SUY = 0$.
Note that, compared  to \eqref{dt_J_dual}, we do not need to differentiate in time any cutoff since 
the frequency scales are fixed (either $\geq 1$ or $N^{-1}$),
but we do need to differentiate in time the weight functions ($\mu^{-1}$ and $\Psi_\lambda$) 
which gives the last three terms.

To prove \eqref{dxm1_upphi_triple_prime_into_LO} it suffices to apply Lemma \ref{weight_triple_prime}.
In the case $\ast = N^{-1}$, since $\Psi'''_{\lambda_N} \leq 4\lambda_N^{-2} \Psi'_{\lambda_N}$, this gives
\begin{align*}
\left\vert \int (\partial_x^{-1} \varphi_{2,N^{-1}})^2 \Psi_{\lambda_N}''' \right\vert 
  \leq \frac{4c_1^*N^2}{\lambda_N^2} \int \varphi_{2,\geq}^2\Psi_{\lambda}' + O(\delta^2/\kappa)
\end{align*}
where $c_1^*>0$ is the absolute constant appearing in Lemma \ref{weight_triple_prime}. Therefore, it is enough to choose the absolute constant $D>1$ in the definition of $\kappa(t)$ large enough (as we have done in Lemma \ref{weight_triple_prime}). An analogous bound holds for $\varphi_{2,\geq 1}$ and the weight $\Psi_{\lambda_1}$.
\end{proof}




\medskip
\subsection{High-frequency case}
In this subsection we prove the following:
\begin{prop}\label{propdotH1}
Let $\mathcal{H}_{\geq 1}$ be the functional defined in \eqref{H>1} with $\varphi_2$ defined as in \eqref{defdual}
for $(\varepsilon_1,\varepsilon_2) \in C(\R,H^1\times L^2)$ solution to the system 
\eqref{system_epsilon}.
Fix $\epsilon \in (0,1)$. Then, the following holds:
\begin{align}\label{dotH1}
\begin{split}
\dfrac{d}{dt}\mathcal{H}_{\geq 1}
  \lesssim -\dfrac{1}{2\mu} \int \varphi_{2, \geq 1}^2 \Psi_{\lambda_1}'
  + \dfrac{\epsilon}{\mu}\int \big(w_t^2+w^2\big)\sech^{1/2}(x) 
  + \dfrac{\delta^{1/2}}{\mu}(a_1^2+a_2^2) + F(t) 
\end{split}
\end{align}
where
\begin{align}\label{dotH1rem}
F(t) \in \delta^2  L^1_t([0,T_{\max})
\end{align}
\end{prop}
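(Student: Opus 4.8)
The starting point is the identity \eqref{dt_H_identity} of Lemma \ref{lemH*} applied with $\ast = {\geq}1$, together with the absorption bound \eqref{dxm1_upphi_triple_prime_into_LO}. These two facts already give
\[
\dfrac{d}{dt}\mathcal{H}_{\geq 1} \leq -\dfrac{7}{8\mu}\int \varphi_{2,\geq 1}^2 \Psi_{\lambda_1}' + \dfrac{1}{\mu} O(\delta^2/\kappa) + (\text{nonlinear term}) + (\text{weight-derivative terms}) - \dfrac{\dot\mu}{\mu}\mathcal{H}_{\geq 1},
\]
so the entire task is to show that each of the last three groups of terms is either of the form $F(t) \in \delta^2 L^1_t$, or can be absorbed into the leading term $-\tfrac{1}{2\mu}\int\varphi_{2,\geq1}^2\Psi'_{\lambda_1}$ (leaving the stated gap $7/8 - 1/2 - (\text{small}) > 0$), or is bounded by $\tfrac{\epsilon}{\mu}\int(w_t^2+w^2)\sech^{1/2} + \tfrac{\delta^{1/2}}{\mu}(a_1^2+a_2^2)$.

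\emph{Step 1: the Morawetz/transport term $-\tfrac{\dot\mu}{\mu}\mathcal{H}_{\geq 1}$ and boundedness of the functional.} First I would show $|\mathcal{H}_{\geq 1}(t)| \lesssim \mu^{-1}\delta^2$. The delicate point is that $\mathcal{H}_{\geq 1}$ contains $\partial_x^{-1}\varphi_{2,\geq 1}$ and $\partial_x^{-1}\varphi_{2,\geq 1, t}$, which are not controlled in $L^2$ a priori; but $\varphi_{2,\geq 1} = \mathbb{P}_{\geq 1}\varphi_2$ has frequency support bounded away from zero, so $\|\partial_x^{-1}\varphi_{2,\geq 1}\|_{L^2} \lesssim \|\varphi_{2,\geq 1}\|_{L^2} \lesssim \delta$ and similarly for $\partial_x^{-1}\varphi_{2,\geq 1,t} = \partial_x^{-1}\mathbb{P}_{\geq1}(-\mathcal{L}_0\varphi_1 + (\text{nonlinearity}))$. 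Since $\Psi_{\lambda_1}, \Psi'_{\lambda_1}$ are bounded (with constants depending on $\lambda_1 = \log\kappa^9$, which grows only like $\log\log$), this yields $|\mathcal{H}_{\geq 1}| \lesssim \mu^{-1}\delta^2 \lambda_1 \lesssim \mu^{-1}\delta^2\log\kappa$. Then $\tfrac{\dot\mu}{\mu}|\mathcal{H}_{\geq 1}| \lesssim \mu^{-2}\delta^2\log\kappa \in \delta^2 L^1_t$, so this term goes into $F(t)$.

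\emph{Step 2: the weight-derivative terms.} These are the two terms carrying $\dot\lambda = \dot\lambda_1 = 9(\log\kappa)^8 \kappa^{-1}\dot\kappa \cdot \text{(stuff)}$; crucially $\dot\lambda_1/\lambda_1 = o(\mu^{-1})$ and in fact $\dot\lambda_1$ itself is $o(1/(\mu \log t))$ or so. Using $|\Psi'_\lambda|, |\Psi''_\lambda| \lesssim 1$ and the $L^2$ bounds on $\varphi_{2,\geq 1}, \partial_x^{-1}\varphi_{2,\geq1}$ and $\partial_x^{-1}\varphi_{2,\geq1,t}$ from Step 1, together with $|x/\lambda|\,\Psi'_\lambda(x) = |x/\lambda|\sech^2(x/\lambda) \lesssim 1$, each of these two terms is bounded by $\tfrac{\dot\lambda}{\mu}\cdot C\delta^2 \lesssim \mu^{-2}\delta^2$, hence integrable in time; they too are absorbed into $F(t)$.

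\emph{Step 3 (main obstacle): the nonlinear term.} This is the integral
\[
\dfrac{1}{\mu}\int\Bigl(\varphi_{2,\geq 1}\Psi_{\lambda_1} + \tfrac12 \partial_x^{-1}\varphi_{2,\geq1}\Psi'_{\lambda_1}\Bigr)\bigl(1-\gamma\partial_x^2\bigr)^{-1}\partial_x^{-1}\partial_t\,\mathbb{P}_{\geq1}SU\mathcal{N}.
\]
The difficulties here are (i) the operator $\partial_x^{-1}\partial_t$ in front of $\mathbb{P}_{\geq1}SU\mathcal{N}$, (ii) the time derivative landing on $\mathcal{N}$, which via \eqref{system_epsilon} produces terms linear in $\varepsilon_2$ and in $a_2$. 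The key structural gains are: the outer projection $\mathbb{P}_{\geq 1}$ keeps frequencies $\gtrsim 1$, so $\partial_x^{-1}\mathbb{P}_{\geq1}$ is bounded on $L^2$, which neutralizes the $\partial_x^{-1}$; and $(1-\gamma\partial_x^2)^{-1}SU$ is bounded on $L^2$ (as used in Lemma \ref{lem_estimate_dual_J_nonlinearity}). I would first compute $\partial_t\mathcal{N}$ using \eqref{system_epsilon}: schematically $\partial_t\mathcal{N} = (\text{polynomial in } a_1, Y, \varepsilon_1)\cdot(a_2 Y + \varepsilon_2 + \cdots)$, where the $\varepsilon_{2,t}$-type pieces never appear because $\mathcal{N}$ depends only on $\phi_1 = Q + a_1 Y + \varepsilon_1$, so $\partial_t\mathcal{N}$ only involves $\dot a_1 = a_2$ and $\varepsilon_{1,t} = \varepsilon_2$. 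Then, exactly as in the proof of Lemma \ref{lem_estimate_dual_J_nonlinearity}, I split $\partial_t\mathcal{N}$ by homogeneity in $(a_1,\varepsilon_1)$ and by frequency ($\varepsilon_1 = \varepsilon_{1,<} + \varepsilon_{1,\geq}$, and likewise $\varepsilon_2$): the terms with at least one $\varepsilon_{i,<}$ are $O(\delta^{?}\kappa^{-1/2})$ by distorted Bernstein; the genuinely quadratic local terms carrying $\varepsilon_{1,\geq}$ or $\varepsilon_{2,\geq}$ times the localized coefficient $Q$ or $Y$ are bounded, after Cauchy–Schwarz with the $H^1$-type factor $B\|\partial_x\varphi_{1,\geq}\|_{L^2} + \|\varphi_{1,\geq}\|_{L^2} \lesssim \delta$ pulled out, by $\epsilon^{-1}\delta^4 + \epsilon\int(w^2 + w_t^2)\sech^{1/2}$ (the $w_t$ appearing precisely from the pieces of $\partial_t\mathcal N$ containing $\varepsilon_{2,\geq} = \varepsilon_{1,\geq,t} + \dot{\widetilde{\mathbb P}}\varepsilon_1$, recalling $w_t = \varepsilon_{1,\geq,t}\sqrt{\Phi_A'} + \cdots$ so that $\int\varepsilon_{2,\geq}^2\sech^{1/2}\lesssim \int w_t^2\sech^{1/2} + O(\delta^2/\kappa^2)$); and the pure $a$-terms $a_1^2 Y^2(a_1 Y + 3Q)$ and their $\partial_t$-analogues produce $\delta a_1^2$ and $\delta|a_1||a_2| \lesssim \delta(a_1^2 + a_2^2)$, which is stronger than the claimed $\delta^{1/2}(a_1^2 + a_2^2)$. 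The use of $\partial_x^{-1}$ is harmless because one writes $\partial_x^{-1}\partial_t\mathbb{P}_{\geq1}SU\mathcal{N} = \partial_x^{-1}\mathbb{P}_{\geq1}(\partial_t SU\mathcal{N})$ and uses boundedness of $\partial_x^{-1}\mathbb{P}_{\geq1}$ on $L^2$; alternatively one keeps $\partial_x^{-1}$ on the other factor, noting $\partial_x^{-1}\varphi_{2,\geq1} = \partial_x^{-1}\mathbb{P}_{\geq1}\varphi_2$ is fine in $L^2$. Collecting Steps 1–3, choosing $B$ first and then $\gamma, D$, and finally absorbing the $\tfrac{1}{8\mu}\int\varphi_{2,\geq1}^2\Psi'_{\lambda_1}$ loss from \eqref{dxm1_upphi_triple_prime_into_LO} and the $\epsilon^{-1}\delta^4/\mu \in \delta^2 L^1_t$ remainders into $F(t)$, gives exactly \eqref{dotH1}–\eqref{dotH1rem}. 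The genuine technical heart is Step 3, and within it the bookkeeping of which homogeneity/frequency pieces of $\partial_t\mathcal{N}$ feed into the coercive $w, w_t$ norms versus into the integrable remainder $F(t)$.
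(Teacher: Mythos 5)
Your proposal follows the paper's proof essentially verbatim: you start from the virial identity \eqref{dt_H_identity} together with the absorption estimate \eqref{dxm1_upphi_triple_prime_into_LO}, bound the $\dot\mu/\mu$ and $\dot\lambda$ terms by Cauchy--Schwarz using the $L^2$ control of $\partial_x^{-1}\varphi_{2,\geq 1}$ and $\partial_x^{-1}\varphi_{2,\geq1,t}$ (which is exactly the content of \eqref{dotH1est1}--\eqref{dotH1est3} with the auxiliary bound \eqref{est123aux}), and estimate the nonlinear contribution by splitting $\partial_t\mathcal{N}$ by homogeneity and low/high frequency, with $\varepsilon_{2,\geq}$ producing the $w_t$ terms via \eqref{H1pr10} --- precisely the argument of Lemma \ref{lem_highfreq_dual_H_estimate_N}. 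The only (benign) imprecision is in the pure $a$-term: the paper's bound is $\lambda_1\delta|a_1a_2|$, not $\delta|a_1a_2|$, but since $\lambda_1\ll\delta^{-1/2}$ this still lands in the stated $\delta^{1/2}(a_1^2+a_2^2)$ slot.
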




The next result is the main estimate needed for the proof of Proposition \ref{propdotH1}.

\begin{lem}\label{lem_highfreq_dual_H_estimate_N}
Under the assumptions of Proposition \ref{propdotH1} we have the estimate
\begin{align*}
&\left \vert \int \left(\varphi_{2,\geq 1 } \Psi_{\lambda_1} + \tfrac{1}{2}\partial_x^{-1} \varphi_{2,\geq1 } \Psi_{\lambda_1}'\right)
	\big(1-\gamma\partial_x^2\big)^{-1}\partial_x^{-1} \partial_t \mathbb{P}_{\geq 1} SU \mathcal{N}  \right\vert
\\ 
& \qquad \qquad  \lesssim  \epsilon\int w^2\sech^{1/2}(x) +\epsilon \int w_t^2\sech^2(x) 
	+ 
	\delta^{1/2} \vert a_1a_2\vert
	+ 
	\delta^4 \, \lambda_1^2 + O(\delta^2/\kappa). 
\end{align*}
\end{lem}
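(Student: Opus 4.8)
The plan is to estimate the bilinear pairing by Cauchy--Schwarz after commuting every Fourier multiplier off the nonlinearity, and then to convert $\partial_t\mathcal N$, monomial by monomial, into the good local norms $\int w^2\sech^{1/2}$ and $\int w_t^2\sech^2$ plus acceptable remainders. Since $0\le\Psi_{\lambda_1}\le\lambda_1$ and $0\le\Psi_{\lambda_1}'\le 1$, and $\partial_x^{-1}$ is bounded on $L^2$ over frequencies $\gtrsim 1$, the a priori bounds $\|\varphi_{2,\geq 1}\|_{L^2}\lesssim\delta$ and $\|\partial_x^{-1}\varphi_{2,\geq 1}\|_{L^2}\lesssim\delta$ (from \eqref{defdual}, $(1-\gamma\partial_x^2)^{-1}SU$ bounded on $L^2$, and \eqref{stable_manifold_hyp_thm}) give $\|\varphi_{2,\geq 1}\Psi_{\lambda_1}+\tfrac12\partial_x^{-1}\varphi_{2,\geq 1}\Psi_{\lambda_1}'\|_{L^2}\lesssim\lambda_1\delta$. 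Writing $\mathcal S:=(1-\gamma\partial_x^2)^{-1}\partial_x^{-1}\mathbb P_{\geq 1}SU(\partial_t\mathcal N)$ (using that $\partial_t$ commutes with $SU$ and all the projections), it therefore suffices to bound $\|\mathcal S\|_{L^2}$ by $\delta^3\lambda_1$ plus $(\lambda_1\delta)^{-1}$ times the claimed right-hand side.

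The operator acting on $\partial_t\mathcal N$ is handled by absorbing the derivatives. Using $SU=\partial_x(\partial_x+3\tanh x)+(2-3\sech^2 x)$, see \eqref{op_SU}--\eqref{op_SUconj}, one has $\partial_x^{-1}\mathbb P_{\geq 1}SU\,g=\mathbb P_{\geq 1}(\partial_x g+3\tanh(x)g)+\partial_x^{-1}\mathbb P_{\geq 1}((2-3\sech^2 x)g)$. Applying $(1-\gamma\partial_x^2)^{-1}$ and using that $(1-\gamma\partial_x^2)^{-1}\partial_x$ and $(1-\gamma\partial_x^2)^{-1}\partial_x^{-1}\mathbb P_{\geq 1}$ are bounded on $L^2$ and map $L^1\to L^2$ (constants depending only on the fixed $\gamma$), while multiplication by $\tanh$ and $\sech^2$ is bounded, one gets $\|\mathcal S\|_{L^2}\lesssim\|\partial_t\mathcal N\|_{L^2}+\|\partial_t\mathcal N\|_{L^1}$; the $L^1$ bound is the convenient one for the term that carried a derivative, since $\partial_t\mathcal N$ is spatially localized apart from its purely cubic part.

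It remains to estimate each monomial of $\partial_t\mathcal N=3(a_1Y+\varepsilon_1)(a_2Y+\varepsilon_2)(2Q+a_1Y+\varepsilon_1)$. The purely cubic term $3\varepsilon_1^2\varepsilon_2$ is bounded in $L^2$ by $\|\varepsilon_1\|_{L^\infty}^2\|\varepsilon_2\|_{L^2}\lesssim\delta^3$, contributing $\delta^4\lambda_1\le\delta^4\lambda_1^2$. The purely discrete monomials $6a_1a_2QY^2+3a_1^2a_2Y^3$ have $L^1$-norm $\lesssim|a_1a_2|$, contributing $\lambda_1\delta|a_1a_2|\lesssim\delta^{1/2}|a_1a_2|$ (as $\lambda_1\ll\delta^{-1/2}$). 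The monomials linear in $(\varepsilon_1,\varepsilon_2)$ with a discrete coefficient, and the quadratic ones with a factor $\varepsilon_{i,<}$, all carry a Schwartz coefficient together with a factor of size $O(\delta)$ or a low-frequency factor, so by distorted Bernstein (as in Lemma \ref{lemma<}) they contribute $O(\lambda_1\delta^3)+O(\lambda_1\delta^2/\kappa^{1/2})$. The only genuinely delicate monomials are the fully high-frequency quadratic ones, the representative case being $6Q\varepsilon_{1,\geq}\varepsilon_{2,\geq}$: here I would keep $\varepsilon_{2,\geq}$ in $L^2$ and estimate $Q\varepsilon_{1,\geq}$ in $L^\infty$ by
\[
\|Q\varepsilon_{1,\geq}\|_{L^\infty}^2\lesssim\|\sech^{1/2}w\|_{L^\infty}^2\lesssim\|\sech^{1/2}w\|_{L^2}\,\|\partial_x(\sech^{1/2}w)\|_{L^2}\lesssim\delta\Big(\int w^2\sech^{1/2}\Big)^{1/2},
\]
using $\sech(x)\,(\Phi_A'(x))^{-1/2}\lesssim\sech^{1/2}(x)$ for $A$ large, Gagliardo--Nirenberg, and $\|w_x\|_{L^2}\lesssim\delta$; this gives $\|Q\varepsilon_{1,\geq}\varepsilon_{2,\geq}\|_{L^2}\lesssim\delta^{3/2}(\int w^2\sech^{1/2})^{1/4}$, and after pairing with $\lambda_1\delta$ and Young's inequality one obtains $\epsilon\int w^2\sech^{1/2}+C_\epsilon\lambda_1^{4/3}\delta^{10/3}$. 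The norm $\int w_t^2\sech^2$ enters only where $w_t$ sits against the coefficient $QY\approx\sech^3$ (or faster), via $\|\sech^3\varepsilon_{2,\geq}\|_{L^2}^2\lesssim\int w_t^2\sech^2+O(\delta^2/\kappa)$. The $\epsilon$-terms are reabsorbed later through Proposition \ref{prop_coer_varepsilon}, and a short computation shows that $\mu^{-1}$ times each of $\delta^4\lambda_1^2$, $\lambda_1^{4/3}\delta^{10/3}$, $\lambda_1\delta^3\kappa^{-1/2}$ is integrable on $[0,T_{\max})$ with integral $\lesssim\delta^2$, because $\lambda_1(T_{\max})=\log\kappa^9(T_{\max})\sim\log(1/\delta)$ whereas $\log T_{\max}=\delta^{-\frac43(1-\alpha)}$.

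The main obstacle is precisely the unbounded virial weight $\Psi_{\lambda_1}$, of size $\lambda_1$: it costs a factor $\lambda_1$ in the outer $L^2$-norm, so the quadratic nonlinear remainders appear as $\delta^4\lambda_1^2$ and $\delta^{10/3}\lambda_1^{4/3}$ rather than $\delta^4$, and checking that these still lie in $\delta^2L^1_t([0,T_{\max}))$ is exactly where the hypothesis $\beta<4/3$ (i.e.\ $\alpha>0$) enters. A secondary, purely technical point is the bookkeeping of $\sech$-powers: since $w,w_t$ carry the weight $\Phi_A'\approx\sech^{1/A}$ (see \eqref{w_def}, \eqref{def_zeta_A}), the local norms produced come naturally with $\sech^{\,c-1/A}$, and one fixes $A$ large at the very end so that these dominate $\sech^{1/2}$ for the $w$-terms --- whose coefficient is $Q\approx\sech$ --- and $\sech^2$ for the $w_t$-terms, which thanks to the $L^\infty\times L^2$ split above never occur against $Q$ alone but only against the faster-decaying $QY\approx\sech^3$.
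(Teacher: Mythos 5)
Your overall strategy matches the paper's: Cauchy--Schwarz against the singular virial factor (of $L^2$-size $\lambda_1\delta$), remove the $(1-\gamma\partial_x^2)^{-1}\partial_x^{-1}\mathbb{P}_{\geq 1}SU$ operator, then estimate $\partial_t\mathcal{N}$ monomial by monomial. Your Gagliardo--Nirenberg treatment of the fully high-frequency quadratic term $Q\varepsilon_{1,\geq}\varepsilon_{2,\geq}$ (placing $Q\varepsilon_{1,\geq}$ in $L^\infty$ and $\varepsilon_{2,\geq}$ in $L^2$) is a genuine, acceptable variant of the paper's route, which instead puts $\varepsilon_1$ in $L^\infty$ and localizes $Q\varepsilon_{2,\geq}$ through \eqref{H1pr10}. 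Your bound after Young's inequality is $\lambda_1^{4/3}\delta^{10/3}$ rather than the stated $\lambda_1^2\delta^4$; this is larger, so you are not literally proving the lemma as stated, but it is still $O(\delta^2/\kappa)$ because $\lambda_1^{4/3}\kappa\delta^{4/3}\lesssim(\log(1/\delta))^{4/3}\delta^{4\alpha^2/3}\to 0$, so the downstream argument in Proposition \ref{propdotH1} survives (with a tighter margin than the paper's).

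There is, however, a genuine gap in your treatment of the monomials that are linear in $\varepsilon$ with a \emph{single} discrete factor, such as $a_1QY\varepsilon_{2,\geq}$ and $a_2QY\varepsilon_{1,\geq}$. You assert these contribute $O(\lambda_1\delta^3)$ by bounding the discrete coefficient crudely by $\delta$ and $\varepsilon_2$ globally in $L^2$, but $\int_0^{T_{\max}}\lambda_1\delta^3\mu^{-1}\,dt\approx\delta^3(\log T_{\max})\log\log T_{\max}\approx\delta^{5/3-}\log(1/\delta)$, which is \emph{not} $\lesssim\delta^2$ for $\beta$ close to $4/3$. You must instead keep the discrete factor explicit, localize $\varepsilon_{i,\geq}$ against the Schwartz coefficient $QY$ to produce the $\int(w^2+w_t^2)\sech^{1/2}$ local norm as in \eqref{H1pr10}, and then apply Young's inequality:
\begin{align*}
\lambda_1\delta\cdot|a_1|\,\Big(\int w_t^2\sech^{2}\Big)^{1/2}\leq \epsilon\int w_t^2\sech^{2}+C_\epsilon\lambda_1^2\delta^2 a_1^2\lesssim \epsilon\int w_t^2\sech^{2}+C_\epsilon\lambda_1^2\delta^4,
\end{align*}
which is of the form required by the lemma (and is integrable). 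Note that the two-$a$ linear monomials such as $a_1a_2Y^2\varepsilon_1$ \emph{can} be bounded crudely, yielding $\lambda_1\delta^4\lesssim\lambda_1^2\delta^4$, so the issue is specific to the one-$a$ terms. Your later remark that $\int w_t^2\sech^2$ ``enters only where $w_t$ sits against the coefficient $QY$'' suggests you see the right move; but the displayed $O(\lambda_1\delta^3)$ bound and the subsequent integrability check (which lists $\lambda_1\delta^3\kappa^{-1/2}$, not $\lambda_1\delta^3$) are inconsistent with it, and as written that branch of the argument does not close.
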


\begin{proof}
We split the estimate according to the homogeneity of the terms 
that appear in $\partial_t \mathcal{N}$ with respect to $(\varepsilon_1,\varepsilon_2)$, see \eqref{Nepsilon'}.

\smallskip
\begin{itemize}[leftmargin=*]
\item[$\cdot$] Control of $\partial_t \varepsilon_{1}^3$: 
This case follows directly from Cauchy-Schwarz inequality and the fact that the $\partial_x^{-1}$ 
is in front of a projector to frequencies greater than $1$:
\begin{align*}
& \left\vert \int \left(\varphi_{2,\geq 1}\Psi_{\lambda_1} + \tfrac{1}{2}\partial_x^{-1}\varphi_{2,\geq 1}\Psi_{\lambda_1}'\right)
	\big(1-\gamma\partial_x^2\big)^{-1} \partial_x^{-1} \mathbb{P}_{\geq 1} SU 
	\partial_t \big(\varepsilon_1^3\big) \right\vert 
	\\
	& \qquad \qquad \lesssim \big( \lambda_1{\| \varphi_{2,\geq 1} \|}_{L^2} + {\| \partial_x^{-1}\varphi_{2,\geq } \|}_{L^2} \big)
	{\| \varepsilon_1^2\varepsilon_2 \|}_{L^2}
	\lesssim \lambda_1\delta^4.
\end{align*}

\smallskip
\item[$\cdot$] Control of $\partial_t\big(\varepsilon_1^2(Q+a_1Y)\big)$:
Similarly, by Cauchy-Schwarz and taking advantage of the decay of $Q$ and $Y$, 
we can estimate 
 \begin{align*}
& \left\vert \int \left(\varphi_{2,\geq 1}\Psi_{\lambda_1} + \tfrac{1}{2}\partial_x^{-1}\varphi_{2,\geq 1}\Psi_{\lambda_1}'\right)
	\big(1-\gamma\partial_x^2\big)^{-1} \partial_x^{-1} \mathbb{P}_{\geq 1} 
	SU \partial_t\big(\varepsilon_1^2 (Q+a_1Y)\big)\right\vert
\\ 
& \qquad \qquad \lesssim \big( \lambda_1{\| \varphi_{2,\geq 1} \|}_{L^2} + {\| \partial_x^{-1}\varphi_{2,\geq 1} \|}_{L^2} \big)
	\big( {\| \partial_t \varepsilon_1^2 \, Q  \|}_{L^2} 
	+  {\| \partial_t (\varepsilon_1^2a_1) \,Y \|}_{L^2} \big)
\\
& \qquad \qquad 
	\lesssim \lambda_1\delta \int \big(w_t^2+w^2\big)\sech^{1/2}(x) + \lambda_1 \delta^4 
	+ \lambda_1O(\delta^3/\kappa).
\end{align*}
where we recall that $w$ is defined in \eqref{w_def}, and we have used,
similarly to \eqref{loce}, the estimate
\begin{align}\label{H1pr10}
\begin{split}
\Vert \varepsilon_1 Q\Vert_{L^2}^2 + \Vert \varepsilon_2 Q\Vert_{L^2}^2
  & \lesssim \int ( w^2 + w_t^2) \sech^{1/2}(x) + \int  (\varepsilon_{1,<}^2 + \varepsilon_{2,<}^2) \sech^2(x) + O(\delta^2 \kappa^{-1}\kappa' )
  \\
  & \lesssim
	\int  ( w^2 + w_t^2) \sech^{1/2}(x) + O(\delta^2 \kappa^{-1}).
\end{split}
\end{align}
Since $\lambda_1=\log\kappa^9\ll\delta^{0-}$, this is consistent with the desired bound.

\smallskip
\item[$\cdot$] Control of $\partial_t\big(a_1\varepsilon_1(a_1Y^2+2 QY)\big)$: 
This case follows exactly the same lines as the previous one: 
 \begin{align*}
& \left\vert \int \left(\varphi_{2,\geq 1}\Psi_{\lambda_1} + \tfrac{1}{2}\partial_x^{-1}\varphi_{2,\geq 1}\Psi_{\lambda_1}'\right)
	\big(1-\gamma\partial_x^2\big)^{-1} \partial_x^{-1} \mathbb{P}_{\geq 1} 
	SU \partial_t \big(a_1\varepsilon_1(a_1Y^2+2 QY) \right\vert
\\ 
& \qquad \qquad \lesssim \big( \lambda_1{\| \varphi_{2,\geq 1} \|}_{L^2} + {\| \partial_x^{-1}\varphi_{2,\geq } \|}_{L^2} \big)
	\big( {\| \partial_t (\varepsilon_1 a_1^2) \, Y^2  \|}_{L^2} 
	+  {\| \partial_t (a_1 \varepsilon_1) \,Q Y \|}_{L^2} \big)
\\
& \qquad \qquad \lesssim  \lambda_1 \delta^4
	+ \lambda_1\delta \cdot (|a_1|+|a_2|) \cdot \Big( \int \big(w_t^2+w^2\big)\sech^{1/2}(x) + O(\delta^2 \kappa^{-1}) \Big)^{1/2}
\\
& \qquad \qquad \leq \epsilon^{-1} \lambda_1^2 \delta^4
	+ \epsilon \int \big(w_t^2+w^2\big)\sech^{1/2}(x) + O(\lambda_1\delta^3 \kappa^{-1/2}).
\end{align*}
 which is compatible with the desired bound (recall that $\lambda_1=\log\kappa^9$).
\smallskip
\item[$\cdot$] Control of $\partial_t\big(a_1^3Y^3 + 3a_1^2QY^2\big)$: 
For the 
terms that do not contain any $\varepsilon$ 
we just use a bound by the norms of $a_1,a_2$: 
\begin{align*}
& \left\vert \int \left(\varphi_{2,\geq 1}\Psi_{\lambda_1} + \tfrac{1}{2}\partial_x^{-1}\varphi_{2,\geq 1}\Psi_{\lambda_1}'\right)
	\big(1-\gamma\partial_x^2\big)^{-1} \partial_x^{-1} \mathbb{P}_{\geq 1} 
	SU \partial_t\big(a_1^3Y^3 + 3a_1^2QY^2\big)\right\vert
	\lesssim \lambda_1\delta |a_1| |a_2|.
\end{align*}
This is acceptable for the desired inequality since $\lambda_1 \lesssim \log\log (e^D+t) \ll \delta^{0-}$.
\end{itemize}
This concludes the proof of the lemma.
\end{proof}

\medskip
We now give the proof of Proposition \ref{propdotH1}.

\begin{proof}[Proof of Proposition \ref{propdotH1}]
In view of the formula \eqref{dt_H_identity}, the estimate \eqref{dxm1_upphi_triple_prime_into_LO}
which takes care of the first two terms on the right-hand side,
and the estimate in Lemma \ref{lem_highfreq_dual_H_estimate_N} which takes care of the terms involving the nonlinearity $\mathcal{N}$,
we see that in order to obtain \eqref{dotH1} with \eqref{dotH1rem} it suffices to prove that the other terms in \eqref{dt_H_identity}
are time integrable.
In particular, it is enough to show the following three bounds:
\begin{align}\label{dotH1est1}
& \left\vert \dfrac{\dot{\lambda}_1}{\mu} \int \left(\dfrac{x}{\lambda_1}\right) \left(\Psi'_{\lambda_1}\varphi_{2,\geq 1}
 	+ \tfrac12 \Psi_{\lambda_1}'' \partial_x^{-1}\varphi_{2,\geq 1} \right) \partial_x^{-1}
	\varphi_{2,\geq 1,t} \right \vert \lesssim \delta^2 \mu^{-2},
\\\
\label{dotH1est2}
& \Big| \dfrac{\dot{\lambda}_1}{\mu\lambda_1} \int \Psi_{\lambda_1} \varphi_{2,\geq }\partial_x^{-1} \varphi_{2,\geq 1,t} \Big|
	\lesssim \delta^2 \mu^{-2},
\\
\label{dotH1est3}
& \Big| \dfrac{\dot\mu}{\mu}\mathcal{H}_{\geq 1} \Big| \lesssim \delta \mu^{-2}.
\end{align}
To prove \eqref{dotH1est1} we recall that $\lambda_1 = \log \kappa^9(t)$, so that $\dot\lambda_1 \ll \mu^{-1}$,
use that
\[
\big\Vert \big(\tfrac{\cdot}{\lambda_1}\big)\Psi_{\lambda_1}'(\cdot)\big\Vert_{L^\infty} \lesssim 1,
\]
and Cauchy-Schwarz together with the bound 
\begin{align}\label{est123aux}
\| \partial_x^{-1} \varphi_{2,\geq 1} \|_{L^2} + \| \partial_x^{-1} \varphi_{2,\geq 1, t} \|_{L^2} \lesssim \delta.
\end{align}
For \eqref{dotH1est2} we use Cauchy-Schwarz, $|\Psi_{\lambda_1}| \lesssim \lambda_1$ and \eqref{est123aux}.
The estimate of \eqref{dotH1est3} is also obtained similarly from Cauchy-Schwarz and \eqref{est123aux}.
This concludes the proof of Proposition \ref{propdotH1}.
\end{proof}


\smallskip
We remark that \eqref{dotH1}-\eqref{dotH1rem} will allow us to control $\varphi_{2,\geq 1}$
provided we can control the local norms of $w$ (and the amplitudes $a_1,a_2$).
In turn, these norms 
are controlled by using the local norm of $\varphi_{1,\geq}$ (or, equivalently, of $z$)
as it appears in \eqref{dj_m_dk_ineq},
also thanks to the coercivity estimate \eqref{prop_coer_varepsilon}.  


\medskip
\subsection{Low frequency case}
We now proceed to estimate the evolution of the singular ``low frequency" functional \eqref{HN-1}.

\begin{prop}\label{propdotHN}
Let $\mathcal{H}_{N^{-1}}$ be the functional defined in \eqref{HN-1} with $\varphi_2$ defined as in \eqref{defdual}
for $(\varepsilon_1,\varepsilon_2) \in C(\R,H^1\times L^2)$ a solution to the system 
\eqref{system_epsilon}.
Fix $\epsilon \in (0,1)$. Then, the following holds:
\begin{align}\label{dotHN}
\begin{split}
\frac{1}{N} \dfrac{d}{dt}\mathcal{H}_{N^{-1}}
  \lesssim -\dfrac{1}{\mu} \frac{1}{N} \int \varphi_{2, N^{-1}}^2\Psi_{\lambda_N}'
  + \dfrac{1}{\mu \log\kappa} \frac{\epsilon}{N^{0+}}
  \int \big(w_t^2+w^2\big)\sech^{1/2}(x) 
\\ 
\qquad + \dfrac{\delta^{1/4}}{\mu}(a_1^2+a_2^2) + F_N(t) 
\end{split}
\end{align}
where
\begin{align}\label{dotHNrem}
\log\kappa(t)\cdot\sum_{1\leq N \leq 2\kappa(t)} F_N(t) \in \delta^2  L^1_t([0,T_{\max}]).
\end{align}
\end{prop}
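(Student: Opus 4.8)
The plan is to mirror the proof of Proposition \ref{propdotH1}, using the identity \eqref{dt_H_identity} for $\mathcal{H}_\ast$ with $\ast = N^{-1}$, together with the absorption estimate \eqref{dxm1_upphi_triple_prime_into_LO}, and then to bound all the remaining terms — in particular the nonlinear term on the second line of \eqref{dt_H_identity} and the three weight-derivative terms on the third line — divided by $N$ and summed dyadically against $\log\kappa(t)$. The extra factor $N^{-1}$ in front of the leading term on the right-hand side of \eqref{dotHN} is exactly the Bernstein-type gain recorded in \eqref{estIIN} of Lemma \ref{lemIIN}: since $\varphi_{2,N^{-1}}$ is localized at frequency $\approx N^{-1}$, the identity \eqref{dt_H_identity} after dividing by $N$ produces the term $-\tfrac{1}{\mu N}\int\varphi_{2,N^{-1}}^2\Psi_{\lambda_N}'$, which is what we want to keep.

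First I would establish the analogue of Lemma \ref{lem_highfreq_dual_H_estimate_N}, namely the estimate of the nonlinear term
\[
\frac1N\left|\int\!\left(\varphi_{2,N^{-1}}\Psi_{\lambda_N}+\tfrac12\partial_x^{-1}\varphi_{2,N^{-1}}\Psi_{\lambda_N}'\right)\!\big(1-\gamma\partial_x^2\big)^{-1}\partial_x^{-1}\partial_t\mathbb{P}_{N^{-1}}SU\mathcal{N}\right|.
\]
As in Lemma \ref{lem_highfreq_dual_H_estimate_N}, I would split $\partial_t\mathcal{N}$ according to homogeneity in $(\varepsilon_1,\varepsilon_2)$ using \eqref{Nepsilon'}, apply Cauchy--Schwarz, and use $\|\partial_x^{-1}\mathbb{P}_{N^{-1}}g\|_{L^2}\lesssim N\|g\|_{L^2}$ together with $\|\Psi_{\lambda_N}\|_{L^\infty}\lesssim\lambda_N = N\log\kappa^9$. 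The crucial point is that every occurrence of $\partial_x^{-1}\mathbb{P}_{N^{-1}}$ costs a factor $N$ and every occurrence of $\Psi_{\lambda_N}$ costs $\lambda_N\lesssim N\log\kappa$; dividing by $N$ and summing over $1\le N\le 2\kappa$ against $\log\kappa$ then produces at most a few extra powers of $\log\kappa$, which are harmless since $\log\kappa(t)=(1+\alpha)\log\log(e^D+t)$ grows slower than any power of $\delta^{-1}$ and the condition \eqref{condition_relating_tmax_kappa_delta} holds on $[0,T_{\max}]$. The localized quadratic-in-$\varepsilon$ terms $\varepsilon_1^2(Q+a_1Y)$ and $a_1\varepsilon_1(a_1Y^2+2QY)$ are handled via the coercivity-type bound \eqref{H1pr10} for $\|\varepsilon_1 Q\|_{L^2}^2+\|\varepsilon_2 Q\|_{L^2}^2$, producing $\epsilon\int(w_t^2+w^2)\sech^{1/2}(x)$ plus integrable remainders; the $\tfrac{1}{\log\kappa}N^{-0+}$ weight in \eqref{dotHN} is arranged so that after multiplying by $\log\kappa$ and summing the dyadic series $\sum N^{-0+}$ converges. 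The purely-$a$ terms give $\delta^{1/4}(a_1^2+a_2^2)/\mu$ after using $|a_j|\le C\delta$ and a fraction of a power of $\delta$, and the cubic term $\varepsilon_1^3$ gives $\lambda_N^2\delta^4\lesssim N^2(\log\kappa)^2\delta^4$, which summed against $\log\kappa/N$ over $N\lesssim\kappa$ yields $O(\kappa^3\delta^4)=O(\delta^2)\cdot O(\delta^2\kappa^3)$, hence integrable by \eqref{condition_relating_tmax_kappa_delta}.

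Next I would dispose of the three weight-derivative terms on the last line of \eqref{dt_H_identity}, exactly as in \eqref{dotH1est1}--\eqref{dotH1est3}: since $\lambda_N=N\log\kappa^9$ we have $\dot\lambda_N/\lambda_N=\dot\kappa/\kappa\cdot 9/\log\kappa\ll\mu^{-1}$, while $\|(\tfrac{\cdot}{\lambda_N})\Psi'_{\lambda_N}\|_{L^\infty}\lesssim 1$ and $|\Psi_{\lambda_N}|\lesssim\lambda_N$, and the key bound is $\|\partial_x^{-1}\varphi_{2,N^{-1}}\|_{L^2}+\|\partial_x^{-1}\varphi_{2,N^{-1},t}\|_{L^2}\lesssim N\delta$ (again Bernstein at frequency $N^{-1}$). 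Dividing by $N$ gives, after Cauchy--Schwarz, contributions of size $\lesssim\dot\lambda_N\lambda_N\delta^2/(N\mu)\lesssim(\log\kappa)^2\dot\kappa\,\delta^2/(\kappa\mu)$ and $\lesssim\delta^2\dot\mu/\mu^2$; multiplying by $\log\kappa$, summing over $N\lesssim\kappa$ (which contributes a factor $\kappa$) and absorbing the resulting powers of $\kappa$ and $\log\kappa$ using $\dot\kappa\ll\mu^{-1}$ and the smallness of $\delta^2\kappa^{3/2}$ leaves a quantity in $\delta^2 L^1_t([0,T_{\max}])$; collecting everything into $F_N(t)$ gives \eqref{dotHNrem}. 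I expect the main obstacle to be bookkeeping the $N$- and $\log\kappa$-dependence: one must check that none of the terms in the nonlinear estimate carries a net positive power of $N$ after division by $N$ that is not compensated when summing $\sum_{N\le 2\kappa}\log\kappa\cdot(\cdot)$, i.e. that the $\partial_x^{-1}$ losses are always paid for by the frequency-gap decay built into the choice $\lambda_N=N\log\kappa^9$ (via \eqref{hatsech}-type estimates, as in Lemma \ref{lemIIN}) and by the spare powers of $\delta$; this is precisely where the restriction $\beta<4/3$ enters through \eqref{condition_relating_tmax_kappa_delta}.
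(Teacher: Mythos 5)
Your overall architecture is the same as the paper's: start from \eqref{dt_H_identity} with $\ast=N^{-1}$, absorb the $\Psi_{\lambda_N}'''$ term via Lemma \ref{weight_triple_prime}, prove an analogue of Lemma \ref{lem_highfreq_dual_H_estimate_N} for the nonlinearity, and treat the weight-derivative terms as in \eqref{dotH1est1}--\eqref{dotH1est3} using $\|\partial_x^{-1}\varphi_{2,N^{-1}}\|_{L^2}+\|\partial_x^{-1}\varphi_{2,N^{-1},t}\|_{L^2}\lesssim N\delta$. That part is fine. The genuine gap is in your quantitative treatment of the nonlinear term, which is exactly where the exponent $4/3$ is decided. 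You propose to estimate it by Cauchy--Schwarz together with $\|\partial_x^{-1}\mathbb{P}_{N^{-1}}g\|_{L^2}\lesssim N\|g\|_{L^2}$; the paper's Lemma \ref{lem_dual_H_lf_estimate_N} instead exploits the frequency localization through Bernstein from $L^1$ to $L^2$, i.e.\ $\|\partial_x^{-1}\mathbb{P}_{N^{-1}}SU\,g\|_{L^2}\lesssim N\cdot N^{-1/2}\|g\|_{L^1}=N^{1/2}\|g\|_{L^1}$, and for the localized quadratic terms it uses $\|(\varepsilon_1^2+\varepsilon_2^2)Q\|_{L^1}\lesssim\int(w^2+w_t^2)\sech^{1/2}+O(\delta^2/\kappa)$ directly (the full local energy, not its square root). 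Your $L^2$-based bound loses a factor $N^{1/2}$ per term, and this loss is fatal at the critical scale: for the cubic term you get $\lambda_N N\delta^4\approx N^2\log\kappa\,\delta^4$, which after dividing by $N$, multiplying by $\log\kappa$, summing over $N\leq 2\kappa$ and integrating against $\mu^{-1}$ up to $T_{\max}=\exp(c\delta^{-4/3+})$ gives roughly $\delta^4\log^{2+}T_{\max}\approx\delta^{4/3}$, which is not $O(\delta^2)$ and so violates \eqref{dotHNrem}; the paper's $\lambda_N N^{1/2}\delta^4$ gives $\delta^4\log^{3/2+}T_{\max}\approx\delta^2$, and this is precisely the computation that forces $\beta<4/3$.

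Your attempted justification also misuses the smallness condition: you write that the cubic contribution is $O(\delta^2)\cdot O(\delta^2\kappa^3)$ and "hence integrable by \eqref{condition_relating_tmax_kappa_delta}", but that condition only gives $\delta^2\kappa^{3/2}\ll1$; at $t=T_{\max}$ one has $\kappa\approx\delta^{-4/3+}$, so $\delta^2\kappa^3\approx\delta^{-2}$ is large. A similar problem recurs for the quadratic-in-$\varepsilon$ terms: with the $L^2$-based bound you are forced into a Young-type splitting whose $\delta^4$-remainder carries $N^{2+}\log^3\kappa$ and again fails after the dyadic sum and time integration. To close the argument you must replace your Cauchy--Schwarz-plus-$N\|g\|_{L^2}$ step by the $L^1\to L^2$ Bernstein gain at frequency $N^{-1}$ (and the $L^1$ control of $(\varepsilon_1^2+\varepsilon_2^2)Q$ via \eqref{H1pr10}), as the paper does.
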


Once again, in view of the virial identity \eqref{dt_H_identity} and Lemma \ref{weight_triple_prime} ,
the main task is to bound the  nonlinear terms in $\mathcal{N}$.

\begin{lem}
\label{lem_dual_H_lf_estimate_N}
Under the same assumptions of Proposition \ref{propdotHN},
for $\epsilon>0$ small, and for all times $t\in(0,T_\mathrm{max})$, the following estimate holds
\begin{align}\label{lemHNest}
\begin{split}
\left\vert \int \left(\varphi_{2, N^{-1} }\Psi_{\lambda_N} + \tfrac{1}{2}\partial_x^{-1}\varphi_{2, N^{-1} }\Psi_{\lambda_N}'\right)
	\big(1-\gamma\partial_x^2\big)^{-1}\partial_x^{-1}\partial_t \mathbb{P}_{ N^{-1}}SU \mathcal{N}  \right\vert  
\\ 
\quad  \lesssim  \lambda_N N^{1/2} \delta^4 + \lambda_N N^{1/2} \delta \big(a_1^2+a_2^2 \big)
	+ \frac{\epsilon N^{1-}}{\log\kappa} \int \big(w_t^2+w^2\big)\sech^{1/2}(x)
\\
\quad  \qquad + \epsilon \int \varphi_{2, N^{-1}}^2 \Psi_{\lambda_N}' + 
	\lambda_N N^{1/2}O(\delta^3/\kappa) + \dfrac{N^{1-}}{\log\kappa} O(\delta^2/\kappa).
\end{split}
\end{align}
\end{lem}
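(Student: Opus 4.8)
The plan is to follow the scheme of Lemma~\ref{lem_highfreq_dual_H_estimate_N}. Starting from the nonlinear term in the virial identity \eqref{dt_H_identity} with $\ast=N^{-1}$, and using that $\partial_t$ commutes with $\mathbb{P}_{N^{-1}}SU$, I would rewrite the integrand on the left-hand side with $\big(1-\gamma\partial_x^2\big)^{-1}\partial_x^{-1}\mathbb{P}_{N^{-1}}SU\,\partial_t\mathcal{N}$ and split $\partial_t\mathcal{N}$ according to the homogeneity in $(\varepsilon_1,\varepsilon_2,a_1,a_2)$ dictated by \eqref{Nepsilon'}, i.e.\ into the four blocks
\[
\partial_t\big(\varepsilon_1^3\big),\qquad \partial_t\big(\varepsilon_1^2(3Q+3a_1Y)\big),\qquad \partial_t\big(\varepsilon_1(6Qa_1Y+3a_1^2Y^2)\big),\qquad \partial_t\big(3a_1^2QY^2+a_1^3Y^3\big),
\]
each of which gets fed into the integral and estimated by Cauchy--Schwarz. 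Compared to Lemma~\ref{lem_highfreq_dual_H_estimate_N}, the two new features are that the frequency localization now sits at the fixed \emph{small} scale $N^{-1}$, and that $\mathcal{H}_{N^{-1}}$ carries $\partial_x^{-1}$; both will only produce powers of $N$, to be tracked explicitly.

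First I would record three ingredients. The virial weight obeys $\big\|\Psi_{\lambda_N}\varphi_{2,N^{-1}}+\tfrac12\Psi'_{\lambda_N}\partial_x^{-1}\varphi_{2,N^{-1}}\big\|_{L^2}\lesssim\lambda_N\,\|\varphi_{2,N^{-1}}\|_{L^2}\lesssim\lambda_N\delta$, using $\|\Psi_{\lambda_N}\|_{L^\infty}\lesssim\lambda_N$, $\|\Psi'_{\lambda_N}\|_{L^\infty}\lesssim1$ and $\|\partial_x^{-1}\varphi_{2,N^{-1}}\|_{L^2}\lesssim N\|\varphi_{2,N^{-1}}\|_{L^2}$. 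On the nonlinear side, the key bound is
\[
\big\|\big(1-\gamma\partial_x^2\big)^{-1}\partial_x^{-1}\mathbb{P}_{N^{-1}}SU\,g\big\|_{L^2}\ \lesssim\ N^{1/2}\,\|g\|_{L^1},
\]
which I would get by writing $SU=\partial_x(\partial_x+3\tanh x)+(2-3\sech^2 x)$ in divergence form, commuting $\partial_x^{\pm1}$ through $\mathbb{P}_{N^{-1}}$, and combining the low-frequency Bernstein inequality $\|\mathbb{P}_{N^{-1}}h\|_{L^2}\lesssim N^{-1/2}\|h\|_{L^1}$ with $\|\partial_x^{-1}\mathbb{P}_{N^{-1}}\|_{L^2\to L^2}\lesssim N$ and the $L^2$-boundedness of $\big(1-\gamma\partial_x^2\big)^{-1}$ (the dominant contribution coming from the zeroth-order piece $(2-3\sech^2 x)g$). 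Finally, the blocks carrying $Q$ or $Y$ are $L^1$-localized: e.g.\ $\|\varepsilon_1\varepsilon_2Q\|_{L^1}\lesssim\|\varepsilon_1Q^{1/2}\|_{L^2}\|\varepsilon_2Q^{1/2}\|_{L^2}$, which by the arguments behind \eqref{loce}--\eqref{H1pr10} is $\lesssim\int(w_t^2+w^2)\sech^{1/2}(x)\,dx+O(\delta^2/\kappa)$, and also trivially $\lesssim\delta^2$.

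With these the four blocks become routine Cauchy--Schwarz estimates. For $\partial_t(\varepsilon_1^3)=3\varepsilon_1^2\varepsilon_2$, pair the $L^2$-weight bound with the operator bound applied to $g=\varepsilon_1^2\varepsilon_2$, $\|g\|_{L^1}\lesssim\delta^3$, to get $\lesssim\lambda_N N^{1/2}\delta^4$. For $\partial_t(\varepsilon_1^2(3Q+3a_1Y))=6\varepsilon_1\varepsilon_2(Q+a_1Y)$, the $L^1$-localization gives the local-energy term with prefactor $\lambda_N N^{1/2}\delta$ plus remainders $\lambda_N N^{1/2}O(\delta^3/\kappa)$. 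The block $\partial_t(\varepsilon_1(6Qa_1Y+3a_1^2Y^2))$ splits, by the product rule, into a part $\propto\varepsilon_2$ and a part $\propto a_2\varepsilon_1$, both spatially localized; these are handled as the quadratic block, the cross products $|a_i|\,\|\varepsilon_jQ^{1/2}\|_{L^2}$ being distributed by Young's inequality into $\lambda_N N^{1/2}\delta(a_1^2+a_2^2)$, a small multiple of $\int(w_t^2+w^2)\sech^{1/2}(x)$, a small multiple of the coercive term $\int\varphi_{2,N^{-1}}^2\Psi'_{\lambda_N}$ (extracted by a weighted Cauchy--Schwarz against $\sqrt{\Psi'_{\lambda_N}}$, the $\partial_x^{-1}\varphi_{2,N^{-1}}$-contributions being reduced using \eqref{dxm1_upphi_triple_prime_into_LO} and Lemma~\ref{weight_triple_prime}), and remainders of the stated type. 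Lastly, $\partial_t(3a_1^2QY^2+a_1^3Y^3)=6a_1a_2QY^2+3a_1^2a_2Y^3$ is localized with $\|\cdot\|_{L^1}\lesssim|a_1||a_2|$, so it contributes $\lesssim\lambda_N N^{1/2}\delta(a_1^2+a_2^2)$. Adding the four gives \eqref{lemHNest}, once the local-energy prefactor $\lambda_N N^{1/2}\delta$ is absorbed into $\epsilon N^{1-}/\log\kappa$.

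The hard part will be exactly this last absorption, together with making all the $N$-losses uniform, and it is here that the choice $\lambda_N=N\log\kappa^9$ matters. The crude $L^2$-bound on the virial weight carries the factor $\lambda_N$, so the prefactor coming out of the estimates is $\lambda_N N^{1/2}\delta=N^{3/2}\log\kappa^9\,\delta$, which exceeds the admissible $\epsilon N^{1-}/\log\kappa$ at face value; one closes the gap using the range restriction $1\le N\le 2\kappa(t)$ from \eqref{deco_varphi2_sech} together with the smallness \eqref{condition_relating_tmax_kappa_delta}, which give $\kappa^{1/2+}\delta\to0$ and hence $\lambda_N N^{1/2}\delta\lesssim\epsilon N^{1-}/\log\kappa$ uniformly for $1\le N\le2\kappa(t)$ and $t<T_{\max}$. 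It is also the $\lambda_N\gg N$ separation, i.e.\ the exponentially small frequency overlap $\exp(-N^{-1}\lambda_N)=\kappa^{-9}$ between $\widehat{\varphi}_{2,N^{-1}}$ and $\widehat{\Psi}_{\lambda_N}$ (the ``scale separation'' behind Lemma~\ref{weight_triple_prime}), that makes the weighted and $L^1$-localized estimates above legitimate. Keeping all of these $N$-dependent losses under simultaneous control, rather than any single estimate, is the delicate point of the lemma.
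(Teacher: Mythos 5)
Your proposal follows the paper's own scheme closely: split $\partial_t\mathcal{N}$ into the four homogeneity blocks of \eqref{Nepsilon'}, use Cauchy--Schwarz with the $L^2$ bound $\lambda_N\delta$ on the virial weight, pull out the factor $N^{1/2}$ from the Bernstein $L^1\to L^2$ inequality on $(1-\gamma\partial_x^2)^{-1}\partial_x^{-1}\mathbb{P}_{N^{-1}}SU$, exploit the spatial localization of $Q,Y$ via \eqref{loce}--\eqref{H1pr10}, distribute cross terms by Young, and close the absorption using $N\lesssim\kappa$ and \eqref{condition_relating_tmax_kappa_delta}. Your divergence-form rewrite of $SU$ is a reasonable explicit justification of the $N^{1/2}\|g\|_{L^1}$ operator bound that the paper asserts without proof. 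The one bookkeeping discrepancy is where the coercive term $\epsilon\int\varphi_{2,N^{-1}}^2\Psi'_{\lambda_N}$ comes from: you extract it in the linear-in-$\varepsilon$ block $\partial_t\big(a_1\varepsilon_1(a_1Y^2+2QY)\big)$, whereas the paper generates it only from the cubic piece $\mathcal{N}_{1,2}$ (the $\tfrac12\partial_x^{-1}\varphi_{2,N^{-1}}\Psi'_{\lambda_N}$ half of the weight applied to $\partial_t\varepsilon_1^3$, after reducing $\partial_x^{-1}\varphi_{2,N^{-1}}\Psi'_{\lambda_N}$ via Lemma \ref{weight_triple_prime} and Young). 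Both bookkeepings yield the stated right-hand side, so this is a cosmetic difference rather than a gap.
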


\begin{proof}

We proceed as before analyzing the nonlinear terms by homogeneity.
Since some of the estimates will be similar to those in the proof of Proposition \ref{propdotH1},
we will skip some of the details.

\begin{itemize}[leftmargin=*]

\smallskip
\item[$\cdot$] Control of $\partial_t(\varepsilon_{1}^3)$:
Let us denote
\begin{align*}
& \int \left(\varphi_{2, N^{-1} }\Psi_{\lambda_N}+\tfrac{1}{2}\partial_x^{-1}\varphi_{2, N^{-1} }\Psi_{\lambda_N}'\right)\big(1-\gamma\partial_x^2\big)^{-1}\partial_x^{-1}\mathbb{P}_{ N^{-1}} SU \partial_t \big(\varepsilon_1^3\big) =  \mathcal{N}_{1,1} +  \mathcal{N}_{1,2}  
\\
&  \mathcal{N}_{1,1} :=  \int \left(\varphi_{2, N^{-1} }\Psi_{\lambda_N} \right)
	\big(1-\gamma\partial_x^2\big)^{-1}\partial_x^{-1}\mathbb{P}_{ N^{-1}} SU \partial_t \big(\varepsilon_1^3\big)
\\
& \mathcal{N}_{1,2} := 
	\int \left(\tfrac{1}{2}\partial_x^{-1}\varphi_{2, N^{-1} }\Psi_{\lambda_N}'\right)\big(1-	\gamma\partial_x^2\big)^{-1}\partial_x^{-1}\mathbb{P}_{ N^{-1}} 
	SU \partial_t \big(\varepsilon_1^3\big).
\end{align*}
The first term is the most dangerous and the one that gives the main restriction for $T_{\max}$.
Using Cauchy-Schwarz, recalling the definition of $\Psi_\lambda$ in \eqref{Psilambda}, 
and applying Bernstein inequality going from $L^2$ to $L^1$, we can bound
\begin{align*}
\big\vert \mathcal{N}_{1,1} \big\vert 
& \lesssim \lambda_N \Vert \varphi_{2,N^{-1}}\Vert_{L^2_x}
	 \Vert \big(1-\gamma\partial_x^2\big)^{-1}\partial_x^{-1}\mathbb{P}_{ N^{-1}} SU \partial_t \big(\varepsilon_1^3\big) \Vert_{L^2_x}
\\
& \lesssim \lambda_N \Vert \varphi_{2,N^{-1}}\Vert_{L^2_x}
	 \cdot N^{1/2} \Vert 
	 \partial_t (\varepsilon_1^3 ) \Vert_{L^1_x} \lesssim \lambda_N N^{1/2}\delta^4.
\end{align*}
This is consistent with the desired inequality.
The other term can be estimated using Cauchy-Schwarz, followed by Lemma \ref{weight_triple_prime}, 
and application of Bernstein as above:
\begin{align*}
\big\vert \mathcal{N}_{1,2}\big\vert \lesssim \Vert \partial_x^{-1}\varphi_{2, N^{-1} }\Psi_{\lambda_N}' \Vert_{L^2_x}
	 \Vert \big(1-\gamma\partial_x^2\big)^{-1}\partial_x^{-1}\mathbb{P}_{ N^{-1}} SU \partial_t \big(\varepsilon_1^3\big) \Vert_{L^2_x}
\\
\lesssim \Big( N  \Vert \varphi_{2,N^{-1}}\Psi_{\lambda_N}' \Vert_{L^2_x} + \delta \kappa^{-1/2} \Big)
	 \cdot N^{1/2} \Vert 
	 \partial_t (\varepsilon_1^3 ) \Vert_{L^1_x}
\\
\lesssim \epsilon\int \varphi_{2,N^{-1}}^2\Psi_{\lambda_N}' + \epsilon^{-1}N^3\delta^6 + \delta^4.
\end{align*}
having also used $N \lesssim \kappa$.

\smallskip
\item[$\cdot$] Control of $\partial_t\big(\varepsilon_1 ^2\big(Q+a_1Y\big)\big)$: 
In this case we denote the corresponding integral by  
\begin{align*}
\mathcal{N}_{2} &:= \int \left(\varphi_{2, N^{-1} }\Psi_{\lambda_N}+\tfrac{1}{2}\partial_x^{-1}\varphi_{2, N^{-1} }\Psi_{\lambda_N}'\right)\times 
\\
& \qquad \qquad \qquad  \times \big(1-\gamma\partial_x^2\big)^{-1}\partial_x^{-1}
	\mathbb{P}_{ N^{-1}} SU \partial_t\Big(\varepsilon_1^2(Q+a_1Y)\Big).
\end{align*}
Applying Cauchy-Schwarz, followed by Lemma \ref{weight_triple_prime} and Bernstein inequalities as before,
and using also \eqref{H1pr10} (which holds verbatim with $\sqrt{Q}$ instead of $Q$), 
we can estimate 
\begin{align*}
\big\vert \mathcal{N}_2\big\vert & \lesssim
	\Big(  \lambda_N \Vert \varphi_{2,N^{-1}}\Vert_{L^2_x} 
	+ N  \Vert \varphi_{2,N^{-1}}\Psi_{\lambda_N}' \Vert_{L^2_x} + \delta \kappa^{-1/2} \Big)
	 \cdot N^{1/2} \Vert 
	 (\varepsilon_1^2 + \varepsilon_2^2 ) Q \Vert_{L^1_x}
\\
& \lesssim 
	\lambda_N \delta 
	\cdot  N^{1/2}  
	\Big( \int \big(w^2+w_t^2\big)\sech^{1/2}(x)+ O(\delta^2/\kappa) \Big)
\end{align*}
Since $N \lesssim \kappa \leq \log T_{\max}$, then $ \lambda_N \delta N^{1/2} \ll \epsilon N^{1-} / \log\kappa$, and the terms
above involving $w$ are consistent with the desired \eqref{lemHNest}; the remaining terms 
of $O(\lambda_N N^{1/2} \delta^3 \kappa^{-1} )$ are also accounted for.

\smallskip
\item[$\cdot$] Control of $\partial_t\big(a_1\varepsilon_1\big(a_1Y^2+ 2QY\big)\big)$: 
Denote the corresponding integral by 
\begin{align*}
\mathcal{N}_{3} &:=   \int \left(\varphi_{2, N^{-1} }\Psi_{\lambda_N} + \tfrac{1}{2}\partial_x^{-1}\varphi_{2, N^{-1}}\Psi_{\lambda_N}'\right)
	 \times 
\\ & \qquad \qquad \qquad \times \big(1-\gamma\partial_x^2\big)^{-1}\partial_x^{-1}\mathbb{P}_{ N^{-1}}SU
	\partial_t\Big(a_1\varepsilon_1(a_1Y^2+2QY)\Big).
\end{align*}
Proceeding as above, using again \eqref{H1pr10}, 
we can estimate
\begin{align*}
\big\vert \mathcal{N}_{3} \big\vert 
	&\lesssim \lambda_N\delta \cdot N^{1/2} \big(\vert a_1\vert+\vert a_2\vert\big) \big( \Vert \varepsilon_{1}Y \Vert_{L^2}
	 + \Vert \varepsilon_2Y\Vert_{L^2} \big)
\\ &  \lesssim \epsilon^{-1} N^{0+} \log\kappa \, \lambda_N^2\delta^2\big(a_1^2+a_2^2\big)
 + \frac{\epsilon N^{1-}}{\log\kappa} \Big( \int \big(w_t^2+w^2\big)\sech^2(x) + O(\delta^2/\kappa) \Big) .
\end{align*}
As before, since $N \lesssim \kappa \leq \log T_{\max}$, then $ \lambda_N \delta N^{0+} \log\kappa \leq N^{1/2}$ so that the terms
containing $a_1$ and $a_2$ are consistent with \eqref{lemHNest}.

\smallskip
\item[$\cdot$] Control of $\partial_t\big(a_1^3Y^3 + 3a_1^2QY^2\big)$: 
It only remains to bound the terms that do not contain any $\varepsilon$. 
Denote
\begin{align*}
\mathcal{N}_{4} &:= \int \left(\varphi_{2,N^{-1} }\Psi_{\lambda_N}+\tfrac{1}{2}\partial_x^{-1}\varphi_{2,N^{-1} }\Psi_{\lambda_N}'\right)\times 
\\ & \qquad \qquad \qquad \times \big(1-\gamma\partial_x^2\big)^{-1}\partial_x^{-1}SU\,\widetilde{\mathbb{P}}_{N^{-1}}\partial_t\big(a_1^2\big(a_1Y^3 + 3QY^2\big)\big).
\end{align*}
Similarly to before, using Cauchy-Schwarz and Bernstein inequality, we get that  
\begin{align*}
\big\vert \mathcal{N}_{4} \big\vert \lesssim \lambda_N \delta \cdot N^{1/2} \vert a_1a_2\vert .
\end{align*}
\end{itemize}
This is consistent with \eqref{lemHNest} and concludes the proof of the lemma.
\end{proof}

We now prove Proposition \ref{propdotHN}.

\begin{proof}[Proof of Proposition \ref{propdotHN}]

The starting point is the identity \eqref{dt_H_identity} (divided by $N$).
The estimate \eqref{dxm1_upphi_triple_prime_into_LO}
takes care of the first two terms on the right-hand side of  \eqref{dt_H_identity}, absorbing the second by the first,
and giving a contribution consistent with the first term on the right-hand side of the desired inequality \eqref{dotHN}
and the remainder \eqref{dotHNrem}:
\begin{align}\label{propdotHNlot}
 -\dfrac{1}{\mu N}\int\varphi_{2,N^{-1}}^2 \Psi_{\lambda_N}' 
	+ \dfrac{1}{4\mu N} \int \big(\partial_x^{-1}\varphi_{2,N^{-1}}\big)^2 \Psi_{\lambda_N}''' 
	\leq  -\dfrac{2}{3\mu N}\int\varphi_{2,N^{-1}}^2 \Psi_{\lambda_N}'  
	+ \dfrac{1}{N}O(\delta^2/(\mu \kappa ^{1+})).
\end{align}
Lemma \ref{lem_dual_H_lf_estimate_N} handles the term involving the nonlinearity $\mathcal{N}$ in \eqref{dt_H_identity};
the contribution of this term to \eqref{dotHN}, 
obtained by dividing \eqref{lemHNest} by $N\mu$, is:
\begin{align*}
& \frac{1}{\mu} \Big( \lambda_N N^{-1/2} \delta^4 + \lambda_N N^{-1/2} \delta \big(a_1^2+a_2^2 \big)
	+ \frac{\epsilon}{N^{0+} \log \kappa} \int \big(w_t^2+w^2\big)\sech^{1/2}(x)
\\
& \quad  \qquad + \frac{\epsilon}{N} \int \varphi_{2, N^{-1}}^2 \Psi_{\lambda_N}' + 
	\lambda_N N^{-1/2}O(\delta^3/\kappa) + \dfrac{1}{N^{0+}\log\kappa}O(\delta^2/\kappa) \Big). 
\end{align*}
Let us analyze all the terms above.

The local norm of $\varphi_{2, N^{-1}}$ appears with an $\epsilon$ and is therefore consistent with the first term
on the right-hand side of \eqref{dotHN}. The same holds for the local norm of $w,w_t$,
and for the terms in $a_1,a_2$ upon observing that (recall $N$ is a dyadic index)
$$\sum_{1 \leq N \leq 2\kappa(t)} \lambda_N N^{-1/2} \delta \lesssim \delta \kappa^{1/2} \log^2 \kappa \leq \delta^{1/4}.$$
For the same reason also the term $\lambda_N N^{-1/2} \delta^3/(\mu \kappa)$  can be included in the remainder $F_N$;
the same also hold true for $\delta^2/(N^{0+}\mu\kappa\log\kappa)$.

For the last remaining term we verify directly that (summing over $N$ only loses $\approx \log \kappa$)
\begin{align*}
\int_0^{T_{\max}} \frac{1}{\mu} \, \log\kappa(t) \sum_{1\leq N \leq 2\kappa(t)} \lambda_N N^{-1/2} \, \delta^4 \, dt 
	& \lesssim \delta^4 \int_0^{T_{\max}} \frac{1}{\langle t \rangle} \kappa^{1/2}(t) \log^3 \kappa(t) \, dt 
	\\
	&  \lesssim \delta^4\,  \log^{3/2+\alpha/2 +}(T_{\max})
\end{align*}
which is $O(\delta^2)$ with our definition of $T_{\max}$ in \eqref{secHparam}. 


From the above estimates, we see that in order to conclude it suffices to prove that the 
other terms in \eqref{dt_H_identity} (with $\ast = N^{-1}$, $\lambda=\lambda_N$) 
are remainders in the sense of \eqref{dotHNrem}. 
In particular, it is enough to show the following three (far from optimal) bounds
\begin{align}\label{dotHNest1}
& \Big| \dfrac{\dot{\lambda}_N}{\mu} \int \left(\dfrac{x}{\lambda_1}\right) \left(\Psi'_{\lambda_N}\varphi_{2,N^{-1}}
 	+ \tfrac12 \Psi_{\lambda_N}'' \partial_x^{-1} \varphi_{2,N^{-1}} \right) \partial_x^{-1}
	\varphi_{2,N^{-1},t} \Big| \lesssim \delta^2 \mu^{-3/2},
\\\
\label{dotHNest2}
& \Big| \dfrac{\dot{\lambda}_N}{\mu\lambda_N} \int \Psi_{\lambda_N} \varphi_{2,N^{-1}} \partial_x^{-1} \varphi_{2,N^{-1},t} \Big|
	\lesssim \delta^2 \mu^{-3/2},
\\
\label{dotHNest3}
& \Big| \dfrac{\dot{\mu}}{\mu}\mathcal{H}_{N^{-1}} \Big| \lesssim \delta^2 \mu^{-3/2}.
\end{align}
These all follow in a straightforward fashion using Cauchy-Schwarz,
recalling that our parameters satisfy $\lambda_N = N \log \kappa^9$,
so that $\dot{\lambda}_N \ll 
N \mu^{-1}$, $N \lesssim \kappa \lesssim \log^{1+\alpha}(2+t)$, $\mu \approx t$, and that we can simply bound
\begin{align*}
{\| \partial_x^{-1} \varphi_{2,N^{-1}} \|}_{L^2} + {\| \partial_x^{-1} \varphi_{2,N^{-1},t}  \|}_{L^2} \leq N \delta.
\end{align*}
This concludes the proof of the main Proposition \ref{propdotHN}.
\end{proof}

%



\bigskip
\section{Proof of Theorem \ref{maintheo}}\label{secprmt}
We are finally ready to prove our main theorem. 
First, let us define by $\mathcal{V}_\theta$ the following combination of 
the functionals in \eqref{I>}, \eqref{true_momentum_original_virial_high_frequency},
\eqref{defJ} and \eqref{defK},
 \begin{align}\label{def_Vc}
\mathcal{V}_\theta  := \mathcal{I}_\geq + \mathcal{P}_\geq + \theta\mathcal{J}_\geq - \theta\mathcal{K}_\geq,
\end{align}
with $\theta > 1$ a positive parameter to be chosen later.
Eventually we are going to combine the main inequalities \eqref{final_2_og_virial_hf}
and \eqref{dj_m_dk_ineq} together with the control on $z_t$ (or, equivalently, on $\varphi_{2,\geq}$) obtained 
in Section \ref{sec_Analysis_H}, and 
the role of $\theta > 1$ in \eqref{def_Vc}
is to make sure we can absorb the local norm of $w$ and $w_t$ in \eqref{final_2_og_virial_hf}
by those of $z$ and $z_t$ 
using Lemma \ref{prop_coer_varepsilon} (recall also \eqref{w_def} and \eqref{def_z}).

\medskip
We begin the proof 
with the following: 

\begin{lem}\label{jgeq_kgeq_final}
Let $(\varepsilon_1,\varepsilon_2)$ be any solution to system \eqref{system_epsilon} 
satisfying \eqref{stable_manifold_hyp_thm}, $(\varphi_{1,\geq},\varphi_{2,\geq})$ defined as 
in \eqref{defdual}. Recall the definitions \eqref{w_def} and \eqref{def_z}.
Then, for $\epsilon>0$ small, for all times $t\in(0,T_\mathrm{max})$ the following estimate holds 
\begin{align}\label{dotJ-K}
\begin{split}
\frac{d}{dt} \mathcal{J}_\geq - \frac{d}{dt} \mathcal{K}_\geq 
  \leq - \dfrac{1}{2\mu}\int z_x^2 - \dfrac{1}{8\mu}\int z^2\sech^{1/4}(x) 
  + 
  \dfrac{\epsilon}{\mu}(a_1^2+a_2^2)
\\ 
+ \dfrac{\epsilon}{\mu}\int \big(w_t^2+w^2\big)\sech^{1/2}(x)+F_\star(t),
\end{split}
\end{align}
where $F_\star(t)$ is some function satisfying
\begin{align}\label{dotJ-KF}
\int_0^{T_\mathrm{max}} F_\star(t)dt = O(\delta^2).
\end{align}
\end{lem}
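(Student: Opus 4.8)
The strategy is to take the already-established inequality \eqref{dj_m_dk_ineq}, namely
\[
\frac{d}{dt}\mathcal{J}_\geq - \frac{d}{dt}\mathcal{K}_\geq
  \leq \frac{1}{2\mu}\int z_t^2\sech^{1/4}(x) - \frac{1}{2\mu}\int z_x^2 - \frac{1}{8\mu}\int z^2\sech^{1/4}(x)
  + O(\delta)\frac{1}{\mu}a_1^2 + \mathfrak{R}_2(t),
\]
and eliminate the ``bad'' first term $\frac{1}{2\mu}\int z_t^2\sech^{1/4}(x)$. Since $z_t = \partial_t(\varphi_{1,\geq}\sqrt{\Phi'_B})$, we have pointwise $z_t^2\sech^{1/4}(x) \lesssim \varphi_{2,\geq}^2\sech^{1/4}(x) + (\dot{\widetilde{\mathbb P}}_{\geq\kappa^{-1}}\varphi_1)^2\Phi'_B$, where the second term contributes only $O(\delta^2/\kappa^2)$ after integration (the argument already appearing after \eqref{propORaux}). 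So it suffices to control $\frac{1}{\mu}\int\varphi_{2,\geq}^2\sech^{1/4}(x)$, which is exactly the quantity \eqref{secHquantity} analyzed in Section \ref{sec_Analysis_H}.

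The key step is then to invoke the dyadic decomposition \eqref{deco_varphi2_sech} together with Lemma \ref{lemIIN}, which bounds $\int\varphi_{2,\geq}^2\sech^{1/4}(x)$ by $\mathrm{I} + \sum_{1\leq N\leq 2\kappa} \mathrm{II}_N$ with $\mathrm{I}\lesssim \int\varphi_{2,\geq 1}^2\sech^2(x/\log\kappa^9)$ and $\mathrm{II}_N \lesssim \frac{\log\kappa}{N}\int\varphi_{2,N^{-1}}^2\Psi'_{\lambda_N} + O(\delta^2/\kappa^2)$. I would then feed in the singular virial estimates: Proposition \ref{propdotH1} gives, after rearranging,
\[
\frac{1}{\mu}\int\varphi_{2,\geq 1}^2\Psi'_{\lambda_1} \lesssim -2\frac{d}{dt}\mathcal{H}_{\geq 1} + \frac{\epsilon}{\mu}\int(w_t^2+w^2)\sech^{1/2}(x) + \frac{\delta^{1/2}}{\mu}(a_1^2+a_2^2) + F(t),
\]
and Proposition \ref{propdotHN} gives the analogous bound for $\frac{1}{N}\int\varphi_{2,N^{-1}}^2\Psi'_{\lambda_N}$ in terms of $-\frac{1}{N}\frac{d}{dt}\mathcal{H}_{N^{-1}}$ plus remainders. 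Multiplying the latter by $\log\kappa(t)$, summing over the dyadic $N$ with $1\leq N\leq 2\kappa(t)$ (which costs only a factor $\log\kappa$, already absorbed into the $N^{0+}$ and $\log\kappa$ denominators in \eqref{dotHN}), and using \eqref{dotHNrem} together with $\sum_N F_N \in \delta^2 L^1_t$, one obtains
\[
\frac{1}{2\mu}\int\varphi_{2,\geq}^2\sech^{1/4}(x) \leq \frac{d}{dt}\Big( -2\mathcal{H}_{\geq 1} - \sum_{1\leq N\leq 2\kappa} \tfrac{\log\kappa}{N}\mathcal{H}_{N^{-1}} \Big) + \frac{\epsilon}{\mu}\int(w_t^2+w^2)\sech^{1/2}(x) + \frac{\delta^{1/4}}{\mu}(a_1^2+a_2^2) + G(t),
\]
with $G\in\delta^2 L^1_t([0,T_{\max}])$. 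One subtlety here is that the upper limit $N\leq 2\kappa(t)$ is time-dependent, so differentiating the finite sum $\sum_N\frac{\log\kappa}{N}\mathcal{H}_{N^{-1}}$ produces extra boundary-in-$N$ terms when a new dyadic block enters; these are controlled crudely since $\mathcal{H}_{N^{-1}}$ is bounded (by $\delta^2/\mu$ times powers of $N$ and $\log\kappa$) and the derivative $\frac{d}{dt}(\text{threshold})$ is $o(1/\mu)$, so they land in the remainder. I would fold this bounded combination of singular functionals into the definition of $F_\star$ via a total-derivative term — more precisely, I absorb $\frac{d}{dt}(-2\mathcal{H}_{\geq 1} - \sum \tfrac{\log\kappa}{N}\mathcal{H}_{N^{-1}})$ into the left-hand side by redefining $\mathcal{J}_\geq, \mathcal{K}_\geq$ implicitly, or equivalently note that the time-integral of this total derivative over $[0,T_{\max}]$ is bounded by the sup of the functionals, which is $O(\delta^2)$, and hence set $F_\star$ to include it. Finally, combining this with \eqref{dj_m_dk_ineq}, the term $\frac{1}{2\mu}\int z_t^2\sech^{1/4}(x)$ is replaced by the right-hand side above, giving \eqref{dotJ-K} with $F_\star = \mathfrak{R}_2 + G + (\text{total derivative of singular functionals}) + O(\delta^2/\kappa^2)$, and \eqref{dotJ-KF} follows from $\mathfrak{R}_2, G \in \delta^2 L^1_t$, $|\mathcal{H}_\ast| \lesssim \delta^2/\mu$, and $\mu^{-1}\kappa^{-1}\in L^1_t$.

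The main obstacle, I expect, is bookkeeping the time-dependent summation range $1\leq N\leq 2\kappa(t)$ correctly: one must verify that differentiating $\sum_N \tfrac{\log\kappa(t)}{N}\mathcal{H}_{N^{-1}}(t)$ produces (i) the desired sum of $\frac{d}{dt}\mathcal{H}_{N^{-1}}$ terms controlled by Proposition \ref{propdotHN}, (ii) a sum of $\frac{\dot\kappa}{\kappa}\tfrac{1}{N}\mathcal{H}_{N^{-1}}$ terms which are time-integrable since $|\mathcal{H}_{N^{-1}}|\lesssim N\delta^2/\mu$ and $\dot\kappa/\kappa = o(1/\mu)$ beats the extra $\sum_N 1 \approx \log\kappa$, and (iii) the discrete ``jump'' contributions at dyadic thresholds which are similarly negligible. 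Checking that the losses $N^{1/2}$, $\log^2\kappa$, etc.\ appearing in Lemma \ref{lem_dual_H_lf_estimate_N} and summed over $N$ still integrate to $O(\delta^2)$ up to $T_{\max} = \exp(\delta^{-4/3(1-\alpha)})$ — i.e.\ that $\delta^4 \log^{3/2+\alpha/2+}(T_{\max}) = O(\delta^2)$ — is the quantitative heart of the matter but is already carried out inside the proof of Proposition \ref{propdotHN}, so here it only needs to be cited.
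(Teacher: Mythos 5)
Your proposal is correct and follows essentially the same route as the paper's proof: it bounds the $z_t$ term through the dyadic decomposition \eqref{deco_varphi2_sech} with Lemma \ref{lemIIN} and Propositions \ref{propdotH1}--\ref{propdotHN}, folds the resulting total derivatives of $\mathcal{H}_{\geq 1}$ and $\mathcal{H}_{N^{-1}}$ into $F_\star$, and checks their time-integrability using crude bounds on the functionals together with the $\dot\kappa/\kappa$ and dyadic-threshold contributions. The only (cosmetic) difference is the bookkeeping of the time-dependent sum: the paper exchanges sum and integral and integrates by parts on each block $[T_N,T_{\mathrm{max}}]$, producing the boundary terms at $T_N$ that correspond exactly to your ``jump'' contributions.
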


\smallskip
\begin{proof}
Recall the main estimate for $\dot{\mathcal{J}}_\geq - \dot{\mathcal{K}}_\geq$ 
in \eqref{dj_m_dk_ineq}-\eqref{def_r2_dual_virial_hf} and notice that in order to obtain
\eqref{dotJ-K} we only need to control the local norm of $z_t = \partial_t (\varphi_{1,\geq} \sqrt{\Phi'_B})$.
To do this we use 
the decomposition in \eqref{deco_varphi2_sech} followed by Lemma \ref{lemIIN}
(recalling also \eqref{Psilambda} and \eqref{H>1})
and by Propositions \ref{propdotH1} and \ref{propdotHN} to deduce that
\begin{align}\label{zt_final_bound}
\begin{split}
& \dfrac{1}{\mu}\int z_t^2\sech^{1/4}(x)
\\ 
&  \qquad \lesssim \dfrac{1}{\mu}\int \varphi_{2,\geq 1}^2
  \Psi'_{\lambda_1}
  + \log\kappa(t) \sum_{\substack{N\in\mathbb{D}, \\ 1< N\leq2\kappa}} 
  \dfrac{1}{N\mu}\int \varphi_{2, N^{-1}}^2 \Psi_{\lambda_N}' + \dfrac{1}{\mu} O(\delta^2/\kappa)
%
%
%
%
%
%
\\ 
& \qquad  \lesssim \dfrac{\epsilon}{\mu}\int \big(w_t^2+w^2\big)\sech^{1/2}(x)
  + \dfrac{\epsilon}{\mu}\big(a_1^2+a_2^2\big) + F(t) -\dot{\mathcal{H}}_{\geq1} 
\\ 
&  \qquad \quad - \log\kappa(t) \sum_{ \substack{N\in\mathbb{D}, 
\\ 
1< N\leq2\kappa}}\Big(\dfrac1N\dot{\mathcal{H}}_{N^{-1}}-F_N(t)\Big)
  + \dfrac{1}{\mu}O(\delta^2/\kappa);
\end{split}
\end{align}
notice that we have used that
\begin{align*}
& \log\kappa(t) \sum_{ \substack{N\in\mathbb{D}, \\ 1< N\leq2\kappa}} 
 \dfrac{\delta^{1/4}}{\mu}\big(a_1^2+a_2^2\big)
 \lesssim  \dfrac{\epsilon}{\mu} \big(a_1^2+a_2^2\big) 
\end{align*}
since $\log^2 \kappa(t) \delta^{1/4} \ll  \delta^{1/8}$,
and that we are slightly abusing notation by using the same $\epsilon$ than the one in the statement.


Using \eqref{zt_final_bound}
we can then improve 
\eqref{dj_m_dk_ineq} to \eqref{dotJ-K}, 
recalling also the definition of $\mathfrak{R}_2$ in \eqref{def_r2_dual_virial_hf} choosing, for instance, $\epsilon \approx B^{-1}$,
and defining
\begin{align}
\label{Jgeq_Kgeq_finalRem}
F_{\star}(t) := F(t) - \dot{\mathcal{H}}_{\geq1} 
  - \log\kappa(t) \sum_{ 1< N\leq2\kappa(t)}\Big(\dfrac1N\dot{\mathcal{H}}_{N^{-1}} 
  - F_N(t)\Big) + \dfrac{1}{\mu}O(\delta^4) + \dfrac{1}{\mu}O(\delta^2/\kappa).
\end{align}
To conclude the proof of the lemma we then need to show the property \eqref{dotJ-KF}.
Since the last two terms in \eqref{Jgeq_Kgeq_finalRem} are clearly 
consistent with  \eqref{dotJ-KF}, 
and the same holds true for $F(t)$, respectively $ F_N(t)$, in view of \eqref{dotH1rem}, respectively \eqref{dotHNrem},
it will suffice to show
%
%
\begin{align}\label{boundary_H_geq1}
& \left\vert \int_0^{T_\mathrm{max}}\dfrac{d}{dt}\mathcal{H}_{\geq 1}(t) \,dt \right\vert 
  \lesssim \delta^2,
\\ 
\label{boundary_H_N1} 
& \left\vert \int_0^{T_\mathrm{max}} \log\kappa(t) \sum_{1\leq N\leq 2\kappa(t)}\dfrac{1}{N}
  \dfrac{d}{dt}\mathcal{H}_{N^{-1}}(t) \, dt \right\vert \lesssim \delta^2.
\end{align}
Unlike in the case of standard virial functionals,
the presence of the $\partial_x^{-1}$ operator in the definition of $\mathcal{H}_*$, 
makes the estimates of the boundary terms coming from integrating (in time) the left-hand side of \eqref{dotHN} 
slightly more delicate. 

\medskip
{\it Proof of \eqref{boundary_H_geq1}}.
From the Fundamental Theorem of Calculus, and Cauchy-Schwarz,
with $T = T_{\max}$, we get
\begin{align}
\nonumber
\left\vert \int_0^{T} \dfrac{d}{dt}\mathcal{H}_{\geq 1}(t)dt\right\vert
 & \lesssim 
 \left\vert \mathcal{H}_{\geq1}(0)\right\vert + \left\vert \mathcal{H}_{\geq1}(T) \right\vert,
\\
\label{boundarypr1}
& \lesssim 
\dfrac{1}{\mu(0)}\Big(\lambda_1(0)\Vert \mathbb{P}_{\geq1}\varphi_2(0)\Vert_{L^2_x}
 + \Vert \partial_x^{-1}\mathbb{P}_{\geq1}\varphi_2(0)\Vert_{L^2_x}\Big)
 \Vert \partial_x^{-1}\partial_t\mathbb{P}_{\geq1}\varphi_2(0)\Vert_{L^2_x}
 \\
 \label{boundarypr2}
 & + 
\dfrac{1}{\mu(T)}\Big(\lambda_1(T)\Vert \mathbb{P}_{\geq1}\varphi_2(T)\Vert_{L^2_x}
 +\Vert \partial_x^{-1}\mathbb{P}_{\geq1}\varphi_2(T)\Vert_{L^2_x}\Big)
 \Vert \partial_x^{-1}\partial_t\mathbb{P}_{\geq1}\varphi_2(T)\Vert_{L^2_x}.
\end{align}
Then, recalling that at $t=0$, $\lambda_1(0)=\log\kappa(0)=O(1)$, and using $\partial_t\varphi_{2,\geq1}=-\mathcal{L}_0\varphi_{1,\geq1}  
	+   \big(1-\gamma\partial_x^2\big)^{-1} \mathbb{P}_{\geq 1}SU \mathcal{N}$,
we can bound
\begin{align*}
\eqref{boundarypr1} \lesssim \Vert \mathbb{P}_{\geq1} \varphi_2(0) \Vert_{L^2_x}  
\Vert \partial_x^{-1}\partial_t\mathbb{P}_{\geq1}\varphi_2(0)\Vert_{L^2_x}
	\lesssim \delta \big( \Vert \varphi_1(0)\Vert_{H^1_x}+\Vert \mathcal{N}(0)\Vert_{L^2_x} \big) \lesssim \delta^2.
\end{align*}
For the term in \eqref{boundarypr2} we can proceed similarly to obtain
\begin{align*}
\eqref{boundarypr2} \lesssim \frac{1}{\mu(T)} \log \kappa(T) \Vert \mathbb{P}_{\geq1} \varphi_2(T) \Vert_{L^2_x}  
\Vert \partial_x^{-1}\partial_t\mathbb{P}_{\geq1}\varphi_2(T)\Vert_{L^2_x}
	\lesssim \delta^2.
\end{align*}

\medskip
{\it Proof of \eqref{boundary_H_N1}}.
This is more delicate than the previous case. In fact, first of all it is useful to recall that
\[
\kappa(t)\leq \kappa(T_\mathrm{max}) =  c \delta^{-(4/3)^-} .
\]
Moreover, from the definition of $\kappa(t)$ we have the constraint
\[
N/2 \leq \kappa(t) = \log^{ 1+\alpha } \big(e^D+t\big) \quad \Longleftrightarrow
	 \quad t \geq \big(e^{(N/2)^{1/(1+\alpha)}}-e^D\big)\vee 0=:T_N.
\]
Using this we can write the quantity to be estimated in \eqref{boundary_H_N1} as (again, we denote $T=T_{\max}$)
\begin{align*}
I(T) & = \int_0^{T} \log\kappa(t) \sum_{1\leq N\leq 2\kappa(t)}\dfrac{1}{N} \dfrac{d}{dt}\mathcal{H}_{N^{-1}}(t) \, dt
	\\
	& = \int_0^{T} \log\kappa(t) \sum_{\substack{N\in\mathbb{D}, \\ 1< N\leq c \delta^{-4/3}} }
	\dfrac{1}{N} \, \mathds{1}_{\kappa(t)\geq \frac12N} \, \dfrac{d}{dt}\mathcal{H}_{N^{-1}}(t) \, dt .
\end{align*}
The advantage of writing $N<c\delta^{-4/3}$ is to take out the characteristic function $\mathds{1}$, 
while keeping the sum finite, so that we can easily switch the sum and integral signs and, also integrating by parts, obtain: 
\begin{align}
\nonumber
I(T) & = \sum_{\substack{N\in\mathbb{D}, \\ 1< N\leq c \delta^{-4/3}} }\dfrac{1}{N}
		\int_{T_N}^{T_\mathrm{max}} \log\kappa(t) \dfrac{d}{dt}\mathcal{H}_{N^{-1}}(t) \, dt
\\
& = \sum_{\substack{N\in\mathbb{D}, \\ 1< N\leq c \delta^{-4/3}} } 
	\dfrac{1}{N} \Big( \log\kappa(T_{\max}) \mathcal{H}_{N^{-1}}(T_\mathrm{max}) 
	- \log\kappa(T_N) \mathcal{H}_{N^{-1}}(T_N)\Big)\label{intHN}
\\
\nonumber
& - \sum_{\substack{N\in\mathbb{D}, \\ 1< N\leq c \delta^{-4/3} }}\dfrac{1}{N}
		\int_{T_N}^{T_\mathrm{max}}\frac{\dot\kappa(t)}{\kappa(t)} \, \mathcal{H}_{N^{-1}}(t) \, dt. 
\end{align}
It only remains to estimate the above sums.
We begin by bounding the first sum associated with the evaluation at $T_N$. 
Let us fix $N\in\mathbb{D}$, and recall one more time that 
$\partial_t\varphi_{2,N^{-1}}=-\mathcal{L}_0\varphi_{1,N^{-1}} + \big(1-\gamma\partial_x^2\big)^{-1} \mathbb{P}_{N^{-1}}SU \mathcal{N}$, 
so that, using Cauchy-Schwarz and Bernstein inequality,
\begin{align*}
& \dfrac{\log\kappa(T_N) }{N}\vert \mathcal{H}_{N^{-1}}(T_N)\vert 
\\ 
&\qquad = \dfrac{ \log\kappa(T_N) }{N\mu(T_N)} \left\vert \int \big(\Psi_{\lambda_N(T_N)}\varphi_{2,N^{-1}}(T_N)
	+\frac{1}{2}\Psi_{\lambda_N(T_N)}'\partial_x^{-1}\varphi_{2,N^{-1}}(T_N)\big)\partial_x^{-1}\varphi_{2,N^{-1},t}(T_N)\right\vert
\\ 
& \qquad \lesssim \dfrac{\log\kappa(T_N) }{N\mu(T_N)}
	\cdot \lambda_N(T_N) \Vert \varphi_{2,N^{-1}}(T_N) \Vert_{L^2_x} \cdot N \Vert \partial_t\varphi_{2,N^{-1}}(T_N) \Vert_{L^2_x}
%
\\ 
& \qquad \lesssim \dfrac{\log\kappa(T_N) \lambda_N(T_N)}{\langle T_N\rangle } \cdot \delta^2 
	= \dfrac{9N\log^2(\kappa(T_N))}{\langle T_N\rangle} \cdot \delta^2.
\end{align*}
Therefore, summing up in $N$, recalling the definition of $T_N$ above, we conclude that 
\begin{align*}
\sum_{\substack{N\in\mathbb{D}, \\ 1< N\leq c\delta^{-4/3}}} \dfrac{\log\kappa(T_N)}{N} \vert \mathcal{H}_{N^{-1}}(T_N)\vert
	\lesssim \delta^2\sum_{\substack{N\in\mathbb{D}, \\ N>1}} \dfrac{N \log^2(\kappa(T_N))}{\langle T_N\rangle}\lesssim \delta^2.
\end{align*}
Finally recalling that $T_{\max} \approx \exp(\delta^{-4/3-})$, it is not difficult to see that, exactly the same argument 
shows that
\begin{align*}
\sum_{\substack{N\in\mathbb{D}, \\ 1< N\leq c\delta^{-4/3}}} \dfrac{\log\kappa(T_{\max}) }{N} \vert \mathcal{H}_{N^{-1}}(T_\mathrm{max})\vert   
	\lesssim \delta^2\sum_{\substack{N\in\mathbb{D}, \\ 1< N\leq c\delta^{-4/3}}}
	\dfrac{N\log^2(\kappa(T_\mathrm{max}))}{\langle T_\mathrm{max}\rangle}\lesssim \delta^2.
\end{align*}
To conclude the proof of \eqref{boundary_H_N1} we need to estimate the integral in \eqref{intHN}.
For this it suffices to use that, under our assumptions,
\begin{align*}
\big| \mathcal{H}_{N^{-1}}(t) \big| \lesssim \frac{1}{\langle t \rangle} \lambda_N(t) N \cdot \delta^2 \lesssim \frac{\delta^2}{\langle t \rangle^{1/2}}
\end{align*}
so that, since $\dot\kappa / \kappa \ll \langle t \rangle^{-1}$,
\begin{align*}
\Big| \sum_{\substack{N\in\mathbb{D}, \\ 1< N\leq c \delta^{-4/3} }}\dfrac{1}{N}
		\int_{T_N}^{T_\mathrm{max}} \frac{\dot\kappa(t)}{\kappa(t)} \, \mathcal{H}_{N^{-1}}(t) \, dt \Big|
		& \lesssim \sum_{\substack{N\in\mathbb{D}, \\ 1< N\leq c \delta^{-4/3} }} \dfrac{1}{N} \frac{1}{T_N^{1/2}} \delta^2 \lesssim \delta^2
\end{align*}
This concludes the proof of the lemma.
\end{proof}


\medskip
\subsection{Virial identity for $\mathcal{V}_\theta$.} 
For ease of reference we recall here our two main virial estimates \eqref{final_2_og_virial_hf} and  \eqref{dj_m_dk_ineq},
making the dependence on the absolute constants explicit:
the first one reads
\begin{align}\label{final_2_og_virial_hf'}
\begin{split}
& \dfrac{d}{dt} \mathcal{I}_\geq + \dfrac{d}{dt} \mathcal{P}_\geq
\\ 
& \leq -\dfrac{1}{2\mu}\int w_t^2\sech^{1/2}(x) - \dfrac{1}{\mu}\int w_x^2
  + \dfrac{C_1}{\mu} \int w^2\sech^{1/2}(x) 
  + O(\delta) \frac{1}{\mu} a_1^2 + \mathfrak{R}_1(t),
\end{split}
\end{align}
and the second one reads
\begin{align}\label{dotJ-K'}
\begin{split}
\frac{d}{dt} \mathcal{J}_\geq - \frac{d}{dt} \mathcal{K}_\geq 
  \leq -\dfrac{1}{2\mu}\int z_x^2 - \dfrac{1}{8\mu}\int z^2\sech^{1/4}(x) 
  + \dfrac{\epsilon}{\mu} (a_1^2+a_2^2)
\\ 
+ \dfrac{\epsilon}{\mu}\int \big(w_t^2+w^2\big)\sech^{1/2}(x)+F_\star(t)
\end{split}
\end{align}
Then, recalling the definition of $\mathcal{V}_\theta$ in \eqref{def_Vc} and using 
\eqref{final_2_og_virial_hf'} and \eqref{dotJ-K'} 
we obtain that 
\begin{align}\label{dt_I_P_J_K}
\begin{split}
\dfrac{d}{dt}\mathcal{V}_\theta \leq & - \dfrac{\theta}{2\mu}\int z_x^2 
	- \dfrac{\theta}{8\mu}\int z^2\sech^{1/4}(x)
	- \dfrac{1 - 2\theta \epsilon}{2\mu}\int w_t^2\sech^{1/2}(x) 
\\
& - \dfrac{1}{\mu}\int w_x^2
 + \dfrac{C_1 +\theta \epsilon}{\mu} \int w^2\sech^{1/2}(x)
 + \dfrac{2\theta\epsilon}{\mu}(a_1^2+a_2^2) + \mathfrak{R}_\star(t),
\end{split}
\end{align}
where (see \eqref{def_r1_og_virial_fh} 
for the definition of $\mathfrak{R}_1$) 
\begin{align}\label{dt_I_P_J_KRem}
\mathfrak{R}_\star(t) & := \mathfrak{R}_1(t) 
 + F_\star (t),
 \qquad \left\vert \int_0^{T_\mathrm{max}}\mathfrak{R}_\star(t)dt\right\vert\lesssim \delta^2
\end{align}
in view of \eqref{dotJ-KF} and \eqref{propORR1}.
%
%
%
%

To handle the integral of $w^2\sech^{1/2}(x)$  in \eqref{dt_I_P_J_K}  
we use that, as a direct consequence of Lemma \ref{prop_coer_varepsilon}
and the definitions \eqref{w_def} and \eqref{def_z}, we have
\begin{align}\label{coer_w_z}
\int w^2\sech^{1/2}(x) \leq C_3 \Big( \int z_x^2+\int z^2\sech^{1/4}(x) \Big) +O(\delta^2/\kappa),
\end{align}
for some absolute $C_3>0$;
this allows us to absorb the term on the second line of 
\eqref{dt_I_P_J_K} that contains $w^2\sech^{1/2}(x)$,
by the first and second integrals, that is, those that contain $z_x^2$ and $z^2\sech^{1/4}(x)$ 
by choosing  $\theta \geq 1$ large enough
depending only on the constants $C_1$, $C_2$ and $C_3$ above.  
Given this $\theta$ we then fix $\epsilon$ small enough (hence $B$ large enough)
such that the last term in the first line of \eqref{dt_I_P_J_K} is negative;
for instance, we can choose any pair satisfying 
$\theta > 8 C_1C_3$ and $\epsilon C_3 \ll 1$.
We then conclude that,
for some absolute constant $\theta \geq 1$ 
the following inequality holds:
\begin{align}\label{dt_I_P_J_K_2}
\begin{split}
\dfrac{d}{dt}\mathcal{V}_\theta 
	\leq  -\dfrac{1}{\mu}\int z_x^2-\dfrac{1}{\mu}\int z^2\sech^{1/4}(x)-\dfrac{1}{\mu}\int w_t^2\sech^{1/2}(x)
\\ 
 - \dfrac{1}{\mu}\int w_x^2 + \dfrac{\sqrt{\epsilon}}{\mu}\big(a_1^2+a_2^2\big) + \mathfrak{R}_\star(t)
\end{split}
\end{align}
for some $\mathfrak{R}_\star(t)$ satisfying \eqref{dt_I_P_J_KRem}.

To conclude the proof of the main theorem we only need to control the terms involving $(a_1,a_2)$.
We do this in the next subsection.

\medskip
\subsection{Control of $a_1(t)$, $a_2(t)$ and end of the proof}
Let us recall that $(a_1(t),a_2(t))$ satisfy the system of equations 
\begin{align*}
\begin{cases}
\dot{a}_1(t)=a_2(t),
\\ \dot{a}_2(t)=3a_1(t) + \langle \mathcal{N},Y\rangle.
\end{cases}
\end{align*} 
Based on the above system, let us define $\mathcal{B}(t):=\tfrac{1} {\mu(t)}a_1(t)a_2(t)$. Recalling the estimate for $\langle \mathcal{N}, Y\rangle$ in \eqref{NY}
we find that 
\begin{align*}
\dfrac{d}{dt}\mathcal{B}&=\dfrac{1}{\mu}a_2^2+\dfrac{3}{\mu}a_1^2+\dfrac{a_1}{\mu}\langle \mathcal{N},Y\rangle -\dfrac{\mu'}{\mu}\mathcal{B}
\\ 
& \gtrsim \dfrac{1}{\mu}a_2^2+\dfrac{1}{\mu}a_1^2-\dfrac{\mu'}{\mu}\mathcal{B}-\dfrac{\delta}{\mu}\int w^2\sech^2(x) +\dfrac{1}{\mu}O(\delta^4)+\dfrac{1}{\mu}O(\delta^2/\kappa).
\end{align*}
Adding this to \eqref{dt_I_P_J_K_2}, using again \eqref{coer_w_z},
we infer that  
\begin{align*}
\dfrac{d}{dt}\mathcal{B} - \dfrac{d}{dt}\mathcal{V}_\theta \gtrsim \dfrac{1}{\mu}\int z_x^2+\dfrac{1}{\mu}\int z^2\sech^{1/4}(x)
	+ \dfrac{1}{\mu}\int w_t^2\sech^{1/2}(x) 
\\ 
+ \dfrac{1}{\mu}\int w_x^2+\dfrac{1}{\mu}\big(a_1^2+a_2^2\big) + \mathfrak{R}_\star(t)
\end{align*}
for some $\mathfrak{R}_\star(t)$ satisfying \eqref{dt_I_P_J_KRem}.
%
%
Integrating in time from $0$ to $T_\mathrm{max}$, 
along with the fact that 
\[
\vert \mathcal{I}_\geq(t)\vert+\vert \mathcal{P}_\geq(t)\vert + \vert \mathcal{J}_\geq(t)\vert+\vert \mathcal{K}_\geq (t)\vert
	+ \vert\mathcal{B}(t)\vert = O(\delta^2) \quad \hbox{ for all } \ t\in \R,
\]
and using property \eqref{coer_w_z}, 
we conclude that
\begin{align*}
&\int_0^{T_\mathrm{max}} \dfrac{1}{\mu(t)}\int_\R \Big(z_x^2(t,x) + z^2(t,x)\sech^{1/4}(x) 
\\ 
& \qquad \qquad  \qquad \quad +w_t^2(t,x)\sech^{1/2}(x)+w_x^2(t,x)+w^2(t,x)\sech^{1/2}(x)\Big)dxdt=O(\delta^2),
\end{align*}
and that 
\begin{align*}
\int_0^{T_\mathrm{max}}\dfrac{1}{\mu(t)} 
  \big( a_1^2(t) + a_2^2(t) \big) 
  dt=O(\delta^2).
\end{align*}
Consequently, in view of the decomposition $\varepsilon_1=\varepsilon_{1,<}+\varepsilon_{1,\geq}$, 
the inequality \eqref{lemma<conc} controlling the low frequencies
and the coercivity property \eqref{prop_coer_varepsilon} with \eqref{def_z},
we finally obtain that 
\[
\int_0^{T_\mathrm{max}} \dfrac{1}{\mu(t)}\int_\R \varepsilon_1^2\sech^{1/2}(x)dxdt=O(\delta^2).
\]
The proof of Theorem \ref{maintheo} is complete.



\bigskip
\section{Supporting material}

\subsection{The distorted Fourier transform}\label{appdFT}
In this section we give a quick introduction to the distorted Fourier Transform 
since some of its basic properties are needed in our analysis.  We will mostly restrict our attention to the specific potential appearing in 
the linearization around the Klein-Gordon soliton, but analogous properties hold in the case of
the linearized operator at the Sine-Gordon kink.
In both of these cases all formulas are explicit, and fairly simple, but the general theory can be used in more general cases. Of course all of our arguments do not rely on such explicit formulas. 
We refer to the classical works \cite{Y10,DT79} and to \cite{GP20} 
for a more general introduction to this topic. 

The distorted Fourier basis associated with $H := -\partial_x^2 + V(x)$ for a general,
sufficiently decaying potential $V$, is given by 
\begin{align}\label{edFT}
e(x,\xi):=\dfrac{1}{\sqrt{2\pi}}
\begin{cases}
T(\xi)f_+(x,\xi) & \hbox{ for } \, \xi\geq 0, 
\\ 
T(-\xi)f_-(x,-\xi) & \hbox{ for } \, \xi<0,
\end{cases}
\end{align}
where $f_\pm(x,\xi)$ are the (normalized) Jost solutions given by
\begin{align}\label{JostODE}
(-\partial_x^2+V(x))f_\pm(x,\xi)=\xi^2f_\pm(x,\xi)  
  \quad \hbox{with} \quad \lim_{x\to \pm \infty}\big\vert f_\pm(x,\xi)-e^{\pm i x\xi}\big\vert=0,
\end{align}
and $T(\xi)$ is the transmission coefficient, which can be determined from the relation 
\begin{align}\label{TW}
T(\xi) W(f_+(\cdot,\xi), f_-(\cdot,\xi)) = -2i\xi, 
\end{align}
where $W$ is the Wronskian.

The potential appearing in the linearized operator \eqref{introlinop}, 
belongs to the family of \textit{P\"oschl-Teller} potentials
which have the form $V_\ell(x) = -\ell(\ell+1) \sech^2(x)$.
The operators $-\partial_x^2 + V_\ell(x)$ are all calculable, since they can be conjugated to the flat 
operator $-\partial_x^2$ see \cite{DT79,LSch23}. 
In our particular case $V(x):=-6\sech^2(x) = V_2(x)$ and, taking the conjugate of the identity \eqref{op_SUconj} 
we have
\begin{align}\label{op_SUconj*}
(-\partial_x^2 + V(x)) U^* S^* = - U^* S^* \partial_x^2,
  \qquad U^*S^* = \partial_x^2 - 3\tanh(x) \partial_x + 3 \tanh(x)^2 - 1.
\end{align}
From this we can find the Jost functions in the form
\begin{align}\label{Jost1}
f_+(x,\xi) = c(\xi) U^* S^* e^{ix\xi}, \qquad f_-(x,\xi) = c(\xi) U^* S^* e^{-ix\xi},
\end{align}
with the suitable choice of the normalizing constant
\begin{align}
c(\xi) = (-\xi^2 - 3i\xi + 2)^{-1}
\end{align}
to ensure the asymptotics \eqref{JostODE} as $|x| \rightarrow \infty$.
We can then explicitly calculate from \eqref{Jost1} that
\begin{align}
\label{Jostformula}
\begin{split}
f_+(x,\xi) & = \frac{-\xi^2 - 3i\xi \tanh(x) + 3\tanh(x)^2 - 1}{-\xi^2 - 3i\xi + 2} e^{ix\xi},
\\
f_-(x,\xi) & = f_+(-x,\xi)
\end{split}
\end{align}
where the last identity follows by uniqueness of solutions of the ODE \eqref{JostODE}. We then calculate the transmission coefficient, using the value of the Wronskian at $x=\infty$:
\begin{align}\label{Tcoeff}
T(\xi) = -2i\xi W(f_+(\cdot,\xi), f_-(\cdot,\xi)) = \frac{-\xi^2 - 3i\xi + 2}{-\xi^2 + 3i\xi + 2}.
\end{align}
Note that $|T(\xi)|=1$, which means that $V$ is a reflectionless potential.

Finally, the distorted Fourier transform is defined by
\begin{align}\label{defdFT}
\widetilde{\mathcal{F}}f(\xi):=\int_\R \overline{e(x,\xi)} f(x)dx. 
\end{align}
where an explicit formulas for $e(x,\xi)$ 
is obtained putting together \eqref{edFT}, \eqref{Jostformula} and \eqref{Tcoeff}
(in particular $f_-(x,-\xi) = f_+(-x,-\xi)$)
\begin{align}\label{eformula}
e(x,\xi):=\dfrac{1}{\sqrt{2\pi}}
\frac{-\xi^2 - 3i\xi \tanh(x) + 3\tanh(x)^2 - 1}{-\xi^2 + 3i|\xi| + 2} \, e^{ix\xi} \qquad \mbox{for} \,\, \xi \in \R,
\end{align}


The following general theorem summarizes the main properties of the distorted Fourier Transform.

\begin{thm}\label{thmdFT}
Assume that $V\in\mathcal{S}(\R)$ and has no bound states. 
Then:

\begin{itemize}

\smallskip
\item[-] The distorted Fourier transform $\widetilde{\mathcal{F}}$ is an isometry on $L^2$, that is, \[
\langle f,g\rangle =\big\langle \widetilde{\mathcal{F}}f,\widetilde{\mathcal{F}}g\big\rangle \quad \hbox{ for all } f,g\in L^2
\]

\smallskip
\item[-]  $\widetilde{\mathcal{F}}$ is a bijection, with
\[
\widetilde{\mathcal{F}}^{-1}\phi(x)=\int e(x,\xi)\phi(\xi)d\xi.
\]

\smallskip
\item[-]  $\widetilde{\mathcal{F}}$ diagonalizes $H=-\partial_x^2+V$, that is, 
\[
\widetilde{\mathcal{F}}\big(Hf\big)(\xi)=\xi^2\widetilde{\mathcal{F}}f(\xi).
\] 
In particular, for any bounded function $a$ one has 
$\widetilde{\mathcal{F}}\big( a(H) f \big)(\xi)= a(\xi^2) \widetilde{\mathcal{F}}f(\xi)$.


\end{itemize}
\end{thm}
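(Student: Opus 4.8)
The plan is to follow the classical stationary scattering approach of \cite{DT79,Y10}, specializing where convenient to the reflectionless P\"oschl--Teller situation in which everything is explicit. First I would recall the construction of the Jost solutions $f_\pm(x,\xi)$ via the Volterra integral equations equivalent to \eqref{JostODE}; since $V\in\mathcal{S}(\R)$, a standard contraction argument gives existence, uniqueness, the bound $|f_\pm(x,\xi)-e^{\pm i x\xi}|\lesssim \langle\xi\rangle^{-1}$ for $\xi$ real, together with smooth (indeed analytic) dependence on $\xi$ in the relevant half-plane. The Wronskian relation \eqref{TW} then defines $T(\xi)$, and the hypothesis of no bound states guarantees that $T$ has no poles on the real axis, so that $e(x,\xi)$ as in \eqref{edFT} is a bona fide bounded generalized eigenfunction, $He(\cdot,\xi)=\xi^2 e(\cdot,\xi)$.

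Granting this, the third bullet (diagonalization) is a direct computation: for $f$ Schwartz,
\[
\widetilde{\mathcal{F}}(Hf)(\xi)=\int_\R \overline{e(x,\xi)}\,(-\partial_x^2+V(x))f(x)\,dx,
\]
and integrating by parts twice moves the operator onto $\overline{e(\cdot,\xi)}$, where it acts by multiplication by $\xi^2$; the boundary terms vanish using the decay of $f$ and the boundedness of $e$ and $\partial_x e$. The statement for $a(H)$ then follows from the bounded Borel functional calculus once the Plancherel identity is in hand.

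The heart of the matter is the first bullet together with surjectivity, i.e.\ that $\widetilde{\mathcal{F}}$ is unitary from $L^2(\R)$ (note that here $L^2_c=L^2$, there being no bound state) onto $L^2_\xi(\R)$ with inverse $\widetilde{\mathcal{F}}^{-1}\phi(x)=\int e(x,\xi)\phi(\xi)\,d\xi$. I would argue via Stone's formula and the limiting absorption principle: writing the resolvent $R(z)=(H-z)^{-1}$ for $z\notin[0,\infty)$ in terms of the Green's function built from $f_+$ and $f_-$, one computes the jump
\[
\tfrac{1}{2\pi i}\big(R(\lambda+i0)-R(\lambda-i0)\big)
\]
across the continuous spectrum and identifies its kernel, up to the Jacobian $d\lambda=2\xi\,d\xi$, with $e(x,\sqrt\lambda)\overline{e(y,\sqrt\lambda)}+e(x,-\sqrt\lambda)\overline{e(y,-\sqrt\lambda)}$, the normalization being fixed by \eqref{TW}. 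Stone's formula then gives, for $f$ in a dense class, $\|f\|_{L^2}^2=\|\widetilde{\mathcal{F}}f\|_{L^2}^2$ and the inversion formula, while the absence of bound states forces the spectral projection onto $[0,\infty)$ to be the identity, hence surjectivity.

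The step I expect to be the main obstacle is the careful justification of the limiting absorption principle and the contour/residue bookkeeping in the Green's function computation. In the present setting, however, there is a clean shortcut that I would mention: since $SU$ conjugates $H=-\partial_x^2+V$ to the flat operator $-\partial_x^2$ (see \eqref{op_SUconj}, \eqref{op_SUconj*}), and $e(x,\xi)$ is obtained from $e^{ix\xi}$ by applying the explicit, invertible intertwiner $U^\ast S^\ast$ as in \eqref{Jost1}, all three properties of $\widetilde{\mathcal{F}}$ can instead be transferred from the corresponding elementary facts for the ordinary Fourier transform through these operators and their bounded inverses on the relevant spaces, thereby bypassing the general machinery entirely.
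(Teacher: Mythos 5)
The paper does not prove Theorem \ref{thmdFT} at all: it is quoted as a classical result of stationary scattering theory, with references to \cite{DT79,Y10,GP20}, and only the explicit ingredients ($f_\pm$, $T$, $e(x,\xi)$ for the P\"oschl--Teller potential) are recorded. Your main outline --- Jost solutions via Volterra equations, $T$ from the Wronskian \eqref{TW}, diagonalization by two integrations by parts on Schwartz functions, and Plancherel/inversion via the Green's function representation of the resolvent, Stone's formula and the limiting absorption principle --- is exactly the classical route of the cited references, and as a sketch it is sound; deferring the limiting absorption details is reasonable for a quoted theorem. One small imprecision: bound states correspond to zeros of $W(f_+,f_-)$ at $\xi=i\kappa$ in the upper half-plane (poles of $T$ off the real axis), not to real poles of $T$; on the real axis the only delicate point is $\xi=0$, where in the non-generic case $W(0)=0$ (a resonance, precisely the situation of this paper), which affects uniformity of $e(x,\xi)$ near $\xi=0$ but not the $L^2$ theory.

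The genuine problem is the closing ``shortcut''. First, it cannot prove the theorem as stated: the statement is for arbitrary $V\in\mathcal{S}(\R)$ without bound states, for which no Darboux conjugation to $-\partial_x^2$ is available; the conjugation \eqref{op_SUconj}--\eqref{op_SUconj*} is special to the reflectionless potential $V=-6\sech^2$, which moreover \emph{does} have bound states, so it falls outside the hypotheses (the paper handles that case in the Remark following the theorem by restricting to $L^2_c$). Second, even in that special case, $U^\ast S^\ast$ is an unbounded second-order differential operator, not an ``invertible intertwiner with bounded inverse'': transferring the isometry requires, after moving $SU$ onto $f$ by parts, the operator identities $(SU)^\ast SU=(H+1)(H+4)$ and $SU(SU)^\ast=(H_0+1)(H_0+4)$ together with the restriction to the continuous subspace (equivalently, the unitarity of the wave operators $\mathcal{W}$), and $|T(\xi)|=1$ with the correct normalization $c(\xi)$; this is real work, essentially equivalent to the completeness statement itself rather than a way of ``bypassing the general machinery entirely''. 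If you keep the shortcut, it should be presented as a verification for the specific potential of the paper, on $L^2_c$, with those identities proved.
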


\begin{rem}
Even though our potential $V=-6\sech^2(x)$ has bound states, 
the formulas in Theorem \ref{thmdFT} 
still holds if $f$ and $g$ belong to $L^2_c$, where $L^2_c$ denotes
the projection onto the continuous spectral subspace of $L^2(\R)$ relative to $H=-\partial_x^2+V$.
\end{rem}

\begin{rem}[Additional properties in the case $V=-6\sech^2(x)$]\label{remdFTadd}

Consider the linear operator in \eqref{introlinop}, then the following holds:

\begin{itemize}

\smallskip
\item[(i)] The wave operators 
\begin{align}  
\mathcal{W} := \widetilde{\mathcal{F}}^{-1} \widehat{\mathcal{F}} , \qquad 
\mathcal{W}^{-1} = \mathcal{W}^\ast = \widehat{\mathcal{F}}^{-1} \widetilde{\mathcal{F}},
\end{align}
are bounded on $L^p$ and $W^{1,p}$ for all $p\in[1,\infty]$,
and intertwine $H=-\partial_x^2 + V$ and $H_0 = -\partial_x^2$:
$$f(H) = \mathcal{W} f(H_0) \mathcal{W}^\ast.$$
In particular, Bernstein-type inequalities hold in the case of distorted projections:
for all $1\leq p \leq q \leq \infty$
\begin{align*}
{\big\| \widetilde{\mathbb{P}}_{K} f \big\|}_{L^q} \lesssim K^{(1/p-1/q)} {\| f \|}_{L^p}.
\end{align*}

\smallskip
\item[(ii)] The dFT (associated to any even potential) preserves parity: 
if $f$ is even/odd then $\wt{\mathcal{F}}(f)$ is even/odd.

\end{itemize}

\end{rem}


\medskip
\subsection{A useful lemma} 
\label{appendix_proof_lemma_weight_triple_prime}
In this subsection we state and prove the lemma that allows us to
absorb some of the (linear) lower order terms appearing in our virial estimates
into the leading order ones,
when there is a scale separation between the function and the weight;
see for example \eqref{dxm1_upphi_triple_prime_into_LO}.

\begin{lem}\label{weight_triple_prime} 
Let $f\in L^2$ with $\Vert f\Vert_{L^2}=\delta$, 
such that that its Fourier transform is supported on frequencies $\vert \xi\vert \geq \tfrac12N^{-1}$, 
where $N\geq1$. Define $\lambda_N := N \log \kappa^9$, with $\kappa\geq \tfrac N2\vee D$ where 
$D$ is an absolute constant defined below. Then, the following inequality holds
\begin{align}\label{dxm1_estimate_when_gap}
\left \vert \int (\partial_x^{-1}f)^2\sech^2\big(\tfrac{x}{\lambda_N}\big)\right\vert 
  \leq c_1^* N^2\int f^2\sech^2\big(\tfrac{x}{\lambda_N}\big) + O(\delta^2/\kappa^{1+}).
\end{align}
\end{lem}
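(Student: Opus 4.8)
\textbf{Proof plan for Lemma \ref{weight_triple_prime}.}
The plan is to exploit the frequency gap: $f$ has Fourier support in $\{|\xi| \geq \tfrac12 N^{-1}\}$, while the weight $\sech^2(x/\lambda_N)$ is, in frequency, concentrated at scales $\lesssim \lambda_N^{-1} = (N\log\kappa^9)^{-1} \ll N^{-1}$, up to exponentially small tails. Morally, this means that when we write $g := \partial_x^{-1}f$ with $\widehat{g}(\xi) = (i\xi)^{-1}\widehat{f}(\xi)$, the multiplier $(i\xi)^{-1}$ is, on the support of $\widehat{f}$, a smooth symbol of size $\lesssim N$, and the weight barely sees the difference between $f$ and $N \cdot (\text{something of the same size as }f)$. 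First I would set $h(x) := f(x)\sech(x/\lambda_N)$ and $G(x) := (\partial_x^{-1}f)(x)\sech(x/\lambda_N)$, so that the left-hand side is $\|G\|_{L^2}^2$ and the right-hand side main term is $c_1^* N^2 \|h\|_{L^2}^2$. The goal is then $\|G\|_{L^2} \lesssim N\|h\|_{L^2} + O(\delta/\kappa^{(1+)/2})$.

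The key step is a commutator / symbol-splitting argument. Write
\begin{align*}
G = (\partial_x^{-1}f)\sech(x/\lambda_N) = \mathbb{P}_{\geq \tfrac14 N^{-1}}\big[(\partial_x^{-1}f)\sech(x/\lambda_N)\big] + \mathbb{P}_{<\tfrac14 N^{-1}}\big[(\partial_x^{-1}f)\sech(x/\lambda_N)\big].
\end{align*}
For the high-frequency piece, since $\widehat{f}$ is supported in $|\xi|\geq \tfrac12 N^{-1}$ and the output has frequency $\geq \tfrac14 N^{-1}$, the $\sech$ factor must contribute frequency $\geq \tfrac14 N^{-1}$; hence we may insert $\mathbb{P}_{\geq \tfrac14 N^{-1}}$ in front of $\sech(x/\lambda_N)$ as in the proof of Lemma \ref{lemIIN}. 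Then on the remaining frequency region $|\xi|\geq \tfrac12 N^{-1}$ one has $|(i\xi)^{-1}| \leq 2N$, so $\|\mathbb{P}_{\geq\tfrac14 N^{-1}}[(\partial_x^{-1}f)\cdot \mathbb{P}_{\geq \tfrac14 N^{-1}}\sech(x/\lambda_N)]\|_{L^2}$ can be bounded, using $\|f\|_{L^2}=\delta$ and the estimate $\|\eta_{\geq \tfrac14 N^{-1}}(\xi)\,\lambda_N\sech(\pi\lambda_N\xi/2)\|_{L^1_\xi} \lesssim \exp(-c N^{-1}\lambda_N) = \exp(-c\log\kappa^9) = \kappa^{-9c}$, by $O(\delta\,\kappa^{-9c/2})$ which is $O(\delta/\kappa^{(1+)/2})$ for $D$ (hence $\kappa$) large and $c$ a fixed absolute constant — this fixes the choice of the absolute constant $D$. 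For the low-frequency piece, I would instead write $(\partial_x^{-1}f)\sech = \partial_x^{-1}(f\sech) - \partial_x^{-1}\big[(\partial_x^{-1}f)\cdot (x/\lambda_N^2)\sech'(\cdot)\sech^{-1}\big]$ type manipulations; more cleanly, note $\mathbb{P}_{<\tfrac14 N^{-1}}[(\partial_x^{-1}f)\sech(x/\lambda_N)]$ is band-limited to $|\xi| < \tfrac14 N^{-1}$, so its $L^2$ norm equals that of $\mathbb{P}_{<\tfrac14 N^{-1}} \partial_x^{-1}[\text{commutator terms}]$, and on this band $|(i\xi)^{-1}|$ is large, which forces us to argue via the commutator $[\partial_x^{-1},\sech(x/\lambda_N)]f$, whose symbol-kernel is controlled by $\lambda_N \gg N$ but whose output is again frequency-gapped away; the gain comes from $\partial_x(\sech(x/\lambda_N)) = -\lambda_N^{-1}\tanh(x/\lambda_N)\sech(x/\lambda_N)$ carrying a $\lambda_N^{-1}$.

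The main obstacle — and where the precise value of $c_1^*$ gets pinned down — is the low-frequency commutator term: one must show $\|[\partial_x^{-1},\sech(x/\lambda_N)]f\|_{L^2} \lesssim N\|f\sech(x/\lambda_N)\|_{L^2} + O(\delta/\kappa^{(1+)/2})$ even though naively $\partial_x^{-1}$ applied to a low-frequency object is unbounded. The resolution is that $[\partial_x^{-1}, m(x)]$ for a slowly-varying $m$ has kernel essentially $\partial_x^{-1}$ hitting $m'$, i.e. it is an operator with symbol-type bounds governed by $\|m'\|_\infty \sim \lambda_N^{-1}$ and the frequency localization $|\xi|\gtrsim N^{-1}$ of the input, giving an operator norm $\lesssim N \cdot \lambda_N \cdot \lambda_N^{-1} = N$ on the relevant frequency band, plus the usual exponentially small tail from the $\sech$ in frequency. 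Iterating this once more (since the commutator output is again frequency-supported near $N^{-1}$ up to tails) closes the estimate. Assembling the two pieces via the triangle inequality and squaring yields \eqref{dxm1_estimate_when_gap} with an absolute constant $c_1^*$ depending only on the Littlewood-Paley cutoffs, and the error term $O(\delta^2/\kappa^{1+})$ coming from the exponentially small frequency tails together with $\|f\|_{L^2}^2 = \delta^2$.
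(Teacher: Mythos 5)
Your overall blueprint — exploit the frequency gap, decompose something in frequency, use exponential decay of $\widehat{\sech}$, and run a commutator argument — is the same one the paper uses, but the case analysis contains a genuine logical error that would break the argument as written. You split the \emph{output} $(\partial_x^{-1}f)\sech(x/\lambda_N)$ into $\mathbb{P}_{\geq \frac14 N^{-1}}$ and $\mathbb{P}_{<\frac14 N^{-1}}$ pieces and claim that for the \emph{high}-frequency output piece the $\sech$ factor must contribute frequency $\geq \frac14 N^{-1}$. This is false: $\widehat{\partial_x^{-1}f}$ is already supported in $\{|\xi|\geq\frac12 N^{-1}\}$, so taking $\widehat{\sech}$ near the origin (where all its mass is) still produces a high-frequency output; nothing forces the $\sech$ side to be in a region where it is exponentially small. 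The support constraint fires in the \emph{opposite} case: if the output has frequency $<\frac14 N^{-1}$ while the $\partial_x^{-1}f$ factor has frequency $\geq\frac12 N^{-1}$, then $\sech$ must contribute frequency $\geq\frac14 N^{-1}$. So the exponentially small term is the low-output piece, and the high-output piece — which is the main term — is left unestimated by your argument. Even after swapping the cases, the high-output piece cannot be dispatched by the remark that $|(i\xi)^{-1}|\leq 2N$ on $|\xi|\geq\frac12 N^{-1}$: that bound is about the multiplier applied to $f$, and after multiplying by $\sech$ there is no direct way to pull the symbol bound through as a pointwise bound in physical space.

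The paper instead decomposes the \emph{weight} in frequency, $\Psi_{\lambda_N}'=\mathbb{P}_{<(4N)^{-1}}\Psi_{\lambda_N}'+\mathbb{P}_{\geq(4N)^{-1}}\Psi_{\lambda_N}'$, and this is the decomposition you want. The $\mathbb{P}_{\geq(4N)^{-1}}$ part is exponentially small by \eqref{ftsech}. For the $\mathbb{P}_{<(4N)^{-1}}$ part one writes $\partial_x^{-1}f\cdot\Psi_{\lambda_N,<}'=\partial_x^{-1}(f\Psi_{\lambda_N,<}')-[\partial_x^{-1},\Psi_{\lambda_N,<}']f$. Now $f\Psi_{\lambda_N,<}'$ has frequency support at distance $\gtrsim N^{-1}$ from the origin, so $\partial_x^{-1}$ gains a factor $\lesssim N$ and yields the main term $\frac{c_1^*}{2}N^2\int f^2\sech^2(x/\lambda_N)$. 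The commutator equals $\partial_x^{-1}(\partial_x^{-1}f\cdot\partial_x\Psi_{\lambda_N,<}')$, and after using the same frequency gap and $|\Psi_{\lambda_N}''|\lesssim\lambda_N^{-1}\Psi_{\lambda_N}'$ one finds a term $c_2^* (N^2/\lambda_N^2)\int(\partial_x^{-1}f)^2(\Psi_{\lambda_N}')^2$; since $\lambda_N=N\log\kappa^9$ and $D$ is chosen so that $N^2/\lambda_N^2\ll 1/c_2^*$, this is absorbed back into the left-hand side, giving a bona fide \emph{smallness} factor $N^2/\lambda_N^2$ rather than the "operator norm $\lesssim N$" your sketch computes ($N\cdot\lambda_N\cdot\lambda_N^{-1}=N$ does not track the quantity that must be small). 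Your "iterating this once more" phrase should really be an absorption/feedback step, not an iteration — the commutator term is compared directly against the quantity being estimated, and the choice of $D$ in $\kappa$ is what makes the coefficient $<1$.
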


\begin{proof}
For the sake of simplicity, 
let us denote the weight function 
$\sech\big(\tfrac{x}{\lambda_N}\big)$ by $\Psi_{\lambda_N}'(x)$. 
We split this weight function in frequencies as follows 
\[
\Psi_{\lambda_N}'=\Psi_{\lambda_N,<}'+\Psi_{\lambda_N,\geq}' 
\quad \hbox{where} \quad \Psi_{\lambda_N,<}':=\mathbb{P}_{< (4N)^{-1}}(\Psi_{\lambda_N}') 
\quad \hbox{and} \quad \Psi_{\lambda_N,\geq}' :=\mathbb{P}_{\geq (4N)^{-1}}(\Psi_{\lambda_N}').
\]
Then, by using Plancherel Theorem, Young convolution inequality, 
followed by Bernstein,
along with the fact that $\Vert  \hat{\Psi}_{\lambda_N,\geq}\Vert_{L^1}= O (e^{-\pi\lambda_N/(16N)}) $
due to \eqref{hatsech}, 
we obtain that 
\begin{align*}
\int (\partial_x^{-1}f\cdot \Psi_{\lambda_N,\geq}')^2 
& \lesssim  
  {\| \widehat{\mathcal{F}}(\partial_x^{-1}f) \|}_{L^2}^2 
  {\| \widehat{\mathcal{F}}(\Psi_{\lambda_N,\geq}') \|}_{L^1}^2 
\\
& \lesssim N^{2}\delta^2 \cdot e^{-\pi\lambda_N/(8N)} \lesssim N^2 \delta^2/\kappa^{3+} = O(\delta^2/\kappa^{1+}). 
\end{align*}
Note that the right-hand side of the above estimate is compatible with \eqref{dxm1_estimate_when_gap}.  Next, in order to deal with $\Psi_{\lambda_N,<}'$ we write 
\begin{align}\label{commutator_deco_fl_gs}
\partial_x^{-1}f\cdot \Psi_{\lambda_N,<}' =\partial_x^{-1}\big(f\Psi_{\lambda_N,<}'\big)-[\partial_x^{-1},\Psi_{\lambda_N,<}']f,
\end{align}
where $[\cdot,\cdot]$ stands for the standard commutator operator.
Then, on the one-hand, due to the presence of $\mathbb{P}_{<(4N)^{-1}}$ in the definition of $\Psi_{\lambda_N,<}$, we have  
\[
\supp\mathcal{F}\big(f\Psi_{\lambda_N,<}'\big)\subset (-\tilde cN^{-1},\tilde cN^{-1})^c,
\]
for some $\tilde{c}\in\R_+$, with $\tilde{c}\sim 1$, and hence, 
\begin{align}\label{triplep_c1star}
\int \big(\partial_x^{-1}(f\Psi_{\lambda_N,<}')\big)^2
  \leq \frac{c_1^*}{2} N^2\int \big(f\Psi_{\lambda_N,<}'\big)^2
  \leq \frac{c_1^*}{2} N^2\int f^2\sech^2\big(\tfrac{x}{\lambda_N}\big) + O\big(\delta^2/\kappa^{1+}\big),
\end{align}
for some absolute constant $c_1^*>0$,
where we have used again that $\lambda_N> N$ to rewrite $\Psi_{\lambda_N,<}'$ 
in terms of $\sech(\frac{x}{\lambda_N})$ plus a term contributing to the remainder, namely, 
$\Psi_{\lambda_N,<}'=\Psi_{\lambda_N}'-\Psi_{\lambda_N,\geq}'$. 

On the other hand, using Plancherel Theorem we have 
\begin{align}\label{triplep_commutator_identity_dxm1}
\begin{split}
\int \big([\partial_x^{-1},\Psi_{\lambda_N,<}']f\big)^2 
  & =\int \Big(\partial_x^{-1}(f\Psi_{\lambda_N,<}')-\Psi_{\lambda_N,<}'\cdot\partial_x^{-1}f\Big)^2
\\ & =\tilde{c}\int \Big\vert \dfrac{(\widehat{f}*\widehat{\Psi}_{\lambda_N,<})}{\xi} 
  - \widehat{\Psi}_{\lambda_N,<}*\Big(\dfrac{\widehat{f}}{\xi}\Big)\Big\vert^2
\\ & =\tilde{c} \int \Big\vert \int \dfrac{\mu}{\xi(\xi-\mu)}\widehat{f}(\xi-\mu)\widehat{\Psi}_{\lambda_N,<}(\mu) \Big\vert^2 
\\ & =\int \Big(\partial_x^{-1}\big(\partial_x^{-1}f\cdot \partial_x\Psi_{\lambda_N,<}'\big)\Big)^2      
\\ &\leq c_2^* N^2\int \big(\partial_x^{-1}f\cdot \partial_x\Psi_{\lambda_N,<}'\big)^2 
\\ & \leq  \dfrac{c_2^* N^2}{\lambda_N^2}\int \big(\partial_x^{-1}f 
  \cdot  \Psi_{\lambda_N}'\big)^2
  + c_2^* N^2\int \big(\partial_x^{-1}f\cdot \partial_x\Psi_{\lambda_N,\geq}' \big)^2,
\end{split}
\end{align}
for some absolute constant $c_2^*>0$, where, from the second last to the last inequality, we have rewritten
$\Psi_{\lambda_N,<}'$ as $\Psi_{\lambda_N}'-\Psi_{\lambda_N,\geq}'$, 
and hence, we can use that 
\[
\vert \Psi_{\lambda_N}''(x)\vert=\big\vert \tfrac{1}{\lambda_N}
  \sech\big(\tfrac{x}{\lambda_N}\big)\tanh\big(\tfrac{x}{\lambda_N}\big)\big\vert 
  \lesssim \tfrac{1}{\lambda_N}\Psi_{\lambda_N}'(x).
\] 
Then, defining $D:=e^{8c_*}$, where $c_*:=c_1^*\vee c_2^*$ 
is an absolute constant depending on $c_1^*$ and $c_2^*$ appearing in \eqref{triplep_c1star} 
and \eqref{triplep_commutator_identity_dxm1}, we conclude that the first term 
on the right-hand side of \eqref{triplep_commutator_identity_dxm1}
can be reabsorbed on the left side of \eqref{dxm1_estimate_when_gap}, 
using the fact that $\lambda_N\gg c_2^*N$.
The last remaining term can be 
bounded in the same fashion as at the beginning of this proof:
using Plancherel Theorem with Young convolution inequality, we estimate
\begin{align*}
N^2\int \big(\partial_x^{-1}f\cdot \partial_x\Psi_{\lambda_N,\geq}' \big)^2
  \lesssim O\left(\tfrac{N^4\delta^2}{\lambda_N^2}e^{-\pi\lambda_N/(8N)}\right).
\end{align*} 
Due to the definition of $N$ and $\lambda_N$, 
we conclude that the right-hand side of the latter inequality is $O(\delta^2/\kappa^{1+})$, 
which concludes the proof of the lemma.
\end{proof}


\bigskip

\end{document}